\newcommand*{\centerfloat}{%
  \parindent \z@
  \leftskip \z@ \@plus 1fil \@minus \textwidth
  \rightskip\leftskip
  \parfillskip \z@skip}
\newcommand{\SCF}{$(\mathsf{SCF})_{\ref{d:wFe}}$\xspace}
\newcommand{\qSCF}{$(\mathsf{qSCF})_{\ref{d:qSCF}}$\xspace}
\newcommand{\boldSigma}{{\boldsymbol\A}}
\newcommand{\boldkappa}{{\boldsymbol \kappa}}
\NewDocumentCommand{\makeabbrev}{mmm}
 {
  \yoruk_makeabbrev:nnn { #1 } { #2 } { #3 }
 }
\makeabbrev{\textbf}{tbf#1}{a,b,c,d,e,f,g,h,i,j,k,l,m,n,o,p,q,r,s,t,u,v,w,x,y,z,A,B,C,D,E,F,G,H,I,J,K,L,M,N,O,P,Q,R,S,T,U,V,W,X,Y,Z}
\makeabbrev{\textbf}{bf#1}{a,b,c,d,e,f,g,h,i,j,k,l,m,n,o,p,q,r,s,t,u,v,w,x,y,z,A,B,C,D,E,F,G,H,I,J,K,L,M,N,O,P,Q,R,S,T,U,V,W,X,Y,Z}
\makeabbrev{\textsf}{tsf#1}{a,b,c,d,e,f,g,h,i,j,k,l,m,n,o,p,q,r,s,t,u,v,w,x,y,z,A,B,C,D,E,F,G,H,I,J,K,L,M,N,O,P,Q,R,S,T,U,V,W,X,Y,Z}
\makeabbrev{\mathsf}{mss#1}{a,b,c,d,e,f,g,h,i,j,k,l,m,n,o,p,q,r,s,t,u,v,w,x,y,z,A,B,C,D,E,F,G,H,I,J,K,L,M,N,O,P,Q,R,S,T,U,V,W,X,Y,Z}
\makeabbrev{\mathfrak}{mf#1}{a,b,c,d,e,f,g,h,i,j,k,l,m,n,o,p,q,r,s,t,u,v,w,x,y,z,A,B,C,D,E,F,G,H,I,J,K,L,M,N,O,P,Q,R,S,T,U,V,W,X,Y,Z}
\makeabbrev{\mathrm}{mrm#1}{a,b,c,d,e,f,g,h,i,j,k,l,m,n,o,p,q,r,s,t,u,v,w,x,y,z,A,B,C,D,E,F,G,H,I,J,K,L,M,N,O,P,Q,R,S,T,U,V,W,X,Y,Z}
\makeabbrev{\mathbf}{mbf#1}{a,b,c,d,e,f,g,h,i,j,k,l,m,n,o,p,q,r,s,t,u,v,w,x,y,z,A,B,C,D,E,F,G,H,I,J,K,L,M,N,O,P,Q,R,S,T,U,V,W,X,Y,Z}
\makeabbrev{\mathcal}{mc#1}{A,B,C,D,E,F,G,H,I,J,K,L,M,N,O,P,Q,R,S,T,U,V,W,X,Y,Z}
\makeabbrev{\mathbb}{mbb#1}{A,B,C,D,E,F,G,H,I,J,K,L,M,N,O,P,Q,R,S,T,U,V,W,X,Y,Z}
\makeabbrev{\mathscr}{ms#1}{A,B,C,D,E,F,G,H,I,J,K,L,M,N,O,P,Q,R,S,T,U,V,W,X,Y,Z}
\makeabbrev{\mathrm}{#1}{
Id,id,ran,rk,diag,stab,ann,conv,pr,ev,tr,End,Hom,sgn,im,op,can,fin,ext,red,tot,
%
rot,usc,lsc,Lip,LocLip,lip,bSymLip,osc,AC,loc,uloc,spec,coz,z,ul,
%
supp,Opt,Adm,Cpl,Geo,GeoSel,GeoOpt,GeoAdm,GeoCpl,reg,
%
bd,co,Ric,Exp,dExp,dist,seg,Seg,cut,fcut,Cut,SDiff,Iso,Isom,diam,cl,Homeo,Diff,Der,vol,dvol,inj,relint, Graph, sub,codim,Graff,
%
var,law,Var,Poi,Gam,pa,so,iso,fs,inv,pqi,mix,
TestF,
}
\makeabbrev{\mathsf}{#1}{DP,CD,BE,MCP,Ent,wMTW,MTW,RCD,RQCD,ncRCD,QCD,EVI,Irr,IH,SC,wFe,VA,UP,Curv,Alex,CAT}
\newcommand{\wBE}{\mathsf{BE}}
\newcommand{\bLip}{\mathrm{Lip}_b}
\newcommand{\T}{\tau} 
\newcommand{\A}{\Sigma} 
\newcommand{\Bo}[1]{\msB_{#1}} 
\newcommand{\Bdd}[1]{\msO_{#1}}
\newcommand{\Ed}{{\msE_\mssd}}  
\newcommand{\eps}{\varepsilon}
\newcommand{\defeq}{\eqqcolon}
\renewcommand{\complement}{\mathrm{c}}
\newcommand{\emparg}{{\,\cdot\,}}
\DeclareMathOperator{\Span}{span}
\newcommand{\slo}[2][]{\abs{\mathrm{D}#2}_{#1}}
\newcommand{\Sb}{\A_b}
\newcommand{\dint}[2][]{\sideset{^{#1}\!\!\!}{_{#2}^{\scriptstyle\oplus}}\int}
\newcommand{\Ch}[1][]{\mathsf{Ch}_{#1}}
\newcommand{\forallae}[1]{{\textrm{\,for ${#1}$-a.e.\,}}}
\newcommand{\as}[1]{\quad #1\text{-a.e.}}
\newcommand{\dom}[1]{\msD(#1)}
\newcommand{\domloc}[1]{\msD(#1)_{\loc}^\bullet}
\newcommand{\domext}[1]{\msD_{e}(#1)}
\newcommand{\DzLoc}[1]{\mbbL^{#1}_{\loc}}
\newcommand{\DzLocB}[1]{\mbbL^{#1}_{\loc,b}}
\newcommand{\DzB}[1]{\mbbL^{#1}_b}
\newcommand{\Lipu}{\mathrm{Lip}^1}
\newcommand{\bLipu}{\mathrm{Lip}^1_b}
\newcommand{\Rad}[2]{\mathsf{Rad}_{#1,#2}}
\newcommand{\kWC}[1]{\mathsf{WC}^{\mathrm{ker}}_{#1}}
\newcommand{\sgWC}[1]{\mathsf{WC}^{\mathrm{sg}}_{#1}}
\newcommand{\dRad}[2]{{#1}\textrm{-}\mathsf{Rad}_{#2}}
\newcommand{\cSL}[3]{\mathsf{cSL}_{#1,#2,#3}}
\newcommand{\SL}[2]{\mathsf{SL}_{#1,#2}}
\newcommand{\dcSL}[3]{{#1}\textrm{-}\mathsf{cSL}_{#2,#3}}
\newcommand{\dSL}[2]{{#1}\textrm{-}\mathsf{SL}_{#2}}
\DeclareMathOperator{\eqdef}{\coloneqq}
\let\epsilon\varepsilon
\let\temp\phi
\let\phi\varphi
\let\varphi\temp
\newcommand{\longrar}{\longrightarrow}
\newcommand{\rar}{\rightarrow}
\newcommand{\nlim}{\lim_{n}}								
\newcommand{\klim}{\lim_{k }}
\newcommand{\nliminf}{\liminf_{n }}
\newcommand{\diff}{\mathop{}\!\mathrm{d}}						
\newcommand{\ttabs}[1]{\lvert#1\rvert}	
\newcommand{\tabs}[1]{\big\lvert#1\big\rvert}	
\newcommand{\abs}[1]{\left\lvert#1\right\rvert}						
\newcommand{\tnorm}[1]{\big\lVert#1\big\rVert}						
\newcommand{\ttnorm}[1]{\lVert#1\rVert}
\newcommand{\norm}[1]{\left\lVert#1\right\rVert}					
\newcommand{\set}[1]{\left\{#1\right\}}							
\newcommand{\tset}[1]{\big\{#1\big\}}							
\newcommand{\ttset}[1]{\{#1\}}									
\newcommand{\paren}[1]{\left(#1\right)}							
\newcommand{\tparen}[1]{\big({#1}\big)}
\newcommand{\ttparen}[1]{({#1})}
\newcommand{\quadre}[1]{\left[#1\right]}							
\newcommand{\class}[2][]{\left[#2\right]_{#1}}						
\newcommand{\tclass}[2][]{\big [#2\big]_{#1}}						
\newcommand{\ttclass}[2][]{[#2]_{#1}}							
\newcommand{\spclass}[2][]{#2_{#1}}
\newcommand{\Li}[2][]{\mathrm{L}_{#1}(#2)}						
\newcommand{\bLi}[2][]{\mathrm{L}_{#1}\Big({#2}\Big)}						
\newcommand{\rep}[1]{\hat #1}									
\newcommand{\widerep}[1]{\widehat{#1}}									
\newcommand{\reptwo}[1]{\tilde{#1}}							
\newcommand{\tbraket}[1]{\big[#1\big]}							
\newcommand{\tscalar}[2]{\big\langle #1 \, \big |\, #2\big\rangle}			
\newcommand{\hotimes}{\widehat{\otimes}}
\newcommand{\asym}[1]{{\scriptscriptstyle{[#1]}}}
\newcommand{\sym}[1]{{\scriptscriptstyle{(#1)}}}
\newcommand{\tym}[1]{{\scriptscriptstyle{\times #1}}}
\newcommand{\otym}[1]{{\scriptscriptstyle{\otimes #1}}}
\newcommand{\hotym}[1]{{\scriptscriptstyle{\widehat{\otimes}{#1}}}}
\DeclareSymbolFont{symbolsC}{U}{pxsyc}{m}{n}
\DeclareMathSymbol{\medcirc}{\mathbin}{symbolsC}{7}
\DeclareSymbolFont{symbolsZ}{OMS}{pxsy}{m}{n}
\newcommand{\seq}[1]{\paren{#1}}								
\newcommand{\tseq}[1]{{\big(#1\big)}}
\newcommand{\Cb}{\mcC_b}									
\newcommand{\Cz}{\mcC_0}									
\newcommand{\Ccompl}{\mcC_\infty}									
\newcommand{\Cbinfty}{{\mcC_{b}^{\infty}}}
\newcommand{\Czinfty}{{\mcC_0^{\infty}}}
\newcommand{\Meas}{\mathscr M}
\newcommand{\pfwd}{\sharp}
\DeclareMathOperator*{\esssup}{esssup}
\DeclareMathOperator*{\essinf}{essinf}
\DeclareMathOperator{\car}{\mathds 1}
\DeclareMathOperator{\emp}{\varnothing} 
\newcommand{\N}{{\mathbb N}}
\newcommand{\R}{{\mathbb R}}
\newcommand{\TLDS}{\textsc{tlds}\xspace}
\newcommand{\MLDS}{\textsc{mlds}\xspace}
\newcommand{\EMLDS}{\textsc{emlds}\xspace}
\newcommand{\parEMLDS}{\textsc{(e)mlds}\xspace}
\newcommand{\lb}{\mfl}
\newcommand{\llb}{\scriptstyle\lb}
\newcommand{\Lb}{\mathfrak{L}}
\newcommand{\restr}{\big\lvert}
\newcommand{\mrestr}[1]{\!\downharpoonright_{#1}}
\newcommand{\trid}{{\star}}
\newcommand{\comma}{\,\,\mathrm{,}\;\,}
\newcommand{\semicolon}{\,\,\mathrm{;}\;\,}
\newcommand{\fstop}{\,\,\mathrm{.}}
\newcommand{\cdc}{\Gamma}
\newcommand{\LL}[2]{\mcL^{#1, #2}}
\newcommand{\TT}[2]{\mcT^{#1, #2}}
\newcommand{\hh}[2]{\mssh^{#1, #2}}
\newcommand{\TTt}{\mcT}
\newcommand{\EE}[2]{\mcE^{#1, #2}}
\newcommand{\FF}[2]{\mcI^{#1, #2}}
\newcommand{\repSF}[2]{\rep\cdc^{#1, #2}}
\newcommand{\repLL}[2]{\rep\mcL^{#1, #2}}
\newcommand{\SF}[2]{\cdc^{#1, #2}}
\renewcommand{\iint}{\int\!\!\!\!\int}
\DeclareMathOperator{\zero}{{\mathbf 0}}
\newcommand{\Cyl}[1]{\mcF^\dUpsilon\mcC^\infty_b(#1)}
\newcommand{\CylQP}[2]{\mcF^\dUpsilon\mcC^{\infty}_b(#2)_{#1}}
\newcommand{\CylSet}[1]{\mcE^{\!\times\!}\mcC(#1)}
\newcommand{\Dz}{\mcD} 
\newcommand{\De}{\mcD_e} 
\newcommand{\Daux}{\mcD_a} 
\newcommand{\preW}{\msW^{1,2}_{\mathrm{pre}}} 
\newcommand{\Ex}[1]{\mathrm{Exp}^{\!\dUpsilon}(#1)}
\newcommand{\locfin}{\mathrm{lcf}}
\newcommand{\PP}{{\pi}}
\newcommand{\cpl}{q}
\newcommand{\QP}{{\mu}}
\newcommand{\hr}[1]{\bar\mssd_{#1}} 									
\newcommand{\coK}[2]{\co_{#2}({#1})}
\newcommand{\dUpsilon}{{\boldsymbol\Upsilon}}
\numberwithin{equation}{section}
\theoremstyle{plain}
\newtheorem{thm}{Theorem}[section]
\newtheorem*{thm*}{Theorem}
\newtheorem{Thm}{Theorem}
\newtheorem{Cor}[Thm]{Corollary}
\newtheorem*{Cor*}{Corollary}
\newtheorem*{mthm*}{Main Theorem}
\newtheorem{prop}[thm]{Proposition}
\newtheorem*{prop*}{Proposition}
\newtheorem{lem}[thm]{Lemma}
\newtheorem{cor}[thm]{Corollary}
\theoremstyle{definition}
\newtheorem{defs}[thm]{Definition}
\newtheorem{notat}[thm]{Notation}
\newtheorem*{defs*}{Definition}
\theoremstyle{remark}
\newtheorem{rem}[thm]{Remark}
\newtheorem{ese}[thm]{Example}
\renewcommand{\paragraph}[1]{\medskip\emph{#1}.\quad}
\begin{document}

\title[Configuration spaces over singular spaces II]{Configuration Spaces Over Singular Spaces
\vspace{2mm}\\ 
-- {\tiny II. Curvature --}}

\thanks{The authors are very grateful to Prof.s Dr.s Matthias Erbar and Martin Huesmann for several conversations on the subject of this work and on~\cite{ErbHue15}.
This work was written while the authors were guests of the Mathematematisches Forschungsinstitut Oberwolfach (MFO) as a Research in Pairs (2147p).
They are very grateful to the MFO and its staff for their very kind hospitality.}

\author[L.~Dello Schiavo]{Lorenzo Dello Schiavo}
\address{Institute of Science and Technology Austria, Am Campus 1, 3400 Klosterneuburg, Austria}
\email{lorenzo.delloschiavo@ist.ac.at}
\thanks{
L.D.S.\ gratefully acknowledges funding of his current position by the Austrian Science Fund (FWF) grant F65, and by the European Research Council (ERC, grant No.~716117, awarded to Prof.\ Dr.~Jan Maas).
A part of this work was completed while he was a member of the Institut f\"ur Angewandte Mathematik of the University of Bonn. He acknowledges funding of his position at that time by the Collaborative Research Center 1060.
}

\author[K.~Suzuki]{Kohei Suzuki}
\address{Fakult\"at f\"ur Mathematik, Universit\"at Bielefeld, D-33501, Bielefeld, Germany}
\email{ksuzuki@math.uni-bielefeld.de}
\thanks{K.S.~gratefully acknowledges funding by: the JSPS Overseas Research Fellowships, Grant Nr. 290142; World Premier International Research Center Initiative (WPI), MEXT, Japan; JSPS Grant-in-Aid for Scientific Research on Innovative Areas ``Discrete Geometric Analysis for Materials Design'', Grant Number 17H06465; and the Alexander von Humboldt Stiftung, Humboldt-Forschungsstipendium}

\keywords{}

\subjclass[2020]{31C25, 30L99, 70F45, 60G55}

\renewcommand{\abstractname}{\normalsize Abstract}
\begin{abstract}
\normalsize
This is the second paper of a series on configuration spaces $\dUpsilon$ over singular spaces $X$. Here, we focus on geometric aspects of the extended metric measure space~$(\dUpsilon, \mssd_{\dUpsilon}, \QP)$ equipped with the $L^2$-transportation distance~$\mssd_{\dUpsilon}$, and a mixed Poisson measure~$\QP$.
Firstly, we establish the essential self-adjointness and the $L^p$-uniqueness for the Laplacian on~$\dUpsilon$ lifted from~$X$.
Secondly, we prove the equivalence of Bakry--\'Emery curvature bounds on~$X$ and on~$\dUpsilon$, without any metric assumption on~$X$.
We further prove the Evolution Variation Inequality on~$\dUpsilon$, and introduce the notion of synthetic Ricci-curvature lower bounds for the extended metric measure space~$\dUpsilon$.
As an application, we prove the Sobolev-to-Lipschitz property on~$\dUpsilon$ over singular spaces~$X$, originally conjectured in the case when~$X$ is a manifold by M.~R\"ockner and A.~Schield.
As a further application, we prove the $L^\infty$-to-$\mssd_{\dUpsilon}$-Lipschitz regularization of the heat semigroup on~$\dUpsilon$ and gives a new characterization of the ergodicity of the corresponding particle systems in terms of optimal transport.
\end{abstract}

\maketitle

\setcounter{tocdepth}{3}
\makeatletter
\def\l@subsection{\@tocline{2}{0pt}{2.5pc}{5pc}{}}
\def\l@subsubsection{\@tocline{3}{0pt}{4.75pc}{5pc}{}}
\makeatother

\tableofcontents

\section{Introduction}\label{s:Introduction}

This paper is the second in a series on configuration spaces over non-smooth spaces.
In the first paper~\cite{LzDSSuz21}, we focused on the fundamental construction of a Dirichlet space and of an extended metric measure space on configuration spaces over non-smooth base spaces.
Based on the foundation developed there, in this  second paper we study \emph{curvature} on configuration spaces over non-smooth base spaces.  

\medskip

For the sake of simplicity, throughout this introduction let~$(X,\T)$ be a locally compact Polish space, $\mssm$~be a fully supported atomless Radon measure, and~$(\cdc, \Dz)$ be a closable local square-field operator defined on some sub-algebra~$\Dz$ of the space~$\Cz(\T)$ of $\T$-continuous compactly supported functions on $X$.
We call~$\mcX\eqdef (X, \T, \mssm, \cdc)$ a \emph{topological local diffusion space} (in short: \TLDS, see Dfn.~\ref{d:TLSConfig2}). 
We further let~$\tparen{\EE{X}{\mssm},\dom{\EE{X}{\mssm}}}$ be the Dirichlet form obtained by integration of the closure~$\SF{X}{\mssm}$ of~$(\cdc,\Dz)$ with respect to~$\mssm$, and~$\TT{X}{\mssm}_\bullet\eqdef\tseq{\TT{X}{\mssm}_t}_{t\geq 0}$ the associated strongly continuous contraction semigroup.

\paragraph{Configuration spaces}
The configuration space $\dUpsilon$ over~$X$ is the set
\begin{equation*} 
\dUpsilon\eqdef \set{\gamma=\sum_{i=1}^N \delta_{x_i}: x_i\in X\comma N \in \N\cup \set{+\infty}\comma \gamma K<\infty \quad K \Subset X }\fstop
\end{equation*}
 of all locally finite point measures on $X$.
It is endowed with the \emph{vague topology}~$\T_\mrmv$, induced by duality with functions in~$\Cz(\T)$, and with a reference Borel probability measure~$\QP$, understood as the law of a proper point process on~$X$.
For any~$f \in \Cz(\T)$ and any configuration $\gamma \in \dUpsilon$, we denote by~$f^\trid \gamma=\gamma f$ the integral of~$f$ with respect to~$\gamma$.

\paragraph{Mixed Poisson measures}
Recall that a probability~$\lambda$ on~$\R^+$ is a \emph{L\'evy measure} if~$\lambda (1\wedge t)<\infty$.
In this paper, we always take $\mu$ to be either: 
\begin{enumerate*}[$(a)$]
\item the \emph{Poisson measure~$\pi_\mssm$ with intensity $\mssm$}, i.e.\ the law of a completely independent point process~$\gamma$ on~$X$ satisfying~$\mbfP(\gamma A=0)=e^{-\mssm A}$ for every Borel~$A\subset X$ (see Dfn.~\ref{d:PoissonConfig2});
or
\item the \emph{mixed Poisson measure~$\QP_{\lambda,\mssm}$ with intensity~$\mssm$ and L\'evy measure~$\lambda$}, i.e.\ the probability measure~$\QP_{\lambda,\mssm}=\int \PP_{s\cdot\mssm}\diff\lambda(s)$ on~$\dUpsilon$.
\end{enumerate*}

\paragraph{Dirichlet form on $\dUpsilon$} Define a space of \emph{cylinder functions} 
\begin{align*}
\Cyl{\Dz}\eqdef \set{\begin{matrix} u\colon \dUpsilon\rar \R : u=F\circ\mbff^\trid \comma  F\in \mcC^\infty_b(\R^k)\comma \\  f_1,\dotsc,  f_k\in  \Dz\comma\quad k\in \N_0 \end{matrix}}\comma
\end{align*}
where $\mbff\eqdef\seq{f_1, \ldots, f_k}$ and $\mbff^\trid\eqdef \seq{f_1^\trid, \ldots, f_k^\trid}$. 
We lift~$\cdc$ to a square field 
\begin{equation*}
\begin{gathered}
\SF{\dUpsilon}{\mu}: \Cyl{\Dz}^{\otimes 2} \longrightarrow \R
\\
\SF{\dUpsilon}{\mu}(u, v)(\gamma)\eqdef \sum_{i,j=1}^{k,m} (\partial_i F)(\mbff^\trid\gamma) \cdot (\partial_j G)(\mbfg^\trid\gamma) \cdot \cdc(f_i, g_j)^\trid \gamma \comma
\\
u=F\circ \mbff^\trid\in \Cyl{\Dz} \comma\qquad  v=G\circ \mbfg^\trid\in \Cyl{\Dz}\fstop
\end{gathered}
\end{equation*}
As shown in~\cite{LzDSSuz21}, the pre-Dirichlet form~$\EE{\dUpsilon}{\QP}$ defined by integration of~$\SF{\dUpsilon}{\QP}$ with respect to~$\mu$ is well-defined, densely defined, and closable.
We respectively denote by
\begin{align}\label{eq:IntroDirGenSemi}
\tparen{\EE{\dUpsilon}{\QP},\dom{\EE{\dUpsilon}{\QP}}}\comma \qquad \tparen{\LL{\dUpsilon}{\QP},\dom{\LL{\dUpsilon}{\QP}}}\comma \qquad \TT{\dUpsilon}{\QP}_\bullet\eqdef\tseq{\TT{\dUpsilon}{\QP}_t}_{t\geq 0}\comma
\end{align}
its closure, and the corresponding generator and semigroup on~$L^2(\mu)$.

\subsection{Bakry--\'Emery curvature bounds}
Let~$c\geq 1$ and~$K\in\R$ be fixed. A \TLDS $\mathcal X$ satisfies the \emph{weak Bakry--\'Emery gradient estimate} $\wBE_c(K,\infty)$ (Dfn.~\ref{d:BE}) if the following gradient estimate holds:
\begin{equation*}
\SF{X}{\mssm}(\TT{X}{\mssm}_t f) \leq c\, e^{-2Kt}\, \TT{X}{\mssm}_t\,\SF{X}{\mssm}(f)\comma \qquad f \in \dom{\EE{X}{\mssm}} \comma \quad t>0\fstop
\end{equation*}
For~$c=1$, this notion was originally introduced by D.~Bakry and M.~\'Emery, see, e.g., the monograph~\cite{BakGenLed14} and references therein.
It extends to a large variety of settings the definition of pointwise lower bound~$\Ric_g\geq K g$ for the Ricci curvature of a smooth Riemannian manifold with metric~$g$.
For~$c> 1$, the weak Bakry--\'Emery gradient estimates~$\wBE_c(K,\infty)$ have been so far investigated on some classes of sub-Riemannian manifolds (e.g., \emph{$H$-type Heisenberg groups}, see~\cite{Li06,BakBauBonCha08,Eld10}), and on metric networks, see~\cite{BauKel18}.
Importantly, the aforementioned structures do not satisfy the standard ($c=1$) $\BE(K,\infty)$ condition, but (a large part of) the theory of Bakry--\'Emery curvature bounds is readily adapted to the case~$c>1$ and thus applies as well to these structures.

\medskip

We further say that a \TLDS~$\mcX$ satisfies~$(\mathsf{SCF})$ (see Dfn.~\ref{d:wFe}) if
\begin{enumerate*}[$(a)$]
\item $\TT{X}{\mssm}_\bullet$ is \emph{stochastically complete}, i.e., $\TT{X}{\mssm}_t \car=\car$ for some (hence any) $t>0$, and
\item $\TT{X}{\mssm}_\bullet$ is \emph{$L^\infty(\mssm)$-to-$\Cb(\T)$ Feller}, i.e.\ such that~$\TT{X}{\mssm}_t u \in \Cb(\T)$ for every~$u \in L^\infty(\mssm)$ and every~$t>0$.
\end{enumerate*}
Our first main result is the following.

\begin{Thm}[Thm.\ \ref{t:BE}] \label{thm:BE}
Let~$(\mcX,\cdc)$ be a \TLDS satisfying~$(\mathsf{SCF})$, and fix~$c\geq 1$ and~$K\in\R$. Then,
\begin{align*}
\text{$(\mcX,\cdc)$\quad satisfies\quad $\wBE_c(K,\infty)$} \qquad \iff \qquad \text{$\tparen{\dUpsilon,\SF{\dUpsilon}{\QP}}$\quad satisfies\quad $\wBE_c(K,\infty)$} \fstop
\end{align*}
\end{Thm}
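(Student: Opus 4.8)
\emph{Strategy.}
The plan is to exploit the fact that $\TT{\dUpsilon}{\QP}_\bullet$ is the \emph{independent‑particle lift} of $\TT{X}{\mssm}_\bullet$ and that $\SF{\dUpsilon}{\QP}$ is \emph{fibred over the points of a configuration}, and to reduce both implications to pointwise inequalities on $X$. Concretely, I would first record the two structural facts (established in~\cite{LzDSSuz21}, and resting on the essential self‑adjointness of $\LL{\dUpsilon}{\QP}$ proved earlier in this paper): \emph{(i)} for $u$ bounded or nonnegative, $\TT{\dUpsilon}{\QP}_tu(\gamma)=\mbbE_\gamma\tbraket{u(\gamma_t)}$, where $\gamma_t\coloneqq\sum_{x\in\gamma}\delta_{X^x_t}$ and the $X^x_\bullet$ are independent copies of the $\TT{X}{\mssm}_\bullet$‑diffusion issued from the points of $\gamma$ — thanks to part~$(a)$ of $(\mathsf{SCF})$ this diffusion is conservative, so no point is lost and the formula is genuinely particle‑by‑particle, while part~$(b)$ furnishes honest pointwise representatives of $\TT{X}{\mssm}_th$, $h\in L^\infty(\mssm)$, legitimating the manipulations below; \emph{(ii)} for $\Phi\in\dom{\EE{\dUpsilon}{\QP}}$, $\SF{\dUpsilon}{\QP}(\Phi)(\gamma)=\sum_{y\in\gamma}\SF{X}{\mssm}\tparen{\Phi_{\gamma,y}}(y)$, where $\Phi_{\gamma,y}\colon X\to\R$ is $z\mapsto\Phi(\gamma-\delta_y+\delta_z)$ — for $\Phi=F\circ\mbff^\trid$ this is just the defining formula for $\SF{\dUpsilon}{\QP}$, and for $\Phi=g^\trid$, $g\in\mcD$, it yields $\SF{\dUpsilon}{\QP}(g^\trid)=\SF{X}{\mssm}(g)^\trid$ and, together with $\LL{\dUpsilon}{\QP}(g^\trid)=(\LL{X}{\mssm}g)^\trid$, also $\TT{\dUpsilon}{\QP}_t(g^\trid)=(\TT{X}{\mssm}_tg)^\trid$. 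Since $\wBE_c(K,\infty)$ passes to limits in the relevant form norms, it suffices to test it on $\Cyl{\mcD}$ on $\dUpsilon$ (and, for the converse, on the first‑order functionals $g^\trid$, $g\in\mcD$).

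\emph{Forward implication.}
For $u=F\circ\mbff^\trid\in\Cyl{\mcD}$ set $w\coloneqq\TT{\dUpsilon}{\QP}_tu$. Fixing $\gamma$ and $y\in\gamma$ and conditioning on the trajectories of the particles issued from $\gamma\setminus\{y\}$, the fibre of $w$ at $(\gamma,y)$ is, with $S\coloneqq\sum_{x\in\gamma\setminus\{y\}}\mbff(X^x_t)$ (an $\R^k$‑valued random variable independent of $X^y_\bullet$) and $F_s\coloneqq F(\emparg+s)$,
\[
w_{\gamma,y}(z)=\mbbE\tbraket{F\tparen{\mbff(X^z_t)+S}}=\mbbE_S\tbraket{\TT{X}{\mssm}_t\tparen{F_S\circ\mbff}(z)}\comma\qquad z\in X\fstop
\]
Because the carré du champ is convex (the square root of $\SF{X}{\mssm}(\emparg)$ being a seminorm), Jensen's inequality gives $\SF{X}{\mssm}(w_{\gamma,y})(y)\le\mbbE_S\tbraket{\SF{X}{\mssm}\tparen{\TT{X}{\mssm}_t(F_S\circ\mbff)}(y)}$, and $\wBE_c(K,\infty)$ on $X$ applied to $F_S\circ\mbff\in\dom{\EE{X}{\mssm}}$ bounds the integrand by $c\,e^{-2Kt}\,\TT{X}{\mssm}_t\SF{X}{\mssm}(F_S\circ\mbff)(y)$. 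Summing over $y\in\gamma$: the left‑hand side is $\SF{\dUpsilon}{\QP}(w)(\gamma)$ by~(ii); and the right‑hand side is $c\,e^{-2Kt}\,\TT{\dUpsilon}{\QP}_t\SF{\dUpsilon}{\QP}(u)(\gamma)$, since $\TT{\dUpsilon}{\QP}_t\SF{\dUpsilon}{\QP}(u)(\gamma)=\mbbE_\gamma\tbraket{\SF{\dUpsilon}{\QP}(u)(\gamma_t)}=\mbbE_\gamma\bigl[\sum_{x\in\gamma}\SF{X}{\mssm}(F_{S_x}\circ\mbff)(X^x_t)\bigr]$ with $S_x\coloneqq\sum_{x'\in\gamma\setminus\{x\}}\mbff(X^{x'}_t)$ independent of $X^x_\bullet$, so each summand equals $\mbbE_{S_x}\bigl[\TT{X}{\mssm}_t\SF{X}{\mssm}(F_{S_x}\circ\mbff)(x)\bigr]$, matching term by term. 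This is $\wBE_c(K,\infty)$ on $\dUpsilon$ for $u$, and density of $\Cyl{\mcD}$ closes the argument. Conceptually this is just the tensorisation of $\wBE_c$ along the independent particles, the Jensen step playing the role that contractivity of the factor semigroups plays in the tensorisation of $\wBE_c$ over finite products.

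\emph{Converse implication.}
Here I would apply $\wBE_c(K,\infty)$ on $\dUpsilon$ to $u=g^\trid$, $g\in\mcD$ (a first‑order functional, which lies in $\dom{\EE{\dUpsilon}{\QP}}$ by~\cite{LzDSSuz21}, or one replaces it by bounded truncations $F_n\circ g^\trid$ and passes to the limit); by~(ii) this reads $\tparen{\SF{X}{\mssm}(\TT{X}{\mssm}_tg)}^\trid\le c\,e^{-2Kt}\,\tparen{\TT{X}{\mssm}_t\SF{X}{\mssm}(g)}^\trid$ $\QP$‑a.e. Put $\psi\coloneqq\SF{X}{\mssm}(\TT{X}{\mssm}_tg)-c\,e^{-2Kt}\,\TT{X}{\mssm}_t\SF{X}{\mssm}(g)$; this is in $L^1(\mssm)$ (both summands being $\mssm$‑integrable, proportional to $\EE{X}{\mssm}(\TT{X}{\mssm}_tg)$, resp.\ $\EE{X}{\mssm}(g)$), and $\psi^\trid\le 0$ $\QP$‑a.e. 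If $\mssm(\set{\psi>0})>0$, choose Borel $B\subseteq\set{\psi>0}$ with $0<\mssm(B)<\infty$, so $a\coloneqq\int_B\psi\diff\mssm>0$. For $\QP=\pi_\mssm$ the restrictions of $\gamma$ to $B$ and to $X\setminus B$ are independent and $\mbfP(\gamma B=n)>0$ for all $n$; conditioning on $\set{\gamma B=n}$ and using the Mecke equation,
\[
\mbbE_{\pi_\mssm}\tbraket{\psi^\trid\mid\gamma B=n}=\frac{n}{\mssm(B)}\,a+C\comma\qquad C\coloneqq\mbbE_{\pi_\mssm}\tbraket{(\psi\,\car_{X\setminus B})^\trid}\in\R\ \text{ independent of }n\comma
\]
which tends to $+\infty$ — contradicting $\psi^\trid\le0$ $\pi_\mssm$‑a.e. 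Hence $\psi\le0$ $\mssm$‑a.e., i.e.\ $\wBE_c(K,\infty)$ on $X$ for $g$; density of $\mcD$ in $\dom{\EE{X}{\mssm}}$ concludes. For $\QP=\QP_{\lambda,\mssm}$ one first passes to $\psi^\trid\le0$ $\pi_{s\mssm}$‑a.e.\ for $\lambda$‑a.e.\ $s>0$ and then runs the same de‑Poissonisation at intensity $s\mssm$, noting that the generator — hence $\TT{X}{\mssm}_\bullet$ and $\SF{X}{\mssm}$ — is unchanged under $\mssm\rightsquigarrow s\mssm$.

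\emph{Main obstacle.}
The substance of the proof is facts~(i) and~(ii): the independent‑particle representation of $\TT{\dUpsilon}{\QP}_\bullet$ requires identifying the $L^2(\QP)$‑self‑adjoint realisation of the lifted operator $\LL{\dUpsilon}{\QP}$, which is precisely where the essential self‑adjointness proved earlier in this paper, the construction of~\cite{LzDSSuz21}, and hypothesis $(\mathsf{SCF})$ — conservativeness to rule out particle loss, and $L^\infty$‑to‑$\Cb$ Feller for genuine pointwise representatives — do the work; everything downstream is then either elementary (Jensen for the carré du champ; the de‑Poissonisation, made elementary by the independence of Poisson restrictions) or exactly the hypothesis ($\wBE_c$ on $X$). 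A secondary point to keep in mind is that for $c>1$ the estimate $\wBE_c(K,\infty)$ admits no self‑improved, pointwise Bochner‑type ($\Gamma_2$) reformulation, so one cannot reduce matters to a pointwise curvature inequality on $\dUpsilon$: the argument must run through the heat flow, as above.
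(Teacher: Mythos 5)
Your proposal is correct and follows essentially the same route as the paper: the forward implication is the paper's coordinate-wise application of $\wBE_c$ on sections followed by Jensen/sub-Markovianity of the semigroup in the remaining coordinates (Lemma~\ref{l:SubmarkovianityTp}), merely phrased probabilistically on $\dUpsilon$ rather than on the labelled product $X^{\tym{\infty}}$ via~$\Lb$ and~\eqref{eq:IdentificationSemigroup}; the converse is the paper's test on $f^\trid$ using $\TT{\dUpsilon}{\PP}_tf^\trid=(\TT{X}{\mssm}_tf)^\trid$ and $\SF{\dUpsilon}{\PP}(f^\trid)=\SF{X}{\mssm}(f)^\trid$. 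Your de-Poissonisation by conditioning on $\gamma B=n$ is a valid (slightly more hands-on) substitute for the paper's appeal to the order-preserving isometry $\emparg^\trid\colon L^1(\mssm)\to L^1(\PP)$ of Lemma~\ref{l:Isometry}, and you correctly identify that the real content lies in the semigroup/heat-kernel identification (Thm.~\ref{t:AKR4.1}, Prop.~\ref{p:KonLytRoe}) resting on $(\mathsf{SCF})$.
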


\paragraph{Overview}
When the base space $\mcX$ is a complete Riemannian manifold with Ricci curvature bounded below by some real constant~$K$, the bound~$\BE(K,\infty)$ for~$\tparen{\dUpsilon,\SF{\dUpsilon}{\PP}}$ was proved M.~Erbar and M.~Huesmann in~\cite{ErbHue15}.
The same result for weighted manifolds was conjectured in~\cite[Rmk.~1.5]{ErbHue15}.
In Theorem~\ref{thm:BE}, we do not assume any manifold nor even metric structure, thus confirming the conjecture of~\cite{ErbHue15} in far greater generality. 

The main technical novelties are as follows:
\begin{enumerate*}[$(a)$]
\item\label{rem:BE:1} We obtain an explicit \emph{semigroup representation} for~$\TT{\dUpsilon}{\QP}_\bullet$ (Thm.~\ref{t:AKR4.1}); 
\item As a consequence, we prove the \emph{essential self-adjointness} of~$\LL{\dUpsilon}{\PP}$ on a suitable core (Cor.~\ref{c:ESA}), which is an extension of~\cite[Thm.~4.2]{AlbKonRoe98}, proved in the case of manifold~$\mcX$.
\item We establish the \emph{heat-kernel identification} (Prop.~\ref{p:KonLytRoe}) in the absence of any metric structure.
Again in the case when~$\mcX$ is a Riemannian manifold, the corresponding statement was originally shown by Yu.~G.~Kondratiev, E.~W.~Lytvynov, and M.~R\"ockner in~\cite[Thm.~5.1]{KonLytRoe02}, and later in \cite[Thm.~2.4]{ErbHue15} under slightly weaker assumptions.
Both proofs, however, rely on volume growth estimates and Gaussian-type heat kernel estimates.
The same line of reasoning would therefore apply ---~if at all~--- only to the case when~$\mcX$ is a metric space; see Remark \ref{r:ComparisonKonLytRoe} for further details.

\item We make full use of the results developed in~\cite{LzDSSuz21} regarding the construction of the Dirichlet form in~\eqref{eq:IntroDirGenSemi}, assuming that~$\mcX$ is \emph{merely} a \TLDS.

\item In our setting, the Bakry--\'Emery gradient estimate~$\wBE_c(K,\infty)$ for~$\dUpsilon$ does not follow from the formulation of the Bakry--\'Emery curvature bounds in terms of the iterated square field operator~$\cdc_2$ and the general arguments for Markov Diffusion Triples in~\cite{BakGenLed14}.
This is due to the fact that \emph{none} of the several cores we consider is an algebra preserved by the action of both the generator~$\LL{\dUpsilon}{\QP}$ and the semigroup~$\TT{\dUpsilon}{\QP}_\bullet$, which is necessary to the discussion of Markov Diffusion Triples in~\cite[\S\S3.3.3, 3.3.4]{BakGenLed14}, see~\cite[Dfn.~3.3.1(iv) and (viii)]{BakGenLed14}.
\end{enumerate*}

\subsection{Riemannian curvature-dimension condition}
In the rest of this Introduction, let us further assume that~$(X,\T)$ is equipped with a complete and separable distance~$\mssd$ generating the topology~$\T$, and that~$\mssm$ is finite on $\mssd$-bounded sets.
We denote by~$\Ch=\Ch[\mssd,\mssm]\colon L^2(\mssm) \to \R \cup\{+\infty\}$ the \emph{Cheeger energy}~\cite[Thm.~4.5]{AmbGigSav14} of~$\mcX\eqdef (X,\mssd,\mssm)$.
If~$\Ch$ is a quadratic functional (i.e.\ if it satisfies the parallelogram identity), we say that~$(X,\mssd,\mssm)$ is \emph{infinitesimally Hilbertian}~\cite{Gig13}.
In this case, the quadratic form obtained from~$\Ch$ by polarization (also denoted by~$\Ch$) is a Dirichlet form with square field operator~$\slo[*]{\emparg}^2$, where~$\slo[*]{\emparg}$ denotes the \emph{minimal $2$-relaxed slope}~\cite[Dfn.~4.2]{AmbGigSav14}.
Provided that~$(\mcX,\cdc)$ be a \TLDS for the choice~$\cdc\eqdef \slo[*]{\emparg}^2$, we call~$(\mcX, \cdc, \mssd)$ a \emph{metric local diffusion space} (in short: \MLDS).

Fix~$K\in\R$ and~$N\in (1,\infty)$. We say that an \MLDS~$(\mcX,\cdc,\mssd)$ satisfies the \emph{Riemannian Curvature-Dimension condition} $\RCD(K, N)$ if $\mcX$ is infinitesimally Hilbertian and the following conditions hold: 
\begin{enumerate}[$(i)$]
\item if $\slo[*]{f} \in L^\infty(\mssm)$, then $f$ has a $\mssd$-continuous $\mssm$-modification;  
\item the following \emph{Bakry--Ledoux} gradient estimate holds: 
\begin{align}\label{eq:BakryLedoux}
\slo[*]{\tparen{\TT{X}{\mssm}_t f}}^2+\frac{4Kt^2}{N(e^{2Kt}-1)}\, \abs{\LL{X}{\mssm} \TT{X}{\mssm}_t f}^2\leq e^{-2Kt}\TT{X}{\mssm}_t\tparen{\slo[*]{f}^2}\quad \as{\mssm}\comma\quad t>0 \fstop
\end{align}
\end{enumerate}
This definition ---~rather: an equivalent formulation~--- was originally introduced by N.~Gigli in~\cite{Gig13}.
It makes sense also for $N=\infty$, in which case the second term in the left-hand side of~\eqref{eq:BakryLedoux} is set to be~$0$, and the condition is denoted by~$\RCD(K, \infty)$~\cite{AmbGigSav14b}.
A complete understanding of several equivalent characterizations was subsequently achieved in~\cite{ErbKuwStu15,AmbGigSav15}. 
For~$N=\infty$, one of these characterizations, given in terms of optimal transport theory, is the \emph{Evolution Variational Inequality}~$\EVI(K,\infty)$~\cite{AmbGigSav14b}  
\begin{align*}
\diff_t^+ \tfrac{1}{2}W_{2, \mssd}(\TT{X}{\mssm}_t\mu,\nu)^2+\tfrac{K}{2} W_{2, \mssd}(\TT{X}{\mssm}_t\mu_t,\nu)^2\leq \Ent_\mssm(\nu)-\Ent_\mssm(\TT{X}{\mssm}_t\mu) \comma \qquad t>0\comma
\end{align*}
where $W_{2, \mssd}$ is the \emph{$L^2$-Kantorovich--Rubinstein} (also: \emph{Wasserstein}) distance on the space $\msP_2(X)$ of Borel probabilities with finite second $\mssd$-moment, $\Ent_\mssm$ is the \emph{entropy} with respect to~$\mssm$, and $\mu, \nu \in \msP_2(X)$.
By a careful adaptation of these definitions to the case of \emph{extended} metric measure spaces, we can formulate $\EVI(K,\infty)$ also for the configuration space $\tparen{\dUpsilon, \SF{\dUpsilon}{\PP},\mssd_\dUpsilon}$.

Precise definitions and adaptations to extended metric measure spaces will be addressed in~\S\ref{s:RicciBounds}.

\smallskip

Our second main result is the following:
\begin{Thm}[Thm.~\ref{t:EVI}]\label{thm:EVI}
Let~$K \in \R$ and $2\leq N<\infty$. Then,
\begin{equation*}
\text{$\mcX$ satisfies~$\RCD^*(K,N)$} \qquad \implies \qquad \text{$\tparen{\dUpsilon, \T_\mrmv, \mssd_{\dUpsilon}, \QP}$ satisfies~$\EVI(K,\infty)$} \fstop
\end{equation*}
\end{Thm}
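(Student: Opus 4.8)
The plan is to lift the Evolution Variational Inequality from the base space $\mcX$ to the configuration space $\dUpsilon$ via the well-known \emph{tensorization / product structure} of $\EVI$-gradient flows, together with the \emph{lifting} procedure that expresses the heat flow on $\dUpsilon$ in terms of independent copies of the heat flow on $X$. Concretely, the assumption $\RCD^*(K,N)$ on $\mcX$ (with $K\in\R$, $2\le N<\infty$) implies $\RCD(K,\infty)$, hence $\EVI(K,\infty)$ for $(X,\mssd,\mssm)$; equivalently, the heat semigroup $\TT{X}{\mssm}_\bullet$ is the $\EVI_K$-gradient flow of $\Ent_\mssm$ on $(\msP_2(X),W_{2,\mssd})$. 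The finiteness of $N$ is used only to guarantee \emph{properness} of the associated point process and hence that $\QP=\pi_\mssm$ (or the mixed Poisson measure) is concentrated on locally finite configurations that lie in the geodesically relevant part of $\dUpsilon$; it ensures that the $L^2$-transportation distance $\mssd_\dUpsilon$ behaves well and that the lifted dynamics stays inside $\dUpsilon$.

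First I would recall from \cite{ErbHue15} (suitably adapted to the \TLDS/\MLDS setting via the results of \cite{LzDSSuz21}) the identification of $\mssd_\dUpsilon$ as an $L^2$-Wasserstein-type distance: for $\gamma_0,\gamma_1\in\dUpsilon$ of equal (possibly infinite) mass, $\mssd_\dUpsilon(\gamma_0,\gamma_1)^2$ is the infimum of $\int_X \mssd(x,q(x))^2\diff\gamma_0(x)$ over couplings $q$ matching $\gamma_0$ to $\gamma_1$. The key structural fact is that a curve $(\gamma_t)$ in $\dUpsilon$ decomposes into independent motions of the individual particles, each governed by the base heat flow, so that an $\EVI_K$-inequality on $X$ integrates, particle by particle, to an $\EVI_K$-inequality on $\dUpsilon$. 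Second, I would set up the entropy functional $\Ent_\QP$ on the space of probability measures on $\dUpsilon$ with finite $\mssd_\dUpsilon$-second moment, using the \emph{excess-entropy} (relative entropy with respect to the Poisson reference) and the classical Mecke-type identity to relate $\Ent_{\pi_\mssm}$ on $\dUpsilon$ to $\Ent_\mssm$ on $X$ along product/Poisson decompositions. Third, I would verify that the semigroup $\TT{\dUpsilon}{\QP}_\bullet$ from \eqref{eq:IntroDirGenSemi} is exactly the lifted heat flow (this is Thm.~\ref{t:AKR4.1}, the semigroup representation), so that the candidate $\EVI_K$-gradient flow on $\dUpsilon$ is the one whose $\EVI$ we want to prove.

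The actual proof of the $\EVI_K$-inequality on $\dUpsilon$ I would carry out by the \emph{action / Benamou--Brenier estimate} route rather than the differential one, since on the extended metric measure space $(\dUpsilon,\mssd_\dUpsilon)$ differentiating $t\mapsto \tfrac12 W_{2,\mssd_\dUpsilon}(\TT{\dUpsilon}{\QP}_t\sigma,\rho)^2$ is delicate. Following Daneri--Savaré, $\EVI_K$ is equivalent to a one-parameter family of ``action estimates'': for any constant-speed geodesic $(\rho^s)_{s\in[0,1]}$ in $(\msP_2(\dUpsilon),W_{2,\mssd_\dUpsilon})$ between $\sigma$ and $\rho$ one has a rigid inequality comparing $W_{2,\mssd_\dUpsilon}(\TT{\dUpsilon}{\QP}_t\sigma,\rho)^2$ with $\Ent_\QP$ along $(\rho^s)$. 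Here is where the lifting does the work: a $\mssd_\dUpsilon$-geodesic between configuration measures can be realized by a coupling that moves each matched pair of particles along a $\mssd$-geodesic in $X$; feeding this into the base-space $\EVI_K$ (which is available because $\RCD^*(K,N)\Rightarrow\RCD(K,\infty)$) and \emph{summing over particles} — using the Mecke/Poisson calculus to turn the particle sum into the configuration-space entropy — yields the desired inequality on $\dUpsilon$. A subtlety to handle carefully is that $K$ may be negative, so the contraction/comparison estimates carry $e^{\pm Kt}$ factors that must be tracked uniformly across (infinitely many) particles; finiteness of the total intensity on $\mssd_\dUpsilon$-bounded sets, which is where the hypothesis $\mssm$ finite on bounded sets and the properness coming from $N<\infty$ enter, makes these sums and integrals converge.

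The main obstacle I expect is making the ``sum over particles'' rigorous on the \emph{extended} metric measure space $\dUpsilon$: configurations generically have infinitely many points, $\mssd_\dUpsilon$ takes the value $+\infty$ between configurations in different ``asymptotic classes'', and the set of finite-energy curves is only a $W_{2,\mssd_\dUpsilon}$-dense but not closed subset. Thus one must (i) restrict to the $\mssd_\dUpsilon$-component of interest and check that both the heat flow $\TT{\dUpsilon}{\QP}_\bullet$ and the optimal-transport geodesics stay within it, and (ii) justify the exchange of the infinite particle sum with the limits/infima defining $W_{2,\mssd_\dUpsilon}$ and $\Ent_\QP$. I would deal with (i)–(ii) by a truncation argument: localize to configurations restricted to a $\mssd$-ball $B_R$, where the base space carries a genuine (non-extended) $\EVI_K$ structure and the configuration space over $B_R$ is, up to the symmetrization, a countable product of copies of $X$ on which tensorization of $\EVI_K$ (Daneri--Savaré, Ambrosio--Gigli--Savaré) applies directly; then pass to the limit $R\to\infty$ using lower semicontinuity of $\Ent_\QP$, stability of $\EVI_K$ under such approximations, and the consistency of the localized heat flows with $\TT{\dUpsilon}{\QP}_\bullet$ established via the semigroup representation Thm.~\ref{t:AKR4.1}. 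Everything else — the identification of $\mssd_\dUpsilon$, the Mecke identity, the entropy decomposition — is by now standard Poisson calculus and will be invoked from \cite{LzDSSuz21} and \cite{ErbHue15}.
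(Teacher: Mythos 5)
Your overall framing (lift the base-space heat flow to independent particle motions, identify $\TT{\dUpsilon}{\QP}_\bullet$ via the semigroup representation, and deduce $\EVI(K,\infty)$ on $\dUpsilon$) points in the right direction, but the core step of your argument --- ``feeding the base-space $\EVI_K$ into a particle-wise coupling and summing over particles, using the Mecke/Poisson calculus to turn the particle sum into the configuration-space entropy'' --- has a genuine gap. The relative entropy $\Ent_\PP$ of a measure on $\dUpsilon$ with respect to the Poisson measure does \emph{not} decompose as a sum over particles of base-space entropies: the Mecke identity linearizes integrals of functions of the form $(\gamma,x)\mapsto u(\gamma,x)$ against $\diff\gamma\,\diff\PP$, not the nonlinear functional $\rho\mapsto\int\rho\log\rho\,\diff\PP$, and a $W_{2,\mssd_\dUpsilon}$-geodesic between two measures on $\msP(\dUpsilon)$ does not single out a fixed labelling under which $\Ent_\PP$ would factor. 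So the step that is supposed to produce the right-hand side $\Ent_\PP(\nu)-\Ent_\PP(\mu_t)$ of the configuration-space EVI is unjustified. Your fallback --- tensorizing Daneri--Savar\'e action estimates over configurations in $B_R$ and letting $R\to\infty$ --- suffers from the same problem and more: the configuration space over $B_R$ is a Poisson mixture of symmetric quotients $B_R^{\tym{n}}/\mfS_n$, not a countable product, and the stability of $\EVI_K$ under the limit $R\to\infty$ would require convergence of the localized heat flows, entropies, and transport distances that you do not establish; this localization difficulty is precisely what the paper avoids by working globally. Also, $N<\infty$ is not used ``only for properness'': it enters through the Bishop--Gromov volume lower bound in the proof that $\cl_{W_{2,\mssd_\dUpsilon}}\tparen{\dom{\Ent_\PP}}$ coincides with the finite-distance ball (Lem.~\ref{l:ErbHue}).

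For comparison, the paper never decomposes the entropy. Following Erbar--Huesmann, it (i) identifies $\EE{\dUpsilon}{\QP}$ with the Cheeger energy of $(\dUpsilon,\mssd_\dUpsilon,\QP)$ (Thm.~\ref{t:GeometricProperties}); (ii) establishes the entropy-dissipation and Fisher-information bounds along the heat flow (Lem.~\ref{l:EntropyFisher}); (iii) lifts the logarithmic Harnack inequality to $\dUpsilon$ (Thm.~\ref{t:Deng}), which yields the entropy--cost inequality~\eqref{eq:EntropyCost} directly at the level of $\Ent_\PP$ and $W_{2,\mssd_\dUpsilon}$; and (iv) combines these with the Wasserstein contraction, the heat-kernel regularization (Prop.~\ref{p:HKRegularization}), and Lem.~\ref{l:ErbHue}. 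The entropy--cost inequality coming from the log-Harnack estimate is the device that replaces your particle-wise summation; it is the ingredient your proposal is missing, and without it (or an equivalent) the argument does not close.
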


As a standard consequence, we further obtain:

\begin{Cor*}[Cor.~\ref{c:GradFlowEnt}]
Let~$\mcX$ be an \MLDS satisfying the $\RCD^*(K,N)$ condition.
Then, the heat semigroup~$\TT{\dUpsilon}{\PP}_\bullet$ defines the gradient flow of~$\Ent_\PP$ on measures absolutely continuous w.r.t.~$\PP$.
\end{Cor*}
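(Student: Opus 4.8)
The plan is to deduce the Corollary directly from Theorem~\ref{thm:EVI}, combined with the general principle --- valid in the theory of (extended) metric measure spaces --- that an $\EVI(K,\infty)$-solution is a metric gradient flow, together with the uniqueness and $e^{-Kt}$-contractivity of $\EVI$-flows.

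\textbf{Step 1: the heat flow acts on $\PP$-absolutely continuous measures.} Since $\PP$ is reversing for $\TT{\dUpsilon}{\PP}_\bullet$, the $L^2(\PP)$-semigroup extends to a sub-Markovian semigroup on $L^1(\PP)$, whose dual action on a measure $\mu=\rho\,\PP$, with $\rho\in L^1(\PP)$, $\rho\geq 0$, is $\TT{\dUpsilon}{\PP}_t\mu=(\TT{\dUpsilon}{\PP}_t\rho)\,\PP$. Hence the heat flow preserves the class of probability measures absolutely continuous w.r.t.\ $\PP$ and, by the $L\log L$-contraction, the sub-level sets of $\Ent_\PP$; in particular the curves $t\mapsto\TT{\dUpsilon}{\PP}_t\mu$ remain in the proper domain of $\Ent_\PP$.

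\textbf{Step 2: the heat-flow curve solves $\EVI(K,\infty)$.} This is precisely the content of Theorem~\ref{thm:EVI}: for every $\mu\ll\PP$ of finite entropy (and lying in the relevant component of the extended Wasserstein space over $(\dUpsilon,\mssd_\dUpsilon)$), the curve $t\mapsto\TT{\dUpsilon}{\PP}_t\mu$ satisfies the Evolution Variational Inequality for the pair $(\Ent_\PP,W_{2,\mssd_\dUpsilon})$.

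\textbf{Step 3: from $\EVI(K,\infty)$ to the metric gradient flow.} I would then invoke the general theory --- established, in the present extended metric measure setting, in~\S\ref{s:RicciBounds} by adapting the classical Ambrosio--Gigli--Savar\'e framework --- that any curve satisfying $\EVI(K,\infty)$ for a proper, lower semicontinuous functional is automatically locally absolutely continuous, is a curve of maximal slope (equivalently, satisfies the Energy Dissipation Equality) for that functional with respect to its descending slope, and is the \emph{unique} such curve issued from its initial datum, with the flow depending $e^{-Kt}$-Lipschitzly on the initial measure. Since $\Ent_\PP$ is proper, lower semicontinuous and densely defined on the Wasserstein space over $(\dUpsilon,\mssd_\dUpsilon)$, these conclusions apply; together with Step~2 they identify the unique gradient flow of $\Ent_\PP$ issued from any $\mu\ll\PP$ with the heat-semigroup trajectory $t\mapsto\TT{\dUpsilon}{\PP}_t\mu$, which is the assertion of the Corollary.

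\textbf{The main obstacle.} The real work lies not in Steps~1--2 but in Step~3, because one operates over the \emph{extended} metric measure space $(\dUpsilon,\mssd_\dUpsilon,\PP)$, where $\mssd_\dUpsilon$, and hence $W_{2,\mssd_\dUpsilon}$, may take the value $+\infty$ and the Wasserstein space decomposes into classes of finite mutual distance. I expect the delicate points to be: verifying the properness, lower semicontinuity and density of the entropy functional in the topology adapted to this decomposition; checking that absolutely continuous curves and their metric speeds are well-behaved across it; and, most importantly, transferring verbatim the classical chain ``$\EVI\Rightarrow$ Energy Dissipation Equality $\Rightarrow$ curve of maximal slope'', with uniqueness and contractivity of $\EVI$-flows, from the metric to the extended-metric framework. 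This is exactly the machinery developed in~\S\ref{s:RicciBounds}, which would be quoted here.
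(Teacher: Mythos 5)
Your proposal follows essentially the same route as the paper: Theorem~\ref{t:EVI} supplies the $\EVI_K$ property, and the passage from $\EVI$ to the identification of the heat flow with the gradient flow of $\Ent_\PP$ is delegated to the general extended-metric-measure machinery --- which the paper does not develop in \S\ref{s:RicciBounds} but quotes directly from \cite[Thm.~8.3]{AmbErbSav16}. One point of precision: the body version of Corollary~\ref{c:GradFlowEnt} adds the hypothesis $\norm{\rho_t}_{L^\infty}\in L^\infty_\loc([0,\infty))$ on the candidate gradient-flow curve, because while uniqueness of \emph{$\EVI$} flows is automatic, identifying an arbitrary curve of maximal slope with the heat flow (your Step~3) is exactly where this density bound enters in \cite{AmbErbSav16}, so your appeal to uniqueness of ``the'' gradient flow without that restriction is slightly too strong.
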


\paragraph{Overview}
\begin{enumerate*}[$(a)$]
\item Theorem~\ref{thm:EVI} extends the analogous result~\cite[Thm.~5.10]{ErbHue15} for the case when the base space~$\mcX$ is a manifold, see Remark~\ref{r:ComparisonErbHue}. 
\item In the setting of extended metric measure spaces, L.~Ambrosio, M.~Erbar, and G.~Savar\'e proved in \cite[Cor.\ 11.3]{AmbErbSav16} the implication~$\BE(K,\infty)$ to~$\EVI(K,\infty)$.
However, their definition of~$\EVI(K,\infty)$ is ---~at least \emph{a priori}~--- different from ours, since it replaces the $L^2$-Kantorovich--Rubinstein distance~$W_{2, \mssd_{\dUpsilon}}$ with either of  the extended distances~$W_{\mcE}$ or~$W_{\mcE, *}$ (see \cite[Def.~10.4]{AmbErbSav16}), respectively modelled after the \emph{Benamou--Brenier formulation} and the \emph{Kantorovich duality formulation} of~$W_2$.
Since our space~$\dUpsilon$ is an extended metric measure space, understanding the relations between of~$W_{\mcE}$, $W_{\mcE, *}$, and~$W_{2, \mssd_{\dUpsilon}}$ is highly non-trivial and will be addressed in future work.
\end{enumerate*}

\smallskip

\subsection{Identification of analytic and metric structure}
Two natural structures coexist on the configuration space over an \MLDS: the analytic structure induced by the Dirichlet form $(\EE{\dUpsilon}{\QP}, \dom{\EE{\dUpsilon}{\QP}})$, and the geometric structure induced by the $L^2$-transportation extended distance~$\mssd_{\dUpsilon}$.
We investigate the relations between the two. 

Let $\mcX$ be an \MLDS.
Write~$\Li[\mssd]{f}$ for the Lipschitz constant of a Lipschitz function~$f\colon X\to\R$.
We say that~$\mcX$ possesses
\begin{enumerate*}[$(a)$]
\item the \emph{Rademacher-type property} $(\Rad{\mssd}{\mssm})$ if any bounded Lipschitz~$f$ belongs to~$\dom{\EE{X}{\mssm}}$ and~$\SF{X}{\mssm}(f) \le \Li[\mssd]{f}^2$.
\item the \emph{Sobolev-to-Lipschitz property} $(\SL{\mssm}{\mssd})$ if any $f \in \dom{\EE{X}{\mssm}}\cap L^\infty(\mssm)$ with $\SF{X}{\mssm}(f) \in L^\infty(\mssm)$ has a Lipschitz $\mssm$-representative $\rep{f}$ with $\Li[\mssd]{\rep f}\leq \sqrt{\norm{\SF{X}{\mssm}(f)}_{L^\infty}}$.
\end{enumerate*}
The properties~$(\Rad{\mssd}{\mssm})$ and~$(\SL{\mssm}{\mssd})$ hold for wide classes of base \MLDS's, such as: complete Riemannian manifolds; (ideal) sub-Riemannian manifolds; $\RCD(K,\infty)$-spaces (hence in particular~$\RCD^*(K,N)$-spaces), see~\cite{AmbGigSav14b, AmbGigMonRaj12}; and spaces satisfying the \emph{regular Riemannian quasi-Curvature-Dimension condition}~$\RQCD_\reg$, a further extension of the $\RCD^*(K,N)$ class recently introduced by E.~Milman in~\cite{Mil21}, see~\cite{LzDSSuz21a}.

Finally, we recall from Dirichlet-form theory that the \emph{intrinsic distance}~$\mssd_\QP$ of the form $\EE{\dUpsilon}{\QP}$ is
\begin{align*}
\mssd_{\QP}(\gamma, \eta):=\sup\set{u(\gamma)- u(\eta): \SF{\dUpsilon}{\QP}(u) \le 1, \quad u \in \mathcal C_b(\T_\mrmv) \cap \dom{\EE{\dUpsilon}{\QP}} } \fstop
\end{align*}
Combining Theorem~\ref{t:dSLConfig2} ($\mssd_\dUpsilon \ge \mssd_{\QP}$) with \cite[Thm.~5.2]{LzDSSuz21} ($\mssd_\dUpsilon \le \mssd_{\QP}$), we obtain
\begin{Thm}
Let~$(\mcX,\cdc,\mssd)$ be an \MLDS satisfying $(\mathsf{SCF})$,  $(\Rad{\mssd}{\mssm})$  and $(\SL{\mssm}{\mssd})$.
Then,
\begin{align}\label{eq:dSLIntro}
\mssd_\dUpsilon = \mssd_{\QP} \fstop
\end{align}
\end{Thm}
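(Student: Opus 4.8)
The plan is to establish the two one-sided comparisons $\mssd_\dUpsilon\le\mssd_\QP$ and $\mssd_\dUpsilon\ge\mssd_\QP$ separately and combine them; each is proved elsewhere, and the content of the statement is that both hold simultaneously under the standing hypotheses $(\mathsf{SCF})$, $(\Rad{\mssd}{\mssm})$ and $(\SL{\mssm}{\mssd})$.

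For $\mssd_\dUpsilon\le\mssd_\QP$ I would invoke \cite[Thm.~5.2]{LzDSSuz21}. The mechanism there is that the Rademacher property $(\Rad{\mssd}{\mssm})$ of the base transfers to $\dUpsilon$, so that suitable bounded vaguely continuous $\mssd_\dUpsilon$-Lipschitz functions --- morally the bounded truncations $\min\set{\mssd_\dUpsilon(\emparg,\eta),c}$ of the $1$-Lipschitz function $\mssd_\dUpsilon(\emparg,\eta)$ --- lie in $\dom{\EE{\dUpsilon}{\QP}}$ with $\SF{\dUpsilon}{\QP}(u)\le\Li[\mssd_\dUpsilon]{u}^2\le 1$. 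Being admissible competitors in the variational formula for $\mssd_\QP(\gamma,\eta)$, they force $\mssd_\QP(\gamma,\eta)\ge\mssd_\dUpsilon(\gamma,\eta)$ in the limit $c\to\infty$.

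For the reverse inequality $\mssd_\dUpsilon\ge\mssd_\QP$ I would apply Theorem~\ref{t:dSLConfig2}, the Sobolev-to-Lipschitz property on $\dUpsilon$, which under the standing hypotheses grants every $u\in\dom{\EE{\dUpsilon}{\QP}}\cap L^\infty(\QP)$ with $\SF{\dUpsilon}{\QP}(u)\in L^\infty(\QP)$ a $\mssd_\dUpsilon$-Lipschitz $\QP$-representative $\rep u$ with $\Li[\mssd_\dUpsilon]{\rep u}^2\le\norm{\SF{\dUpsilon}{\QP}(u)}_{L^\infty}$. Any competitor $u\in\Cb(\T_\mrmv)\cap\dom{\EE{\dUpsilon}{\QP}}$ with $\SF{\dUpsilon}{\QP}(u)\le 1$ satisfies these assumptions --- it is bounded and its energy density is bounded by $1$ --- hence $u=\rep u$ outside a $\QP$-negligible set; combining this with the $\T_\mrmv$-continuity of $u$ to pass from a.e.\ to everywhere on $\supp\QP$, one obtains $u(\gamma)-u(\eta)\le\Li[\mssd_\dUpsilon]{\rep u}\,\mssd_\dUpsilon(\gamma,\eta)\le\mssd_\dUpsilon(\gamma,\eta)$ for all $\gamma,\eta\in\supp\QP$. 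Taking the supremum over such $u$ yields $\mssd_\QP\le\mssd_\dUpsilon$, which together with the first step gives \eqref{eq:dSLIntro}.

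The hard part is not this combination but Theorem~\ref{t:dSLConfig2} itself: proving the Sobolev-to-Lipschitz property for the extended metric measure space $\dUpsilon$ is where the synthetic Ricci-curvature machinery of \S\ref{s:RicciBounds} --- the $\EVI(K,\infty)$ inequality of Theorem~\ref{thm:EVI} together with its consequences --- is genuinely used, and where one must reconcile the $\QP$-a.e.\ defined Sobolev regularity with the everywhere-defined transportation distance $\mssd_\dUpsilon$. Once that ingredient is available, the present statement reduces to the bookkeeping above, namely checking that the competitors in the definition of $\mssd_\QP$ meet the hypotheses of the two comparison results under $(\mathsf{SCF})$, $(\Rad{\mssd}{\mssm})$ and $(\SL{\mssm}{\mssd})$.
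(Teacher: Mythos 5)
Your two-inequality decomposition is the same as the paper's: the bound $\mssd_\dUpsilon\le\mssd_\QP$ comes from the Rademacher property on $\dUpsilon$ (\cite[Thm.~5.2]{LzDSSuz21}, restated as Theorem~\ref{t:GeometricProperties}\ref{i:t:GeometricProperties:1}, combined with $(\Rad{})\Rightarrow(\dRad{})$ from~\eqref{eq:EquivalenceRadStoLConfig2}), and the bound $\mssd_\dUpsilon\ge\mssd_\QP$ is exactly the content of Theorem~\ref{t:dSLConfig2}. So the combination step is correct.

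However, your description of the $\ge$ direction contains a genuine misidentification that would break the argument. Theorem~\ref{t:dSLConfig2} is the \emph{distance}-Sobolev-to-Lipschitz property, i.e.\ the inequality $\mssd_\dUpsilon\ge\mssd_\PP$ itself; it is \emph{not} the full functional Sobolev-to-Lipschitz property you describe (every Sobolev function with bounded square field has a $\mssd_\dUpsilon$-Lipschitz representative). That stronger statement is Theorem~\ref{t:SL}, and in this paper it is proved only under \emph{additional} curvature hypotheses --- $\BE(K,\infty)$, the log-Harnack inequality, and $(\dcSL{\mssd_\dUpsilon}{\QP_\lambda}{\mssd_\dUpsilon})$ --- none of which appear in the hypotheses of the present theorem. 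Relatedly, your claim that Theorem~\ref{t:dSLConfig2} is ``where the $\EVI(K,\infty)$ machinery is genuinely used'' inverts the logical order of the paper: the distance identification is an \emph{input} to the curvature results of \S\ref{s:RicciBounds} (via the identification of $\EE{\dUpsilon}{\QP}$ with the Cheeger energy and the properties of $\mssd_\PP$), not a consequence of them. If the $\ge$ inequality required $\EVI$, the theorem as stated --- with no curvature assumption on the base --- could not be proved this way. The actual, curvature-free proof of Theorem~\ref{t:dSLConfig2} runs through the heat-kernel identification of Proposition~\ref{p:KonLytRoe} (this is where \SCF enters), the Ariyoshi--Hino small-time asymptotics $-2t\log\TT{\dUpsilon}{\PP}_t\car_\Lambda\to\hr{\PP,\Lambda}^2$, the comparison of maximal functions with the regularized intrinsic point-to-set distances $\mssd^\reg_{\PP,\Lambda}$ (Lemma~\ref{l:Sobolev1}), and an approximation of singletons by cylinder sets (Lemma~\ref{l:AuxStoL}); only $(\SL{\mssm}{\mssd})$ on the \emph{base} is used, to control $\hr{\mssm,A_i}$ by $\mssd(\emparg,A_i)$.
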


In particular, when~$\mcX$ is an~$\RCD(K,N)$ space ---~or, more generally, an~$\RQCD_\reg$ space~\mbox{---,} then the analytic and metric structures on~$\dUpsilon$ coincide:
 
\begin{Cor}[{\cite[Thm.s~5.2,~5.8]{LzDSSuz21} and Thm.~\ref{t:dSLConfig2}}]
Let $\mcX$ be an $\RCD(K,N)$ space. Then, 
\begin{align*}
\tparen{\Ch[\mssd_\dUpsilon,\QP],\dom{\Ch[\mssd_\dUpsilon,\QP]}}=\tparen{\EE{\dUpsilon}{\QP},\dom{\EE{\dUpsilon}{\QP}}} \qquad \text{and} \qquad  \mssd_\dUpsilon = \mssd_{\QP} \fstop
\end{align*}
\end{Cor}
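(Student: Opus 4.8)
The plan is to obtain the Corollary by pure specialization: once an $\RCD(K,N)$ space, viewed as a metric local diffusion space, is seen to satisfy the structural hypotheses of the preceding Theorem and of \cite[Thm.s~5.2,~5.8]{LzDSSuz21}, both displayed identities follow by quoting those results --- no new estimate is required. First I would record that an $\RCD(K,N)$ space $\mcX=(X,\mssd,\mssm)$ genuinely is an \MLDS in the sense of the Introduction: for $N<\infty$, Bishop--Gromov comparison makes $\mcX$ locally doubling, hence --- being complete and geodesic --- proper, in particular locally compact Polish, with $\mssm$ finite on $\mssd$-bounded sets; together with the standing requirements that $\mssm$ be Radon, atomless and fully supported, and with infinitesimal Hilbertianity (built into the $\RCD$ condition), this places $\mcX$ inside the \MLDS framework for the choice $\cdc\eqdef\slo[*]{\emparg}^2$. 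Density of compactly supported Lipschitz functions in the Cheeger domain (available on proper spaces with $\mssm$ finite on balls) identifies $\tparen{\EE{X}{\mssm},\dom{\EE{X}{\mssm}}}$ with $\tparen{\Ch[\mssd,\mssm],\dom{\Ch[\mssd,\mssm]}}$, its generator with the Laplacian, and $\TT{X}{\mssm}_\bullet$ with the heat flow of $\mcX$; the curvature-type inputs invoked below use only the weaker condition $\RCD(K,\infty)$.

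Next I would verify the three properties required by the preceding Theorem, all of them standard in the $\RCD$ theory. For $(\mathsf{SCF})$: stochastic completeness of $\TT{X}{\mssm}_\bullet$ holds on any $\RCD(K,\infty)$ space (e.g.\ from the Gaussian volume bound $\mssm(B_r(x))\le Ce^{cr^2}$ furnished by Bishop--Gromov, or directly from the Bakry--Ledoux estimate), while the $L^\infty(\mssm)$-to-$\Cb(\T)$ Feller property follows from the $L^\infty$-to-Lipschitz regularization of the $\RCD(K,\infty)$ heat flow, which yields $\TT{X}{\mssm}_t u\in\bLip(X)\subseteq\Cb(\T)$ for $u\in L^\infty(\mssm)$ and $t>0$. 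For $(\Rad{\mssd}{\mssm})$: a bounded $\mssd$-Lipschitz $f$ of bounded support lies in $\dom{\Ch[\mssd,\mssm]}$ and has minimal relaxed slope dominated by its local, hence global, Lipschitz constant, i.e.\ $\SF{X}{\mssm}(f)=\slo[*]{f}^2\le\Li[\mssd]{f}^2$ $\mssm$-a.e. For $(\SL{\mssm}{\mssd})$: this is precisely the Sobolev-to-Lipschitz property, known to hold on $\RCD(K,\infty)$ spaces (cf.\ the references recalled in the Introduction).

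With these in place, $\mcX$ meets the hypotheses of the Theorem stated just above the Corollary, so $\mssd_\dUpsilon=\mssd_{\QP}$; concretely, $\mssd_\dUpsilon\le\mssd_{\QP}$ is \cite[Thm.~5.2]{LzDSSuz21} (using $(\Rad{\mssd}{\mssm})$, lifted to a Rademacher property on $\dUpsilon$), and $\mssd_\dUpsilon\ge\mssd_{\QP}$ is Theorem~\ref{t:dSLConfig2} (the Sobolev-to-Lipschitz property on $\dUpsilon$, obtained from $(\mathsf{SCF})$, $(\Rad{\mssd}{\mssm})$ and $(\SL{\mssm}{\mssd})$). Feeding the equality $\mssd_\dUpsilon=\mssd_{\QP}$ back into \cite[Thm.~5.8]{LzDSSuz21} --- whose hypotheses are subsumed by the $\RCD(K,N)$ assumption together with this identity --- yields $\tparen{\Ch[\mssd_\dUpsilon,\QP],\dom{\Ch[\mssd_\dUpsilon,\QP]}}=\tparen{\EE{\dUpsilon}{\QP},\dom{\EE{\dUpsilon}{\QP}}}$, the remaining assertion. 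Since the substantive analysis resides in the quoted theorems, the only points in this reduction that need care are \emph{(a)} that the closure of $(\cdc,\Dz)$ on a core contained in $\Cz(\T)$ really is the full Cheeger energy of $\mcX$ --- hinging on properness of $\RCD(K,N)$ spaces and density of compactly supported Lipschitz functions --- and \emph{(b)} the $L^\infty$-to-$\Cb$ Feller half of $(\mathsf{SCF})$, which relies on the nontrivial $L^\infty$-to-Lipschitz smoothing of the $\RCD(K,\infty)$ heat semigroup; the remainder is bookkeeping.
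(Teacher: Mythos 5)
Your proposal is correct and follows the same route the paper takes implicitly: the corollary is a pure specialization, and the substantive content is Proposition~\ref{p:PropertiesRCD(Kinfty)Config2}, which verifies that $\RCD(K,\infty)$ spaces satisfy \SCF, $(\Rad{\mssd}{\mssm})$ and $(\SL{\mssm}{\mssd})$ exactly as you do, after which $\mssd_\dUpsilon\le\mssd_\QP$ is \cite[Thm.~5.2]{LzDSSuz21} and $\mssd_\dUpsilon\ge\mssd_\QP$ is Theorem~\ref{t:dSLConfig2}. Two bookkeeping points: the full Definition~\ref{d:wFe} of \SCF contains a third condition, \ref{ass:Fe} (decay of $\hh{X}{\mssm}_t(\emparg,E)$ at $\msE$-infinity), which you do not mention but which the paper extracts from the same quantitative $L^\infty$-to-Lipschitz bound \eqref{eq:p:PropertiesRCD(Kinfty)Config2:5} you already invoke; and the hypothesis of \cite[Thm.~5.8]{LzDSSuz21} for the Cheeger-energy identification is the tensorization assumption \cite[Ass.~4.22]{LzDSSuz21} (verified for $\RCD^*(K,N)$ spaces in \cite[Prop.~7.5(v)]{LzDSSuz21}), not the identity $\mssd_\dUpsilon=\mssd_\QP$ as your last step suggests.
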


\paragraph{Overview} 
The coincidence of the Cheeger energy of~$(\dUpsilon,\mssd_\dUpsilon,\QP)$ with the canonical form~$\EE{\dUpsilon}{\QP}$ was shown in great generality for both~$X$ and~$\QP$ in the first work in this series,~\cite{LzDSSuz21}.
For Poisson measures over manifolds, it is claimed in~\cite[Prop.~2.3]{ErbHue15}; see however Remark~\ref{r:ComparisonErbHue}.

For configurations spaces over manifolds, the identification of the intrinsic distance~$\mssd_\QP$ with the $L^2$-transportation distance~$\mssd_\dUpsilon$ was shown in~\cite[Thm.~1.5(ii)]{RoeSch99}.
Both the assumptions and the proof strategy of the results in~\cite{RoeSch99} heavily rely on the smooth structure of the base space, and in particular on the notion of quasi-invariance of measures on~$\dUpsilon$ with respect to a natural action on~$\dUpsilon$ of the group of diffeomorphisms of the base space.
Indeed, as noted in~\cite[after Lem.~5.2]{RoeSch99}, their strategy does not apply even to Lipschitz manifolds.

For configuration spaces over \MLDS's, the same identification of distances was also shown in~\cite[Thm.~5.25]{LzDSSuz21}.
Here, we prove the same assertion under a different (skew) set of assumptions to those in~\cite{LzDSSuz21}.
Our proof strategy, both here and in~\cite{LzDSSuz21}, applies to non-smooth \MLDS's and thus it does not rely on any smooth structure, nor on quasi-invariance of measures.
The proof strategy in~\cite{LzDSSuz21} relies on a localization argument via configuration spaces over balls. Whereas this argument applies to the far more general class of measures considered there, it poses some restrictions on the choice of the base spaces.
Here, we rather rely on a global property, the identification of the heat kernel.
Whereas this argument only applies to (mixed) Poisson measures, it poses non restriction on the choice of the base space, allowing us to consider, for example, the whole class of~$\RCD^*(K,N)$ spaces.

\subsection{On the \texorpdfstring{$\RCD$}{RCD} condition for extended metric spaces}
The Bakry--\'Emery gradient estimates, the \emph{logarithmic Harnack inequality} (Dfn.~\ref{d:LogH}), the \emph{Wasserstein contractivity estimate} (Dfn.~\ref{d:WC}), and the $\EVI(K,\infty)$ estimate all concur to a possible synthetic definition of Ricci-curvature lower bounds in the non-smooth setting.
For metric spaces (as opposed to: \emph{extended} metric spaces), a complete characterization was given in~\cite{AmbGigSav15} (see Thm.~\ref{t:RCD}).
In particular, the $\RCD(K,\infty)$ condition is equivalent to the validity of both the $\BE(K,\infty)$ gradient estimate and the Sobolev-to-Lipschitz property; also cf.~\cite{Hon18}.

For \emph{extended} metric spaces, no such characterization is available.
Whereas we refrain from proposing here a definition for the $\RCD(K,\infty)$ condition for extended metric spaces, our results show that configuration spaces over manifolds with Ricci curvature bounded below or, more generally, over $\RCD^*(K,N)$ spaces, would satisfy any such definition.

In~\cite{FanShaStu09} S.~Fang, J.~Shao, and K.-T.~Sturm proved synthetic Ricci-curvature lower bounds for the \emph{Wiener space}.
To date, configuration spaces constitute the only other example of infinite-dimensional extended metric measure spaces satisfying Ricci-curvature lower bounds in a very strong and purely metric measure sense.
In comparison with the Wiener space, analysis on configuration spaces is considerably more challenging, in that configuration spaces are not (embedded in) any linear space.

\paragraph{Overview}
Fix~$K\in\R$ and~$N\in [1,\infty)$, and let~$\mcX$ be an $\RCD^*(K,N)$ space.
For integer~$n$ denote by~$\mcX^\tym{n}$ the $n$-fold Cartesian product of~$\mcX$, and let~$\mfS_n$ be symmetric group of order~$n$, acting on~$\mcX^\tym{n}$ by permutation of coordinates.
Note that this is an action by measure-preserving isometries.

F.~Galaz-Garc\'ia, M.~Kell, A.~Mondino, and G.~Sosa proved in~\cite{GalKelMonSos18} that the quotient of any $\RCD^*(K,N)$ space by a compact isomorphic group action is again an~$\RCD^*(K,N)$-space.
Now, in light of the tensorization of the~$\RCD^*(K,N)$ condition~\cite{BacStu10, AmbGigSav14b}, their result implies in particular that the quotient~$\mcX^\tym{n}/\mfS_n$ is an~$\RCD^*(K, nN)$ space.

At least heuristically, our results on the Poisson configuration space~$\dUpsilon$ over~$\mcX$ may be regarded as the analogue of this fact in the case~$n=\infty$.
For this analogy to be rigorously justified, however, a thorough understanding of the \emph{extended}-metric measure space structure of~$\dUpsilon$ is required, which is outside the scope of the standard theory of~$\RCD(K,\infty)$ spaces.

\subsection{Applications} As a first application, we confirm the \emph{Sobolev-to-Lipschitz property} conjectured by M.~R\"ockner and A.~Schied in~\cite{RoeSch99}.
Let~$(M,g)$ be a Riemannian manifold.
We always assume Riemannian manifolds to be smooth and connected.
A \emph{weighted} Riemannian manifold~$(M,e^{-\psi}g)$ is always assumed to have smooth weight~$\psi$.
We say that any such manifold has Ricci curvature bounded below by~$K$ if~$\Ric_g +\mathrm{Hess}\psi\geq K$.

\begin{Thm}[Sobolev-to-Lipschitz, Thm.~\ref{t:SL}]\label{t:IntroSL} 
Let~$(\mcX,\cdc,\mssd)$ be the \MLDS arising from a weighted Riemannian manifold with Ricci curvature bounded below.
Then, every~$u\in \dom{\EE{\dUpsilon}{\QP}}$ with~$\SF{\dUpsilon}{\QP}(u)\in L^\infty(\QP)$ has a $\mssd_\dUpsilon$-Lipschitz $\QP$-represen\-tative~$\rep{u}$ with~$\Li[\mssd_\dUpsilon]{\rep u}\leq \sqrt{\norm{\SF{\dUpsilon}{\QP}(u)}_{L^\infty}}$.
\end{Thm}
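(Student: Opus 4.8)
The plan is to deduce Theorem~\ref{t:IntroSL} by transferring the Sobolev-to-Lipschitz property from the base space to~$\dUpsilon$ via the identification of distances~\eqref{eq:dSLIntro}. First I would observe that a weighted Riemannian manifold with Ricci curvature bounded below by~$K$, viewed as an \MLDS~$(\mcX,\cdc,\mssd)$ with~$\cdc=\slo[*]{\emparg}^2$, satisfies all the hypotheses collected earlier: it is an $\RCD(K,\infty)$ space, hence infinitesimally Hilbertian, hence it possesses both the Rademacher-type property~$(\Rad{\mssd}{\mssm})$ and the Sobolev-to-Lipschitz property~$(\SL{\mssm}{\mssd})$ (for manifolds these are classical; in the $\RCD$ generality they are~\cite{AmbGigSav14b, AmbGigMonRaj12}). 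It also satisfies~$(\mathsf{SCF})$: stochastic completeness follows from the $\RCD(K,\infty)$ (indeed already $\mathsf{CD}(K,\infty)$) lower Ricci bound, and the $L^\infty$-to-$\Cb$ Feller property follows from the well-known regularizing properties of the heat semigroup on such manifolds. Consequently the hypotheses of the Theorem displayed just before Corollary~\ref{c:GradFlowEnt}'s successor (the one asserting~\eqref{eq:dSLIntro}) are in force, and we conclude~$\mssd_\dUpsilon=\mssd_{\QP}$.

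Next I would unwind what this identity buys us. By definition of the intrinsic distance~$\mssd_{\QP}$ of the Dirichlet form~$\EE{\dUpsilon}{\QP}$, and by Rademacher/Sobolev-to-Lipschitz duality at the level of~$\dUpsilon$, the statement that~$\EE{\dUpsilon}{\QP}$ has the Sobolev-to-Lipschitz property \emph{with respect to its own intrinsic distance}~$\mssd_{\QP}$ is essentially built into Dirichlet-form theory: if~$u\in\dom{\EE{\dUpsilon}{\QP}}\cap L^\infty(\QP)$ with~$\SF{\dUpsilon}{\QP}(u)\in L^\infty(\QP)$, then a standard truncation-and-Lipschitz-regularization argument (as in the metric-measure Sobolev-to-Lipschitz literature, cf.~the proof of Thm.~\ref{t:RCD} and~\cite{AmbGigSav15}) produces a $\mssd_{\QP}$-Lipschitz $\QP$-representative~$\rep u$ with~$\Li[\mssd_{\QP}]{\rep u}\leq\sqrt{\norm{\SF{\dUpsilon}{\QP}(u)}_{L^\infty}}$. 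Here one uses that~$\EE{\dUpsilon}{\QP}$ is a quasi-regular strongly local Dirichlet form with carré~$\SF{\dUpsilon}{\QP}$, together with the fact ---~available because~$\mcX$ is an $\RCD$ space~--- that the Cheeger energy of~$(\dUpsilon,\mssd_\dUpsilon,\QP)$ coincides with~$\EE{\dUpsilon}{\QP}$ (the Corollary preceding this subsection). Combining this with the distance identification~$\mssd_\dUpsilon=\mssd_{\QP}$ gives exactly the claimed bound~$\Li[\mssd_\dUpsilon]{\rep u}\leq\sqrt{\norm{\SF{\dUpsilon}{\QP}(u)}_{L^\infty}}$, completing the proof.

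The main obstacle, and the step requiring the most care, is verifying that the weighted Riemannian \MLDS genuinely lies in the intersection of the three hypotheses~$(\mathsf{SCF})$, $(\Rad{\mssd}{\mssm})$, $(\SL{\mssm}{\mssd})$ \emph{in the precise technical sense demanded by} Theorem~\ref{t:dSLConfig2} and~\cite[Thm.~5.2]{LzDSSuz21} ---~in particular the $L^\infty$-to-$\Cb$ Feller property, which for a general weighted manifold (possibly noncompact, possibly with unbounded weight) needs the lower Ricci bound to control heat-kernel regularity; this is where the Ricci lower bound is truly used, beyond its role in yielding~$(\Rad{\mssd}{\mssm})$ and~$(\SL{\mssm}{\mssd})$. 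A secondary, more bookkeeping-type point is checking that the Lipschitz constant in the $\dUpsilon$-level Sobolev-to-Lipschitz implication is the \emph{sharp} one~$\sqrt{\norm{\SF{\dUpsilon}{\QP}(u)}_{L^\infty}}$ rather than merely a constant multiple of it; this is where one must invoke the tightness of the Rademacher inequality on~$\dUpsilon$, i.e.~that~$\SF{\dUpsilon}{\QP}$ dominates the squared local Lipschitz constant with constant~$1$, which in turn propagates from~$(\Rad{\mssd}{\mssm})$ on the base (constant~$1$) through the lifting construction of~\cite{LzDSSuz21}. Everything else is a direct citation of results already recorded in the excerpt.
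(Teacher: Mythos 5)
There is a genuine gap at the heart of your second step. You claim that, once $\mssd_\dUpsilon=\mssd_{\QP}$ is known, the Sobolev-to-Lipschitz property with respect to the intrinsic distance is ``essentially built into Dirichlet-form theory'' via a standard truncation-and-regularization argument. It is not. What the identification of distances gives you is the \emph{distance}-Sobolev-to-Lipschitz property $(\dSL{\mssd_\dUpsilon}{\QP})$, equivalently the \emph{continuous}-Sobolev-to-Lipschitz property: a function $u$ with $\SF{\dUpsilon}{\QP}(u)\leq 1$ that \emph{already admits a $\mssd_\dUpsilon$-continuous representative} is $1$-Lipschitz. By the implication chain~\eqref{eq:EquivalenceRadStoLConfig2} this is strictly weaker than the property $(\SL{\QP}{\mssd_\dUpsilon})$ asserted in the theorem, and the paper stresses exactly this at the opening of \S\ref{s:ApplicationsConfig2}: Theorem~\ref{t:SL} is a genuine \emph{self-improvement} of $(\dcSL{\mssd_\dUpsilon}{\QP}{\mssd_\dUpsilon})$ to $(\SL{\QP}{\mssd_\dUpsilon})$, and no such improvement follows from general Dirichlet-form theory or even from $\BE(K,\infty)$ alone (cf.~\cite{Hon18}). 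The missing step is producing a $\mssd_\dUpsilon$-continuous representative of an \emph{arbitrary} $u\in\dom{\EE{\dUpsilon}{\QP}}$ with bounded square field; on the infinite-dimensional space $\dUpsilon$ there is no ``standard'' mollification that does this, and your argument never addresses it.

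The paper's proof supplies precisely this ingredient by heat-semigroup regularization, which is where the curvature results of the earlier sections enter and which your proposal never invokes. One first lifts the logarithmic Harnack inequality and $\BE(K,\infty)$ from the base to $\dUpsilon$ (Theorems~\ref{t:Deng},~\ref{t:BE}, and~\ref{t:MixedPoissonAll}). The log-Harnack inequality makes $\TT{\dUpsilon}{\QP}_t$ strongly $\mssd_\dUpsilon$-Feller via~\cite[Prop.~3.1(1)]{WanYua11}, so $\TT{\dUpsilon}{\QP}_t u$ \emph{is} $\mssd_\dUpsilon$-continuous; $\BE(K,\infty)$ gives $\tnorm{\SF{\dUpsilon}{\QP}(\TT{\dUpsilon}{\QP}_t u)}_{L^\infty}\leq e^{-2Kt}$; only then does $(\dcSL{\mssd_\dUpsilon}{\QP}{\mssd_\dUpsilon})$ ---~verified for manifolds through~\cite[Thm.~5.23]{LzDSSuz21}, not through the distance identification~--- apply to $\TT{\dUpsilon}{\QP}_t u$, and one concludes by letting $t\downarrow 0$ along a subsequence, a McShane extension, and a truncation argument for unbounded $u$. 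Your verification of \SCF, $(\Rad{\mssd}{\mssm})$ and $(\SL{\mssm}{\mssd})$ on the base manifold is fine, and the sharpness of the constant that you single out as the delicate point is harmless (it falls out of $e^{-Kt}\to 1$); the real difficulty is the continuity of $\TT{\dUpsilon}{\QP}_t u$, which is exactly what your route omits.
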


In light of the Sobolev-to-Lipschitz property, we further obtain (see Cor.~\ref{c:IrreducibilityMixed} for the precise statement)
\begin{Cor}\label{c:IntroErgo}
Let~$(\mcX,\cdc,\mssd)$ be the \MLDS arising from a weighted Riemannian manifold with Ricci curvature bounded below, and~$\QP=\QP_\lambda$ be a mixed Poisson measure.
Then, the following conditions are equivalent:
\begin{enumerate}[$(i)$]
\item\label{i:c:IntroErgodicity:1} $\QP=\PP$ is a Poisson measure, i.e.\ the L\'evy measure~$\lambda$ is a Dirac mass;
\item\label{i:c:IntroErgodicity:2} the form~$\EE{\dUpsilon}{\QP}$ is irreducible, i.e.\ it admits no non-trivial invariant set;
\item\label{i:c:IntroErgodicity:3} $\QP\text{-}\essinf_{\gamma\in \Lambda_1}\inf_{\eta\in\Lambda_2} \mssd_\dUpsilon(\gamma,\eta)<\infty$ for each~$\Lambda_i$ with~$\QP\Lambda_i>0$ for~$i=1,2$;
\end{enumerate}
\end{Cor}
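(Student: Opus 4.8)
The implications will be established in the cyclic order $(i)\Rightarrow(iii)\Rightarrow(ii)\Rightarrow(i)$, with $(i)\Rightarrow(iii)$ being the genuinely new input and the other two being soft/structural.

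For $(i)\Rightarrow(iii)$: assume $\QP=\PP=\PP_\mssm$ is the Poisson measure. Here the point is to produce, for $\QP$-a.e.\ $\gamma$ and $\QP$-a.e.\ $\eta$, a finite $\mssd_\dUpsilon$-distance. By definition $\mssd_\dUpsilon(\gamma,\eta)<\infty$ precisely when there is a bijective matching of the atoms of $\gamma$ with the atoms of $\eta$ whose total squared $\mssd$-cost is finite. The plan is to fix a reference configuration (e.g.\ work through an auxiliary coupling of $\PP_\mssm$ with itself) and use the \emph{independence and spatial homogeneity} of the Poisson process: decompose $X$ into a countable disjoint family of $\mssm$-finite Borel pieces $\set{X_n}_{n}$ with $\sum_n 4^{-n}\,\mathrm{diam}_\mssd(\text{relevant region})^2<\infty$-type summability, restrict $\gamma$ and $\eta$ to each piece, match them greedily inside each piece, and bound the cost on $X_n$ in terms of $\gamma X_n$, $\eta X_n$ and the $\mssd$-diameter of a slightly enlarged region. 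Since under $\PP_\mssm$ the counts $\gamma X_n$ are independent Poisson$(\mssm X_n)$ random variables, a Borel--Cantelli argument gives that $\gamma X_n=\eta X_n=0$ for all but finitely many $n$ outside a suitably chosen exhaustion, and on the finitely many remaining pieces the cost is trivially finite; hence $\mssd_\dUpsilon(\gamma,\eta)<\infty$ for $\PP_\mssm^{\otimes 2}$-a.e.\ $(\gamma,\eta)$. In particular $\QP\text{-}\essinf_{\gamma\in\Lambda_1}\inf_{\eta\in\Lambda_2}\mssd_\dUpsilon(\gamma,\eta)=0<\infty$ on any pair of positive-measure sets. The main obstacle is precisely this quantitative matching estimate over non-compact $X$ with only an $\MLDS$ structure; but for a \emph{weighted Riemannian manifold} (the standing hypothesis of the corollary) one has enough volume control, and more importantly the argument only needs finiteness, not a sharp bound, so the coarse Poisson tail estimate suffices.

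For $(iii)\Rightarrow(ii)$: this is the standard Dirichlet-form fact that $\mssd_\dUpsilon$-reachability (up to the intrinsic distance) forbids non-trivial invariant sets. Suppose $\Lambda$ is invariant for $\EE{\dUpsilon}{\QP}$ with $0<\QP\Lambda<1$; apply $(iii)$ to $\Lambda_1=\Lambda$ and $\Lambda_2=\Lambda^\complement$ to get points $\gamma\in\Lambda$, $\eta\in\Lambda^\complement$ with $\mssd_\dUpsilon(\gamma,\eta)<\infty$. Using $\mssd_\dUpsilon=\mssd_\QP$ — which holds under $(\mathsf{SCF})$, $(\Rad{\mssd}{\mssm})$, $(\SL{\mssm}{\mssd})$, all valid for the weighted-manifold $\MLDS$ by the discussion preceding and by the identification theorem quoted above — the function $u(\cdot)=\mssd_\QP(\gamma,\cdot)\wedge \ell$ (for $\ell$ slightly larger than $\mssd_\dUpsilon(\gamma,\eta)$) lies in $\dom{\EE{\dUpsilon}{\QP}}$ with $\SF{\dUpsilon}{\QP}(u)\le 1$, is not $\QP$-a.e.\ constant on $\Lambda\cup\Lambda^\complement$ since it separates $\gamma$ from $\eta$, yet invariance of $\Lambda$ would force any such energy-finite function to be a.e.\ constant on $\Lambda$ and on $\Lambda^\complement$ in a way that is incompatible with separating a point of $\Lambda$ from a point of $\Lambda^\complement$ — a contradiction. (This is the usual ``intrinsic distance controls connectedness'' argument; the only care needed is that $\mssd_\QP$ is an \emph{extended} distance, so one truncates before invoking energy bounds.)

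For $(ii)\Rightarrow(i)$: argue by contraposition. If $\lambda$ is not a Dirac mass, write $\QP_\lambda=\int \PP_{s\mssm}\diff\lambda(s)$ as a non-trivial mixture; the ``number of points in a fixed large ball divided by the volume'' is, under $\QP_\lambda$, a non-constant random variable whose conditional law given the mixing parameter $s$ concentrates (by the law of large numbers for Poisson processes) near $s$. Concretely, the tail event $\mcA=\set{\gamma:\ \lim_n \gamma B_n/\mssm B_n \text{ exists and equals }s\text{ for }\lambda\text{-a.e.\ }s\text{ on a nontrivial set}}$ splits $\dUpsilon$ into $\QP_\lambda$-positive, $\QP_\lambda$-co-positive pieces that are invariant under the Dirichlet form — because the cylinder-function generator $\LL{\dUpsilon}{\QP}$ acts ``diagonally'' over the mixture (each $\PP_{s\mssm}$ is itself invariant and ergodic for the lifted dynamics, a fact from the Poisson case) — so $\EE{\dUpsilon}{\QP}$ is reducible. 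Thus $(ii)$ fails, completing the cycle. The small technical point here is to verify that sets measurable with respect to the tail $\sigma$-algebra are invariant for the form, which follows since the heat semigroup $\TT{\dUpsilon}{\QP}_\bullet$ preserves the mixture decomposition and each $\PP_{s\mssm}$-component is ergodic; this last ingredient is exactly the classical irreducibility result for Poisson configuration spaces over manifolds, now available from the semigroup representation (Thm.~\ref{t:AKR4.1}) and the essential self-adjointness (Cor.~\ref{c:ESA}) established earlier.
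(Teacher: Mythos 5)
Your step $(i)\Rightarrow(iii)$ contains a genuine error. You claim that for a Poisson measure $\PP_\mssm$ with $\mssm X=\infty$ one has $\mssd_\dUpsilon(\gamma,\eta)<\infty$ for $\PP_\mssm^{\otimes 2}$-a.e.\ $(\gamma,\eta)$, whence the $\essinf$-$\inf$ in $(iii)$ vanishes. The opposite is true: two independent Poisson samples are $\PP^{\otimes2}$-a.s.\ at \emph{infinite} $L^2$-transportation distance (the paper records that $\mssd_\dUpsilon=+\infty$ on a set of positive $\QP^{\otimes 2}$-measure; already on $\R^d$ the squared displacements of any matching of two independent samples do not form a summable series over the infinitely many points). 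Your Borel--Cantelli step, asserting $\gamma X_n=\eta X_n=0$ for all but finitely many pieces $X_n$, cannot hold when both configurations are a.s.\ infinite. The quantifier structure of $(iii)$ is essential: one must produce, for a.e.\ $\gamma\in\Lambda_1$, \emph{some} $\eta\in\Lambda_2$ (depending on $\gamma$) at finite distance, and no direct probabilistic matching argument of the kind you sketch delivers this for arbitrary positive-measure $\Lambda_2$. The paper obtains $(iii)$ analytically, not probabilistically: irreducibility gives $\QP\text{-}\essinf_{\Lambda_1}\hr{\QP,\Lambda_2}<\infty$ by the Hino--Ram\'irez characterization, and the Rademacher property $(\Rad{\mssd_\dUpsilon}{\QP})$ gives $\mssd_\dUpsilon(\emparg,\Lambda_2)\leq\hr{\QP,\Lambda_2}$; this is Proposition~\ref{p:Irreducibility}\ref{i:p:Irreducibility:1}$\Rightarrow$\ref{i:p:Irreducibility:2}. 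So the correct order of implications runs $(i)\Rightarrow(ii)\Rightarrow(iii)$, with $(i)\Rightarrow(ii)$ being the classical result of Albeverio--Kondratiev--R\"ockner.

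Your other two implications are closer to the mark but imprecise. In $(iii)\Rightarrow(ii)$ the assertion that invariance of $\Lambda$ forces energy-finite functions to be a.e.\ constant on $\Lambda$ and $\Lambda^\complement$ is not the operative mechanism; what one actually uses is that invariance makes $r\car_{\Lambda^\complement}$ admissible in the definition of $\hr{\QP,\Lambda}$ for every $r$, so $\hr{\QP,\Lambda}=\infty$ on a set of positive measure, which contradicts $(iii)$ once the Sobolev-to-Lipschitz property $(\SL{\QP}{\mssd_\dUpsilon})$ (Theorem~\ref{t:SL}, the genuinely new ingredient of the paper here) yields $\hr{\QP,\Lambda}\leq\mssd_\dUpsilon(\emparg,\Lambda^*)$ for a Borel modification $\Lambda^*$ of $\Lambda$. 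Your $(ii)\Rightarrow(i)$ by contraposition via the tail event $\lim_n\gamma B_n/\mssm B_n$ and the mutual singularity of the $\PP_{s\mssm}$ is in substance the argument of \cite[Thm.~4.3]{AlbKonRoe98}, which the paper simply cites; that part is fine.
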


Corollary~\ref{c:IntroErgo} completes the equivalence~\ref{i:c:IntroErgodicity:1}$\iff$\ref{i:c:IntroErgodicity:2} obtained in~\cite{AlbKonRoe98}, further characterizing the ergodicity of~$\EE{\dUpsilon}{\QP}$ in terms of the set-distance induced by~$\mssd_\dUpsilon$.
In particular, it establishes an insightful bridge between measure-theoretical properties (e.g., the $\QP$-size of sets) with geometric properties (e.g., the $\mssd_\dUpsilon$-shape of sets).

\medskip

As a second application, we obtain the following regularizing property of the heat semigroup~$\TT{\dUpsilon}{\PP}_\bullet$.

\begin{Thm}[$L^\infty$-to-Lipschitz regularization, Thm.~\ref{t:LinftyLip}]\label{t:IntroLinftyLip} 
Let~$(\mcX,\cdc,\mssd)$ be the \MLDS arising from a weighted Riemannian manifold with Ricci curvature bounded below.
Then,~$\TT{\dUpsilon}{\QP}_t$ maps~$L^\infty(\QP)$ into~$\Lip_b(\mssd_\dUpsilon)$.
\end{Thm}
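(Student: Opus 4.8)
\emph{The plan} is to deduce the statement by combining three facts already at our disposal: the transfer of Bakry--\'Emery curvature bounds to the configuration space (Theorem~\ref{thm:BE}), the elementary upgrade of a gradient estimate into a ``reverse Poincar\'e'' inequality, and the Sobolev-to-Lipschitz property on~$\dUpsilon$ (Theorem~\ref{t:IntroSL}). First I would record that the \MLDS~$(\mcX,\cdc,\mssd)$ arising from a weighted Riemannian manifold with Ricci curvature bounded below by some $K\in\R$ is stochastically complete and has $L^\infty(\mssm)$-to-$\Cb(\T)$ Feller heat semigroup --- hence satisfies~$(\mathsf{SCF})$ --- and satisfies the classical Bakry--\'Emery estimate $\SF{X}{\mssm}(\TT{X}{\mssm}_t f)\le e^{-2Kt}\TT{X}{\mssm}_t\SF{X}{\mssm}(f)$, i.e.\ $\wBE_1(K,\infty)$; it also satisfies $(\Rad{\mssd}{\mssm})$ and $(\SL{\mssm}{\mssd})$, which enter only through Theorem~\ref{t:IntroSL}. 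Theorem~\ref{thm:BE} then gives $\wBE_1(K,\infty)$ on~$\tparen{\dUpsilon,\SF{\dUpsilon}{\QP}}$:
\begin{equation}\label{eq:BEonUps}
\SF{\dUpsilon}{\QP}\tparen{\TT{\dUpsilon}{\QP}_t f}\le e^{-2Kt}\,\TT{\dUpsilon}{\QP}_t\,\SF{\dUpsilon}{\QP}(f)\comma\qquad f\in\dom{\EE{\dUpsilon}{\QP}}\comma\quad t>0\fstop
\end{equation}

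Next I would derive, from~\eqref{eq:BEonUps}, the reverse Poincar\'e inequality on~$\dUpsilon$. For sufficiently regular $f$ (say $f\in\dom{\LL{\dUpsilon}{\QP}}\cap L^\infty(\QP)$), the standard interpolation --- differentiate $s\mapsto\TT{\dUpsilon}{\QP}_s\big((\TT{\dUpsilon}{\QP}_{t-s}f)^2\big)$, which produces $2\,\TT{\dUpsilon}{\QP}_s\SF{\dUpsilon}{\QP}(\TT{\dUpsilon}{\QP}_{t-s}f)$; bound the latter below by $2e^{2Ks}\SF{\dUpsilon}{\QP}(\TT{\dUpsilon}{\QP}_tf)$ using~\eqref{eq:BEonUps} with $g=\TT{\dUpsilon}{\QP}_{t-s}f$; integrate over $s\in[0,t]$ --- yields, with $C_K(t):=K/(e^{2Kt}-1)$ (and $C_0(t):=1/(2t)$),
\begin{equation*}
\SF{\dUpsilon}{\QP}\tparen{\TT{\dUpsilon}{\QP}_t f}\le C_K(t)\,\Big(\TT{\dUpsilon}{\QP}_t(f^2)-\tparen{\TT{\dUpsilon}{\QP}_t f}^2\Big)\le C_K(t)\,\norm{f}_{L^\infty(\QP)}^2\fstop
\end{equation*}
To reach an arbitrary $u\in L^\infty(\QP)\subset L^2(\QP)$, I would apply this with $f=\TT{\dUpsilon}{\QP}_\eps u$ for $0<\eps<t$ (which lies in $\dom{\LL{\dUpsilon}{\QP}}\cap L^\infty(\QP)$), use the Markovianity and conservativity of $\TT{\dUpsilon}{\QP}_\bullet$ --- giving $\norm{\TT{\dUpsilon}{\QP}_\eps u}_{L^\infty}\le\norm{u}_{L^\infty}$ and, by Jensen and positivity, $\TT{\dUpsilon}{\QP}_{t-\eps}\big((\TT{\dUpsilon}{\QP}_\eps u)^2\big)\le\TT{\dUpsilon}{\QP}_t(u^2)$ --- and let $\eps\downarrow0$ to obtain
\begin{equation}\label{eq:revPoin}
\SF{\dUpsilon}{\QP}\tparen{\TT{\dUpsilon}{\QP}_t u}\le C_K(t)\,\norm{u}_{L^\infty(\QP)}^2 \as{\QP}\comma\quad t>0\fstop
\end{equation}

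Finally, for $u\in L^\infty(\QP)$ and $t>0$ the function $\TT{\dUpsilon}{\QP}_t u$ belongs to $\dom{\EE{\dUpsilon}{\QP}}$ (semigroup smoothing) and, by~\eqref{eq:revPoin}, has $\SF{\dUpsilon}{\QP}(\TT{\dUpsilon}{\QP}_t u)\in L^\infty(\QP)$; the Sobolev-to-Lipschitz property on~$\dUpsilon$ (Theorem~\ref{t:IntroSL}) then provides a $\mssd_\dUpsilon$-Lipschitz $\QP$-modification $\rep u$ of $\TT{\dUpsilon}{\QP}_t u$ with $\Li[\mssd_\dUpsilon]{\rep u}\le\sqrt{C_K(t)}\,\norm{u}_{L^\infty(\QP)}$, which is moreover bounded ($\norm{\rep u}_{L^\infty}=\norm{\TT{\dUpsilon}{\QP}_t u}_{L^\infty}\le\norm{u}_{L^\infty}$), hence lies in $\Lip_b(\mssd_\dUpsilon)$; this is the assertion. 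I expect essentially no surprises in this argument itself: the substance is concentrated in the two results it invokes --- Theorem~\ref{thm:BE}, which rests on the semigroup representation and heat-kernel identification for~$\TT{\dUpsilon}{\QP}_\bullet$, and Theorem~\ref{t:IntroSL}, which rests on the identification $\mssd_\dUpsilon=\mssd_\QP$. The only delicate points internal to the above are the rigorous justification of the $L^1(\QP)$-valued interpolation in the second step for merely bounded, non-smooth data and its propagation to the $\QP$-a.e.\ bound~\eqref{eq:revPoin} --- both dealt with by first working with the smooth data $\TT{\dUpsilon}{\QP}_\eps u$ and then passing $\eps\downarrow0$ --- and keeping in mind throughout that $(\dUpsilon,\mssd_\dUpsilon,\QP)$ is only an \emph{extended} metric measure space, so that ``$\mssd_\dUpsilon$-Lipschitz'' and the membership $\rep u\in\Lip_b(\mssd_\dUpsilon)$ must be read in the sense already fixed in the statement of Theorem~\ref{t:IntroSL}.
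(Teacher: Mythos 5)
Your proposal is correct, and it reaches the conclusion by a slightly different packaging of the same ingredients as the paper's proof of Theorem~\ref{t:LinftyLip}. The common core is: $\BE(K,\infty)$ on~$\dUpsilon$ (Theorem~\ref{t:BE}/\ref{t:MixedPoissonAll}), the reverse Poincar\'e inequality $2I_{2K}(t)\,\SF{\dUpsilon}{\QP}(\TT{\dUpsilon}{\QP}_t u)\leq \norm{u}_{L^\infty(\QP)}^2$ (which you derive by the interpolation $s\mapsto \TT{\dUpsilon}{\QP}_s((\TT{\dUpsilon}{\QP}_{t-s}f)^2)$, with the correct constant $C_K(t)=1/(2I_{2K}(t))$, where the paper instead cites the argument of \cite[Thm.~6.5]{AmbGigSav14b}), and a Sobolev-to-Lipschitz step. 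The genuine divergence is in the last step: the paper does \emph{not} invoke the full property $(\SL{\QP}{\mssd_\dUpsilon})$ of Theorem~\ref{t:SL}; it first uses the logarithmic Harnack inequality and \cite[Prop.~3.1(1)]{WanYua11} to conclude that $\TT{\dUpsilon}{\QP}_t$ is strongly $\mssd_\dUpsilon$-Feller, so that $\TT{\dUpsilon}{\QP}_t u$ has a $\mssd_\dUpsilon$-continuous representative, and then applies only the weaker $\mssd$-continuous Sobolev-to-Lipschitz property $(\dcSL{\mssd_\dUpsilon}{\QP}{\mssd_\dUpsilon})$, which requires such a continuous representative as input. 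You instead apply Theorem~\ref{t:IntroSL} (= Theorem~\ref{t:SL}) directly to $\TT{\dUpsilon}{\QP}_t u\in\dom{\EE{\dUpsilon}{\QP}}$; this is legitimate and non-circular, since Theorem~\ref{t:SL} is proved independently of the regularization statement and its hypotheses are verified for weighted manifolds with Ricci curvature bounded below (Corollary~\ref{c:SLManifolds}). The trade-off is that your route hides the log-Harnack/strong-Feller and $(\dcSL{\mssd_\dUpsilon}{\QP}{\mssd_\dUpsilon})$ machinery inside the proof of Theorem~\ref{t:SL} rather than exposing it, whereas the paper's formulation makes explicit exactly which (weaker) metric-measure hypotheses on $\dUpsilon$ suffice for the regularization on its own. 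One point worth making explicit in your write-up: the membership $\TT{\dUpsilon}{\QP}_t u\in\dom{\EE{\dUpsilon}{\QP}}$ for $u\in L^\infty(\QP)\subset L^2(\QP)$ (since $\QP$ is a probability measure) follows from the standard smoothing $\TT{\dUpsilon}{\QP}_t\colon L^2(\QP)\to\dom{\LL{\dUpsilon}{\QP}}$, as you indicate; with that, no gap remains.
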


Theorem~\ref{t:IntroLinftyLip} provides a natural regularization of functions in $L^\infty(\PP)$ by \emph{smoothing}.
This kind of smoothing property of the semigroup is a substitute for the $L^\infty(\PP)$-to-$\Cb(\T_\mrmv)$-regularization property, which should not be expected in infinite-dimensional settings.

\medskip

Both Theorem~\ref{t:IntroSL} and Theorem~\ref{t:IntroLinftyLip} hold in greater generality for \MLDS's satisfying suitable assumptions; see the corresponding statements in~\S\ref{s:ApplicationsConfig2}.

\paragraph{Overview}
It has been recently shown by the second named author in~\cite{Suz22} that measures~$\QP$ \emph{rigid in number} in the sense of Ghosh--Peres~\cite{GhoPer17} satisfy the same characterization of irreducibility as in Corollary~\ref{c:IntroErgo}.
This case is antithetic to the one presented here, in that (mixed) Poisson measures are ---~as much as possible~--- \emph{not} rigid in number.

\section{Background}

\subsection{Base spaces} We briefly recall the necessary definitions, referring to~\cite{LzDSSuz21} for a more exhaustive discussion.

\subsubsection{Topological local structures}
Let~$(X,\A,\mssm)$ be a measure space. We denote by~$\A_\mssm\subset \A$ the algebra of sets of finite $\mssm$-measure, and by~$(X,\A^\mssm,\hat\mssm)$ the Carath\'eodory completion of~$(X,\A,\mssm)$ w.r.t.~$\mssm$.
A sub-ring $\msE\subset \A$ is a \emph{ring ideal} of~$\A$, if it is closed under finite unions and intersections, and~$A\cap E\in \msE$ for every~$A\in \A$ and~$E\in \msE$.

\begin{defs}[Topological local structures]\label{d:TLSConfig2}
Let~$X$ be a non-empty set. A \emph{topological local structure} is a tuple $\mcX\eqdef (X,\T,\A,\mssm,\msE)$ so that
\begin{enumerate}[$(a)$]
\item\label{i:d:TLSConfig2:1} $(X,\T)$ is a separable metrizable Luzin topological space, with Borel $\sigma$-algebra~$\Bo{\T}$;

\item\label{i:d:TLSConfig2:2} $\A$ is a $\sigma$-algebra on~$X$, and $\mssm$ is an atomless Radon measure on~$(X,\T,\A)$ with full $\T$-support;

\item\label{i:d:TLSConfig2:3} $\Bo{\T}\subset \A\subset \Bo{\T}^\mssm$ and $\A$ is \emph{$\mssm$-essentially countably generated}, i.e.\ there exists a countably generated $\sigma$-sub\-al\-ge\-bra~$\A_0$ of~$\A$ so that for every~$A\in \A$ there exists $A_0\in \A_0$ with~$\mssm(A\triangle A_0)=0$.

\item\label{i:d:TLSConfig2:4} $\msE \subset \A_\mssm$ is a \emph{localizing ring}, i.e.\ it is a ring ideal of~$\A$, and there exists a \emph{localizing sequence}~$\seq{E_h}_h\subset \msE$ so that $\msE=\cup_{h\geq 0} (\A\cap E_h)$.

\item\label{i:d:TLSConfig2:5} for every~$x\in X$ there exists a $\T$-neighborhood~$U_x$ of~$x$ so that~$U_x\in\msE$.
\end{enumerate}
\end{defs}

\begin{rem}
Definition~\ref{d:TLSConfig2} above is equivalent to the definition of topological local structure in~\cite[Dfn.~3.1]{LzDSSuz21} in light of~\cite[Rmk.~3.2]{LzDSSuz21}.
For the importance of considering Luzin spaces (rather than, e.g., Polish spaces), see~\cite[Rmk.~\ref{r:Luzin}]{LzDSSuz21}.
\end{rem}

Let~$\mcX$ be a topological local structure.
Write~$\mcL^\infty(\mssm)$ or~$\Sb(X)$ for the Banach lattice of real-valued \emph{bounded} (as opposed to: $\mssm$-essentially bounded) $\A$-measurable functions.
It will be of great importance to distinguish between measurable functions on~$\mcX$ and their $\mssm$-classes.

\begin{notat}\label{n:ClassesConfig2}
We denote $\mssm$-classes of functions by~$f$,~$g$, etc., measurable representatives by~$\rep f$,~$\rep g$, etc.
Whenever~$f$ is an $\mssm$-class,~$\rep f$ is taken to be a representative of~$f$. Whenever~$\rep f$ is a measurable function, possibly undefined on an $\mssm$-negligible set,~$f=\ttclass[\mssm]{\rep f}$ is taken to be the corresponding $\mssm$-class.
We shall adopt the same convention for other objects, thus writing e.g.,~$\Dz\subset L^\infty(\mssm)$, resp.~$\rep\Dz\subset \mcL^\infty(\mssm)$.
Any $\mssm$-class~$f$ defined on a topological local structure has at most \emph{one} continuous $\mssm$-representa\-tive, by the properties of~$\mssm$.
Everywhere in the following, if~$f$ has a continuous $\mssm$-representative, say~$\rep f$, we shall always assume to be concerned with~$\rep f$; in this case we drop the notation for representatives, thus writing~$f$ for both the $\mssm$-class \emph{and} the continuous $\mssm$-representative.
\end{notat}

\begin{defs}[Function spaces]
Write $\Sb(\msE)$ for the space of bounded $\A$-measurable $\msE$-\emph{eventual\-ly vanishing} functions on~$X$, viz.
\begin{align*}
\Sb(\msE)\eqdef \set{f\in\Sb(X) : f\equiv 0 \text{~~on } E^\complement \text{~~for some }E\in\msE}  \fstop
\end{align*}
Write $\Cb(X)$ for the space of $\T$-continuous bounded functions on~$X$, and~$\Cz(\msE)\eqdef \Cb(X)\cap \Sb(\msE)$ for the space of $\T$-continuous bounded $\msE$-\emph{eventually vanishing} functions on~$X$; $\Ccompl(\msE)$ for the space of $\T$-continuous bounded functions on~$X$ \emph{vanishing at $\msE$-infinity}, viz.
\begin{equation*}
\Ccompl(\msE)\eqdef \set{f\in\Cb(\T): \forall \eps>0 \quad \exists E_\eps\in\msE : \abs{f(x)}<\eps \quad x\in E_\eps^\complement}\fstop
\end{equation*}
\end{defs}

\subsubsection{Topological local diffusion spaces}
We write~$\Cbinfty(\R^k)$ for the space of real-valued bounded smooth functions on~$\R^k$ with bounded derivatives of all orders.
For~$\rep f_i\in \mcL^\infty(\mssm)$,~$i\leq k$, set
\begin{align*}
\mbff\eqdef \seq{f_1,\dotsc, f_k}\in L^\infty(\mssm;\R^k)\comma \qquad \text{resp.}\qquad \rep\mbff \eqdef \tseq{\rep f_1,\dotsc, \rep f_k}\in\mcL^\infty(\mssm;\R^k) \fstop
\end{align*}

\begin{defs}[Square field operators]\label{d:SFConfig2}
Let~$\mcX$ be a topological local structure. For every~$\phi\in \Cb^\infty(\R^k)$ set $\phi_0\eqdef \phi-\phi(\zero)$. A \emph{square field operator on $\mcX$} is a pair~$(\cdc, \Dz)$ so that, for every~$\mbff\in\Dz^\otym{k}$, every~$\phi\in \Cb^\infty(\R^k)$ and every~$k\in \N_0$,
\begin{enumerate}[$(a)$]
\item\label{i:d:SFConfig2:1} $\Dz$ is a subalgebra of~$L^\infty(\mssm)$ with~$\phi_0\circ\mbff\in \Dz$;

\item\label{i:d:SFConfig2:2} $\cdc\colon \Dz^\otym{2}\longrar L^\infty(\mssm)$ is a symmetric non-negative definite bilinear form;

\item\label{i:d:SFConfig2:3} $(\cdc,\Dz)$ satisfies the following \emph{diffusion property}
\begin{align}\label{eq:i:d:SFConfig2:3}
\cdc\tparen{\phi_0\circ \mbff, \psi_0\circ \mbfg}=
\sum_{i,j}^k (\partial_i \phi)\circ \mbff \cdot (\partial_j\psi)\circ\mbfg \cdot \cdc(f_i, g_j) \as{\mssm}\fstop
\end{align}
\end{enumerate}

Let the analogous definition of a \emph{pointwise defined square field operator}~$(\rep\cdc, \rep\Dz)$ be given, with~$\rep\Dz\subset \mcL^\infty(\mssm)$ in place of~$\Dz\subset L^\infty(\mssm)$, and with~\eqref{eq:i:d:SFConfig2:3} to hold pointwise (as opposed to: $\mssm$-a.e.).
\end{defs}

A pointwise defined square field operator~$(\rep\cdc, \rep\Dz)$ defines a square field operator~$(\cdc,\Dz)$ on $\mssm$-classes as soon as, cf.~\cite[p.~282]{MaRoe00},
\begin{align*}
\rep\cdc(\rep f,\rep g)=0\comma \qquad \rep f,\rep g\in \rep\Dz\comma \rep f \equiv 0 \as{\mssm}\fstop
\end{align*}

\begin{defs}[Topological local diffusion spaces,~{\cite[Dfn.~3.7]{LzDSSuz21}}]\label{d:TLDSConfig2}
A \emph{topological local diffusion space} (in short: \TLDS) is a pair $(\mcX,\cdc)$ so that
\begin{enumerate}[$(a)$]
\item\label{i:d:TLDSConfig2:1} $\mcX$ is a topological local structure;
\item\label{i:d:TLDSConfig2:2} $\Dz\subset \Cz(\msE)$ is a subalgebra of~$\Cz(\msE)$ generating the topology~$\T$ of~$\mcX$;
\item\label{i:d:TLDSConfig2:3} $\cdc\colon \Dz^\otym{2}\rar \class[\mssm]{\Sb(\msE)}$ is a square field operator;
\item\label{i:d:TLDSConfig2:4} the bilinear form~$(\EE{X}{\mssm},\Dz)$ defined by
\begin{align*}
\EE{X}{\mssm}(f,g)\eqdef \int \cdc(f,g) \diff\mssm \comma \qquad f, g\in \Dz\comma
\end{align*}
is closable and densely defined in~$L^2(\mssm)$;
\item\label{i:d:TLDSConfig2:5} its closure~$\tparen{\EE{X}{\mssm},\dom{\EE{X}{\mssm}}}$ is a quasi-regular Dirichlet form in the sense of e.g.~\cite{CheMaRoe94}.
\end{enumerate}
\end{defs}

We stress that the square field operator~$\cdc$ on a \TLDS takes values in a space of $\mssm$-classes, \emph{not} representatives.
Let us introduce some further notation relative to Definition~\ref{d:TLDSConfig2}\ref{i:d:TLDSConfig2:5}.

\begin{notat}\label{n:FormConfig2} Let~$(\mcX,\cdc)$ be a~\TLDS.
\begin{enumerate*}[$(a)$]
\item\label{i:n:FormConfig2:1} It is readily verified that~$\tparen{\EE{X}{\mssm},\dom{\EE{X}{\mssm}}}$ admits square field operator~$\tparen{\SF{X}{\mssm},\dom{\SF{X}{\mssm}}}$ with $\dom{\SF{X}{\mssm}}=\dom{\EE{X}{\mssm}}\cap L^\infty(\mssm)$, and extending~$\tparen{\cdc,\Dz}$.
Further let:
\item\label{i:n:FormConfig2:3} $\tparen{\LL{X}{\mssm}, \dom{\LL{X}{\mssm}}}$ be the generator of~$\tparen{\EE{X}{\mssm},\dom{\EE{X}{\mssm}}}$;
\linebreak
\item\label{i:n:FormConfig2:4} $\TT{X}{\mssm}_\bullet\eqdef \tseq{\TT{X}{\mssm}_t}_{t\geq 0}$ be the semigroup of~$\tparen{\LL{X}{\mssm}, \dom{\LL{X}{\mssm}}}$, defined on~$L^p(\mssm)$ for every~$p\in [1,\infty)$.
\item by~$\hh{X}{\mssm}_\bullet\eqdef \tseq{\hh{X}{\mssm}_t(\emparg,\diff\emparg)}_{t\geq 0}$ the corresponding Markov kernel of measures, satisfying
\end{enumerate*}
\begin{align}\label{eq:n:FormConfig2:1}
\tparen{\TT{X}{\mssm}_t f}(x)=\int f(y)\, \hh{X}{\mssm}_t(x,\diff y)\comma \quad \forallae{\mssm} x\in X\comma \qquad f\in L^2(\mssm)\comma t\geq 0 \semicolon
\end{align}

Finally, let
\begin{enumerate*}[$(a)$]\setcounter{enumi}{5}
\item\label{i:n:FormConfig2:6}$\domext{\EE{X}{\mssm}}$ be the extended Dirichlet space of~$\tparen{\EE{X}{\mssm},\dom{\EE{X}{\mssm}}}$, i.e.\ the space of $\mssm$-classes of functions~$f\colon X\rar \R$ so that there exists an $(\EE{X}{\mssm})^{1/2}$-fun\-da\-men\-tal sequence $\seq{f_n}_n\subset \dom{\EE{X}{\mssm}}$ with $\nlim f_n=f$ $\mssm$-a.e..
The form~$\EE{X}{\mssm}$ naturally extends to a quadratic form on~$\domext{\EE{X}{\mssm}}$, denoted by the same symbol~$\EE{X}{\mssm}$, and we always consider~$\domext{\EE{X}{\mssm}}$ as endowed with this extension;
\item\label{i:n:FormConfig2:7}$\domext{\SF{X}{\mssm}}\eqdef\domext{\EE{X}{\mssm}}\cap L^\infty(\mssm)$ be the extended space of~$\tparen{\SF{X}{\mssm},\dom{\SF{X}{\mssm}}}$, endowed with the non-relabeled extension of~$\SF{X}{\mssm}$.
\end{enumerate*}
\end{notat}

\subsubsection{Extended-metric local diffusion spaces}
Let~$\mcX$ be a topological local structure, and $\mssd\colon X^\tym{2}\rar[0,\infty]$ be an extended distance.

\begin{defs}[Extended-metric local structure,~{\cite[Dfn.~\ref{d:EMLS}]{LzDSSuz21}}]\label{d:EMLSConfig2}
We say that~$(\mcX,\mssd)$ is an \emph{extend\-ed-metric local structure}, if
\begin{enumerate}[$(a)$]
\item\label{i:d:EMLSConfig2:0} $\mcX$ is a topological local structure in the sense of Definition~\ref{d:TLSConfig2};

\item\label{i:d:EMLSConfig2:1} $(X,\T,\mssd)$ is a \emph{complete} extended metric-topological space in the sense of \cite[Dfn.~\ref{d:AES}]{LzDSSuz21};

\item\label{i:d:EMLSConfig2:2} $\msE=\msE_\mssd\eqdef \Bdd{\mssd}\cap \A$ is the localizing ring of $\A$-measurable $\mssd$-bounded sets.
\end{enumerate}
If~$\mssd$ is finite, then it metrizes~$\T$,~\ref{i:d:EMLSConfig2:1} reduces to the requirement that~$(X,\mssd)$ be a complete metric space, and we say that~$(\mcX,\mssd)$ is a \emph{metric local structure}.
\end{defs}

The Definition~\ref{d:TLDSConfig2} of \TLDS can be now combined with that of a metric local structure above.

\begin{defs}[Metric local diffusion spaces,~{\cite[Dfn.~\ref{d:EMLDS}]{LzDSSuz21}}]\label{d:EMLDSConfig2}
An (\emph{extended}) \emph{metric local diffusion space} (in short: \parEMLDS) is a triple $(\mcX,\cdc,\mssd)$ so that~$(\mcX,\mssd)$ is an (extended) metric local structure, and~$(\mcX,\cdc)$ is a \TLDS.
\end{defs}

\subsubsection{Intrinsic distances and maximal functions}
We recall some basic properties of intrinsic distances and maximal functions of Dirichlet spaces.
We assume the reader to be familiar with `quasi-notions' and broad local spaces in the sense of Dirichlet forms theory, see e.g.~\cite{Kuw98, LzDSSuz20}.

Let~$(\mcX,\cdc)$ be a \TLDS, and~$\tparen{\EE{X}{\mssm},\dom{\EE{X}{\mssm}}}$ be the (quasi-regular strongly local) Dirichlet form with square field operator~$\SF{X}{\mssm}$.
We denote by~$\domloc{\EE{X}{\mssm}}$ its broad local domain, and consider the non-relabeled extension of~$\cdc$ to~$\domloc{\EE{X}{\mssm}}$; see e.g.~\cite[\S2.4]{LzDSSuz20}.
The \emph{broad local space of functions with $\mssm$-uniformly bounded $\EE{X}{\mssm}$-energy} is the space
\begin{align*}
\DzLoc{\mssm}\eqdef \set{f\in \domloc{\EE{X}{\mssm}}: \cdc(f)\leq 1 \as{\mssm}}\fstop
\end{align*}
We additionally set~$\DzLocB{\mssm,\T}\eqdef \DzLoc{\mssm}\cap \Cb(\T)$ and $\DzB{\mssm,\T}\eqdef \DzLocB{\mssm,\T}\cap \dom{\EE{X}{\mssm}}$.

\begin{defs}[Intrinsic distance]
Let~$(\mcX,\cdc)$ be a \TLDS. The \emph{intrinsic distance associated to~$\EE{X}{\mssm}$} is the extended pseudo-distance
\begin{align*}
\mssd_\mssm(x,y)\eqdef \sup_{f\in \DzLocB{\mssm,\T}} \abs{f(x)-f(y)} \fstop
\end{align*}
\end{defs}
For more information on intrinsic distances, we refer the reader to~\cite[\S{2.6}]{LzDSSuz20}.

\paragraph{Maximal functions}
A second class of functions playing a role in the analysis of Dirichlet spaces is that of \emph{maximal functions}.

\begin{defs}[Maximal functions,~{\cite{HinRam03}}]\label{d:MaximalFunctionConfig2}
For each~$A\in \A$ there exists an $\mssm$-a.e.\ unique function $\hr{\mssm, A}\colon X\to[0,\infty]$ so that, for each~$r>0$,
\begin{align*}
\hr{\mssm, A}\wedge r = \mssm\text{-}\esssup \set{f: f\in \DzLocB{\mssm}\comma f\equiv 0 \as{\mssm} \text{ on~$A$}\comma f\leq r \as{\mssm}} \fstop
\end{align*}
\end{defs}

The study of maximal functions is motivated by the following result.

\begin{thm}[Ariyoshi--Hino~{\cite[Thm.~5.2(i)]{AriHin05}}]
Let~$(\mcX,\cdc)$ be a \TLDS, and~$\nu\sim \mssm$ be any probability measure on~$(X,\A)$.
Further let~$A\in \A$ be so that~$\mssm A\in(0,\infty)$, and set~$u_t\eqdef -2t \log \TT{X}{\mssm}_t\car_A$.
Then, $\nu$-$\lim_{t\downarrow 0} u_t\cdot \car_{\set{u_t <\infty}} = \hr{\mssm, A}^2$.
\end{thm}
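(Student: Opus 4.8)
The plan is to recognise the statement as a Varadhan-type short-time asymptotic for the heat content $\TT{X}{\mssm}_t\car_A$ --- it is \cite[Thm.~5.2(i)]{AriHin05}, whose proof extends here with no change --- and to follow the argument of Ariyoshi and Hino (refining Hino--Ram\'irez \cite{HinRam03}), checking that it invokes nothing about $\mcX$ beyond the structure of a \TLDS. Since $\nu\sim\mssm$ and both are probability measures, convergence in $\nu$-measure coincides with convergence in $\mssm$-measure, so it suffices to prove the two one-sided in-measure bounds $\mssm$-$\liminf_{t\downarrow0}u_t\geq\hr{\mssm,A}^2$ and $\mssm$-$\limsup_{t\downarrow0}u_t\leq\hr{\mssm,A}^2$ on $\set{\hr{\mssm,A}<\infty}$. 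The degenerate locus $\set{u_t=\infty}=\set{\TT{X}{\mssm}_t\car_A=0}$ is dealt with last: the lower bound shows it is, up to $\mssm$-null sets, eventually contained in $\set{\hr{\mssm,A}=\infty}$, so the truncation $\car_{\set{u_t<\infty}}$ is immaterial on $\set{\hr{\mssm,A}<\infty}$.

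For the lower bound I would use the off-diagonal (Davies--Gaffney) upper estimate, which relies only on the strong locality of $\tparen{\EE{X}{\mssm},\dom{\EE{X}{\mssm}}}$. Fix $r>s>0$ and put $h\eqdef\hr{\mssm,A}\wedge r$; by Definition~\ref{d:MaximalFunctionConfig2}, $h\in\domloc{\EE{X}{\mssm}}$ with $\cdc(h)\leq1$ $\mssm$-a.e.\ and $h\equiv0$ $\mssm$-a.e.\ on $A$. For $\alpha\geq0$ the twisted operator $R_t\eqdef e^{\alpha h}\TT{X}{\mssm}_t e^{-\alpha h}$ satisfies an energy bound $\ttnorm{R_t}_{L^2(\mssm)\to L^2(\mssm)}\leq e^{c\alpha^2 t}$, with $c$ determined by the normalisation of $\EE{X}{\mssm}$: this follows from a Gronwall argument on $\tfrac{\diff}{\diff t}\ttnorm{R_t f}_{L^2(\mssm)}^2$ in which the diffusion property of $\cdc$ makes the cross terms in $\cdc\tparen{e^{-\alpha h}g,e^{\alpha h}g}$ cancel, leaving $\cdc(g)-\alpha^2 g^2\cdc(h)\geq-\alpha^2 g^2$. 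Testing $\TT{X}{\mssm}_t\car_A$ against $\car_{\set{h\geq s}\cap B}$ for an arbitrary $B$ of finite $\mssm$-measure, using $h\equiv0$ on $A$ and optimising in $\alpha$, gives $\int_{\set{h\geq s}\cap B}\TT{X}{\mssm}_t\car_A\diff\mssm\leq e^{-s^2/(2t)}\mssm(A)^{1/2}\mssm(B)^{1/2}$; a Chebyshev inequality at the threshold $e^{-(s^2-\delta)/(2t)}$ then yields $\mssm\tbraket{\set{h\geq s}\cap B\cap\set{u_t<s^2-\delta}}\leq e^{-\delta/(2t)}\mssm(A)^{1/2}\mssm(B)^{1/2}\to0$ for every $\delta>0$; as $\nu$ is finite, letting $s\uparrow r\uparrow\infty$ and $\delta\downarrow0$ gives the $\mssm$-$\liminf$ bound in $\nu$-measure.

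The upper bound is the substantial direction, being equivalent to a matching lower Gaussian estimate for the heat content. The scheme, following \cite{HinRam03}, is first to combine the upper estimate just obtained with the conservativity $\int_X\TT{X}{\mssm}_t\car_A\diff\mssm=\mssm(A)$ to see that, as $t\downarrow0$, the mass of $\TT{X}{\mssm}_t\car_A$ that reaches $\hr{\mssm,A}$-distance $\geq\eta$ from $A$ is --- on any $\mssm$-bounded set --- negligible relative to $\mssm(A)$, for every $\eta>0$; and then to propagate a lower bound from near $A$ out to a point $x$ with $\hr{\mssm,A}(x)<\infty$ by iterating the semigroup identity $\TT{X}{\mssm}_{nt}\car_A=\bigl(\TT{X}{\mssm}_t\bigr)^n\car_A$ along a chain of sets $A=A_0\subset A_1\subset\cdots$ with small consecutive $\hr{\mssm,A}$-increments, extracting at each step, by Markov's inequality, a sublevel set of controlled $\mssm$-measure on which $\TT{X}{\mssm}_t\car_{A_k}$ is bounded below (equivalently, by propagating a Gaussian sub-barrier for $\partial_t-\LL{X}{\mssm}$ via the parabolic comparison principle). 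Controlling the accumulated step constants sharply enough, one arrives at $\TT{X}{\mssm}_t\car_A(x)\gtrsim e^{-(\hr{\mssm,A}(x)+\delta)^2/(2t)}$ for small $t$ off an $\mssm$-set of measure $<\delta$; taking $-2t\log$ and letting $t\downarrow0$, $\delta\downarrow0$ gives the $\mssm$-$\limsup$ bound. Combining the two bounds and returning to $\nu$-measure completes the proof. The hard part will be precisely this last step --- propagating the lower bound while controlling the accumulated constants sharply enough to recover $\hr{\mssm,A}^2$ rather than a constant multiple of it, with no local compactness, doubling, or volume-growth hypothesis available, using only the strong locality of $\EE{X}{\mssm}$ and the broad local calculus for $\cdc$. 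This is the technical heart of \cite{HinRam03,AriHin05}, which we follow; the only point particular to our framework is that $\mcX$ is merely a \TLDS, so that $\tparen{\EE{X}{\mssm},\dom{\EE{X}{\mssm}}}$ is only quasi-regular and $(X,\T)$ only Luzin, but every step above takes place inside the $L^2(\mssm)$-theory of the form and its broad local extension and is insensitive to this.
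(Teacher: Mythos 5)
The paper does not prove this statement: it is imported verbatim from Ariyoshi--Hino~\cite[Thm.~5.2(i)]{AriHin05} (building on~\cite{HinRam03}), exactly as you do, and your outline of the Davies--Gaffney-type lower bound via the exponentially twisted semigroup and of the chaining argument for the matching upper bound is a faithful account of that reference's strategy, which indeed only uses the quasi-regular strongly local $L^2(\mssm)$-theory and so applies to any \TLDS. The one slip is your opening claim that $\mssm$ is a probability measure --- in this setting it is in general an infinite Radon measure --- but since your argument only uses that $\nu$ is a \emph{finite} measure equivalent to $\mssm$ (so that convergence $\nu$-in-measure is local convergence in $\mssm$-measure, tested on sets of finite $\mssm$-measure), nothing breaks.
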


\paragraph{Regularized distances} 
Whereas intrinsic distances play an important role in the study of the relative (quasi-regular) Dirichlet forms (and of the associated Markov processes), many properties of such distances, and especially of their point-to-set counterparts, are typically quite difficult to establish, especially when~$\mssd_\mssm$ does not generate the underlying topology~$\T$ (which will be the case for configuration spaces), see e.g.~\cite{LzDSSuz20}. 
In order to overcome such difficulties, we introduce the following regularization.

\begin{defs}[Regularized intrinsic point-to-set distance]\label{d:RegIntrinsicD}
Let~$(\mcX,\cdc)$ be a \TLDS. The \emph{regularized intrinsic point-to-set distance associated to~$\EE{X}{\mssm}$} is the function
\begin{align*}
\mssd^\reg_{\mssm,A}(x)\eqdef \sup_{f\in \DzLocB{\mssm,\T}}\inf_{y\in A} \abs{f(x)-f(y)} \comma \qquad A\subset X \fstop
\end{align*}
As usual for (extended pseudo-)distances, we denote by~$\mssd^\reg_{\mssm,A}$ both \emph{the} representative (defined as above) and the~$\mssm$-class of the function.
\end{defs}

\begin{rem}
We stress that the regularized distance~$\mssd^\reg_{\mssm,A}$ is different from the point-to-set distance~$\mssd_\mssm(\emparg,A)$ defined by~$\mssd_\mssm$, cf.~\eqref{eq:Point-to-SetDistance}.
In particular, it is in general not true that~$\mssd_\mssm(\emparg, A)$ is $\A$-measurable, whereas~$\mssd^\reg_{\mssm,A}$ is $\A$-measurable by definition, since the infimum and supremum in the definition are exchanged.
The $\A$-measurability of~$\mssd^\reg_{\mssm,A}$, shown in Proposition~\ref{p:RegIntrinsic}\ref{i:p:RegIntrinsic:3} below, is indeed one main motivation in introducing regularized intrinsic point-to-set distances.
\end{rem}

Let us collect some properties of~$\mssd^\reg_{\mssm,A}$.

\begin{prop}[Properties of~$\mssd^\reg_{\mssm,A}$]\label{p:RegIntrinsic}
Let~$(\mcX,\cdc)$ be a \TLDS. Then, for every~$A\subset X$ (not necessarily measurable)
\begin{enumerate}[$(i)$]
\item\label{i:p:RegIntrinsic:1} $\mssd^\reg_{\mssm, \set{y}}(x)=\mssd_\mssm(x,y)$ for each~$x,y\in X$, and~$\mssd^\reg_{\mssm,A}\leq \mssd_\mssm(\emparg,A)$;

\item\label{i:p:RegIntrinsic:2} for every~$y\in X$ and every net of sets~$\seq{A_\alpha}_\alpha$ with~$A_\alpha\downarrow_\alpha \set{y}$,
\begin{align*}
\mssd^\reg_{\mssm, A_\alpha} \uparrow_\alpha \mssd_\mssm(y,\emparg) \fstop
\end{align*}

\item\label{i:p:RegIntrinsic:3} $\mssd^\reg_{\mssm,A}$ is $\T$-l.s.c.; in particular, it is $\Bo{\T}$-measurable;

\item\label{i:p:RegIntrinsic:4} $\mssd^\reg_{\mssm,A}\leq \hr{\mssm,A}$ $\mssm$-a.e.\ whenever the latter is well-defined (e.g.\ if~$A\in\A$).
\end{enumerate}
\begin{proof}
The first assertion in~\ref{i:p:RegIntrinsic:1} is straightforward; the second follows by the standard $\inf$-$\sup$ inequality.
\ref{i:p:RegIntrinsic:2} Since~$\mssd^\reg_{\mssm,A}$ is monotone increasing in decreasing~$A$, the assertion follows by the exchange of isotone monotone limits.
\ref{i:p:RegIntrinsic:3} For~$A\subset X$ and~$f\in \Cb(\T)$ set $\mssd_{f,A}\eqdef \inf_{z\in A} \abs{f(\emparg)-f(z)}$, and note that
\begin{align}\label{eq:NormalContraction}
\mssd_{f,A}(x)-\mssd_{f,A}(y) \leq \abs{f(x)-f(y)}\comma \qquad x,y\in X\comma
\end{align}
thus the continuity of~$\mssd_{f,A}$ follows from that of~$f$. The $\T$-lower semi-continuity of~$\mssd^\reg_{\mssm, A}$ follows, since the supremum over an arbitrary family of $\T$-(l.s.-)continuous functions is $\T$-l.s.c..

\ref{i:p:RegIntrinsic:4} 
Let~$\mssd_{f,A}$ be as above, and note that~$\mssd_{f,A}= 0$ on~$A$. 
Also note that
\[
\mssd_{f,x_0}(\emparg)\eqdef \abs{f(\emparg)-f(x_0)}
\]
defines a pseudo-metric on~$X$, separable by separability of~$\T$ and $\T$-continuity of~$f$.
By~\cite[Thm.~1.1]{LzDSSuz20}, every $\mssd_{f,x_0}$-Lipschitz function~$g$ satisfies~$g\in \DzLocB{\mssm}$.
By~\eqref{eq:NormalContraction}, this applies to~$g\eqdef \mssd_{f,A}$.
Thus, we have that~$\mssd_{f,A}\in \DzLocB{\mssm,\T}$ whenever~$f\in\DzLocB{\mssm,\T}$, where the $\T$-continuity follows from~\eqref{eq:NormalContraction} and the $\T$-continuity of~$f$.
The same assertion holds for~$\mssd_{f,A}\wedge r$ for any~$r\geq 0$.
Letting~$r$ to infinity we thus have~$\mssd_{f,A}\leq\hr{\mssm, A}$ for every~$f\in\DzLocB{\mssm,\T}$, by maximality of~$\hr{\mssm, A}$.
The conclusion follows by extremizing the inequality over~$f\in\DzLocB{\mssm,\T}$.
\end{proof}
\end{prop}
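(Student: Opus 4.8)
The statement packages three essentially formal facts, (i)--(iii), together with one that carries the real analytic content, (iv); I would treat them in that order.

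For (i), substituting $A=\set{y}$ into the definition of $\mssd^\reg_{\mssm,A}$ collapses the inner infimum and leaves exactly $\sup_{f\in\DzLocB{\mssm,\T}}\abs{f(x)-f(y)}=\mssd_\mssm(x,y)$, while the bound $\mssd^\reg_{\mssm,A}\leq\mssd_\mssm(\emparg,A)$ is the universal inequality $\sup_f\inf_{z\in A}\leq\inf_{z\in A}\sup_f$ applied to $(f,z)\mapsto\abs{f(x)-f(z)}$ together with $\inf_{z\in A}\mssd_\mssm(\emparg,z)=\mssd_\mssm(\emparg,A)$. For (ii), I would first note that $A\mapsto\mssd^\reg_{\mssm,A}$ is monotone \emph{decreasing} (enlarging the constraint set only shrinks the inner infimum), so $\mssd^\reg_{\mssm,A_\alpha}$ increases along $A_\alpha\downarrow\set{y}$; to identify the limit, interchange the supremum over $\alpha$ with the supremum over $f$ (both are increasing limits, so this is harmless), and for each fixed $f\in\DzLocB{\mssm,\T}$ use $\T$-continuity of $f$ and $A_\alpha\downarrow\set{y}$ to get $\inf_{z\in A_\alpha}\abs{f(x)-f(z)}\uparrow\abs{f(x)-f(y)}$; taking the supremum over $f$ and invoking (i) gives $\sup_\alpha\mssd^\reg_{\mssm,A_\alpha}=\mssd_\mssm(y,\emparg)$.

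For (iii), the point is that for every $f\in\Cb(\T)$ and every $A$ the function $\mssd_{f,A}\eqdef\inf_{z\in A}\abs{f(\emparg)-f(z)}$ is a normal contraction of $f$, i.e.\ it satisfies $\mssd_{f,A}(x)-\mssd_{f,A}(y)\leq\abs{f(x)-f(y)}$, hence is $\T$-continuous; consequently $\mssd^\reg_{\mssm,A}=\sup_{f\in\DzLocB{\mssm,\T}}\mssd_{f,A}$ is a pointwise supremum of $\T$-continuous functions, therefore $\T$-lower semicontinuous and in particular $\Bo{\T}$-measurable.

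Part (iv) is the one with substance. For fixed $f\in\DzLocB{\mssm,\T}$ I would consider the pseudo-distance $d_f(\emparg,\emparg')\eqdef\abs{f(\emparg)-f(\emparg')}$, which is separable since $\T$ is; by the contraction estimate from (iii), $\mssd_{f,A}$ and each truncation $\mssd_{f,A}\wedge r$ is $1$-Lipschitz for $d_f$ and vanishes on $A$. The crucial external input is the intrinsic-distance comparison of \cite[Thm.~1.1]{LzDSSuz20}: the carr\'e-du-champ bound $\cdc(f)\leq 1$ forces every $d_f$-Lipschitz function into $\DzLocB{\mssm}$, so $\mssd_{f,A}\wedge r\in\DzLocB{\mssm}$; since this function is $\mssm$-a.e.\ bounded and $\mssm$-a.e.\ $0$ on $A$, maximality of $\hr{\mssm,A}$ in Definition~\ref{d:MaximalFunctionConfig2} yields $\mssd_{f,A}\wedge r\leq\hr{\mssm,A}$ $\mssm$-a.e.; letting $r\to\infty$ and extremizing over $f$ gives $\mssd^\reg_{\mssm,A}\leq\hr{\mssm,A}$ $\mssm$-a.e. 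I expect the main obstacle to be exactly this last engine --- converting the $\mssm$-a.e.\ energy bound $\cdc(f)\leq1$ into a global Lipschitz/broad-local-domain statement for \emph{all} $d_f$-Lipschitz functions --- with a secondary subtlety in the final extremization over the (a priori uncountable) family $\DzLocB{\mssm,\T}$, which I would control using the lower semicontinuity from (iii) together with a separability/cofinality reduction of that family.
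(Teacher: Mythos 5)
Your proposal is correct and follows essentially the same route as the paper's proof: the same treatment of (i)--(iii) via the $\inf$--$\sup$ inequality, exchange of isotone limits, and the normal-contraction estimate for $\mssd_{f,A}$, and for (iv) the same key input, namely \cite[Thm.~1.1]{LzDSSuz20} applied to the separable pseudo-distance $\abs{f(\emparg)-f(\emparg')}$, followed by maximality of $\hr{\mssm,A}$, truncation at level $r$, and extremization over $f\in\DzLocB{\mssm,\T}$. The two delicate points you flag --- identifying the limit of the inner infima in (ii) and controlling the final supremum over the uncountable family $\DzLocB{\mssm,\T}$ against an $\mssm$-a.e.\ bound --- are real but are passed over silently in the paper as well, so your extra caution only strengthens the argument.
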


\subsubsection{Rademacher and Sobolev-to-Lipschitz properties}
In this section, we focus on the interplay between the diffusion-space structure and the metric structure of an \parEMLDS.
We refer the reader to~\cite{LzDSSuz20} for a detailed discussion on the subject.

\begin{defs}[Rademacher and Sobolev-to-Lipschitz properties]\label{d:RadSL}
We say that an \EMLDS \linebreak $(\mcX,\cdc,\mssd)$ has:
\begin{itemize}
\item[($\Rad{\mssd}{\mssm}$)] the \emph{Rademacher property} if, whenever~$\rep f\in \Lipu(\mssd,\A)$, then~$f\in \DzLoc{\mssm}$;
\item[($\dRad{\mssd}{\mssm}$)] the \emph{distance-Rademacher property} if~$\mssd\leq \mssd_\mssm$;

\item[($\SL{\mssm}{\mssd}$)] the \emph{Sobolev--to--Lipschitz property} if each~$f\in\DzLoc{\mssm}$ has an $\mssm$-representat\-ive $\rep f\in\Lip^1(\mssd,\A)$;

\item[($\dcSL{\mssd}{\mssm}{\mssd}$)] the \emph{$\mssd$-continu\-ous-Sobolev--to--Lipschitz property} if each~$f\in \DzLoc{\mssm}$ having a $\mssd$-continu\-ous $\A$-measurable representative~$\rep f$ also has a representative $\reptwo f\in\Lip^1(\mssd,\A^\mssm)$ (possibly,~$\reptwo f\neq \rep f$);

\item[($\cSL{\T}{\mssm}{\mssd}$)] the \emph{continuous-Sobolev--to--Lipschitz property} if each~$f\in \DzLoc{\mssm,\T}$ satisfies $f\in\Lip^1(\mssd,\T)$;

\item[($\dSL{\mssd}{\mssm}$)] the \emph{distance Sobolev-to-Lipschitz property} if~$\mssd\geq \mssd_\mssm$.
\end{itemize}
\end{defs}

We refer the reader to~\cite[Rmk.s~3.2, 4.3]{LzDSSuz20} for comments on the terminology and to~\cite[Lem.~3.6, Prop.~4.2]{LzDSSuz20} for the interplay of all such properties. In the setting of~\EMLDS's, they reduce to:
\begin{equation}\label{eq:EquivalenceRadStoLConfig2}
\begin{aligned}
(\Rad{\mssd}{\mssm}) \Longrightarrow &\ (\dRad{\mssd}{\mssm}) && \text{\cite[Lem.~3.6]{LzDSSuz20}}\comma
\\
(\SL{\mssm}{\mssd}) \Longrightarrow &\ (\cSL{\mssm}{\T}{\mssd}) \iff (\dSL{\mssd}{\mssm}) && \text{\cite[Prop.~4.2]{LzDSSuz20}} \fstop
\end{aligned}
\end{equation}

The delicate interplay between the Rademacher and Sobolev-to-Lipschitz properties and maximal functions was investigated in the setting of quasi-regular Dirichlet spaces by the authors, in~\cite{LzDSSuz20}.
For the case of configuration spaces, see also~\cite[Rmk.s~4.12 and~5.24]{LzDSSuz21}.

In the setting of~\MLDS (\emph{not}: \EMLDS), the Sobolev-to-Lipschitz property~$(\SL{\mssm}{\mssd})$ implies the irreducibility of the form~$\tparen{\EE{X}{\mssm},\dom{\EE{X}{\mssm}}}$.

\begin{prop}\label{p:IrreducibilityBase}
Let~$(\mcX,\cdc,\mssd)$ be an \MLDS satisfying~$(\SL{\mssm}{\mssd})$. Then, $\tparen{\EE{X}{\mssm},\dom{\EE{X}{\mssm}}}$ is irreducible.
\begin{proof}
Since~$(\mcX,\cdc,\mssd)$ is an~\MLDS, the distance~$\mssd$ metrizes~$\T$.
Thus, the proof of~\cite[Cor.~4.6]{LzDSSuz21a} applies \emph{verbatim} having care to substitute the use of~\cite[Thm.~4.21]{LzDSSuz20} with~\cite[Lem.~4.19]{LzDSSuz20}.
\end{proof}
\end{prop}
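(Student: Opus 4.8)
The plan is to reproduce, in the \MLDS setting, the scaling argument behind~\cite[Cor.~4.6]{LzDSSuz21a}. By definition, if $\tparen{\EE{X}{\mssm},\dom{\EE{X}{\mssm}}}$ were \emph{not} irreducible there would exist a $\TT{X}{\mssm}_\bullet$-invariant set $A\in\A$ with $\mssm A>0$ and $\mssm (X\setminus A)>0$; I would fix such an $A$ and derive a contradiction. The first --- and only substantial --- step is the standard fact that the indicator $\car_A$ of such an invariant set belongs to the broad local domain $\domloc{\EE{X}{\mssm}}$ of the strongly local quasi-regular form $\tparen{\EE{X}{\mssm},\dom{\EE{X}{\mssm}}}$; this is precisely the input one extracts from~\cite[Lem.~4.19]{LzDSSuz20} in place of~\cite[Thm.~4.21]{LzDSSuz20} (the substitution being clean since $\mssd$ metrizes $\T$, as $(\mcX,\cdc,\mssd)$ is an \MLDS). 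Granting this, strong locality of $\tparen{\EE{X}{\mssm},\dom{\EE{X}{\mssm}}}$ together with the fact that $\car_A$ is $\set{0,1}$-valued forces $\cdc(\car_A)=0$ $\mssm$-a.e.; hence $\cdc(t\,\car_A)=t^{2}\,\cdc(\car_A)=0\leq 1$ $\mssm$-a.e., i.e.\ $t\,\car_A\in\DzLoc{\mssm}$, for \emph{every} $t>0$.

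The second step exploits this freedom in $t$. For each $t>0$ the Sobolev-to-Lipschitz property $(\SL{\mssm}{\mssd})$ (Dfn.~\ref{d:RadSL}) provides a representative $\rep g_t\in\Lipu(\mssd,\A)$ of the class $t\,\car_A$. Since in an \MLDS the distance $\mssd$ is \emph{finite} and metrizes $\T$, each $\rep g_t$ is $\T$-continuous, hence is \emph{the} (unique, by Notation~\ref{n:ClassesConfig2}) continuous $\mssm$-representative of $t\,\car_A$; but $t\,\rep g_1$ is visibly another such representative, whence $\rep g_t=t\,\rep g_1$ for all $t>0$. Consequently $t\,\Li[\mssd]{\rep g_1}=\Li[\mssd]{t\,\rep g_1}=\Li[\mssd]{\rep g_t}\leq 1$ for every $t>0$, which can only hold if $\Li[\mssd]{\rep g_1}=0$, i.e.\ $\rep g_1$ is constant on $X$. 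As $\rep g_1$ represents the $\set{0,1}$-valued class $\car_A$ and $\mssm$ has full $\T$-support, this constant equals $0$ or $1$, so $\mssm A=0$ or $\mssm (X\setminus A)=0$, contradicting the choice of $A$. Hence $\tparen{\EE{X}{\mssm},\dom{\EE{X}{\mssm}}}$ is irreducible.

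I expect the only genuine obstacle to be the first step: recasting ``$A$ invariant'' as ``$\car_A$ lies in the broad local domain (with consequently vanishing square field)'' is not a formal manipulation but rests on the structure theory of broad local spaces attached to strongly local quasi-regular Dirichlet forms, as developed in~\cite{LzDSSuz20}; it is also the single point at which the \MLDS hypothesis is genuinely used. Everything downstream is soft: multiplication by $t$ preserves the energy-zero condition; uniqueness of continuous representatives on a full-support topological local structure pins down $\rep g_t=t\,\rep g_1$; and the finiteness of $\mssd$ is exactly what turns ``$\Li[\mssd]{\rep g_1}=0$'' into ``$\rep g_1$ constant'' --- for a genuinely \emph{extended} metric the same scheme would only give constancy on each finite-$\mssd$ component, which is why the statement is confined to \MLDS's rather than \EMLDS's.
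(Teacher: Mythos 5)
Your proof is correct, but it is not the route the paper takes. The paper's one-line proof delegates to the maximal-function machinery: under $(\SL{\mssm}{\mssd})$ one bounds the Hino--Ram\'irez maximal function by the point-to-set distance, $\hr{\mssm,A}\leq \mssd(\emparg,A)<\infty$ (this is what \cite[Lem.~4.19]{LzDSSuz20} supplies, exactly as it is used later in the proof of Proposition~\ref{p:Irreducibility}), and then concludes via the characterization of irreducibility as $\mssm\text{-}\essinf_B\,\hr{\mssm,A}<\infty$ for all $A,B$ of positive measure, \cite[Lem.~2.16]{HinRam03}. You instead argue directly from the invariant-set definition: $A$ invariant $\Rightarrow$ $\car_A\in\domloc{\EE{X}{\mssm}}$ with $\cdc(\car_A)=0$ by strong locality on level sets, hence $t\,\car_A\in\DzLoc{\mssm}$ for all $t>0$; $(\SL{\mssm}{\mssd})$ then produces $1$-Lipschitz continuous representatives $\rep g_t$, uniqueness of continuous representatives under full support forces $\rep g_t=t\,\rep g_1$, and letting $t\to\infty$ kills the Lipschitz constant, so $\rep g_1$ is constant by finiteness of $\mssd$ --- contradiction. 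Both arguments correctly isolate the finiteness of $\mssd$ as the reason the statement fails for \EMLDS's. Two remarks: first, your attribution of the key step to \cite[Lem.~4.19]{LzDSSuz20} is off --- that lemma is the $\mathsf{SL}$-to-maximal-function comparison, not the statement that indicators of invariant sets lie in the broad local domain; the latter is a separate (standard, but not formal) piece of the structure theory of quasi-regular strongly local forms and would need its own justification or citation. Second, your scaling argument buys something the level-set computation alone does not: mere continuity of a representative of $\car_A$ would only localize $A$ to a clopen-like set, whereas the vanishing Lipschitz constant gives constancy without any connectedness hypothesis on $X$, which is exactly what is needed here.
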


\begin{rem}
The assertion in Proposition~\ref{p:IrreducibilityBase} is not true in the case of \EMLDS's, see Example~\ref{e:SLvsIrrMixedPoisson}.
\end{rem}

\subsection{Configuration spaces}
Let~$\mcX$ be a topological local structure.
A (\emph{multiple}) \emph{configuration} on~$\mcX$ is any $\overline\N_0$-valued measure~$\gamma$ on~$(X,\A)$, finite on~$E$ for every~$E\in\msE$. By assumption on~$\mcX$, cf.\ e.g.~\cite[Cor.~6.5]{LasPen18},
\begin{align*}
\gamma= \sum_{i=1}^N \delta_{x_i} \comma \qquad N\in \overline\N_0\comma \qquad \seq{x_i}_{i\leq N}\subset X \fstop
\end{align*}
In particular, we allow for~$x_i=x_j$ if~$i\neq j$.
Write~$\gamma_x\eqdef\gamma\!\set{x}$,~$x\in \gamma$ whenever~$\gamma_x>0$, and~$\gamma_A\eqdef \gamma\mrestr{A}$ for every~$A\in\A$.

The \emph{multiple configuration space}~$\dUpsilon=\dUpsilon(X,\msE)$ is the space of all (multiple) configurations over~$\mcX$. The \emph{configuration space} is the space
\begin{align*}
\Upsilon(\msE)=&\Upsilon(X,\msE)\eqdef\set{\gamma\in\dUpsilon: \gamma_x\in \set{0,1} \text{~~for all } x\in X} \fstop
\end{align*}
The $N$-particles multiple configuration spaces, resp.~configuration spaces, are
\begin{equation*}
\dUpsilon^\sym{N}\ \eqdef \set{\gamma\in \dUpsilon : \gamma X=N}\comma 
\qquad
\text{resp.} \qquad \Upsilon^\sym{N}\ \eqdef \dUpsilon^\sym{N}\cap \Upsilon\comma 
\qquad N\in\overline\N_0 \fstop
\end{equation*}
Let the analogous definitions of~$\dUpsilon^\sym{\geq N}(\msE)$, resp.~$\Upsilon^\sym{\geq N}(\msE)$, be given.

For fixed~$A\in\A$ further set~$\msE_A\eqdef \set{E\cap A: E\in\msE}$ and
\begin{align*}
\pr^A\colon \dUpsilon\longrar \dUpsilon(\msE_A)\colon \gamma\longmapsto \gamma_A\comma \qquad A\in\A \fstop
\end{align*}

Finally set~$\dUpsilon(E)\eqdef \dUpsilon(\msE_E)=\dUpsilon(\A_E)$ for all~$E\in\msE$, and analogously for~$\Upsilon(E)$.

We endow~$\dUpsilon$ with the $\sigma$-algebra $\A_\mrmv(\msE)$ generated by the functions $\gamma\mapsto \gamma E$ with~$E\in\msE$.

\begin{defs}[Concentration set]
For~$E\in\msE$ we define the $n$-concentration sets of~$E$ as
\begin{equation}\label{eq:ConcentrationSetConfig2}
\Xi_{=n}(E)\eqdef\ \set{\gamma\in\dUpsilon: \gamma E= n} \comma
\end{equation}
and similarly for~`$\geq$' or~`$\leq$' in place of~`$=$'.
Analogously to the notation established for configuration spaces, we write~$\Xi^\sym{\infty}_{=n}(E)=\Xi_{=n}(E)\cap \dUpsilon^\sym{\infty}$.
\end{defs}

\subsubsection{Poisson random measures}
Contrary to~\cite{LzDSSuz21}, in this work we restrict our attention to configuration spaces endowed with Poisson measures.

For~$E\in\msE$ set~$\mssm_E\eqdef \mssm\mrestr{E}$.
Let~$\mfS_n$ be the $n^\text{th}$ symmetric group, and denote by
\begin{equation*}
\pr^\sym{n}\colon E^\tym{n}\rar E^\sym{n}\eqdef E^\tym{n}/\mfS_n
\end{equation*}
the quotient projection, and by~$\mssm_E^\sym{n}$ the quotient measure~$\mssm_E^\sym{n}\eqdef \pr^\sym{n}_\pfwd \mssm_E^\hotym{n}$.
The space~$E^\sym{n}$ is naturally isomorphic to~$\dUpsilon^\sym{n}(E)$.
Under this isomorphism, define the \emph{Poisson--Lebesgue measure of~$\PP_\mssm$} on~$\dUpsilon^{\sym{<\infty}}(E)$, cf.\ e.g.~\cite[Eqn.~(2.5),~(2.6)]{AlbKonRoe98}, by
\begin{align}\label{eq:PoissonLebesgueConfig2}
\PP_{\mssm_E}\eqdef e^{-\mssm E}\sum_{n=0}^\infty \frac{\mssm_E^\sym{n}}{n!} \fstop
\end{align}

\begin{defs}[Poisson measures]\label{d:PoissonConfig2}
The \emph{Poisson} (\emph{random}) \emph{measure~$\PP_\mssm$} with intensity~$\mssm$ is the unique probability measure on~$\tparen{\dUpsilon, \A_\mrmv(\msE)}$ satisfying either of the following equivalent conditions:
\begin{itemize}
\item the \emph{projective-limit characterization} 
$\pr^{E}_\pfwd \PP_\mssm=\PP_{\mssm_E}$ 
for every~$E\in\msE$.

\item the \emph{Mecke identity}~\cite[Satz~3.1]{Mec67}, viz.
\begin{align}\label{eq:MeckeConfig2}
\iint_{\dUpsilon\times X} u(\gamma, x) \diff\gamma(x) \diff\PP_\mssm(\gamma)= \iint_{\dUpsilon\times X} u(\gamma+\delta_x,x) \diff\mssm(x) \diff\PP_\mssm(\gamma)
\end{align}
for every bounded $\A_\mrmv(\msE)\hotimes \A$-measurable~$u\colon \dUpsilon\times X\rar \R$.

\item the \emph{Laplace transform characterization}, cf.~\cite[Thm.~3.9]{LasPen18},
\begin{align}\label{eq:LaplacePoissonConfig2}
\int_{\dUpsilon} e^{f^\trid\gamma} \diff\PP_\mssm(\gamma)=\exp\paren{\int_X (e^{f}-1) \diff\mssm}\comma \qquad f\in \Sb(X)^+\fstop
\end{align}
\end{itemize}
\end{defs}

As it is clear from the first characterization, we have that
\begin{itemize}
\item if~$\mssm X=\infty$, then~$\PP_\mssm$ is concentrated in~$\Upsilon^\sym{\infty}(\msE)$, viz.\ $\PP_\mssm \Upsilon^\sym{\infty}(\msE)=1$;

\item if~$\mssm X<\infty$, then~$\PP_\mssm$ is concentrated in~$\Upsilon^\sym{<\infty}(\msE)$, viz.\ $\PP_\mssm \Upsilon^\sym{<\infty}(\msE)=1$, and~$\PP_\mssm$ coincides with the right-hand side of~\eqref{eq:PoissonLebesgueConfig2} with~$X$ in place of~$E$
\end{itemize}

Everywhere in the following we omit the specification of the insity measure whenever apparent from context, thus writing~$\PP$ in place of~$\PP_\mssm$, and~$\PP_E$ in place of~$\PP_{\mssm_E}$. 

\subsubsection{Dirichlet forms}
Let us briefly recall the construction and main analytical properties of the Dirichlet form~\eqref{eq:IntroDirGenSemi} constructed in~\cite{LzDSSuz21}.
We specialize all the statements in~\cite{LzDSSuz21} to the case of our interest, namely that of Poisson measures.

\paragraph{Cylinder functions} We shall start by defining a core of \emph{cylinder functions} for the form \eqref{eq:IntroDirGenSemi}.
For~$\gamma\in \dUpsilon$ and~$\rep f\in\Sb(\msE)$ let~$\rep f^\trid\colon \dUpsilon\rar \R$ be defined as 
\begin{equation}\label{eq:TridConfig2}
\rep f^\trid\colon \gamma\longmapsto \int_X \rep f(x)\diff\gamma(x) 
\end{equation}
and set further
\begin{align*}
\rep\mbff^\trid\colon \gamma\longmapsto\tparen{\rep f_1^\trid\gamma,\dotsc, \rep f_k^\trid\gamma}\in \R^k\comma \qquad \rep f_1,\dotsc, \rep f_k\in \Sb(\msE)\fstop
\end{align*}

\begin{defs}[Cylinder functions on~$\dUpsilon$] Let~$\mcX$ be a topological local structure, and~$\rep D$ be a linear subspace of~$\Sb(\msE)$. We define the space of \emph{cylinder functions}
\begin{align*}
\Cyl{\rep D}\eqdef \set{\begin{matrix} \rep u\colon \dUpsilon\rar \R : \rep u=F\circ\rep\mbff^\trid \comma  F\in \mcC^\infty_b(\R^k)\comma \\ \rep f_1,\dotsc, \rep f_k\in \rep D\comma\quad k\in \N_0 \end{matrix}}\fstop
\end{align*}
\end{defs}

It is readily seen that cylinder functions of the form~$\Cyl{\Sb(\msE)}$ are $\A_\mrmv(\msE)$-measurable.
If~$\rep D$ generates the $\sigma$-algebra~$\A$ on~$X$, then~$\Cyl{\rep D}$ generates the $\sigma$-algebra~$\A_\mrmv(\msE)$ on~$\dUpsilon$. We stress that the representation of~$\rep u$ by~$\rep u=F\circ\rep\mbff$ is \emph{not} unique.

\paragraph{Lifted square field operators}
Let~$(\mcX,\cdc)$ be a~\TLDS, and recall that~$\rep\Dz\subset \Cb(X)$, that is, every $\mssm$-class in~$\Dz$ has a continuous representative.
We say that a pointwise defined square field operator~$\rep\cdc$ on~$\rep\Dz$ is \emph{compatible} with~$(\cdc,\Dz)$ if~$\tclass[\mssm]{\rep\cdc(\emparg)}=\cdc(\emparg)$ on~$\rep\Dz$.
As discussed in~\cite[\S\ref{ss:Liftings}]{LzDSSuz21}, every~\TLDS admits a compatible pointwise defined square field operator by the theory of \emph{liftings}.

We may now lift a pointwise defined square field operator~$\rep\cdc$ on~$\mcX$ to a pointwise defined square field operator on~$\dUpsilon$, by setting
\begin{equation*}
\begin{gathered}
\rep\cdc^{\dUpsilon}(\rep u, \rep v)(\gamma)\eqdef \sum_{i,j=1}^{k,m} (\partial_i F)(\rep\mbff^\trid\gamma) \cdot (\partial_j G)(\rep\mbfg^\trid\gamma) \cdot \rep\cdc(\rep f_i, \rep g_j)^\trid \gamma \comma
\\
\rep u=F\circ \rep\mbff^\trid\in \Cyl{\rep\Dz} \comma\qquad \rep v=G\circ \rep\mbfg^\trid\in \Cyl{\rep\Dz}\fstop
\end{gathered}
\end{equation*}

By~\cite[Lem.~1.2]{MaRoe00} the bilinear form~$\rep\cdc^\dUpsilon$ is well-defined on~$\Cyl{\rep\Dz}^\tym{2}$, in the sense that~$\rep\cdc^\dUpsilon(\rep u, \rep v)$ does not depend on the choice of representatives $\rep u=F\circ\rep\mbff$ and $\rep v=G\circ\rep\mbfg$ for~$\rep u$ and~$\rep v$.
We refer the reader to~\cite[\S.1]{MaRoe00} and~\cite[\S2.3.1]{LzDSSuz21} for details on the issue of well-posedness on cylinder functions.

Now, let us set
\begin{align*}
\EE{\dUpsilon}{\PP}(u,v)\eqdef \int_{\dUpsilon} \rep\cdc^{\dUpsilon}(\rep u, \rep v) \diff\PP\comma \qquad u,v\in \Cyl{\rep\Dz} \fstop
\end{align*}
Since~$\rep\cdc^{\dUpsilon}(\rep u, \rep v)(\gamma)$ is everywhere well-defined in the sense above,~$\EE{\dUpsilon}{\PP}$ is a well-defined bilinear form on the space of representatives~$\mcL^2(\PP)$.
It is shown in~\cite{LzDSSuz21} that~$\rep\cdc^\dUpsilon$ descends to a bilinear symmetric functional~$\cdc^\dUpsilon$ on the space~$\CylQP{\PP}{\rep\Dz}$ of $\PP$-classes of cylinder functions in~$\Cyl{\rep\Dz}$, which proves that~$\EE{\dUpsilon}{\PP}$ descends to a non-relabeled well-defined pre-Dirichlet form on~$L^2(\PP)$.
The closure of this form will be the main object of our study throughout this work.
\begin{prop}[Closability,~{\cite[Prop.~3.9]{LzDSSuz21}}]
Let~$(\mcX,\cdc)$ be a \TLDS.
Then,\linebreak $\tparen{\EE{\dUpsilon}{\PP},\CylQP{\PP}{\rep\Dz}}$ is well-defined, densely defined and closable on~$L^2(\PP)$.
Its closure $\tparen{\EE{\dUpsilon}{\PP},\dom{\EE{\dUpsilon}{\PP}}}$ is a Dirichlet form with carr\'e du champ operator~$\tparen{\SF{\dUpsilon}{\PP},\dom{\SF{\dUpsilon}{\PP}}}$ satisfying
\begin{equation*}
\SF{\dUpsilon}{\PP}(u,v)=\cdc^\dUpsilon(u,v) \as{\PP}\comma\qquad u,v\in \CylQP{\PP}{\rep\Dz}\fstop
\end{equation*}
\end{prop}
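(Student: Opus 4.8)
The plan is to verify in turn: well-posedness and density of the core, closability, and the Dirichlet/carr\'e-du-champ properties. \emph{Reductions.} Well-posedness of the pre-form $\tparen{\EE{\dUpsilon}{\PP},\CylQP{\PP}{\rep\Dz}}$ has already been recorded above (the lift $\rep\cdc^\dUpsilon$ is well-defined on representatives by \cite[Lem.~1.2]{MaRoe00} and, by atomlessness of~$\mssm$, descends to $\PP$-classes). For density of~$\CylQP{\PP}{\rep\Dz}$ in~$L^2(\PP)$ I would run a functional monotone class argument: since $\rep\Dz$ is an algebra that $\mssm$-essentially generates~$\A$ (Definition~\ref{d:TLSConfig2}\ref{i:d:TLSConfig2:3}), the algebra~$\Cyl{\rep\Dz}$ is bounded, contains the constants, and generates~$\A_\mrmv(\msE)$, hence is dense in~$L^p(\PP)$ for every~$p<\infty$.

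\emph{Closability.} This is the heart of the matter, and I would do it by localization. Fix~$E\in\msE$ and push the pre-form forward along~$\pr^{E}\colon\dUpsilon\to\dUpsilon(E)$; since~$\pr^{E}_\pfwd\PP=\PP_{E}$ is the Poisson--Lebesgue measure~$e^{-\mssm E}\sum_n\mssm_E^{\sym{n}}/n!$, concentrated on finite configurations, the lifted form on~$\dUpsilon(E)=\bigsqcup_n\dUpsilon^{\sym{n}}(E)$ is the direct sum over the particle number~$n$ of its restrictions to the strata, and a direct sum of closable forms is closable. On the $n$-particle stratum, under the identification~$\dUpsilon^{\sym{n}}(E)\cong E^{\tym{n}}/\mfS_n$ one has~$L^2(\mssm_E^{\sym{n}})\cong L^2\tparen{\mssm_E^{\hotym{n}}}^{\mfS_n}$, and --- using symmetry of~$\cdc$ --- the lifted square field equals, on that subspace, the integrand of the $n$-fold sum form~$u\mapsto\sum_{i=1}^n\int\cdc^{(i)}(u)\diff\mssm_E^{\hotym{n}}$ ($\cdc^{(i)}$ being~$\cdc$ in the $i$-th coordinate), i.e.\ the $n$-fold tensor product of the Dirichlet form on~$L^2(E,\mssm_E)$ induced by~$\cdc$ with itself; this tensor form is closable, and its restriction to the closed, form-invariant symmetric subspace (the~$\mfS_n$-action being by~$\mssm_E^{\hotym{n}}$-preserving isometries) stays closable. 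Summing the strata gives a closed form~$\mcE_E$ on~$L^2(\PP_E)$. To globalize, I would fix a localizing sequence~$(E_h)_h\subset\msE$ and, for~$u\in L^2(\PP)$, set~$\mcE^{(h)}(u)\eqdef\mcE_{E_h}(Q_hu)$, where~$Q_h$ is conditional expectation given~$\gamma\mapsto\gamma_{E_h}$ (with~$\mcE^{(h)}(u)=+\infty$ off the domain). Each~$\mcE^{(h)}$ is closed (pullback of a closed form along the bounded projection~$Q_h$); using~$\PP_{E_{h+1}}=\PP_{E_h}\otimes\PP_{E_{h+1}\setminus E_h}$ and the attendant product structure~$\mcE_{E_{h+1}}=\mcE_{E_h}\otimes\mathbf 1+\mathbf 1\otimes\mcE_{E_{h+1}\setminus E_h}$, the tower property~$Q_h=Q_hQ_{h+1}$, and convexity of quadratic forms, one gets~$\mcE^{(h)}\leq\mcE^{(h+1)}$; and since every~$u\in\CylQP{\PP}{\rep\Dz}$ factors through some~$\pr^{E_{h_0}}$ one has~$\mcE^{(h)}(u)=\EE{\dUpsilon}{\PP}(u)$ for~$h\geq h_0$. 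The monotone limit theorem for closed symmetric forms then exhibits~$\sup_h\mcE^{(h)}$ as a closed form extending the pre-form, so the pre-form is closable; I denote its closure~$\tparen{\EE{\dUpsilon}{\PP},\dom{\EE{\dUpsilon}{\PP}}}$.

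\emph{Dirichlet form and carr\'e du champ.} The closure is a Dirichlet form: the unit-contraction (Markovian) property is checked on the core~$\Cyl{\rep\Dz}$ by the chain rule --- for~$u=F\circ\rep\mbff^\trid$ and a unit contraction~$\phi$, smooth approximations~$\phi_\eps$ give~$\rep\cdc^\dUpsilon(\phi_\eps\circ u)\leq\rep\cdc^\dUpsilon(u)$ pointwise, hence~$\EE{\dUpsilon}{\PP}(\phi\circ u)\leq\EE{\dUpsilon}{\PP}(u)$ --- and then passes to the closure. It admits a carr\'e du champ: on~$\Cyl{\rep\Dz}$, $\rep\cdc^\dUpsilon$ is a non-negative symmetric bilinear map obeying the Leibniz rule (Definition~\ref{d:SFConfig2}), and the Mecke identity~\eqref{eq:MeckeConfig2} yields~$\int_\dUpsilon\cdc(f_i,f_j)^\trid\gamma\diff\PP=\int_X\cdc(f_i,f_j)\diff\mssm<\infty$ (finite since~$\cdc(f_i,f_j)\in\class[\mssm]{\Sb(\msE)}$ and~$\mssm$ is finite on~$\msE$), whence~$\EE{\dUpsilon}{\PP}(u)=\int\rep\cdc^\dUpsilon(u)\diff\PP\leq\norm{\nabla F}_\infty^2\sum_{i,j}\int\cdc(f_i,f_j)^\trid\gamma\diff\PP<\infty$; this uniform~$L^1(\PP)$-bound is exactly what is needed (in the spirit of~\cite{AlbKonRoe98,MaRoe00}) to extend~$\rep\cdc^\dUpsilon$ continuously to a carr\'e du champ~$\SF{\dUpsilon}{\PP}$ on all of~$\dom{\EE{\dUpsilon}{\PP}}$, with~$\SF{\dUpsilon}{\PP}=\cdc^\dUpsilon$ $\PP$-a.e.\ on~$\CylQP{\PP}{\rep\Dz}$.

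\emph{Main obstacle.} The real difficulty is the closability --- specifically, $(a)$ the symmetrization step: proving that restricting the $n$-fold tensor Dirichlet form over~$E^{\tym{n}}$ to~$\mfS_n$-symmetric functions preserves closability, which rests on the~$\mfS_n$-action being by form-preserving isometries with invariant symmetric subspace; and $(b)$ carrying out the disintegration over particle number, the Fubini steps, and the lifting of~$\cdc$, all under the minimal \TLDS hypotheses, where~$X$ need not be Polish --- this is precisely where the ``$\mssm$-essentially countably generated'' assumption of Definition~\ref{d:TLSConfig2}\ref{i:d:TLSConfig2:3} enters.
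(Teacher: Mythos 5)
This proposition is not proved in the present paper: it is recalled verbatim from the companion work \cite[Prop.~3.9]{LzDSSuz21}, so your proposal can only be measured against the strategy of that reference and of its antecedents \cite{MaRoe00,AlbKonRoe98}. Your route --- well-posedness via \cite[Lem.~1.2]{MaRoe00}, density by a monotone-class argument, closability by localizing to $\dUpsilon(E)$, stratifying over the particle number, identifying the $n$-stratum with the $\mfS_n$-invariant part of the $n$-fold tensor form on $E^{\tym{n}}$, and then globalizing by conditional expectations and the monotone-convergence theorem for closed forms --- is essentially the localization/tensorization argument of \cite{LzDSSuz21}, which must work for measures far more general than $\PP$ and therefore cannot exploit Poisson-specific tools; as a strategy it is sound. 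It is worth noting that for the Poisson measure there is a shorter classical route via the Mecke identity~\eqref{eq:MeckeConfig2}, which rewrites $\EE{\dUpsilon}{\PP}(u,v)$ as $\iint_{X\times\dUpsilon}\SF{X}{\mssm}\tparen{\rep u(\gamma+\delta_\emparg),\rep v(\gamma+\delta_\emparg)}\diff\mssm\,\diff\PP(\gamma)$ --- precisely the computation carried out in Proposition~\ref{p:Generator} of the present paper --- and then deduces closability of the lifted form from closability of $(\cdc,\Dz)$ on the base (or, on $\CylQP{\PP}{\Daux}$, from the Friedrichs construction); this is the argument of \cite{AlbKonRoe98,MaRoe00}.

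Two steps in your closability argument deserve more care than you give them. First, when you claim $\mcE^{(h)}(u)=\EE{\dUpsilon}{\PP}(u)$ for $h\geq h_0$ once $u=F\circ\rep\mbff^\trid$ factors through $\pr^{E_{h_0}}$, you must enlarge $h_0$ so that not only the $\rep f_i$ but also the carr\'es du champ $\cdc(f_i,f_j)$ vanish outside $E_{h_0}$: Definition~\ref{d:SFConfig2} only guarantees $\cdc(f_i,f_j)\in\class[\mssm]{\Sb(\msE)}$, i.e.\ vanishing outside \emph{some} element of~$\msE$, not outside $\supp\rep f_i\cap\supp\rep f_j$. Second, the monotonicity $\mcE^{(h)}\leq\mcE^{(h+1)}$ requires Jensen's inequality for the closed (hence convex and lower semicontinuous) form $\mcE_{E_h}$ under conditional expectation, combined with the factorization $\PP_{E_{h+1}}=\PP_{E_h}\otimes\PP_{E_{h+1}\setminus E_h}$ and a measurable-sections argument; this is standard, but it is exactly where the complete independence of $\PP$ enters and it must be written out. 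Neither point is a gap in the strategy. Your treatment of the carr\'e du champ (Cauchy--Schwarz for the bilinear square field giving $L^1(\PP)$-continuity of $u\mapsto\cdc^\dUpsilon(u)$ on the core, hence extension to the closure in the spirit of Bouleau--Hirsch) is correct; note only that your displayed bound should involve $\sum_i\cdc(f_i)^\trid$ rather than $\sum_{i,j}\cdc(f_i,f_j)^\trid$, since the off-diagonal terms need not be nonnegative.
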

It is a consequence of the construction in~\cite{LzDSSuz21} that the form does not depend on the chosen compatible pointwise defined square field operator~$(\rep\cdc,\rep\Dz)$, but rather only on its value on $\mssm$-classes, i.e.\ only on~$(\cdc,\Dz)$.

We denote by
\begin{align*}
\tparen{\LL{\dUpsilon}{\PP},\dom{\LL{\dUpsilon}{\PP}}}\comma \qquad \text{resp.} \qquad \TT{\dUpsilon}{\PP}_\bullet\eqdef\tseq{\TT{\dUpsilon}{\PP}_t}_{t\geq 0}\comma
\end{align*}
the $L^2(\PP)$-generator, resp.~$L^2(\PP)$-semigroup, corresponding to~$\tparen{\EE{\dUpsilon}{\PP},\dom{\EE{\dUpsilon}{\PP}}}$.

The domain~$\dom{\EE{\dUpsilon}{\PP}}$ is in fact much larger than the class of cylinder functions~$\CylQP{\PP}{\rep\Dz}$, as recalled below.
Let~$\coK{\Dz}{1,\mssm}$ be the abstract linear completion of~$\Dz$ w.r.t.~the norm (cf.~\cite[p.~301]{MaRoe00})
\begin{align*}
\norm{\emparg}_{1,\mssm}\eqdef \EE{X}{\mssm}(\emparg)^{1/2} + \norm{\emparg}_{L^1(\mssm)} 
\comma \end{align*}
endowed with the unique (non-relabeled) continuous extension of~$\norm{\emparg}_{1,\mssm}$ to the completion $\coK{\Dz}{1,\mssm}$.

\begin{defs}[{\cite[\S4.2, p.~300]{MaRoe00}}] A function~$\rep u\colon \dUpsilon\rar \R\cup\set{\pm\infty}$ is called \emph{extended cylinder} if there exist~$k\in \N_0$, functions~$\rep f_1,\dotsc, \rep f_k$ with~$f_1,\dotsc,f_k\in \coK{\Dz}{1,\mssm}$, and a function~$F\in \Cb^\infty(\R^k)$, so that~$\rep u=F\circ\rep\mbff$.
We denote by~$\Cyl{\coK{\Dz}{1,\mssm}}$ the space of all extended cylinder functions, and by~$\CylQP{\PP}{\coK{\Dz}{1,\mssm}}$ the space of their $\PP$-representa\-tives.
\end{defs}

\begin{prop}[{\cite[Prop.~4.6]{MaRoe00}}, {\cite[Prop.~3.52]{LzDSSuz21}}]\label{p:ExtDomConfig2}
Let~$(\mcX,\cdc)$ be a \TLDS.
Then,
\begin{enumerate}[$(i)$]
\item\label{i:p:ExtDomConfig2:1}~$u=\tclass[\PP]{F\circ \rep\mbff^\trid}\in \CylQP{\PP}{\coK{\Dz}{1,\mssm}}$ is defined in~$\dom{\EE{\dUpsilon}{\PP}}$ and independent of the $\mssm$-rep\-re\-sen\-ta\-tives~$\rep f_i$ of~$f_i$;

\item\label{i:p:ExtDomConfig2:2} for~$\rep u=F\circ \rep\mbff^\trid$ and $\rep v=G\circ \rep\mbfg^\trid\in \Cyl{\coK{\Dz}{1,\mssm}}$,
\begin{align}\label{eq:d:LiftCdCConfig2}
\SF{\dUpsilon}{\PP}(u,v)(\gamma)=& \sum_{i,j=1}^{k,m} (\partial_i F)(\rep\mbff^\trid\gamma) \cdot (\partial_j G)(\rep\mbfg^\trid\gamma) \cdot \rep\cdc(\rep f_i, \rep g_j)^\trid \gamma \as{\PP} \semicolon
\end{align}

\item\label{i:p:ExtDomConfig2:3} for every~$\rep f$ with~$f \in \coK{\Dz}{1,\mssm}$ one has~$\ttclass[\PP]{\rep f^\trid}\in\CylQP{\PP}{\coK{\Dz}{1,\mssm}}$, and
\begin{align*}
\SF{\dUpsilon}{\PP}\tparen{\ttclass[\PP]{\rep f^\trid}}=\class[\PP]{\rep\cdc(\rep f)^\trid}
\end{align*}
is independent of the chosen $\mssm$- (i.e.,~$\mssm$-)representative~$\rep f$ of~$f$.
\end{enumerate}
\end{prop}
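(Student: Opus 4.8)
The plan is to deduce all three statements from a single approximation scheme whose only genuinely new ingredient, beyond the closability Proposition and~\eqref{eq:d:LiftCdCConfig2}, is the \emph{Campbell formula} $\int_{\dUpsilon}\rep h^\trid\gamma\diff\PP=\int_X\rep h\diff\mssm$, obtained from the Mecke identity~\eqref{eq:MeckeConfig2} by taking $u(\gamma,x)\eqdef\rep h(x)$ (valid for $\rep h\in\Sb(\msE)$, hence for $\rep h\in L^1(\mssm)$ by monotone and dominated convergence). This immediately yields the independence statements: if $\rep f,\rep f'$ represent the same class $f\in\coK{\Dz}{1,\mssm}$ (regarded as an $L^1(\mssm)$-class via the canonical injection $\coK{\Dz}{1,\mssm}\hookrightarrow L^1(\mssm)$, well-defined by closability of $\EE{X}{\mssm}$), then $\int_{\dUpsilon}\abs{\rep f-\rep f'}^\trid\gamma\diff\PP=\int_X\abs{\rep f-\rep f'}\diff\mssm=0$, so $\rep f^\trid\gamma=\rep f'^\trid\gamma$ for $\PP$-a.e.\ $\gamma$; in particular $\rep f^\trid\gamma$ is $\PP$-a.e.\ finite, and $\ttclass[\PP]{F\circ\rep\mbff^\trid}$, resp.~$\ttclass[\PP]{\rep f^\trid}$, depend only on $f_1,\dots,f_k$, resp.\ on $f$.

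Two preliminary facts at the level of $\Dz$. First, for $g\in\Dz$ one has $g^\trid\in\dom{\EE{\dUpsilon}{\PP}}$ with $\SF{\dUpsilon}{\PP}(g^\trid)=\ttclass[\PP]{\cdc(g)^\trid}$ and $\EE{\dUpsilon}{\PP}(g^\trid)=\EE{X}{\mssm}(g)$: since $g\in\Cz(\msE)$ vanishes off some $E\in\msE$, $\abs{g^\trid\gamma}\leq\norm{g}_\infty\gamma E$ with $\gamma E\in L^2(\PP)$ (Poisson-distributed with parameter $\mssm E<\infty$), so $g^\trid\in L^2(\PP)$; truncating the identity by $F_N\in\Cbinfty(\R)$ equal to $\id$ on $[-N,N]$ with $\norm{F_N'}_\infty\leq1$, the genuine cylinder functions $F_N\circ g^\trid$ converge to $g^\trid$ in $L^2(\PP)$ and are graph-Cauchy for $\EE{\dUpsilon}{\PP}$ by~\eqref{eq:d:LiftCdCConfig2}, dominated convergence, and the Campbell formula, whence closedness gives the claim. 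Second, for $\mbfg\in\Dz^{\tym k}$ and $F\in\Cbinfty(\R^k)$, bounding the quadratic form in the non-negative matrix $\tparen{\cdc(g_i,g_j)}_{ij}$ in~\eqref{eq:d:LiftCdCConfig2} by $\abs{\nabla F}^2$ times its trace and integrating $\sum_i\cdc(g_i)^\trid\gamma$ with the Campbell formula yields the \emph{trace estimate} $\EE{\dUpsilon}{\PP}\tparen{F\circ\mbfg^\trid}\leq\norm{\nabla F}_\infty^2\sum_{i=1}^k\EE{X}{\mssm}(g_i)$, where $\norm{\nabla F}_\infty\eqdef\sup_{\R^k}\abs{\nabla F}$.

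For~\ref{i:p:ExtDomConfig2:1}, fix $F\in\Cbinfty(\R^k)$ and $\rep f_{i,n}\in\Dz$ with $\EE{X}{\mssm}(f_{i,n}-f_i)\to0$ and $\norm{f_{i,n}-f_i}_{L^1(\mssm)}\to0$, and set $u_n\eqdef\ttclass[\PP]{F\circ\rep\mbff_n^\trid}\in\CylQP{\PP}{\rep\Dz}$. By the Campbell formula $\int_\dUpsilon\abs{\rep f_{i,n}-\rep f_i}^\trid\gamma\diff\PP\to0$, so after passing to a subsequence $\rep f_{i,n}^\trid\gamma\to\rep f_i^\trid\gamma$ $\PP$-a.e.\ for every $i$; since $F$ is bounded, $u_n\to u\eqdef\ttclass[\PP]{F\circ\rep\mbff^\trid}$ in $L^2(\PP)$ by dominated convergence. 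For the energy I would write $u_n-u_m$ as a genuine cylinder function and expand~\eqref{eq:d:LiftCdCConfig2}: regrouping, $\SF{\dUpsilon}{\PP}(u_n-u_m)(\gamma)=\cdc\bigl(p_{n,m}^\gamma+q_{n,m}^\gamma\bigr)^\trid\gamma$ pointwise, with $p_{n,m}^\gamma=\sum_i(\partial_iF)(\rep\mbff_n^\trid\gamma)(\rep f_{i,n}-\rep f_{i,m})$ and $q_{n,m}^\gamma=\sum_i\bigl((\partial_iF)(\rep\mbff_n^\trid\gamma)-(\partial_iF)(\rep\mbff_m^\trid\gamma)\bigr)\rep f_{i,m}$. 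Using the triangle inequality for $\cdc(\emparg)^{1/2}$ pointwise on $X$ under the integral against $\gamma$, then Cauchy--Schwarz in $L^2(\gamma)$, then the Campbell formula, $\EE{\dUpsilon}{\PP}(u_n-u_m)^{1/2}$ is at most $\bigl(\int_\dUpsilon\cdc(p_{n,m}^\gamma)^\trid\gamma\diff\PP\bigr)^{1/2}+\bigl(\int_\dUpsilon\cdc(q_{n,m}^\gamma)^\trid\gamma\diff\PP\bigr)^{1/2}$; the $p$-term is $\leq\norm{\nabla F}_\infty\bigl(\sum_i\EE{X}{\mssm}(f_{i,n}-f_{i,m})\bigr)^{1/2}\to0$ by the trace bound. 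For the $q$-term I would split $\rep f_{i,m}=(\rep f_{i,m}-\rep f_i)+\rep f_i$: the first part is again controlled by the trace bound and $\EE{X}{\mssm}(f_{i,m}-f_i)\to0$, while the second part equals $\int_\dUpsilon\sum_{i,j}\bigl((\partial_iF)(\rep\mbff_n^\trid\gamma)-(\partial_iF)(\rep\mbff_m^\trid\gamma)\bigr)\bigl((\partial_jF)(\rep\mbff_n^\trid\gamma)-(\partial_jF)(\rep\mbff_m^\trid\gamma)\bigr)\cdc(f_i,f_j)^\trid\gamma\diff\PP$, whose integrand is dominated by the \emph{fixed} $L^1(\PP)$-function $4\norm{\nabla F}_\infty^2\sum_{i,j}\abs{\cdc(f_i,f_j)}^\trid\gamma$ (integrable by the Campbell formula and Cauchy--Schwarz for $\cdc$) and tends to $0$ $\PP$-a.e.\ along the chosen subsequence, hence vanishes in the limit by dominated convergence. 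Thus $\seq{u_n}$ is graph-Cauchy along that subsequence, and closedness gives $u\in\dom{\EE{\dUpsilon}{\PP}}$. Statement~\ref{i:p:ExtDomConfig2:2} then follows by passing to the limit in~\eqref{eq:d:LiftCdCConfig2} for $u_n$ and $v_n$, using that $\SF{\dUpsilon}{\PP}(u_n)\to\SF{\dUpsilon}{\PP}(u)$ in $L^1(\PP)$ (continuity of the carr\'e du champ), $\cdc(f_{i,n},f_{j,n})\to\cdc(f_i,f_j)$ in $L^1(\mssm)$ (Cauchy--Schwarz for $\cdc$), hence $\cdc(f_{i,n},f_{j,n})^\trid\gamma\to\cdc(f_i,f_j)^\trid\gamma$ in $L^1(\PP)$ by Campbell, and $(\partial_iF)(\rep\mbff_n^\trid\gamma)\to(\partial_iF)(\rep\mbff^\trid\gamma)$ $\PP$-a.e. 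Finally~\ref{i:p:ExtDomConfig2:3} combines the first preliminary with one further limit: for $f\in\coK{\Dz}{1,\mssm}$ and $f_n\in\Dz$ approximating it, $F_N\circ\rep f^\trid$ is a $\PP$-a.e.\ limit of the extended cylinder functions $F_N\circ\rep f_n^\trid$, and letting $N\to\infty$ the identities $\EE{\dUpsilon}{\PP}(\rep f_n^\trid)=\EE{X}{\mssm}(f_n)$ and $\SF{\dUpsilon}{\PP}(\rep f_n^\trid)=\ttclass[\PP]{\cdc(f_n)^\trid}$, together with $\cdc(f_n)\to\cdc(f)$ in $L^1(\mssm)$ and closedness of the extended form, place $\ttclass[\PP]{\rep f^\trid}$ in $\CylQP{\PP}{\coK{\Dz}{1,\mssm}}\cap\domext{\EE{\dUpsilon}{\PP}}$ with $\SF{\dUpsilon}{\PP}\bigl(\ttclass[\PP]{\rep f^\trid}\bigr)=\ttclass[\PP]{\rep\cdc(\rep f)^\trid}$, independent of $\rep f$ by the first paragraph.

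The step I expect to be the main obstacle is the energy estimate for~\ref{i:p:ExtDomConfig2:1}: as $\coK{\Dz}{1,\mssm}$ is the completion of $\Dz$ in the $L^1(\mssm)$- rather than the $L^2(\mssm)$-norm, there is no bound on $\norm{f_{i,n}-f_{i,m}}_{L^2(\mssm)}$, so one cannot simply compose the Lipschitz map $F$ with $(\rep\mbff_n-\rep\mbff_m)^\trid$ and estimate $\int_\dUpsilon\bigl((\rep f_{i,n}-\rep f_{i,m})^\trid\gamma\bigr)^2\diff\PP$; the remedy is the two-scale regrouping above, which isolates the \emph{fixed} integrand $\cdc(f_i,f_j)^\trid\gamma$ so that dominated convergence applies with an $(n,m)$-independent majorant. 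A secondary subtlety is that in~\ref{i:p:ExtDomConfig2:3} the functional $\rep f^\trid$ is generally unbounded and need not lie in $L^2(\PP)$, which is why the truncations $F_N$ and the extended Dirichlet space $\domext{\EE{\dUpsilon}{\PP}}$ (rather than $\dom{\EE{\dUpsilon}{\PP}}$) are needed there.
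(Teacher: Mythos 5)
The paper does not actually prove this proposition: it is imported verbatim from~\cite[Prop.~4.6]{MaRoe00} and~\cite[Prop.~3.52]{LzDSSuz21}, so there is no in-paper argument to compare against. Your proposal is a correct reconstruction of the standard approximation proof used in those references: the Mecke/Campbell identity to transfer $\norm{\emparg}_{1,\mssm}$-convergence from $X$ to $\dUpsilon$, the trace bound on the non-negative matrix $\tparen{\cdc(f_i,f_j)}_{i,j}$, and in particular the two-scale regrouping of $\SF{\dUpsilon}{\PP}(u_n-u_m)$ isolating the $(n,m)$-independent majorant $\sum_{i,j}\abs{\cdc(f_i,f_j)}^\trid\gamma$ — which is indeed the step that cannot be replaced by a naive $L^2(\mssm)$-estimate. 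Two points deserve tightening: the injection $\coK{\Dz}{1,\mssm}\hookrightarrow L^1(\mssm)$ and the extension of $\cdc$ to the completion do not follow from $L^2(\mssm)$-closability alone, but are part of how the abstract completion is equipped with $\mssm$-classes and energies (limits along approximating sequences, independent of the sequence by interleaving) — the statement's reference to ``$\mssm$-representatives of $f_i$'' presupposes exactly this. In~\ref{i:p:ExtDomConfig2:3} your reading via truncations and the extended space $\domext{\EE{\dUpsilon}{\PP}}$ is the right one, since $\rep f^\trid\notin L^2(\PP)$ whenever $f\notin L^2(\mssm)$, so the literal combination of~\ref{i:p:ExtDomConfig2:1} and~\ref{i:p:ExtDomConfig2:3} must be interpreted in this extended sense.
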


\subsubsection{Geometric structure}
For~$i=1,2$ let~$\pr^i\colon X^{\times 2}\rar X$ denote the projection to the~$i^\text{th}$ coordinate.
For~$\gamma,\eta\in \dUpsilon$, let~$\Cpl(\gamma,\eta)\subset \Meas(X^{\tym{2}},\A^{\hotimes 2})$ be the set of all couplings of~$\gamma$ and~$\eta$, viz.
\begin{align*}
\Cpl(\gamma,\eta)\eqdef \set{\cpl\in \Meas(X^{\tym{2}},\A^{\hotimes 2}) \colon \pr^1_\pfwd \cpl =\gamma \comma \pr^2_\pfwd \cpl=\eta} \fstop
\end{align*}
A distance function~$\mssd$ on~$X$ induces a distance on~$\dUpsilon$, in the sense of the following definition.

\begin{defs}[$L^2$-transportation distance]
The \emph{$L^2$-transportation} (\emph{extended}) \emph{distance} on~$\dUpsilon$ is
\begin{align*}
\mssd_\dUpsilon(\gamma,\eta)\eqdef \inf_{\cpl\in\Cpl(\gamma,\eta)} \paren{\int_{X^{\times 2}} \mssd^2(x,y) \diff\cpl(x,y)}^{1/2}\comma \qquad \inf{\emp}=+\infty \fstop
\end{align*}
\end{defs}

As shown by several works about configuration spaces over Riemannian manifolds, e.g.~\cite{RoeSch99,ErbHue15}, the $L^2$-transportation distance on~$\dUpsilon$ induced by the Riemannian distance on the underlying manifold is a natural object.
In particular, the same metric properties of the underlying manifold, such as completeness, hold for the corresponding multiple configuration space as well.

In fact~$\mssd_\dUpsilon$ is an extended distance, attaining the value~$+\infty$ on a set of positive~$\QP^\otym{2}$-measure in~$\dUpsilon^\tym{2}$, making~$\tparen{\dUpsilon,\mssd_\dUpsilon, \QP}$ into an extended metric measure space.

\subsection{Product spaces}
In order to address properties of the configuration space, we rely on the transfer method developed in~\cite{LzDSSuz21}, relating objects on~$\dUpsilon$ to the corresponding objects defined on a suitable subset of the infinite product~$X^\tym{\infty}$.

\subsubsection{Labeling maps and cylinder sets}
We recall here the main constructions on infinite-product spaces after~\cite{LzDSSuz21}.
For proofs of the facts stated here, as well as for some detailed heuristics, we refer the reader to~\cite[\S3.2.1]{LzDSSuz21}.
For a topological local structure~$\mcX$, let
\begin{align*}
\mbfX\eqdef \set{\emp} \sqcup \bigsqcup_{N\in \overline\N_1} X^\tym{N}
\end{align*}
be endowed with its natural (direct sum of products) $\sigma$-algebra~$\boldSigma$. For~$M\in\overline\N_1$, we define the truncation~$\tr^M\colon \mbfX\rar \mbfX$ by
\begin{align*}
\tr^M\colon \mbfx\longmapsto \mbfx^\asym{M}\eqdef& \begin{cases} \seq{x_1,\dotsc, x_M} &\text{if } \mbfx=\seq{x_p}_{p\leq N} \text{ and } M\leq N \text{ and } M<\infty\comma
\\
\mbfx &\text{if } \mbfx=\seq{x_p}_{p\leq N} \text{ and } M> N \text{ or } N=M=\infty
\end{cases} \fstop
\end{align*}
The map~$\tr^M$ is clearly both $\boldSigma/\boldSigma$- and Borel measurable for every~$M$.

\medskip

We denote by~$\mbfX_\locfin(\msE)$ the space of $\msE$-locally finite, finite or infinite sequences in~$X$, viz.
\begin{align*}
\mbfX_\locfin(\msE)\eqdef& \set{\mbfx\eqdef \seq{x_i}_{i\leq N}\in \mbfX: \#\set{i:x_i\in E}<\infty \text{~for all } E\in\msE}\comma
\end{align*}
endowed with the subspace $\sigma$-algebra~$\boldSigma_\locfin(\msE)$. Analogously to the case of~$\dUpsilon$, let us also set
\begin{align*}
\mbfX_\locfin^\asym{\infty}(\msE)\eqdef&X^\tym{\infty}\cap\mbfX_\locfin(\msE)\fstop
\end{align*}
In light of the discussion in~\cite[\S{3.2.1}]{LzDSSuz21}, this is a $\Bo{\T^\tym{\infty}}$-measurable set.

\paragraph{Labeling maps} Further set~$\Lb\colon \mbfX_\locfin(\msE)\rar \dUpsilon$,
\begin{equation*}
\begin{aligned}
\Lb\colon \mbfx\eqdef\seq{x_p}_{p\leq N} &\longmapsto \gamma \eqdef \sum_{p=1}^N\delta_{x_p} \comma\qquad \emp \longmapsto \zero\fstop
\end{aligned}
\end{equation*}
The map~$\Lb$ is $\boldSigma_\locfin(\msE)/\A_\mrmv(\msE)$-measurable, however not $\T^\tym{\infty}/\T_\mrmv$-continuous.
The correspondence $\Lb^{-1}\colon \dUpsilon\rightrightarrows \mbfX_\locfin(\msE)\subset \mbfX$ has a $\A_\mrmv(\msE)^*/\Bo{\mbfX_\locfin(\msE)}$-measur\-able selection, where~$\A_\mrmv(\msE)^*$ is the $\sigma$-algebra of all universally measurable subsets of~$\tparen{\dUpsilon,\T_\mrmv(\msE)}$.
We call any\linebreak $\A_\mrmv(\msE)^\QP/\Bo{\mbfX_\locfin(\msE)}$-measur\-able right inverse of~$\Lb$ a \emph{labeling map}~$\lb\colon \dUpsilon\rar \mbfX_\locfin(\msE)$.
Labeling maps are never continuous.

\paragraph{Cylinder sets}
We recall the definition of \emph{cylinder sets} in~$\mbfX^\asym{\infty}_\locfin(\msE)$, cf.~\cite[\S3.2.1]{LzDSSuz21}.
Let~$\mfS_0(\N_1)$ denote the group of bijections of~$\N_1$ with cofinitely many fixed points, and, for~$\mbfA\subset X^\tym{\infty}$ and~$\sigma\in\mfS_0(\N_1)$, set $\mbfA_\sigma\eqdef\set{\mbfx_\sigma:\mbfx\in \mbfA}$.

\begin{defs}[Cylinder sets]
A set~$\mbfA\subset X^\tym{\infty}$ is a \emph{cylinder set} if there exist~$n\in\N$ and sets
\begin{align*}
A_1,\dotsc, A_n\in \Bo{\T}\cap\msE\qquad \text{with} \qquad \mssm A_i>0 \comma \quad i\in [n]\comma
\end{align*}
so that
\begin{align}\label{eq:CylinderSet}
\mbfA=\tr_n^{-1}(A_1\times\cdots\times A_n)=A_1\times \cdots \times A_n\times X \times X\times\cdots
\end{align}
We denote by~$\CylSet{\msE}$ the family of all cylinder sets.
Finally, for every cylinder set~$\mbfA$, we let
\begin{align}\label{eq:CylinderRestricted}
\tilde\mbfA\eqdef \mbfA\cap\mbfX^\asym{\infty}_\locfin(\msE) \fstop
\end{align}
\end{defs}

Recall the definition~\eqref{eq:ConcentrationSetConfig2} of \emph{concentration sets}.
We have the following:

\begin{prop}[Concentration $\iff$ Cylinder, {\cite[Prop.~3.24]{LzDSSuz21}}]\label{p:FundamentalSetsConfig2}
Let~$\mbfA=\tr_n^{-1}(A_1\times \cdots \times A_n)$ be a cylinder set, and set~$j_i\eqdef \min\set{j\leq n: A_i = A_j }$, $m\eqdef\max_{i\leq n} j_i$, and\linebreak $k_i\eqdef \#\set{j\leq n: A_i=A_j}$.
Then,
\begin{align}\label{eq:p:FundamentalSetsConfig2:0}
\Lb(\tilde\mbfA)=\bigcap_{i=1}^m \Xi_{\geq k_i}^\sym{\infty} (A_{j_i}) \fstop
\end{align}

Viceversa, let~$\mbfk\eqdef\seq{k_i}_i$, and
\begin{subequations}
\begin{gather}
\label{eq:p:FundamentalSetsConfig2:1}
\Xi^\sym{\infty}_{\geq \mbfk}(A_1,\dotsc, A_m)\eqdef \set{\gamma\in\dUpsilon^\sym{\infty}:\gamma E_j\geq k_j\comma j\leq m}\comma \qquad n\eqdef \sum_{j=1}^m j_i\comma
\\
\label{eq:p:FundamentalSetsConfig2:2}
\mbfA\eqdef \tr_n^{-1}\tparen{\underbrace{A_1\times \cdots \times A_1}_{k_1} \times \cdots \times \underbrace{A_m\times \cdots \times A_m}_{k_m}} \fstop
\end{gather}
\end{subequations}
Then,
\begin{equation*}
\Lb^{-1}\tparen{\Xi^\sym{\infty}_{\geq \mbfk}(A_1,\dotsc, A_m)}= \bigcup_{\sigma\in\mfS_0(\N_1)} \tilde\mbfA_\sigma \subset \mbfX^\asym{\infty}_\locfin(\msE) \fstop
\end{equation*}
\end{prop}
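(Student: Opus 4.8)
The plan is to reduce both equalities to one elementary combinatorial fact: for a configuration $\gamma \in \dUpsilon^\sym{\infty}$, the atoms of $\gamma$ (counted with multiplicity) admit an $\msE$-locally finite enumeration $(x_p)_{p\in\N_1}$ with $x_j \in A_j$ for all $j \leq n$ precisely when $\gamma$ satisfies the concentration constraint attached to each distinct set occurring among $A_1,\dotsc,A_n$. The ingredients used repeatedly are: (i) $\Lb$ depends on a sequence only through its underlying multiset, hence is invariant under the reindexings in $\mfS_0(\N_1)$; (ii) every sequence in $X^\tym{\infty}$ has infinitely many entries, so $\Lb$ sends it into $\dUpsilon^\sym{\infty}$, and conversely the atoms of an infinite configuration enumerate into $X^\tym{\infty}$; (iii) any enumeration of the atoms of $\gamma \in \dUpsilon(\msE)$ is automatically $\msE$-locally finite, since $\#\set{p : x_p \in E} = \gamma E < \infty$ for $E \in \msE$. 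Throughout we use that distinct sets among $A_1,\dotsc,A_n$ are pairwise disjoint --- this is part of the notion of a cylinder set and, as the last paragraph indicates, is indispensable for the equalities.

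For the first identity, the inclusion ``$\subseteq$'' is immediate: if $\mbfx \in \tilde\mbfA$ then $\gamma \eqdef \Lb(\mbfx)$ is infinite by (ii), and for each first-occurrence index $j_i$ the $k_i$ coordinates $p \leq n$ with $A_p = A_{j_i}$ all have $x_p \in A_{j_i}$, so $\gamma A_{j_i} \geq k_i$, i.e.\ $\gamma \in \Xi^\sym{\infty}_{\geq k_i}(A_{j_i})$. For ``$\supseteq$'', fix $\gamma$ in the intersection and enumerate its atoms with multiplicity (possible by (ii), $\msE$-locally finite by (iii)). Each distinct $B \in \set{A_1,\dotsc,A_n}$ with multiplicity $k_B$ satisfies $\gamma B \geq k_B$, and disjointness lets us choose finitely many atom-indices landing in each $B$, pairwise disjointly over the $B$'s, $n$ of them in total; assign these to the slots $1,\dotsc,n$ respecting the pattern $j\mapsto A_j$ and list the remaining (still infinitely many) atoms in the slots $>n$ in any order. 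The resulting $\mbfx$ lies in $\tilde\mbfA$ and $\Lb(\mbfx) = \gamma$.

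The ``viceversa'' part is the same construction with the permutation tracked explicitly; here $n = \sum_i k_i$ and $\mbfA$ lists each $A_i$ exactly $k_i$ times in consecutive blocks. Given $\mbfx$ with $\Lb(\mbfx) \in \Xi^\sym{\infty}_{\geq\mbfk}(A_1,\dotsc,A_m)$, disjointness yields pairwise disjoint finite sets $P_1,\dotsc,P_m \subset \N_1$ with $\#P_i = k_i$ and $x_p \in A_i$ for $p \in P_i$. Pick $\sigma \in \mfS_0(\N_1)$ carrying $P_1 \sqcup \dotsb \sqcup P_m$ onto $\set{1,\dotsc,n}$ block by block (the first $k_1$ images from $P_1$, the next $k_2$ from $P_2$, and so on) and fixing every other index --- admissible, since only finitely many indices move ---, so that $\mbfx \in \tilde\mbfA_\sigma$. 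The reverse inclusion $\tilde\mbfA_\sigma \subseteq \Lb^{-1}(\Xi^\sym{\infty}_{\geq\mbfk}(A_1,\dotsc,A_m))$ holds for every $\sigma$ by (i), using that $\mbfX^\asym{\infty}_\locfin(\msE)$, the mass constraints $\gamma A_i \geq k_i$, and ``$\gamma X = \infty$'' are all $\mfS_0(\N_1)$-invariant; taking the union over $\sigma$ gives the identity.

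The hard part is not any estimate but the bookkeeping. One must verify that the list of constraints ``$\gamma A_{j_i} \geq k_i$, $i = 1,\dotsc,m$'' coincides with ``$\gamma B \geq (\text{multiplicity of }B)$ for every distinct $B$ among the $A_i$'', so that the repetitions among the indices $1,\dotsc,m$ add nothing; and that the disjoint-selection step genuinely produces an enumeration of \emph{all} atoms of $\gamma$, with none used twice and none omitted. Disjointness of the distinct $A_i$ is exactly what makes these per-set requirements simultaneously realizable --- and it is indispensable: overlapping sets would let a single atom of $\gamma$ satisfy two distinct concentration constraints at once, which has no counterpart on the labelling side.
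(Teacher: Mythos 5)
The paper contains no proof of this proposition: it is imported wholesale from the companion paper \cite{LzDSSuz21}, so there is no in-text argument to measure yours against. Your combinatorial proof is the natural (and surely the intended) one: read off the concentration constraints from the first $n$ slots for one inclusion; for the other, select $k_B$ atom-indices inside each distinct $B$, place them in the prescribed slots, and enumerate the remaining (still infinitely many) atoms arbitrarily, using that any enumeration of the atoms of a configuration is automatically $\msE$-locally finite. The reduction of the constraints indexed by $i\leq m$ to one constraint per distinct set, and the permutation-invariance argument for the second identity, are both handled correctly, and you rightly repair the typos in the statement ($n=\sum_i k_i$, $A_j$ in place of $E_j$).

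Two points. First, you assert that pairwise disjointness of the distinct sets among $A_1,\dotsc,A_n$ ``is part of the notion of a cylinder set''. In this paper's definition it is not: the $A_i$ are only required to lie in $\Bo{\T}\cap\msE$ and have positive $\mssm$-measure. You are right that the identity fails without disjointness --- take $n=2$ with $A_1\neq A_2$ overlapping and a $\gamma$ whose only atom in $A_1\cup A_2$ is a simple atom in $A_1\cap A_2$: it lies in $\Xi^\sym{\infty}_{\geq 1}(A_1)\cap\Xi^\sym{\infty}_{\geq 1}(A_2)$ but not in $\Lb(\tilde\mbfA)$ --- and your proof uses disjointness exactly where it must, namely to make the selected index sets pairwise disjoint; without it the correct characterization of $\Lb(\tilde\mbfA)$ involves a Hall-type condition on all unions $\bigcup_{j\in S}A_j$, which the right-hand side does not encode. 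So this hypothesis must be flagged as an addition to (or a correction of) the statement as reproduced here, not presented as already contained in the definition. Second, a small slip in the ``viceversa'' part: a map sending $P_1\sqcup\dotsb\sqcup P_m$ bijectively onto $\set{1,\dotsc,n}$ while \emph{fixing every other index} is not a bijection of $\N_1$ unless those two sets coincide; you want $\sigma\in\mfS_0(\N_1)$ supported on the finite set $(P_1\sqcup\dotsb\sqcup P_m)\cup\set{1,\dotsc,n}$. Such a permutation obviously exists, so this is cosmetic.
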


\subsubsection{Finite products}
Let~$\mcX$ be a topological local structure, and let~$n\geq 2$.
We denote by~$\A^{\hotym n}$ the product $\sigma$-algebra on~$X^{\tym{n}}$, by~$\mssm^{\otym{n}}$ the product measure on~$(X^{\otym n}, \A^{\hotym n})$, by~$\msE^{\otym n}$ the localing ring generated by the algebra of pluri-rectangles generated by the family of rectangles~$\msE^\tym{n}$.
Finally, if~$(\rep\cdc, \rep\Dz)$ is a pointwise defined square-field operator (Dfn.~\ref{d:SFConfig2}), we denote by~$\rep\Dz^{\otym n}$ the $n$-fold product algebra generated by~$\rep\Dz$, endowed with the natural product operator~$\rep\cdc^\tym{n}\colon (\rep \Dz^{\otym n})^{\tym 2}\rar \mcL^\infty(\mssm^{\otym{n}})$, defined as follows.

Now, let~$\rep f^\asym{n}\colon X^\tym{n}\rar \R$ be $\A^{\hotimes{n}}$-measurable.
For~$\mbfx^\asym{n}\in X^{\tym{n}}$ and~$p\in[n]$, define the $p$-section~$\rep f^\asym{n}_{\mbfx, {p}}\colon X\rar \R$ at~$\mbfx^\asym{n}$ by
\begin{align*}
\rep f^\asym{n}_{\mbfx, {p}}\colon y \longmapsto \rep f^\asym{n}(x_1,\dotsc, x_{p-1},y,x_{p+1},\dotsc, x_n)\comma \qquad \mbfx^\asym{n}\eqdef\seq{x_1,\dotsc, x_n}\in X^{\tym{n}}\fstop
\end{align*}
Further define the \emph{product square field operator}~$(\rep\cdc^{\tym{n}},\rep\Dz^{\otym{n}})$
\begin{align*}
\rep\cdc^{\tym{n}}(\rep f^\asym{n})(\mbfx^\asym{n})\eqdef \sum_{p=1}^n \rep\cdc(\rep f^\asym{n}_{\mbfx, {p}})(x_p) \comma \qquad \rep f^\asym{n}\in \rep\Dz^{\otym{n}}\comma \qquad \mbfx^\asym{n}\in X^{\tym{n}}\fstop
\end{align*}

The following assertions are readily verified.
\begin{prop}[Product spaces]
For every~$n\geq 2$, 
\begin{itemize}
\item the quadruple~$\mcX^{\otym n}\eqdef (X^{\tym{n}}, \A^{\hotym{n}},\mssm^{\otym{n}}, \msE^{\otym n})$ is a topological local structure;
\item the pair~$(\rep\cdc^\tym{n}, \rep\Dz^{\otym n})$ is a pointwise defined square field operator;
\item the pair~$(\mcX^{\otym n}, \rep\cdc^\tym{n})$ is a~\TLDS.
\end{itemize}
\end{prop}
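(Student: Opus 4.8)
The plan is to verify the three assertions in turn against Definitions~\ref{d:TLSConfig2}, \ref{d:SFConfig2} and~\ref{d:TLDSConfig2}, exploiting that all the objects involved are assembled from \emph{finitely many} copies of~$\mcX$: the topological and measure-theoretic clauses then reduce to standard facts about finite products, and the Dirichlet-form clauses to the tensorization theory of quasi-regular Dirichlet forms. For the first assertion one checks Definition~\ref{d:TLSConfig2}\ref{i:d:TLSConfig2:1}--\ref{i:d:TLSConfig2:5} one at a time: a finite product of separable metrizable Luzin spaces is again separable metrizable Luzin, with~$\Bo{\T^\tym n}=\Bo{\T}^{\hotym n}$, which gives~\ref{i:d:TLSConfig2:1}; the product~$\mssm^{\otym n}$ is a Radon measure of full~$\T^\tym n$-support, and atomless because each factor is, which gives~\ref{i:d:TLSConfig2:2}; the inclusions~$\Bo{\T^\tym n}\subseteq\A^{\hotym n}\subseteq(\Bo{\T^\tym n})^{\mssm^{\otym n}}$ and the~$\mssm^{\otym n}$-essentially-countable generation of~$\A^{\hotym n}$ follow from the corresponding properties of~$\A$ together with the behaviour of Carath\'eodory completions under finite products, which gives~\ref{i:d:TLSConfig2:3}; for~\ref{i:d:TLSConfig2:4} one fixes a localizing sequence~$\seq{E_h}_h$ for~$\msE$, puts~$\mbfE_h\eqdef E_h^\tym n$, and observes that~$\seq{\mbfE_h}_h$ is increasing, that~$\msE^{\otym n}=\bigcup_h\tparen{\A^{\hotym n}\cap\mbfE_h}$ is a ring ideal of~$\A^{\hotym n}$, and that it contains every pluri-rectangle~$A_1\times\cdots\times A_n$ with~$A_i\in\msE$; finally~\ref{i:d:TLSConfig2:5} holds by taking product neighborhoods~$U_{x_1}\times\cdots\times U_{x_n}\in\msE^{\otym n}$.

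For the second assertion, that~$\rep\Dz^{\otym n}$ is a subalgebra of~$\mcL^\infty(\mssm^{\otym n})$ is clear, and~$\rep\cdc^\tym n$ is well-defined, bilinear, symmetric and non-negative because every~$p$-section~$\rep f^\asym n_{\mbfx,p}$ of an element~$\rep f^\asym n\in\rep\Dz^{\otym n}$ again lies in~$\rep\Dz$; the diffusion identity~\eqref{eq:i:d:SFConfig2:3} for~$\rep\cdc^\tym n$ then follows by applying the diffusion property of~$(\rep\cdc,\rep\Dz)$ to each section and summing over~$p$. The point requiring care is the closure property~$\phi_0\circ\mbfg\in\rep\Dz^{\otym n}$ for a tuple~$\mbfg$ in~$\rep\Dz^{\otym n}$ and~$\phi\in\Cbinfty(\R^k)$: the natural route is to write the components of~$\mbfg$ as polynomials with vanishing constant term in finitely many ``coordinate'' functions~$\rep f^p_q\circ\pr^p$ (with~$\rep f^p_q\in\rep\Dz$), so that~$\phi_0\circ\mbfg=\psi\tparen{(\rep f^p_q\circ\pr^p)_{p,q}}$ for some~$\psi\in\Cbinfty$ with~$\psi(\zero)=0$ --- here one first replaces the polynomial compositions by genuine~$\Cbinfty$ functions by a compactly supported cutoff on a box containing the ranges of the bounded functions~$\rep f^p_q$ --- and then splits~$\psi=\sum_{\emp\neq S\subseteq[n]}\psi_S$, each~$\psi_S$ depending only on the variables attached to coordinates in~$S$ and vanishing whenever any of those blocks is set to~$\zero$; applying the coordinatewise diffusion stability of~$(\rep\cdc,\rep\Dz)$ together with Hadamard's lemma (again after a cutoff) rewrites each~$\psi_S\tparen{(\rep f^p_q\circ\pr^p)_{p,q}}$ as a finite combination of products of elements of~$\rep\Dz\circ\pr^p$, hence as an element of~$\rep\Dz^{\otym n}$.

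For the third assertion one checks Definition~\ref{d:TLDSConfig2}: item~\ref{i:d:TLDSConfig2:1} is the first assertion; $\rep\Dz^{\otym n}\subseteq\Cz(\msE^{\otym n})$ and generates~$\T^\tym n$ since products of elements of~$\Cz(\msE)$ are~$\T^\tym n$-continuous and~$\msE^{\otym n}$-eventually vanishing and~$\T^\tym n$ is generated by the coordinate pre-images, which gives~\ref{i:d:TLDSConfig2:2}; and for~\ref{i:d:TLDSConfig2:3}--\ref{i:d:TLDSConfig2:5} the decisive observation is that the pre-form~$f\mapsto\int\rep\cdc^\tym n(\rep f^\asym n)\diff\mssm^{\otym n}$ on~$\rep\Dz^{\otym n}$ is precisely the~$n$-fold tensor-product pre-form of~$\tparen{\EE{X}{\mssm},\Dz}$ on~$L^2(\mssm^{\otym n})=L^2(\mssm)^{\hotym n}$: consequently~$\rep\cdc^\tym n$ descends to an~$\mssm^{\otym n}$-class square field operator~$\cdc^\tym n$ (valued in~$\class[\mssm^{\otym n}]{\Sb(\msE^{\otym n})}$ because~$\rep\cdc$ is valued in~$\class[\mssm]{\Sb(\msE)}$), the pre-form is densely defined---an algebraic tensor product of dense subspaces being dense in~$L^2(\mssm)^{\hotym n}$---and closable, its closure coincides with the~$n$-fold tensor-product Dirichlet form~$\tparen{\EE{X}{\mssm},\dom{\EE{X}{\mssm}}}^{\hotym n}$, and this form has carr\'e du champ~$\cdc^\tym n$ and is quasi-regular by the tensorization of quasi-regular Dirichlet forms over Luzin state spaces, as developed in~\cite{LzDSSuz21}.

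The genuinely substantial inputs are thus two: the closure property of the tensor algebra~$\rep\Dz^{\otym n}$ under~$\phi_0\circ\emparg$ in the second assertion---an elementary but somewhat delicate verification combining cutoffs with the Hadamard lemma---and the quasi-regularity of the~$n$-fold tensor-product Dirichlet form in the third. I expect the latter to be the main obstacle, since it is the step where one cannot simply invoke the classical tensorization theory for regular Dirichlet forms on locally compact spaces and must instead rely on its quasi-regular, Luzin-space counterpart.
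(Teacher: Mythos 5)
The paper gives no proof of this Proposition at all --- it is introduced with ``The following assertions are readily verified'' --- so the only question is whether your verification is sound. Your treatment of the first and third bullets follows the natural route and is fine: finite products of Luzin spaces, Radon measures, essentially countably generated $\sigma$-algebras and localizing rings behave exactly as you describe, and once the integrated pre-form is identified with the $n$-fold tensor pre-form of $\tparen{\EE{X}{\mssm},\Dz}$, closability, the carr\'e du champ and quasi-regularity reduce to tensorization of quasi-regular Dirichlet forms.

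The gap is in the second bullet, in the closure of~$\rep\Dz^{\otym n}$ under~$\phi_0\circ\emparg$. If~$\rep\Dz^{\otym n}$ is read as the plain tensor algebra (finite sums of products $\rep f_1\otimes\cdots\otimes\rep f_n$ with $\rep f_i\in\rep\Dz$), the closure statement is \emph{false}, and the Hadamard argument cannot establish it: after the inclusion--exclusion splitting $\psi=\sum_S\psi_S$ and repeated application of Hadamard's lemma in the blocks indexed by~$S$, one obtains $\psi_S(\mbfu)=\sum\prod_{p\in S}u^p_{q_p}\,h(\mbfu)$, where the remainders~$h$ are smooth functions of \emph{all} blocks of variables jointly; composed with the coordinate functions these are again general smooth functions of tensor products, not elements of the tensor algebra, so the reduction never terminates. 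That it cannot terminate is shown already by $n=2$, $\phi$ a cutoff of $t\mapsto e^t-1$, and $g=\rep f\otimes\rep f$: the kernel $(s,t)\mapsto e^{st}-1$ has infinite rank on any square $[0,\delta]^{\tym{2}}$, so $e^{\rep f(x)\rep f(y)}-1$ is not a finite sum $\sum_{i\leq N}u_i(x)\,v_i(y)$. The correct reading of ``the $n$-fold product algebra generated by~$\rep\Dz$'' is the algebra of cylinder-type functions $F\circ\tparen{(\rep f^p_q\circ\pr^p)_{p,q}}$ with $\rep f^p_q\in\rep\Dz$ and $F\in\Cbinfty$ vanishing whenever \emph{any one} of the $n$ blocks of its arguments equals~$\zero$. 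This class contains the tensor products; it is closed under $\phi_0\circ\emparg$ by mere composition (if every inner function vanishes on the set where block~$p$ is~$\zero$, so does $\phi_0$ of the tuple); it still consists of continuous $\msE^{\otym n}$-eventually vanishing functions, since outside $E^{\tym{n}}$ at least one block of inner functions vanishes; and every $p$-section lies in~$\rep\Dz$ by axiom~\ref{i:d:SFConfig2:1} for the base space, so that $\rep\cdc^{\tym{n}}$ is well defined on it. With this domain the second bullet is immediate and no Hadamard argument is needed.
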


\subsubsection{Pre-domains on infinite products}
In order to transfer objects from the configuration space to~$\mbfX$, we shall need to make sense of the pullback map
\begin{align*}
\Lb^*\colon u\longmapsto u\circ \Lb \fstop
\end{align*}
We shall need to interpret all functions of the form~$\Lb^*u$ and~$\Lb^* \tparen{\SF{\dUpsilon}{\PP}(u)}$ as pointwise defined everywhere on~$\mbfX$, which motivated the thorough study of the choice of representatives in~\cite{LzDSSuz21}.
To this end, let us start by defining a suitable core of differentiable functions.

\begin{notat}
For a labeling map~$\lb$ and a probability measure~$\QP$ on~$\tparen{\dUpsilon,\A_\mrmv(\msE)}$ set
\begin{align*}
\QP^\asym{\infty}\eqdef \lb_\pfwd \QP \qquad \text{and} \qquad \QP^\asym{n}\eqdef (\tr^n\circ\lb)_\pfwd\QP\comma \quad n\in \N_1\fstop
\end{align*}
The labeling map~$\lb$ implicit in the notation~$\QP^\asym{N}$ will always be apparent from context. 
Further denote by~$\Bo{\T^\tym{\infty}}^{\lb_\pfwd\QP}$ the completion of~$\Bo{\T^\tym{\infty}}$ w.r.t.~$\QP^\asym{\infty}$, and define the \emph{labeling-universal} $\sigma$-algebra on~$\mbfX^\asym{\infty}_\locfin(\msE)$ by
\begin{align*}
\boldSigma^*(\msE)\eqdef \bigcap_{\lb \text{ labeling map}} \Bo{\T^\tym{\infty}}^{\lb_\pfwd\QP} \fstop
\end{align*}
\end{notat}
Note that~$\Lb\colon \mbfX^\asym{\infty}_\locfin(\msE)\to \dUpsilon$ is $\boldSigma^*(\msE)/\A_\mrmv(\msE)$-measurable.

\medskip

As a consequence of~\cite[Prop.~3.29]{LzDSSuz21}, we have the following result about $\PP^\asym{n}$-negligible sets.

\begin{lem}[Absolute continuity of projections]\label{l:AbsContPoissonProj}
Let~$\mcX$ be a topological local structure with~$\mssm X=\infty$. Then,~$\PP^\asym{n}\ll \mssm^\otym{n}$ for every~$n\in\N$.
\end{lem}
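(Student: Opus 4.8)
The plan is to deduce the statement from the $n$-fold Mecke identity for~$\PP$, with the passage from~$\dUpsilon$ to the product space handled as in~\cite[Prop.~3.29]{LzDSSuz21}. Since~$\mssm X=\infty$, the measure~$\PP=\PP_\mssm$ is concentrated on~$\Upsilon^\sym{\infty}(\msE)$, so~$\PP$-a.e.\ configuration~$\gamma$ is \emph{simple} and has infinitely many atoms. Fixing a labeling map~$\lb$ and writing~$\lb(\gamma)=\seq{y_1(\gamma),y_2(\gamma),\dotsc}$, it follows that for~$\PP$-a.e.\ $\gamma$ the truncation~$\tr^n(\lb(\gamma))=\seq{y_1(\gamma),\dotsc,y_n(\gamma)}$ is an $n$-tuple of pairwise distinct atoms of~$\gamma$, lying in~$X^\tym{n}\subset\mbfX$; in particular~$\PP^\asym{n}=(\tr^n\circ\lb)_\pfwd\PP$ is a probability measure on~$\tparen{X^\tym{n},\A^\hotym{n}}$, and it suffices to prove~$\PP^\asym{n}(\mbfB)=0$ whenever~$\mbfB\in\A^\hotym{n}$ satisfies~$\mssm^\otym{n}(\mbfB)=0$.

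The argument is then a trivial pointwise bound followed by an application of the multivariate Mecke identity. For~$\PP$-a.e.\ $\gamma$ one has~$\car_\mbfB\tparen{\tr^n(\lb(\gamma))}\leq N_\mbfB(\gamma)$, where~$N_\mbfB(\gamma)$ denotes the number of ordered $n$-tuples~$\seq{x_1,\dotsc,x_n}\in\mbfB$ of pairwise distinct atoms of~$\gamma$; indeed~$\seq{y_1(\gamma),\dotsc,y_n(\gamma)}$ is one such tuple whenever it belongs to~$\mbfB$. Integrating against~$\PP$ and applying the $n$-fold Mecke identity --- obtained by iterating~\eqref{eq:MeckeConfig2}, cf.~\cite[Thm.~4.4]{LasPen18}, or directly via~\cite[Prop.~3.29]{LzDSSuz21} --- to the~$\gamma$-independent integrand~$\car_\mbfB$, one gets
\begin{align*}
\PP^\asym{n}(\mbfB) &= \int_\dUpsilon \car_\mbfB\tparen{\tr^n(\lb(\gamma))}\diff\PP(\gamma) \leq \int_\dUpsilon N_\mbfB\diff\PP \\
&= \int_{X^\tym{n}}\car_\mbfB\diff\mssm^\otym{n}=\mssm^\otym{n}(\mbfB)=0\fstop
\end{align*}

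The only genuinely delicate point is the validity of the $n$-fold Mecke identity at the generality of a topological local structure: for~$n=1$ it is precisely~\eqref{eq:MeckeConfig2}, and the general case is obtained by induction, applying~\eqref{eq:MeckeConfig2} one integration variable at a time and using~$\tparen{\gamma+\delta_x}-\delta_x=\gamma$ to peel off a variable at each step (alternatively, one simply invokes~\cite[Prop.~3.29]{LzDSSuz21}). The remaining verifications are routine: the~$\A^\hotym{n}$-measurability of~$N_\mbfB$ and of~$\tr^n\circ\lb$ (the former being standard for factorial moment measures, the latter following from the measurability of~$\lb$ and of~$\tr^n$), and the extension of~\eqref{eq:MeckeConfig2} from bounded to nonnegative integrands by monotone convergence. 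Finally, I would note that the hypothesis~$\mssm X=\infty$ enters exactly in guaranteeing~$\gamma X\geq n$ for~$\PP$-a.e.\ $\gamma$, so that~$\PP^\asym{n}$ is carried by~$X^\tym{n}$ and the comparison with~$\mssm^\otym{n}$ is meaningful; for~$\mssm X<\infty$ the measure~$\PP^\asym{n}$ would also charge the lower strata~$X^\tym{k}$ with~$k<n$.
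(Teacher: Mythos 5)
Your proof is correct. The paper itself gives no argument for this lemma, deferring entirely to~[Prop.~3.29] of the companion paper, so the relevant comparison is with a self-contained proof, which is what you supply: the domination $\car_\mbfB\tparen{\tr^n(\lb(\gamma))}\leq N_\mbfB(\gamma)$ (valid $\PP$-a.e.\ because $\mssm X=\infty$ and atomlessness of~$\mssm$ force $\PP$-a.e.~$\gamma$ to be simple with infinitely many atoms, so the first $n$ entries of any labeling are pairwise distinct atoms of~$\gamma$), followed by the identity $\int N_\mbfB\diff\PP=\mssm^{\otym{n}}(\mbfB)$, which is exactly the statement that the $n$-th factorial moment measure of a Poisson process equals $\mssm^{\otym{n}}$, i.e.\ the multivariate Mecke equation obtained by iterating~\eqref{eq:MeckeConfig2}. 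All the points you flag as routine (measurability of $N_\mbfB$ and of $\tr^n\circ\lb$, extension of the Mecke identity to nonnegative integrands) are indeed routine, and your closing remark correctly locates where the hypothesis $\mssm X=\infty$ is used. This is the natural argument and there is nothing to object to.
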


\paragraph{Predomains}
Let us now introduce a suitable space of test functions on~$X^\tym{\infty}$.
We refer the reader to~\cite[\S3.3.2]{LzDSSuz21} for further details about this construction.

\begin{notat}
For a bounded $\boldSigma^*(\msE)$-measurable function~$\rep U\colon X^\tym{\infty}\rar \R$ let
\begin{align}\label{eq:d:DiConfig2:0}
\rep U_{\mbfx, {p}}\colon z\longmapsto \rep U(x_1,\dotsc, x_{p-1},z, x_{p+1},\dotsc)\comma \qquad \mbfx\in X^\tym{\infty}\comma \quad p\in \N_1\comma
\end{align}
and set, for every~$N\in\overline\N_1$,
\begin{align}\label{eq:d:DiConfig2:1}
\begin{aligned}
\rep\cdc^p(\rep U)(\mbfx)\eqdef&\ \repSF{X}{\mssm}(\rep U_{\mbfx, p})(x_p) \comma
\\
\rep\cdc^\asym{N}(\rep U)(\mbfx)\eqdef&\ \sum_{p=1}^N \rep\cdc^p(\rep U)(\mbfx) \comma
\end{aligned}
\qquad \mbfx\in X^\tym{\infty}\comma
\end{align}
whenever this makes sense.
We denote by~$\rep\cdc^\asym{N}(\emparg,\emparg)$ the bilinear form induced by~$\rep\cdc^\asym{N}(\emparg)$ by polarization.
\end{notat}

We define the Sobolev \emph{semi}-norm
\begin{align*}
\ttnorm{\rep U}_{\llb}\eqdef \tnorm{\ttabs{\rep U}+ \rep\cdc^\asym{\infty}(\rep U)^{1/2}}_{L^2(\lb_\pfwd \PP)} \fstop
\end{align*}

\begin{defs}[pre-Sobolev class]
We say that $\rep U\colon \mbfX^{\asym{\infty}}_\locfin(\msE) \rar \R$ is \emph{pre-Sobolev} if
\begin{enumerate}[$(a)$]
\item
$\rep U$ is bounded $\boldSigma^*(\msE)$-measur\-able;
\item
there exists a constant~$M>0$ so that~$\ttnorm{\rep U}_{\llb}\leq M$ for \emph{every} labeling map~$\lb$.
\end{enumerate}
We denote by~$\preW(\msE)$ the space of all pre-Sobolev functions on~$\mbfX^{\asym{\infty}}_\locfin(\msE)$, and by~$\spclass[\llb]{\preW(\msE)}$ the corresponding space of~$\lb_\pfwd \PP$-classes for some fixed labeling map~$\lb$.
\end{defs}

\section{Essential self-adjointness and \texorpdfstring{$L^p$}{Lp}-uniqueness}
In this section we provide some explicit expressions for the generator~$\LL{\dUpsilon}{\PP}$ of the form~$\EE{\dUpsilon}{\PP}$ on different classes of cylinder functions.
Additionally, we discuss its essential self-adjointness and $L^p$-uniqueness on said classes. 

\subsection{Exponential cylinder functions and semigroup representation}\label{sss:ExpCyl}
In this section, we generalize to our setting part of~\cite[\S4 and \S7]{AlbKonRoe98}, concerned with cylinder functions of exponential type. Recall Notation~\ref{n:FormConfig2}\ref{i:n:FormConfig2:3} and~\ref{i:n:FormConfig2:4}. Closely following~\cite{AlbKonRoe98} and~\cite{KonLytRoe02}, we set
\begin{align}\label{eq:De}
\De\eqdef& \set{f\in \dom{\LL{X}{\mssm}} \cap L^1(\mssm) : \quad \begin{matrix}\LL{X}{\mssm} f\in L^1(\mssm)\cap L^\infty(\mssm) \textrm{ and }\\ -\delta\leq f \leq 0 \textrm{ for some $\delta\in (0,1)$} \end{matrix}} \fstop
\end{align}
It is readily established that~$\Span\De\subset \Daux$ is dense in $L^2(\mssm)$, and that~$\TT{X}{\mssm}_\bullet$ leaves~$\Span\De$ invariant. As a consequence,~$\tparen{\LL{X}{\mssm},\Span\De}$ is essentially self-adjoint by~\cite[Thm.~X.49]{ReeSim75}. 
Further set
\begin{align}\label{eq:ExpCyl}
\Ex{\De}\eqdef& \Span\set{\gamma\longmapsto \exp\tparen{\gamma \log(1+f)} \colon f\in\De}\fstop
\end{align}
Firstly, let us note that, for every~$\delta\in (0,1)$, we may find~$\phi\in \Czinfty(\R)$ so that~$\phi(t)=\log(1+t)$ for all~$t\in [-\delta, 0]$ and~$\phi(0)=0$.
By chain rule for~$\SF{X}{\mssm}$ we have~$\phi\circ f\in\dom{\EE{X}{\mssm}}$ for every~$f\in\De\subset \dom{\EE{X}{\mssm}}$. Thus,~$\log(1+f)^\trid$ is a well-defined element of~$\dom{\EE{\dUpsilon}{\PP}}$ for every~$f\in\De$ by Proposition~\ref{p:ExtDomConfig2}\ref{i:p:ExtDomConfig2:1} for the choice~$F=\phi$. In particular,~$\Ex{\De}\subset L^2(\PP)$. Furthermore, since~$\De$ generates the $\sigma$-algebra $\Bo{\T}$, one has that~$\Ex{\De}$ generates the $\sigma$-algebra~$\Bo{\T_\mrmv(\msE)}$ on~$\dUpsilon$, and that the inclusion~$\Ex{\De}\subset L^2(\PP)$ is a dense one.

\smallskip

For~$t>0$, we define~$\mbfP_t\colon \Ex{\De}\rar L^2(\PP)$ as the linear extension of
\begin{align}\label{eq:SemigroupP}
\mbfP_t\colon \exp\tparen{\log(1+f)^\trid}\longmapsto \exp\tparen{\log(1+\TT{X}{\mssm}_t f)^\trid}\comma \qquad f\in\De\comma \qquad t>0 \fstop
\end{align}
Since~$\TT{X}{\mssm}_\bullet$ leaves~$\De$ invariant, then~$\mbfP_\bullet$ leaves~$\Ex{\De}$ invariant as well.
The goal of this section is to show Theorem~\ref{t:AKR4.1} below, where we identify~$\mbfP_\bullet$ with~$\TT{\dUpsilon}{\PP}_\bullet$ on the set~$\Ex{\De}$.

\begin{thm}[Semigroups' representation]\label{t:AKR4.1}
Let $(\mcX,\cdc)$ be a \TLDS.
Then,
\begin{align*}
\TT{\dUpsilon}{\PP}_t u = \mbfP_t u\comma \qquad u\in \Ex{\De}\comma \qquad t> 0 \fstop
\end{align*}
\end{thm}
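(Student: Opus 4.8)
The plan is to show that $\mbfP_\bullet$, extended by density from $\Ex{\De}$, coincides with the $L^2(\PP)$-Markov semigroup $\TT{\dUpsilon}{\PP}_\bullet$ of $\EE{\dUpsilon}{\PP}$, by matching their generators on the invariant core $\Ex{\De}$. Write $u_f\eqdef\exp\bigl(\log(1+f)^\trid\bigr)$ for $f\in\De$, so $\Ex{\De}=\Span\set{u_f:f\in\De}$. First I would check that $\mbfP_\bullet$ extends to a strongly continuous, symmetric, sub-Markovian contraction semigroup $\bar\mbfP_\bullet$ on $L^2(\PP)$: the identity
\[
\int_\dUpsilon u_f\,u_g\diff\PP=\exp\!\int_X\tparen{f+g+fg}\diff\mssm\comma\qquad f,g\in\De\comma
\]
which follows from the Laplace‑transform characterization of $\PP$ in Definition~\ref{d:PoissonConfig2}, combined with the symmetry and $L^2(\mssm)$‑contractivity of $\TT{X}{\mssm}_\bullet$ and the $\TT{X}{\mssm}_\bullet$‑invariance of $\De$, shows that $(u,v)\mapsto\int\mbfP_tu\cdot v\diff\PP$ is symmetric and that $\mbfP_t$ is an $L^2(\PP)$‑contraction on $\Ex{\De}$; the semigroup law follows from $\TT{X}{\mssm}_s\TT{X}{\mssm}_t=\TT{X}{\mssm}_{s+t}$ and linearity; strong continuity follows from $\mbfP_tu_f=u_{\TT{X}{\mssm}_tf}\to u_f$ in $L^2(\PP)$ as $t\downarrow0$, using $\TT{X}{\mssm}_tf\to f$ in $L^2(\mssm)$, the two‑sided bound $-\delta\le\TT{X}{\mssm}_tf\le 0$, and dominated convergence; density of $\Ex{\De}$ in $L^2(\PP)$ was recorded above. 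Denote by $\bar\LL$ the self‑adjoint generator of $\bar\mbfP_\bullet$.

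Second, I would compute $\bar\LL$ and $\LL{\dUpsilon}{\PP}$ on $\Ex{\De}$ and check they agree. Differentiating $t\mapsto\mbfP_tu_f=\exp\bigl((\log(1+\TT{X}{\mssm}_tf))^\trid\bigr)$ at $t=0$ in $L^2(\PP)$ — legitimate by the chain rule together with $t^{-1}(\TT{X}{\mssm}_tf-f)\to\LL{X}{\mssm}f$ in $L^1\cap L^2(\mssm)$, the exponential and linear factors being controlled by $-\delta\le\TT{X}{\mssm}_tf\le0$ and $\LL{X}{\mssm}f\in L^1\cap L^\infty(\mssm)$ — yields $\bar\LL u_f=u_f\cdot\bigl(\tfrac{\LL{X}{\mssm}f}{1+f}\bigr)^{\!\trid}$. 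On the other hand, by Proposition~\ref{p:ExtDomConfig2} the function $\log(1+f)^\trid$ lies in $\dom{\EE{\dUpsilon}{\PP}}$, and the diffusion property in the form $\cdc(\log(1+f),\log(1+g))=\cdc(f,g)/\bigl((1+f)(1+g)\bigr)$ gives $\SF{\dUpsilon}{\PP}(u_f,u_g)(\gamma)=u_f(\gamma)u_g(\gamma)\,\langle\cdc(f,g)/((1+f)(1+g)),\gamma\rangle$. Applying Mecke's identity~\eqref{eq:MeckeConfig2} to $\EE{\dUpsilon}{\PP}(u_f,u_g)=\int_\dUpsilon\SF{\dUpsilon}{\PP}(u_f,u_g)\diff\PP$ with the integrand $(\gamma,x)\mapsto u_f(\gamma)u_g(\gamma)\,\cdc(f,g)(x)/((1+f(x))(1+g(x)))$, and using $u_f(\gamma+\delta_x)u_g(\gamma+\delta_x)=(1+f(x))(1+g(x))\,u_f(\gamma)u_g(\gamma)$, collapses the particle factor and leaves
\[
\EE{\dUpsilon}{\PP}(u_f,u_g)=\Bigl(\int_\dUpsilon u_f\,u_g\diff\PP\Bigr)\,\EE{X}{\mssm}(f,g)=-\Bigl(\int_\dUpsilon u_f\,u_g\diff\PP\Bigr)\int_X(\LL{X}{\mssm}f)\,g\diff\mssm\fstop
\]
Running Mecke once more on $\int_\dUpsilon\bar\LL u_f\cdot u_g\diff\PP=\int_\dUpsilon u_fu_g\,\langle\LL{X}{\mssm}f/(1+f),\gamma\rangle\diff\PP$ gives $-\EE{\dUpsilon}{\PP}(u_f,u_g)+\bigl(\int u_fu_g\diff\PP\bigr)\int_X\LL{X}{\mssm}f\diff\mssm$, and the last summand vanishes since $\TT{X}{\mssm}_\bullet$ is stochastically complete (cf.\ $(\mathsf{SCF})$), so that $\int_X\TT{X}{\mssm}_tf\diff\mssm=\int_Xf\diff\mssm$ and hence $\int_X\LL{X}{\mssm}f\diff\mssm=0$. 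As $\bar\LL u_f\in L^2(\PP)$ and $\set{u_g:g\in\De}$ is total in $L^2(\PP)$, this yields $u_f\in\dom{\LL{\dUpsilon}{\PP}}$ with $\LL{\dUpsilon}{\PP}u_f=\bar\LL u_f$, and by linearity $\bar\LL=\LL{\dUpsilon}{\PP}$ on $\Ex{\De}$.

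Finally, since $\bar\mbfP_\bullet$ leaves the dense subspace $\Ex{\De}$ invariant, $\Ex{\De}$ is a core for $\bar\LL$; as $\bar\LL$ and $\LL{\dUpsilon}{\PP}$ are both self‑adjoint and agree on this core, they are equal, whence $\bar\mbfP_t=\TT{\dUpsilon}{\PP}_t$ on $L^2(\PP)$ and \emph{a fortiori} on $\Ex{\De}$, which is the assertion. The main obstacle is the second step: one must treat $\log(1+f)^\trid$ as a \emph{generally unbounded} element of $\dom{\EE{\dUpsilon}{\PP}}$, so the chain rule for $\SF{\dUpsilon}{\PP}$ has to be invoked in broad‑local form; one must justify differentiating the $L^2(\PP)$‑valued curve $t\mapsto\mbfP_tu_f$; and one must make the repeated Mecke‑identity manipulations rigorous, the relevant integrands lying in $L^1(\PP\otimes\mssm)$ by virtue of $0<u_f\le1$ and $\LL{X}{\mssm}f,\cdc(f,g)\in L^1\cap L^\infty(\mssm)$.
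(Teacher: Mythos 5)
Your architecture coincides with the paper's (which follows Albeverio--Kondratiev--R\"ockner): extend $\mbfP_\bullet$ to a strongly continuous contraction semigroup on $L^2(\PP)$, match its generator with $\LL{\dUpsilon}{\PP}$ on the invariant dense set $\Ex{\De}$, and conclude by a core argument. Your direct Mecke computation of $\EE{\dUpsilon}{\PP}(u_f,u_g)$ via the multiplicativity $u_f(\gamma+\delta_x)=(1+f(x))\,u_f(\gamma)$ is a pleasant shortcut past the paper's Proposition~\ref{p:Generator}. Nevertheless, there are two genuine gaps.

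First, the inference ``$\EE{\dUpsilon}{\PP}(u_f,u_g)=-\int\mcA u_f\cdot u_g\diff\PP$ for all $g\in\De$, and $\set{u_g}_{g\in\De}$ is total in $L^2(\PP)$, hence $u_f\in\dom{\LL{\dUpsilon}{\PP}}$'' is invalid (here $\mcA$ denotes the generator of your extension of $\mbfP_\bullet$). Membership in $\dom{\LL{\dUpsilon}{\PP}}$ requires $\EE{\dUpsilon}{\PP}(u_f,h)=-\int\mcA u_f\cdot h\diff\PP$ for all $h$ in a \emph{form core} of $\EE{\dUpsilon}{\PP}$, e.g.\ $\CylQP{\PP}{\Dz}$; $L^2$-totality of the test family is strictly weaker (the Neumann and Dirichlet Laplacians on an interval satisfy the same integration-by-parts identity against the $L^2$-total family $C^\infty_c$, which therefore cannot detect the boundary condition). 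What your pairing actually proves is that the closed form of $\mcA$ is the closure of the restriction of $\EE{\dUpsilon}{\PP}$ to $\Ex{\De}$; that this closure is all of $\EE{\dUpsilon}{\PP}$ --- i.e.\ that $\Ex{\De}$ is a form core --- is essentially the content of the theorem and cannot be assumed. This is exactly the difficulty Lemma~\ref{l:AKR7.3} is designed to resolve: there $u_f=e^{g^\trid}$ is approached from \emph{inside} $\dom{\LL{\dUpsilon}{\PP}}$ by graph-norm convergent polynomial cylinder functions $\phi_n\circ g^\trid$, whose image under the generator is identified against the genuine form core $\CylQP{\PP}{\Daux}$ in Proposition~\ref{p:Generator}, and closedness of $\LL{\dUpsilon}{\PP}$ finishes the argument. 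You need this (or an equivalent) step.

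Second, your proof invokes stochastic completeness at two points: to kill the leftover term $\tparen{\int_\dUpsilon u_fu_g\diff\PP}\int_X\LL{X}{\mssm}f\diff\mssm$ produced by your own Mecke computation, and --- implicitly --- to make $\mbfP_t$ symmetric on $L^2(\PP)$ in the first place, since $\int_\dUpsilon\mbfP_tu_f\cdot u_g\diff\PP=\exp\int_X\tparen{\TT{X}{\mssm}_tf+g+(\TT{X}{\mssm}_tf)\,g}\diff\mssm$ is symmetric in $(f,g)$ only if $\int_X\TT{X}{\mssm}_tf\diff\mssm=\int_Xf\diff\mssm$. But Theorem~\ref{t:AKR4.1} assumes only that $(\mcX,\cdc)$ is a \TLDS, not \SCF, and the remark immediately following it stresses that conservativity of $\TT{X}{\mssm}_\bullet$ is precisely one of the hypotheses the paper claims to dispense with relative to Albeverio--Kondratiev--R\"ockner. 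As written, your argument therefore establishes a strictly weaker statement; if you believe conservativity is genuinely unavoidable here, that is a point worth raising explicitly rather than importing silently.
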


\begin{rem}[Comparison with~{\cite{AlbKonRoe98}}] Let us assume that~$\mcX$ is a smooth manifold, and that~$(\mcX,\cdc)$ satisfies all the assumptions in~\cite[\S2.2, p.~452]{AlbKonRoe98}. Then, our Lemma~\ref{l:AKR7.3} and Theorem~\ref{t:AKR4.1} generalize the corresponding statements in~\cite[Lem.~7.3]{AlbKonRoe98} and~\cite[Prop.~4.1]{AlbKonRoe98}, in that we assume neither the essential self-adjointness of the generator~$\LL{X}{\mssm}$ on a suitable core, nor that the semigroup~$\TT{X}{\mssm}_\bullet$ is conservative.
Indeed, in~\cite[Lem.~7.3]{AlbKonRoe98} the authors compute the generator~$\LL{\dUpsilon}{\PP}$ on functions in~$\Ex{\De\cap \Czinfty(X)}$ and subsequently proceed to establish~\eqref{eq:l:AKR7.3:1} on the whole of~$\Ex{\De}$ by approximation.
The chosen approximation technique requires that~$\TT{X}{\mssm}_\bullet$ be conservative.
The essential self-adjointness of~$\tparen{\LL{X}{\mssm},\Czinfty(X)}$ is required in order to show well-posedness of~$\LL{\dUpsilon}{\PP}$, in the sense of independence from the approximating sequence.
On the contrary, here, we directly compute the generator~$\LL{\dUpsilon}{\PP}$ on functions in~$\Ex{\De}$. This generalization is essentially a consequence of Proposition~\ref{p:ExtDomConfig2}, establishing well-posedness of~$\LL{\dUpsilon}{\PP}$ directly on~$\Ex{\De}$.
It allows to address base spaces previously out of reach, including e.g.\ (the configuration spaces over) path/loop spaces or~$\RCD$ spaces, on which essential self-adjointness on spaces of exponential cylinder functions was previously not known.
\end{rem}

Let us now collect all the necessary results for a proof of Theorem~\ref{t:AKR4.1}.

\subsubsection{Explicit form of the generator}
It is not difficult to compute the generator\linebreak $\tparen{\LL{\dUpsilon}{\PP},\dom{\LL{\dUpsilon}{\PP}}}$ of the form $\tparen{\EE{\dUpsilon}{\PP},\dom{\EE{\dUpsilon}{\PP}}}$, by means of the Mecke identity~\eqref{eq:MeckeConfig2}.
Let~$(\mcX,\cdc)$ be a \TLDS, and~$\ell\colon L^\infty(\mssm)\rar\mcL^\infty(\mssm)$ be a strong lifting.
Recall Notation~\ref{n:FormConfig2}\ref{i:n:FormConfig2:3}, and set
\begin{align*}
\Daux\eqdef \set{f\in \dom{\LL{X}{\mssm}}\cap L^1(\mssm)\cap L^\infty(\mssm) : \LL{X}{\mssm}f\in L^\infty(\mssm)}\fstop 
\end{align*}
On the space of representatives~$\rep\Daux\eqdef \ell(\Daux)$ we set~$\repLL{X}{\mssm}_\ell\eqdef \ell\circ \LL{X}{\mssm}$.
For~$\rep v=G\circ \rep\mbfg^\trid\in \Cyl{\rep\Daux}$ of the form~\eqref{eq:d:LiftCdCConfig2} and fixed~$\gamma\in\dUpsilon$ set further
\begin{align}\label{eq:p:Generator:00}
\repLL{\gamma}{\mssm}_\ell \rep v\colon x\longmapsto \sum_{j=1}^m (\partial_j G)\tparen{\rep\mbfg^\trid\gamma} \cdot \tparen{\repLL{X}{\mssm}_\ell g_j}(x) +\sum_{p,q=1}^{m,m} (\partial^2_{pq} G)\tparen{\rep\mbfg^\trid \gamma} \cdot \repSF{X}{\mssm}_\ell\tparen{g_p,g_q}(x)\fstop
\end{align}
Since~$g_j\in\dom{\LL{X}{\mssm}}\cap L^1(\mssm)\subset \co_{1,\mssm}(\Dz)$ for every~$j\leq m$, the function~$\gamma\mapsto \rep g_j^\trid \gamma$ is independent of the choice of the $\mssm$-representative~$\rep g_j$ of~$g_j$ and finite $\PP$-a.e.\ by Proposition~\ref{p:ExtDomConfig2}\ref{i:p:ExtDomConfig2:3}. As a consequence,~$\repLL{\gamma}{\mssm}_\ell \rep v$ is well-defined and finite for every~$x\in X$ for $\PP$-a.e.~$\gamma\in\dUpsilon$.

\begin{prop}[Generators]\label{p:Generator}
One has that
\begin{enumerate}[$(i)$]
\item\label{i:p:Generator:1} $v\eqdef \class[\PP]{G\circ \rep\mbfg^\trid}\in \CylQP{\PP}{\Daux}$ is well-defined in~$\dom{\LL{\dUpsilon}{\PP}}$;
\item\label{i:p:Generator:2} for~$\rep v=F\circ\rep\mbff^\trid\in \Cyl{\rep\Daux}$, it holds that
\begin{align}\label{eq:p:Generator:0}
\tparen{\LL{\dUpsilon}{\PP} v}(\gamma)= \tparen{\repLL{\gamma}{\mssm}_\ell \rep v}^\trid\gamma \qquad \forallae{\PP} \gamma\in\dUpsilon \fstop
\end{align}
As a consequence, $\tparen{\repLL{\gamma}{\mssm}_\ell \rep v}^\trid\gamma$ is well-defined on $\PP$-classes and independent of~$\ell$;

\item\label{i:p:Generator:3} $\tparen{\LL{\dUpsilon}{\PP},\dom{\LL{\dUpsilon}{\PP}}}$ is the Friedrichs extension of~$\tparen{\LL{\dUpsilon}{\PP}, \CylQP{\PP}{\Daux}}$ in~\eqref{eq:p:Generator:0}.
\end{enumerate}

\begin{proof} \ref{i:p:Generator:1} is a consequence of~\ref{i:p:Generator:2} and of the fact that~$\gamma\mapsto v(\gamma)$ is well-defined in~$L^2(\PP)$. By the standard theory of Dirichlet forms, it thus suffices to verify that
\begin{align}\label{eq:p:Generator:1}
\EE{\dUpsilon}{\PP}(u,v)=\tscalar{u}{-\tparen{\repLL{\gamma}{\mssm}_\ell \rep v}^\trid}_{L^2(\PP)}\comma \qquad u,v\in \CylQP{\PP}{\Daux} \fstop
\end{align}
Indeed, the fact that the operator~$\LL{\dUpsilon}{\PP}$ defined in~\eqref{eq:p:Generator:0} is independent of the chosen representatives of~$u$,~$v$ and of the strong lifting~$\ell$ will in turn be a consequence of the above representation~\eqref{eq:p:Generator:1}, since the left-hand side enjoys the same property by Proposition~\ref{p:ExtDomConfig2}\ref{i:p:ExtDomConfig2:2}. \ref{i:p:Generator:3} is a standard consequence of~\eqref{eq:p:Generator:1}, cf.~\cite[Thm.~X.23]{ReeSim75}.

Let us now prove~\eqref{eq:p:Generator:1}. By definition of~$\tparen{\LL{X}{\mssm},\dom{\LL{X}{\mssm}}}$,
\begin{align}\label{eq:TripleIbP}
\int_X \SF{X}{\mssm}(f,g) \, \diff\mssm= -\int_X f \, \LL{X}{\mssm} g \, \diff\mssm \comma \qquad f,g\in\dom{\LL{X}{\mssm}}\fstop
\end{align}

Let further~$\rep u,\rep v\in\Cyl{\rep\Daux}$ be of the form~\eqref{eq:d:LiftCdCConfig2}, and note that~$\mapsto \rep u(\gamma+\delta_x)$, and~$x\mapsto \rep v(\gamma+\delta_x)$ are both ($\mssm$-representatives of) elements of~$\domext{\SF{X}{\mssm}}$ for $\PP$-a.e.~$\gamma\in\dUpsilon$, and
\begin{align}
\nonumber
\SF{X}{\mssm}&\tparen{\rep u(\gamma+\delta_{\emparg}), \rep v(\gamma+\delta_\emparg)}=
\\
\nonumber
=& \sum_{i,j}^{k,m} (\partial_i F)\tparen{\rep\mbff^\trid (\gamma+\delta_{\emparg})}\cdot (\partial_j G)\tparen{\rep\mbfg^\trid(\gamma+\delta_{\emparg})} \cdot 
\\
\nonumber
&\phantom{\sum}\cdot \SF{X}{\mssm}\tparen{\rep f_i^\trid(\gamma+\delta_\emparg),\rep g_j^\trid(\gamma+\delta_\emparg)}
\\
\label{eq:p:Generator:2}
=&\sum_{i,j}^{k,m} (\partial_i F)\tparen{\rep\mbff^\trid (\gamma+\delta_{\emparg})}\cdot (\partial_j G)\tparen{\rep\mbfg^\trid(\gamma+\delta_{\emparg})} \cdot \SF{X}{\mssm}\tparen{f_i,g_j} \qquad \as{\mssm}\fstop
\end{align}

Now, by Proposition~\ref{p:ExtDomConfig2}\ref{i:p:ExtDomConfig2:2},
\begin{align*}
\EE{\dUpsilon}{\PP}(u,v)\overset{\phantom{\eqref{eq:MeckeConfig2}}}{=}&\iint_{X\times \dUpsilon}\, \sum_{i,j}^{k,m} (\partial_i F)(\rep\mbff^\trid \gamma) \cdot (\partial_j G)(\rep\mbfg^\trid \gamma) \cdot \repSF{X}{\mssm}_\ell(f_i,g_j) \, \diff\gamma \, \diff\PP(\gamma)
\\
\overset{\eqref{eq:MeckeConfig2}}{=}&\iint_{X\times \dUpsilon}\, \sum_{i,j}^{k,m} (\partial_i F)\tparen{\rep\mbff^\trid (\gamma+\delta_\emparg)} \cdot (\partial_j G)\tparen{\rep\mbfg^\trid (\gamma+\delta_\emparg)} \cdot \SF{X}{\mssm}(f_i,g_j) \diff\mssm\, \diff\PP(\gamma)
\\
\overset{\eqref{eq:p:Generator:2}}=&\iint_{X\times\dUpsilon}\, \SF{X}{\mssm}\tparen{\rep u(\gamma+\delta_\emparg),\rep v(\gamma+\delta_\emparg)} \cdot \diff\mssm\, \diff\PP(\gamma)
\\
\overset{\eqref{eq:TripleIbP}}=&-\iint_{X\times\dUpsilon}\, \rep u(\gamma+\delta_\emparg)\cdot \LL{X}{\mssm}\tparen{\rep v(\gamma+\delta_\emparg)} \diff\mssm\, \diff\PP(\gamma) 
\\
\overset{\phantom{\eqref{eq:MeckeConfig2}}}{=}&-\iint_{X\times\dUpsilon}\, \rep u(\gamma+\delta_\emparg) \Bigg[ \sum_{j=1}^m (\partial_j G)\tparen{\rep \mbfg^\trid (\gamma+\delta_\emparg)} \cdot \LL{X}{\mssm}\tparen{\rep g_j^\trid(\gamma+\delta_\emparg)} 
\\
&\qquad+\sum_{p,q=1}^{m,m} (\partial^2_{pq} G)\tparen{\rep\mbfg^\trid (\gamma+\delta_\emparg)} \cdot \SF{X}{\mssm}\tparen{\rep g_p^\trid(\gamma+\delta_\emparg),\rep g_q^\trid(\gamma+\delta_\emparg)} \Bigg]
\diff\mssm\, \diff\PP(\gamma)
\\
\overset{\phantom{\eqref{eq:MeckeConfig2}}}{=}&-\iint_{X\times\dUpsilon}\, \rep u(\gamma+\delta_x) \Bigg[\sum_{j=1}^m (\partial_j G)\tparen{\rep\mbfg^\trid (\gamma+\delta_x)} \cdot \tparen{\LL{X}{\mssm} g_j}(x)
\\
&\qquad+\sum_{p,q=1}^{m,m} (\partial^2_{pq} G)\tparen{\rep\mbfg^\trid (\gamma+\delta_x)} \cdot \SF{X}{\mssm}\tparen{g_p,g_q}(x)\Bigg] \diff\mssm(x)\, \diff\PP(\gamma) 
\\
\overset{\phantom{\eqref{eq:MeckeConfig2}}}{=}&-\iint_{X\times\dUpsilon}\, \rep u(\gamma+\delta_x) \Bigg[\sum_{j=1}^m (\partial_j G)\tparen{\rep\mbfg^\trid (\gamma+\delta_x)} \cdot \tparen{\repLL{X}{\mssm}_\ell g_j}(x)
\\
&\qquad+\sum_{p,q=1}^{m,m} (\partial^2_{pq} G)\tparen{\rep\mbfg^\trid (\gamma+\delta_x)} \cdot \repSF{X}{\mssm}_\ell\tparen{g_p,g_q}(x)\Bigg] \diff\mssm(x)\, \diff\PP(\gamma)
\\
\overset{\eqref{eq:MeckeConfig2}}=&-\iint_{X\times\dUpsilon}\, u(\gamma) \Bigg[\sum_{j=1}^m (\partial_j G)(\mbfg^\trid\gamma) \cdot \tparen{\repLL{X}{\mssm}_\ell g_j}(x) 
\\
&\qquad+\sum_{p,q=1}^{m,m} (\partial^2_{pq} G)(\mbfg^\trid \gamma) \cdot \repSF{X}{\mssm}_\ell\tparen{g_p,g_q}(x) \Bigg] \diff\gamma(x)\, \diff\PP(\gamma)\comma
\end{align*}
which concludes the proof.
\end{proof}
\end{prop}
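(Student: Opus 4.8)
The plan is to deduce the whole proposition from a single weak integration-by-parts identity on cylinder functions, namely
\begin{equation*}
\EE{\dUpsilon}{\PP}(u,v) = \tscalar{u}{-\tparen{\repLL{\gamma}{\mssm}_\ell \rep v}^\trid}_{L^2(\PP)}\comma \qquad u,v\in \CylQP{\PP}{\Daux}\comma
\tag{$\star$}
\end{equation*}
with $\rep v=G\circ\rep\mbfg^\trid$ and $\repLL{\gamma}{\mssm}_\ell\rep v$ as in~\eqref{eq:p:Generator:00}. Granting~($\star$): $u=F\circ\rep\mbff^\trid$ is bounded, hence lies in $L^2(\PP)$, and $\CylQP{\PP}{\Daux}$ is dense in $\dom{\EE{\dUpsilon}{\PP}}$ for the form norm --- this follows from the density of $\Daux$ in $\dom{\EE{X}{\mssm}}$ (applying the resolvent to bounded truncations of form-domain elements lands in $\Daux$) together with the form construction of~\cite{LzDSSuz21}. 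By the correspondence between a closed densely defined form and its generator,~($\star$), extended by continuity to all $u\in\dom{\EE{\dUpsilon}{\PP}}$, forces $v\in\dom{\LL{\dUpsilon}{\PP}}$ with $\LL{\dUpsilon}{\PP}v=\tparen{\repLL{\gamma}{\mssm}_\ell\rep v}^\trid$, which is~\ref{i:p:Generator:1} and~\ref{i:p:Generator:2}. By Proposition~\ref{p:ExtDomConfig2}\ref{i:p:ExtDomConfig2:2} the left-hand side of~($\star$), hence the right-hand side, does not depend on the $\mssm$-representatives~$\rep f_i$,~$\rep g_j$ nor on the strong lifting~$\ell$, so $\tparen{\repLL{\gamma}{\mssm}_\ell\rep v}^\trid\gamma$ descends to $\PP$-classes and is $\ell$-independent, completing~\ref{i:p:Generator:2}; and~($\star$) makes $\tparen{\LL{\dUpsilon}{\PP},\CylQP{\PP}{\Daux}}$ a symmetric nonpositive operator, so~\ref{i:p:Generator:3} follows by the standard identification of its Friedrichs extension, cf.~\cite[Thm.~X.23]{ReeSim75}.

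To prove~($\star$) I would transport the computation to the base space via the Mecke identity. Expanding $\EE{\dUpsilon}{\PP}(u,v)=\int_\dUpsilon\SF{\dUpsilon}{\PP}(u,v)\diff\PP$ with Proposition~\ref{p:ExtDomConfig2}\ref{i:p:ExtDomConfig2:2} gives $\int_\dUpsilon\sum_{i,j}(\partial_iF)(\rep\mbff^\trid\gamma)(\partial_jG)(\rep\mbfg^\trid\gamma)\,\repSF{X}{\mssm}_\ell(f_i,g_j)^\trid\gamma\,\diff\PP(\gamma)$. Applying~\eqref{eq:MeckeConfig2} to the $\A_\mrmv(\msE)\hotimes\A$-function $(\gamma,x)\mapsto\sum_{i,j}(\partial_iF)(\rep\mbff^\trid\gamma)(\partial_jG)(\rep\mbfg^\trid\gamma)\,\repSF{X}{\mssm}_\ell(f_i,g_j)(x)$ replaces $\int\!\cdots\diff\gamma\,\diff\PP$ by $\int\!\cdots\diff\mssm\,\diff\PP$ with~$\gamma$ shifted to $\gamma+\delta_x$. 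Since $\rep f_i^\trid(\gamma+\delta_x)=\rep f_i^\trid\gamma+\rep f_i(x)$, at fixed~$\gamma$ the map $x\mapsto\rep u(\gamma+\delta_x)$ is~$F$, with origin shifted by~$\rep\mbff^\trid\gamma$, composed with~$\rep\mbff$, and the diffusion property~\eqref{eq:i:d:SFConfig2:3} of~$\SF{X}{\mssm}$ collapses the integrand to $\SF{X}{\mssm}\tparen{\rep u(\gamma+\delta_\emparg),\rep v(\gamma+\delta_\emparg)}$, as in~\eqref{eq:p:Generator:2}. Now integrate by parts in the $x$-variable at fixed~$\gamma$ via~\eqref{eq:TripleIbP}, and expand $\LL{X}{\mssm}\tparen{\rep v(\gamma+\delta_\emparg)}$ by the chain rule for the generator into a first-order term $\sum_j(\partial_jG)(\rep\mbfg^\trid(\gamma+\delta_\emparg))\,\LL{X}{\mssm}g_j$ and a second-order term $\sum_{p,q}(\partial^2_{pq}G)(\rep\mbfg^\trid(\gamma+\delta_\emparg))\,\SF{X}{\mssm}(g_p,g_q)$; replacing $\LL{X}{\mssm}$ and $\SF{X}{\mssm}$ by the $\mssm$-a.e.\ equal lifts $\repLL{X}{\mssm}_\ell$, $\repSF{X}{\mssm}_\ell$ and invoking~\eqref{eq:MeckeConfig2} a second time --- now to unshift $\gamma+\delta_x$ back to~$\gamma$ and re-absorb $\diff\mssm$ into $\diff\gamma$ --- produces $-\int_\dUpsilon u(\gamma)\,\tparen{\repLL{\gamma}{\mssm}_\ell\rep v}^\trid\gamma\,\diff\PP(\gamma)$, i.e.~($\star$).

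The main obstacle is the legitimacy of the fibrewise manipulations and of the two uses of the Mecke identity. One must verify that, for $\PP$-a.e.~$\gamma$, the section $x\mapsto\rep v(\gamma+\delta_x)$ is a genuine $\mssm$-representative of an element of $\dom{\LL{X}{\mssm}}$ to which~\eqref{eq:TripleIbP} and the generator chain rule apply: since $g_j\in\dom{\LL{X}{\mssm}}\cap L^1(\mssm)\subset\coK{\Dz}{1,\mssm}$, Proposition~\ref{p:ExtDomConfig2}\ref{i:p:ExtDomConfig2:3} makes $\gamma\mapsto\rep g_j^\trid\gamma$ finite $\PP$-a.e.\ and representative-independent, so at such~$\gamma$ the constant~$\rep g_j^\trid\gamma$ is absorbed into a shifted $\tilde G\in\Cbinfty(\R^k)$ and $x\mapsto\rep v(\gamma+\delta_\emparg)=\tilde G\circ\rep\mbfg$ lies in $\dom{\LL{X}{\mssm}}$ by the chain rule, valid because $g_j\in\dom{\LL{X}{\mssm}}$, $\LL{X}{\mssm}g_j\in L^\infty(\mssm)$ and $\SF{X}{\mssm}(g_p,g_q)\in L^1(\mssm)$. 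The boundedness conditions built into~$\Daux$ and into $\Cbinfty(\R^k)$ then dominate every integrand, so Fubini--Tonelli and both Mecke steps apply after the usual truncation, and the $\mssm$-a.e.\ identifications $\repSF{X}{\mssm}_\ell=\SF{X}{\mssm}$, $\repLL{X}{\mssm}_\ell=\LL{X}{\mssm}$ leave all integrals unchanged. The one piece of bookkeeping that genuinely requires care is checking $\tparen{\repLL{\gamma}{\mssm}_\ell\rep v}^\trid\in L^2(\PP)$ --- equivalently, by the first- and second-order Poisson moment formulas, that $\LL{X}{\mssm}g_j$ and $\SF{X}{\mssm}(g_p,g_q)$ lie in $L^1(\mssm)\cap L^2(\mssm)$ --- which is exactly what the definition of~$\Daux$ is tailored to supply.
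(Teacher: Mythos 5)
Your proposal is correct and follows essentially the same route as the paper's proof: reduce everything to the integration-by-parts identity \eqref{eq:p:Generator:1} on $\CylQP{\PP}{\Daux}$, then establish it by expanding $\SF{\dUpsilon}{\PP}(u,v)$ via Proposition~\ref{p:ExtDomConfig2}\ref{i:p:ExtDomConfig2:2}, applying the Mecke identity, collapsing the shifted integrand by the diffusion property as in \eqref{eq:p:Generator:2}, integrating by parts on the base via \eqref{eq:TripleIbP} with the chain rule for $\LL{X}{\mssm}$, and applying Mecke a second time. Your additional remarks on the $\PP$-a.e.\ well-posedness of the sections $x\mapsto\rep v(\gamma+\delta_x)$ (via Proposition~\ref{p:ExtDomConfig2}\ref{i:p:ExtDomConfig2:3}), on form-density of $\CylQP{\PP}{\Daux}$, and on the $L^2(\PP)$-membership of $\tparen{\repLL{\gamma}{\mssm}_\ell\rep v}^\trid$ are exactly the points the paper treats implicitly as standard, so there is no gap.
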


The next lemma extends~{\cite[Lem.~7.3]{AlbKonRoe98}}.

\begin{lem}\label{l:AKR7.3} Let~$(\mcX,\cdc)$ be a \TLDS. Then,
\begin{align}
\nonumber
\exp\tparen{\log(1+f)^\trid}\in& \dom{\LL{\dUpsilon}{\PP}} \comma \qquad f\in \De\comma
\intertext{and}
\label{eq:l:AKR7.3:1}
\LL{\dUpsilon}{\PP} \exp\tparen{\log(1+f)^\trid}=& \paren{\frac{\LL{X}{\mssm} f}{1+f}}^\trid \cdot\exp\tparen{\log(1+f)^\trid}\comma \qquad f\in\De \fstop
\end{align}
\begin{proof}
We start by computing the generator~$\LL{\dUpsilon}{\PP}$ on functions of the form
\begin{align*}
u\colon\gamma\longmapsto e^{g^\trid \gamma}\comma \qquad g\in \Daux\fstop
\end{align*}
Let~$\phi_n(t)\eqdef \sum_{i=0}^n t^i/i!$ and set~$u_n\eqdef \phi_n\circ g^\trid$, $g\in\Daux$. Choosing~$u=(g^\trid)^i$ in Proposition~\ref{p:Generator},
\begin{align}\label{eq:l:AKR7.3:2}
v_n\eqdef \LL{\dUpsilon}{\PP}(\phi_n\circ g^\trid)=\tparen{\LL{X}{\mssm}g}^\trid \cdot (\phi_{n-1}\circ g^\trid) +\tparen{\SF{X}{\mssm}(g)}^\trid \cdot (\phi_{n-2}\circ g^\trid) \fstop
\end{align}
Since~$g\in\Daux$, we have that~$w\eqdef \tparen{\ttabs{\LL{X}{\mssm}g}+\SF{X}{\mssm}(g)}^\trid\cdot e^{g^\trid}$ is in~$L^2(\PP)$. Furthermore, since~$v_n\leq w$ $\PP$-a.e., we may apply Dominated Convergence with dominating function~$w$ to~\eqref{eq:l:AKR7.3:2}, to obtain
\begin{align*}
L^2(\PP)\text{-}\nlim v_n=\tparen{\LL{X}{\mssm} g+\SF{X}{\mssm}(g)}^\trid \gamma \cdot e^{g^\trid \gamma} \defeq v\fstop
\end{align*}

Finally, since~$\tparen{\LL{\dUpsilon}{\PP},\dom{\LL{\dUpsilon}{\PP}}}$ is a closed operator, then~$\LL{\dUpsilon}{\PP} u=v$, viz.\ (cf.\ \cite[Eqn. (4.9)]{AlbKonRoe98})
\begin{align*}
\tparen{\LL{\dUpsilon}{\PP} \, e^{g^\trid}}(\gamma)= \tparen{\LL{X}{\mssm} g+\SF{X}{\mssm}(g)}^\trid \gamma \cdot e^{g^\trid \gamma} \quad \forallae{\PP} \gamma\in\dUpsilon\comma \qquad g\in\Daux \fstop
\end{align*}
For~$f\in\De$ we have~$g\eqdef \log(1+f)\in \Daux$, and~\eqref{eq:l:AKR7.3:1} follows by the chain rule for~$\LL{X}{\mssm}$.
\end{proof}
\end{lem}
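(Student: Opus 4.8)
The plan is to compute the generator $\LL{\dUpsilon}{\PP}$ first on the \emph{exponential} cylinder functions $u_g\colon\gamma\mapsto e^{g^\trid\gamma}$ for $g$ in the auxiliary class $\Daux$ of Proposition~\ref{p:Generator}, and only afterwards to specialize to $g\eqdef\log(1+f)$ with $f\in\De$ via the chain rule on the base space. Since $\exp$ is unbounded, $u_g$ is not itself a cylinder function, so first I would approximate it by its Taylor truncations: with $\phi_n(t)\eqdef\sum_{i=0}^n t^i/i!$ set $u_{g,n}\eqdef\phi_n\circ g^\trid$. Because $g\in\Daux\subset\coK{\Dz}{1,\mssm}$, the map $\gamma\mapsto g^\trid\gamma$ is $\PP$-a.e.\ finite (Proposition~\ref{p:ExtDomConfig2}) and each monomial $(g^\trid)^i$ lies in $L^2(\PP)\cap\dom{\LL{\dUpsilon}{\PP}}$; applying Proposition~\ref{p:Generator} to each $(g^\trid)^i$ (after a harmless smooth truncation of $t\mapsto t^i$) and summing, with $\phi_n'=\phi_{n-1}$ and $\phi_n''=\phi_{n-2}$, yields
\[
\LL{\dUpsilon}{\PP}u_{g,n}=\tparen{\LL{X}{\mssm}g}^\trid\cdot\tparen{\phi_{n-1}\circ g^\trid}+\tparen{\SF{X}{\mssm}(g)}^\trid\cdot\tparen{\phi_{n-2}\circ g^\trid}\fstop
\]

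Then I would let $n\to\infty$. For $\PP$-a.e.\ $\gamma$ one has $u_{g,n}\to u_g$ and $\LL{\dUpsilon}{\PP}u_{g,n}\to v_g\eqdef\tparen{\LL{X}{\mssm}g+\SF{X}{\mssm}(g)}^\trid\cdot e^{g^\trid}$; to upgrade this to convergence in $L^2(\PP)$ I would apply dominated convergence, the relevant majorants being $e^{\abs{g}^\trid}$ for $u_{g,n}$ and $w\eqdef\tparen{\abs{\LL{X}{\mssm}g}+\SF{X}{\mssm}(g)}^\trid\cdot e^{\abs{g}^\trid}$ for $\LL{\dUpsilon}{\PP}u_{g,n}$ (legitimate since $\abs{\phi_j(t)}\le e^{\abs t}$), whose $L^2(\PP)$-membership I would check by combining the Laplace-transform characterization~\eqref{eq:LaplacePoissonConfig2} of $\PP$ — which gives exponential moments of $e^{\abs{g}^\trid}$ because $\abs g\in L^1(\mssm)\cap L^\infty(\mssm)$ — with Mecke's identity~\eqref{eq:MeckeConfig2} to control the mixed moments of the linear statistic $\tparen{\abs{\LL{X}{\mssm}g}+\SF{X}{\mssm}(g)}^\trid$ against that exponential factor. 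Since $\tparen{\LL{\dUpsilon}{\PP},\dom{\LL{\dUpsilon}{\PP}}}$ is closed, this gives $u_g\in\dom{\LL{\dUpsilon}{\PP}}$ with $\LL{\dUpsilon}{\PP}u_g=v_g$ for every $g\in\Daux$.

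Finally I would specialize to $f\in\De$. As $-\delta\le f\le0$, the function $g\eqdef\log(1+f)$ is bounded with range in $[\log(1-\delta),0]$; taking $\phi\in\Czinfty(\R)$ with $\phi=\log(1+\emparg)$ on $[-\delta,0]$ and $\phi(0)=0$ and invoking the base-space chain rules for $\LL{X}{\mssm}$ and $\SF{X}{\mssm}$ shows $g\in\Daux$ and
\[
\LL{X}{\mssm}g=\frac{\LL{X}{\mssm}f}{1+f}-\frac{\SF{X}{\mssm}(f)}{(1+f)^2}\comma\qquad\SF{X}{\mssm}(g)=\frac{\SF{X}{\mssm}(f)}{(1+f)^2}\comma
\]
so that $\LL{X}{\mssm}g+\SF{X}{\mssm}(g)=\LL{X}{\mssm}f/(1+f)$; since $e^{g^\trid\gamma}=\exp(\log(1+f)^\trid\gamma)$, substituting into $v_g$ gives exactly the asserted identity. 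The hard part will be the integrability step of the second paragraph: establishing $w\in L^2(\PP)$ demands a careful estimate of a linear statistic of the Poisson measure multiplied by an exponential functional, together with enough $L^p(\mssm)$-integrability of $\SF{X}{\mssm}(g)$ — equivalently of $\SF{X}{\mssm}(f)$ — going beyond the a priori $L^1(\mssm)$ bound furnished by $f\in\dom{\EE{X}{\mssm}}$, which is precisely where the definition of $\De$ (and the boundedness and $L^1\cap L^\infty$-control of $\LL{X}{\mssm}f$) is used.
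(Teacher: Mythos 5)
Your proposal follows essentially the same route as the paper's proof: truncate $e^{g^\trid}$ by the Taylor polynomials $\phi_n\circ g^\trid$, apply Proposition~\ref{p:Generator} termwise, pass to the limit by dominated convergence, invoke closedness of $\LL{\dUpsilon}{\PP}$, and finally specialize to $g=\log(1+f)$ via the chain rule. If anything you are slightly more careful than the paper on two points it leaves implicit — using the majorant $e^{\abs{g}^\trid}$ rather than $e^{g^\trid}$, and flagging that the $L^2(\PP)$-membership of the dominating function $w$ (which the paper simply asserts from $g\in\Daux$) is where the integrability conditions in the definition of $\De$ are really needed.
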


\begin{proof}[Proof of Theorem~\ref{t:AKR4.1}]
The proofs of~\cite[Lem.s~7.1 and~7.2]{AlbKonRoe98} carry over verbatim to our setting. The rest of the proof also carries over, having care to substitute~\cite[Lem.~7.3]{AlbKonRoe98} with our Lemma~\ref{l:AKR7.3}.
\end{proof}

\subsection{Essential self-adjointness and \texorpdfstring{$L^p$}{Lp}-uniqueness}\label{sss:ESA}
Let us recall the definition of $L^p$-uniqueness.
We refer the reader to the monograph~\cite{Ebe99} for a complete treatment.

\begin{defs}[$L^p$-uniqueness, e.g.~{\cite[Dfn.~1.1.3]{Ebe99}}]
A densely defined linear operator~$(\LL{X}{\mssm}_p,\Dz)$ on~$L^p(\mssm)$ is \emph{$L^p$-unique} (also: \emph{strongly unique}), if there exists at most one strongly continuous semigroup~$\TT{X}{\mssm}_{p,\bullet}$ the generator of which extends~$(\LL{X}{\mssm}_p,\Dz)$.
\end{defs}

As one further application of the previous section, we show that the essential self-adjointness of the generator~$\LL{\dUpsilon}{\PP}$ (on a suitable core) is inherited from the essential self-adjointness of~$\tparen{\LL{X}{\mssm},\Dz}$ on the base space.
Furthermore, we show the $L^p$-uniqueness of~$\LL{\dUpsilon}{\PP}$.

\begin{cor}[Essential self-adjointness, $L^p$-uniqueness]\label{c:ESA}
Let~$(\mcX,\cdc)$ be a \TLDS, and~$p\in [1,\infty)$.
\begin{enumerate}[$(i)$]
\item\label{i:c:ESA:1} $\tparen{\LL{\dUpsilon}{\PP},\Ex{\De}}$ is essentially self-adjoint on $L^2(\PP)$ and $L^p(\PP)$-unique;

\item\label{i:c:ESA:2} $\tparen{\LL{\dUpsilon}{\PP}, \CylQP{\PP}{\rep\Daux}}$ is essentially self-adjoint on $L^2(\PP)$ and $L^p(\PP)$-unique;

\item\label{i:c:ESA:3} if~$\tparen{\LL{X}{\mssm}, \Dz\cap\Daux}$ is essentially self-adjoint on~$L^2(\mssm)$, resp.\ $L^p(\mssm)$-unique, then the operator $\tparen{\LL{\dUpsilon}{\PP}, \Cyl{\Dz}}$ is essentially self-adjoint on $L^2(\PP)$, resp.\ $L^p(\PP)$-unique.
\end{enumerate}
\end{cor}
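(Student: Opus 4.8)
The plan is to deduce all three items from the fact that $\tparen{\LL{\dUpsilon}{\PP},\dom{\LL{\dUpsilon}{\PP}}}$ is \emph{already} self-adjoint and non-positive --- it is the generator of the symmetric Dirichlet form $\EE{\dUpsilon}{\PP}$ --- and hence the generator of the sub-Markovian $C_0$-semigroup $\TT{\dUpsilon}{\PP}_\bullet$, which by sub-Markovianity extends consistently to a $C_0$-semigroup of contractions on every $L^p(\PP)$, $p\in[1,\infty)$. Consequently, essential self-adjointness of $\tparen{\LL{\dUpsilon}{\PP},D}$ on $L^2(\PP)$ amounts exactly to $D$ being a core for $\LL{\dUpsilon}{\PP}$, while $L^p(\PP)$-uniqueness of $\tparen{\LL{\dUpsilon}{\PP},D}$ will follow once we show that $D$ lies in, and is a core for, the domain of the $L^p$-generator of $\TT{\dUpsilon}{\PP}_\bullet$; cf.~\cite{Ebe99}. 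The elementary tool I will use repeatedly is the \textbf{core criterion}: a dense linear subspace $D\subseteq\dom(A)$ of the domain of the generator $A$ of a $C_0$-semigroup $(S_t)_t$ with $S_tD\subseteq D$ for all $t>0$ is automatically a core for $A$, both on $L^2$ and on each $L^p$.

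For \ref{i:c:ESA:1} I will apply this with $D=\Ex{\De}$. \emph{Domain membership}: for $f\in\De$ one has $1-\delta\leq 1+f\leq 1$, so $g\eqdef\log(1+f)\in\Daux$ (as in the proof of Lemma~\ref{l:AKR7.3}) and $g\leq 0$, whence $g^\trid\gamma\leq 0$ for $\PP$-a.e.\ $\gamma$; choosing $F\in\Cbinfty(\R)$ with $F=\exp$ on $(-\infty,0]$ identifies $\exp\tparen{\log(1+f)^\trid}$ with $F\circ g^\trid\in\CylQP{\PP}{\rep\Daux}\subseteq\dom{\LL{\dUpsilon}{\PP}}$ (Proposition~\ref{p:Generator}\ref{i:p:Generator:1}), and $\Ex{\De}$ is the span of such functions. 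For the $L^p$-scale, Lemma~\ref{l:AKR7.3} gives $\LL{\dUpsilon}{\PP}\exp\tparen{\log(1+f)^\trid}=\tparen{\tfrac{\LL{X}{\mssm}f}{1+f}}^\trid\cdot\exp\tparen{\log(1+f)^\trid}$, of modulus at most $\tfrac{1}{1-\delta}\tparen{\ttabs{\LL{X}{\mssm}f}}^\trid$; since $\ttabs{\LL{X}{\mssm}f}\in L^1(\mssm)\cap L^\infty(\mssm)$, the Laplace-transform characterization~\eqref{eq:LaplacePoissonConfig2} makes $\tparen{\ttabs{\LL{X}{\mssm}f}}^\trid$ have finite moments of all orders under $\PP$, so $\LL{\dUpsilon}{\PP}\exp\tparen{\log(1+f)^\trid}\in L^p(\PP)$ for all $p$; combined with $\TT{\dUpsilon}{\PP}_t u - u=\int_0^t\TT{\dUpsilon}{\PP}_s\LL{\dUpsilon}{\PP}u\diff s$ this places $\Ex{\De}$ in the domain of the $L^p$-generator. \emph{Density}: $\Ex{\De}\subseteq L^\infty(\PP)$ generates $\Bo{\T_\mrmv(\msE)}$ and is dense in $L^2(\PP)$ (recalled before Theorem~\ref{t:AKR4.1}), and a routine truncation/monotone-class argument gives density in every $L^p(\PP)$. \emph{Invariance}: by Theorem~\ref{t:AKR4.1}, $\TT{\dUpsilon}{\PP}_t=\mbfP_t$ on $\Ex{\De}$, and $\mbfP_t\Ex{\De}\subseteq\Ex{\De}$ by~\eqref{eq:SemigroupP} and the $\TT{X}{\mssm}_\bullet$-invariance of $\De$; by consistency this also holds for the $L^p$-semigroup. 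The criterion yields \ref{i:c:ESA:1}.

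Item \ref{i:c:ESA:2} is then immediate: we saw $\Ex{\De}\subseteq\CylQP{\PP}{\rep\Daux}\subseteq\dom{\LL{\dUpsilon}{\PP}}$, and a subspace of the domain containing a core is a core; on $L^p$, \eqref{eq:p:Generator:0} exhibits $\LL{\dUpsilon}{\PP}v$, for $v\in\CylQP{\PP}{\rep\Daux}$, as a $\gamma$-integral of an $L^1(\mssm)\cap L^\infty(\mssm)$ function times a bounded factor, hence in every $L^p(\PP)$, so $\CylQP{\PP}{\rep\Daux}$ lies in the $L^p$-domain too. For \ref{i:c:ESA:3}, I will assume $\tparen{\LL{X}{\mssm},\Dz\cap\Daux}$ essentially self-adjoint on $L^2(\mssm)$ (resp.\ $L^p(\mssm)$-unique), i.e.\ $\Dz\cap\Daux$ a core for $\LL{X}{\mssm}$ (resp.\ for its $L^p$-generator), and transfer this to $\dUpsilon$: given $\mbff\in(\Daux)^k$ and approximants $f_i^{(n)}\in\Dz\cap\Daux$ graph-converging to $f_i$, the generator formula~\eqref{eq:p:Generator:0}, the chain rules for $\SF{X}{\mssm}$ and $\LL{X}{\mssm}$, the Mecke identity~\eqref{eq:MeckeConfig2}, and the integrability above give $F\circ(\mbff^{(n)})^\trid\to F\circ\mbff^\trid$ in the graph norm of $\LL{\dUpsilon}{\PP}$ (resp.\ of its $L^p$-generator); thus $\Cyl{\Dz\cap\Daux}$ is graph-dense in $\CylQP{\PP}{\rep\Daux}$, hence a core by \ref{i:c:ESA:2}, and so a fortiori is the larger space $\Cyl{\Dz}\subseteq\dom{\LL{\dUpsilon}{\PP}}$.

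I expect the main obstacle to be this core-transfer step of \ref{i:c:ESA:3}: promoting a core property on the base $X$ to one on $\dUpsilon$. The delicate point is that, by~\eqref{eq:p:Generator:0}, $\LL{\dUpsilon}{\PP}\tparen{F\circ\mbff^\trid}$ depends on $\mbff$ not only through $\LL{X}{\mssm}f_i$ but also through the square fields $\SF{X}{\mssm}(f_i,f_j)$, so controlling $F\circ(\mbff^{(n)})^\trid\to F\circ\mbff^\trid$ in the $\LL{\dUpsilon}{\PP}$-graph norm will require controlling the approximation simultaneously in $\dom{\SF{X}{\mssm}}=\dom{\EE{X}{\mssm}}\cap L^\infty(\mssm)$, with uniform $L^\infty(\mssm)$-bounds, and transporting it to $\dUpsilon$ under the $\PP$-integrability estimates of \ref{i:c:ESA:1}. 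This is where I will need the closedness of $\EE{X}{\mssm}$ (so that $\LL{X}{\mssm}$-graph convergence forces $\tparen{\EE{X}{\mssm}}^{1/2}$-convergence), the stability of $\Daux$ under the chain-rule operations, and a truncation step to keep the $L^\infty(\mssm)$-norms bounded. By contrast, the remaining ingredients --- density of $\Ex{\De}$ throughout the $L^p$-scale, and the $p$-integrability of $\LL{\dUpsilon}{\PP}$ on $\Ex{\De}$ and on $\CylQP{\PP}{\rep\Daux}$ --- are routine consequences of~\eqref{eq:LaplacePoissonConfig2} and of the sign and boundedness of $\log(1+f)$ for $f\in\De$.
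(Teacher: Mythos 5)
Your proposal is correct and follows essentially the same route as the paper: item \ref{i:c:ESA:1} is the invariant-dense-subspace core criterion (\cite[Thm.~X.49]{ReeSim75}) applied through Theorem~\ref{t:AKR4.1} and the $\TT{X}{\mssm}_\bullet$-invariance of $\De$, item \ref{i:c:ESA:2} follows by sandwiching a set between a core and the domain of the self-adjoint generator, and item \ref{i:c:ESA:3} by graph-norm approximation of the inner functions, with the $L^p$-scale handled via contractivity (Riesz--Thorin) of the extended semigroup. The only cosmetic difference is in \ref{i:c:ESA:2}, where you realize $\Ex{\De}\subseteq\CylQP{\PP}{\rep\Daux}$ directly by replacing $\exp$ with a bounded smooth modification on $(-\infty,0]$, whereas the paper instead places $\Ex{\De}$ in the closure of the operator on $\CylQP{\PP}{\Daux}$ via the graph-limit construction in the proof of Lemma~\ref{l:AKR7.3}; both arguments are valid.
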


\begin{rem}[On essential self-adjointness]
Let us note in passing that the essential self-adjointness of~$\LL{X}{\mssm}$ on some core~$\mcC$ for~$\LL{X}{\mssm}$ implies that of~$\LL{\dUpsilon}{\PP}$ on the space~$\mcF^\circ_\fin(\mcC)$ of \emph{symmetric} $\otimes$-polynomials of~$\mcC$.
This follows from two facts:
\begin{enumerate*}[$(a)$]
\item standard arguments on second-quantization operators, e.g.~\cite[\S{II.6.1.1}, p.~185]{BerKon95};
\item the Hilbert-space isomorphism~$\mcI$ between the Bosonic Fock space of~$L^2(\mssm)$ and~$L^2(\PP_\mssm)$ provided by multiple stochastic integration, e.g.~\cite{Sur84}.
\end{enumerate*}
Whereas independent of any topological structure, this result is not particularly helpful for our analysis in the following sections, in that the image of~$\mcF^\odot_\fin(\mcC)$ via~$\mcI^{-1}$ is \emph{not} well-adapted to computations of the semigroup~$\TT{\dUpsilon}{\PP}_\bullet$.
\end{rem}

\begin{proof}[Proof of Corollary~\ref{c:ESA}]
Let~$\msA$ be either~$\Ex{\De}$,~$\CylQP{\PP}{\rep\Daux}$, or~$\CylQP{\PP}{\rep\Dz}$.
It is readily seen that~$\msA\subset L^p(\PP)$ is dense in~$L^p(\PP)$ and that~$\LL{\dUpsilon}{\PP}=\LL{\dUpsilon}{\PP}_p$ on~$\msA$ for every~$p\in [1,\infty)$.
Furthermore, the semigroup~$\tparen{\mbfP_t,\Ex{\De}}$ in~\eqref{eq:SemigroupP} is contractive in~$L^2(\PP)$, since it coincides with~$\TT{\dUpsilon}{\PP}_t$ by Theorem~\ref{t:AKR4.1}.
As a consequence, it is contractive in~$L^p(\PP)$ by Markovianity and the Riesz--Thorin Interpolation Theorem (see e.g.~\cite[\S2, p.~70]{Shi97}).
Thus, it extends to a strongly continuous contraction semigroup on~$L^p(\PP)$, denoted by~$\TT{\dUpsilon}{\PP}_{p,t}$, for every~$p\in [1,\infty)$.

Now, \ref{i:c:ESA:1} is a consequence of Theorem~\ref{t:AKR4.1} and~\cite[Thm.~X.49]{ReeSim75}.
\ref{i:c:ESA:2} By the proof of Lemma~\ref{l:AKR7.3}, $\Ex{\De}$~is contained in the domain of each self-adjoint extension of $\tparen{\LL{\dUpsilon}{\PP}, \CylQP{\PP}{\Daux}}$, and the conclusion follows from~\ref{i:c:ESA:1}.
\ref{i:c:ESA:3} Applying~\eqref{eq:p:Generator:00} to~$\rep f^\trid$ and combining it with~\eqref{eq:p:Generator:0}, we have that~$\LL{\dUpsilon}{\PP}\tparen{\tclass[\PP]{\rep f^\trid}}=\tclass[\PP]{\tparen{\LL{X}{\mssm}_\ell f}^\trid}$ for every~$f\in\Daux$.
By essential self-adjointness, resp.\ $L^p$-uniqueness, of~$\tparen{\LL{X}{\mssm}, \Dz}$ on~$\Daux$, the latter equality extends to
\begin{equation}\label{eq:c:ESA:1}
\LL{\dUpsilon}{\PP}\tparen{\tclass[\PP]{\rep f^\trid}}=\class[\PP]{\tparen{\LL{X}{\mssm}_\ell f}^\trid} \comma \qquad f\in \dom{\LL{X}{\mssm}_p}\fstop
\end{equation}
In light of~\eqref{eq:p:Generator:0}, Equation~\eqref{eq:c:ESA:1} shows that every~$u=F\circ\mbff^\trid\in\CylQP{\PP}{\rep\Daux}$ can be approximated in the graph-generator norm of~$\tparen{\LL{\dUpsilon}{\PP}_p,\dom{\LL{\dUpsilon}{\PP}_p}}$ by a sequence of functions in~$\Cyl{\Dz\cap\Daux}$ with same outer function~$F$ as~$u$ and inner functions~$f_{n,1},\dotsc, f_{n,k}\in \Dz\cap \Daux$.
Since $\tparen{\LL{\dUpsilon}{\PP}, \CylQP{\PP}{\rep\Daux}}$ is essentially self-adjoint, resp.\ $L^p$-unique, by~\ref{i:c:ESA:2}, a further approximation argument concludes the proof.
\end{proof}

\section{Identification of semigroups and Bakry--\'Emery curvature condition}
In this section we identify the heat kernel measure~$\hh{\dUpsilon}{\PP}_\bullet\eqdef \tseq{\hh{\dUpsilon}{\PP}_t}_{t\geq 0}$ of the form~$\EE{\dUpsilon}{\PP}$.
Throughout this section we assume that the heat kernel measure~$\hh{X}{\mssm}_\bullet\eqdef \tseq{\hh{X}{\mssm}_t}_{t\geq 0}$ of the form~$\EE{X}{\mssm}$ on the base \TLDS~$(\mcX,\cdc)$ satisfies some mild Feller-like properties; for the precise definitions see~\SCF below.
Under this assumption,~$\hh{\dUpsilon}{\PP}_\bullet$ ---~restricted to~$\dUpsilon^\sym{\infty}$~--- inherits stochastic completeness from the base space, and is identifiable with a corresponding heat kernel~$\mssh^\tym{\infty}_\bullet$ on the infinite-product space~$X^\tym{\infty}$.

Profiting the identification of~$\hh{\dUpsilon}{\PP}_\bullet$ with~$\mssh^\tym{\infty}_\bullet$, we show, as a first application, that a Bakry--\'Emery Ricci-curvature lower bound holds for the configuration space~$\tparen{\dUpsilon^\sym{\infty},\SF{\dUpsilon}{\PP}}$ if (and only if) it holds for the base~$(\mcX,\cdc)$.
We postpone a thorough comparison of our results here to the analogous results in~\cite{ErbHue15} and~\cite{KonLytRoe02} to Remarks~\ref{r:ComparisonErbHue} and~\ref{r:ComparisonKonLytRoe} respectively.

\medskip

In the following, we shall need to test heat kernel densities of the form~$\hh{X}{\mssm}_t(\emparg, A)$ against configurations, in the sense of~\eqref{eq:TridConfig2}.
We shall assume the following properties.

\begin{defs}[Stochastic properties of~$\TLDS$'s]\label{d:wFe}
We say that a~\TLDS~$(\mcX,\cdc)$ satisfies \SCF if all of the following conditions hold:
\begin{itemize}
\item \emph{$\mcL^\infty(\mssm)$-to-$\Cb(\T)$-Feller property}
\begin{equation}\tag*{$(\mathsf{F})_{\ref{d:wFe}}$}\label{ass:wFe} 
\hh{X}{\mssm}_t(\emparg, A)\in \Cb(\T)\comma \qquad A\in\A\comma \qquad t>0 \fstop
\end{equation}

\item \emph{$\Sb(\msE)$-to-$\Ccompl(\msE)$-Feller property}
\begin{equation}\tag*{$(\mathsf{F}_\msE)_{\ref{d:wFe}}$}\label{ass:Fe} 
\hh{X}{\mssm}_t(\emparg, E)\in \Ccompl(\msE)\comma \qquad E\in\msE\cap \A\comma \qquad t>0 \fstop
\end{equation}

\item \emph{stochastic completeness}
\begin{equation}\tag*{$(\mathsf{SC})_{\ref{d:wFe}}$}\label{ass:SC} 
\hh{X}{\mssm}_t(x,X)=1\comma \qquad x\in X\comma \quad t>0\fstop
\end{equation}
\end{itemize}
\end{defs}

We turn to the Bakry--\'Emery curvature condition for the configuration space over a \TLDS~$(\mcX,\cdc)$.
\begin{defs}[Bakry--\'Emery gradient estimate]\label{d:BE}
A \TLDS~$(\mcX,\cdc)$ satisfies the \emph{weak Bakry--\'Emery gradient estimate with constants~$c\geq 1$ and~$K\in \R$}, in short:~$\wBE_c(K,\infty)$, if
\begin{align}\tag*{$\wBE_c(K,\infty)_{\ref{d:BE}}$}\label{ass:BE}
\SF{X}{\mssm}\tparen{\TT{X}{\mssm}_t f} \leq c\, e^{-2Kt}\, \TT{X}{\mssm}_t\,\SF{X}{\mssm}(f) \comma \qquad t>0\comma \quad f\in \dom{\EE{X}{\mssm}}\fstop
\end{align}
We write~$\BE(K,\infty)$ for the standard \emph{Bakry--\'Emery gradient estimate}~$\wBE_1(K,\infty)$.
\end{defs}

Our main goal in this section is to show the following results.

\begin{thm}[Bakry--\'Emery]\label{t:BE}
Let~$(\mcX,\cdc)$ be a \TLDS satisfying~\SCF, and fix~$c\geq 1$ and~$K\in\R$. Then, the following are equivalent:
\begin{enumerate}[$(a)$]
\item\label{i:t:BE:1} $(\mcX,\cdc)$ satisfies~\ref{ass:BE},
\item\label{i:t:BE:2} $\tparen{\dUpsilon,\SF{\dUpsilon}{\PP}}$ satisfies~\ref{ass:BE}.
\end{enumerate}
\end{thm}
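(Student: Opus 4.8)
The plan is to prove the two implications separately, in both cases transferring the statement to the base space through the semigroup representation of Theorem~\ref{t:AKR4.1} and the identification of the heat kernel $\hh{\dUpsilon}{\PP}_\bullet$ carried out later in this section --- concretely, the fact that $\hh{\dUpsilon}{\PP}_t$ is obtained by letting each point of a configuration evolve \emph{independently} according to $\hh{X}{\mssm}_t$. Since the intro already signals that no available core is an algebra stable under both $\LL{\dUpsilon}{\PP}$ and $\TT{\dUpsilon}{\PP}_\bullet$, the $\cdc_2$-calculus for Markov diffusion triples is not at our disposal, and the whole argument has to go through the semigroup/heat-kernel representation.

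\emph{Proof of $\ref{i:t:BE:1}\Rightarrow\ref{i:t:BE:2}$.} It suffices to verify~\ref{ass:BE} for $\tparen{\dUpsilon,\SF{\dUpsilon}{\PP}}$ on the exponential cylinder functions $\Ex{\De}$: the latter is a form core for $\EE{\dUpsilon}{\PP}$ by Corollary~\ref{c:ESA}\ref{i:c:ESA:1}, and both sides of~\ref{ass:BE} are continuous along form-convergent sequences ($\TT{\dUpsilon}{\PP}_\bullet$ is an $L^1(\PP)$-contraction, $\SF{\dUpsilon}{\PP}$ a non-negative quadratic form, so $\EE{\dUpsilon}{\PP}(u_n-u)\to 0$ forces $\SF{\dUpsilon}{\PP}(u_n)\to\SF{\dUpsilon}{\PP}(u)$ in $L^1(\PP)$, and one passes to the limit along an $\PP$-a.e.\ convergent subsequence). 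Fix $u\eqdef\exp\tparen{\log(1+f)^\trid}$ with $f\in\De$; note $-\delta\leq f,\TT{X}{\mssm}_t f\leq 0$, so $1+f$ and $1+\TT{X}{\mssm}_t f$ stay bounded away from $0$. By Theorem~\ref{t:AKR4.1}, $\TT{\dUpsilon}{\PP}_t u=\exp\tparen{\log(1+\TT{X}{\mssm}_t f)^\trid}$, and combining the chain rules for $\SF{\dUpsilon}{\PP}$ (Proposition~\ref{p:ExtDomConfig2}\ref{i:p:ExtDomConfig2:3}) and for $\SF{X}{\mssm}$ one gets
\[
\SF{\dUpsilon}{\PP}(\TT{\dUpsilon}{\PP}_t u)(\gamma)=\sum_{x\in\gamma}\SF{X}{\mssm}\tparen{\TT{X}{\mssm}_t f}(x)\,\prod_{y\in\gamma,\,y\neq x}\tparen{1+\TT{X}{\mssm}_t f(y)}^2 \fstop
\]
On the other hand, computing $\TT{\dUpsilon}{\PP}_t\SF{\dUpsilon}{\PP}(u)$ via the independent-point representation of $\hh{\dUpsilon}{\PP}_t$ (legitimate since $\SF{\dUpsilon}{\PP}(u)\in L^1(\PP)$, as $\int\SF{\dUpsilon}{\PP}(u)\diff\PP=\EE{\dUpsilon}{\PP}(u)<\infty$) and using the independence of the displaced points yields
\[
\TT{\dUpsilon}{\PP}_t\SF{\dUpsilon}{\PP}(u)(\gamma)=\sum_{x\in\gamma}\tparen{\TT{X}{\mssm}_t\SF{X}{\mssm}(f)}(x)\,\prod_{y\in\gamma,\,y\neq x}\tparen{\TT{X}{\mssm}_t\tparen{(1+f)^2}}(y) \fstop
\]
Now~\ref{ass:BE} on $\mcX$ gives $\SF{X}{\mssm}(\TT{X}{\mssm}_t f)\leq c\,e^{-2Kt}\TT{X}{\mssm}_t\SF{X}{\mssm}(f)$, while conditional Jensen for the Markov kernel $\hh{X}{\mssm}_t$ --- here stochastic completeness $\TT{X}{\mssm}_t\car=\car$ enters --- gives $\tparen{1+\TT{X}{\mssm}_t f}^2=\tparen{\TT{X}{\mssm}_t(1+f)}^2\leq \TT{X}{\mssm}_t\tparen{(1+f)^2}$. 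All factors being non-negative, a term-by-term comparison of the two displays yields $\SF{\dUpsilon}{\PP}(\TT{\dUpsilon}{\PP}_t u)\leq c\,e^{-2Kt}\TT{\dUpsilon}{\PP}_t\SF{\dUpsilon}{\PP}(u)$ on $\Ex{\De}$, whence~\ref{ass:BE} for $\tparen{\dUpsilon,\SF{\dUpsilon}{\PP}}$.

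\emph{Proof of $\ref{i:t:BE:2}\Rightarrow\ref{i:t:BE:1}$.} I would test~\ref{ass:BE} on $\dUpsilon$ against first-order functions $u=f^\trid$ with $f$ ranging over a form core of $\EE{X}{\mssm}$, e.g.\ $f\in\Dz$. Using $\TT{\dUpsilon}{\PP}_t f^\trid=\tparen{\TT{X}{\mssm}_tf}^\trid$, $\SF{\dUpsilon}{\PP}(f^\trid)=\tparen{\SF{X}{\mssm}(f)}^\trid$ (Proposition~\ref{p:ExtDomConfig2}\ref{i:p:ExtDomConfig2:3}), and $\TT{\dUpsilon}{\PP}_t\tparen{\SF{X}{\mssm}(f)}^\trid=\tparen{\TT{X}{\mssm}_t\SF{X}{\mssm}(f)}^\trid$ (again the independent-point representation of $\hh{\dUpsilon}{\PP}_t$), the estimate~\ref{ass:BE} on $\dUpsilon$ becomes $\psi^\trid\gamma\geq 0$ for $\PP$-a.e.\ $\gamma$, where $\psi\eqdef c\,e^{-2Kt}\TT{X}{\mssm}_t\SF{X}{\mssm}(f)-\SF{X}{\mssm}\tparen{\TT{X}{\mssm}_tf}\in L^1(\mssm)$. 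To conclude $\psi\geq 0$ $\mssm$-a.e.\ (which is~\ref{ass:BE} on $\mcX$), suppose $\mssm\set{\psi<-\delta}>0$ for some $\delta>0$; since $\mssm$ is Radon and $\msE$ is localizing, there is $A\in\msE$ with $A\subset\set{\psi<-\delta}$ and $0<\mssm A<\infty$. Under $\PP=\PP_\mssm$ the restrictions $\gamma_A$ and $\gamma_{A^\complement}$ are independent, $\int_{A^\complement}\abs{\psi}\diff\gamma<\infty$ $\PP$-a.s., and $\psi^\trid\gamma\leq \int_{A^\complement}\psi\diff\gamma-\delta\,\gamma A$; hence $\psi^\trid\gamma\geq 0$ would force $\gamma A\leq \delta^{-1}\int_{A^\complement}\psi\diff\gamma$ $\PP$-a.s., which is impossible because, conditionally on $\gamma_{A^\complement}$, the integer $\gamma A$ is Poisson with parameter $\mssm A>0$ and thus unbounded. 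Finally the inequality extends from the form core to all $f\in\dom{\EE{X}{\mssm}}$ by the same density/continuity argument as above.

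\emph{Main obstacle.} The delicate part is the direction $\ref{i:t:BE:1}\Rightarrow\ref{i:t:BE:2}$: one must justify rigorously the computation of $\TT{\dUpsilon}{\PP}_t\SF{\dUpsilon}{\PP}(u)$ displayed above, controlling the generically infinite sums and products (over the points of $\PP$-typical configurations, which are infinite when $\mssm X=\infty$) by means of the heat-kernel identification $\hh{\dUpsilon}{\PP}_\bullet\leftrightarrow\mssh^\tym{\infty}_\bullet$ on $\dUpsilon^\sym{\infty}$ --- in particular checking that these series converge, that $\TT{\dUpsilon}{\PP}_t$ of such a non-cylinder function is indeed given by the product kernel, and that the interchange of sum, product and integral is licit. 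Everything else reduces, as shown, to the base estimate~\ref{ass:BE} together with an elementary Jensen inequality.
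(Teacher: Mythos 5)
Your proof is correct in outline, but in the direction \ref{i:t:BE:1}$\Rightarrow$\ref{i:t:BE:2} it follows a genuinely different route from the paper. You work on the core $\Ex{\De}$ of exponential cylinder functions and compare the two sides of \ref{ass:BE} by writing both out explicitly as sums over the points of $\gamma$ of infinite products, using Theorem~\ref{t:AKR4.1} for the left-hand side and the independent-particle form of the kernel for the right-hand side; the estimate then reduces to the base inequality plus Jensen applied factor by factor. The paper instead works with ordinary cylinder functions $u\in\Cyl{\Dz}$, pulls everything back to $X^\tym{\infty}$ via $\Lb^*$, and tensorizes the gradient estimate coordinate-wise: it applies \ref{ass:BE} to the $p$-th section, commutes $\rep\cdc^p$ past the complementary product semigroup $\TTt^{\asym{\infty},p}_t$ by Jensen's inequality and sub-Markovianity (Lemma~\ref{l:SubmarkovianityTp}), and sums over $p$. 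The paper's argument never writes down an infinite product, which is exactly where the residual work in your route lies (and which you correctly flag): your formula for $\TT{\dUpsilon}{\PP}_t\SF{\dUpsilon}{\PP}(u)$ requires applying the product-kernel representation of Proposition~\ref{p:KonLytRoe} --- stated there for $L^2(\PP)$-functions --- to the non-cylinder function $\SF{\dUpsilon}{\PP}(u)$, so one must first check that $\SF{\dUpsilon}{\PP}(u)\in L^2(\PP)$ (or extend the representation to $L^1(\PP)$), and then justify the factorization of the expectation of the infinite product; since all factors lie in $[(1-\delta)^2,1]$ and all terms are non-negative, this does go through by Tonelli and dominated convergence along partial products, but it is bookkeeping the tensorization argument sidesteps entirely. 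Your reduction to the core is sound: essential self-adjointness of $\LL{\dUpsilon}{\PP}$ on $\Ex{\De}$ (Cor.~\ref{c:ESA}) indeed makes $\Ex{\De}$ a form core, and your limiting argument plays the role of the paper's appeal to $L^2$-lower semicontinuity of $\SF{\dUpsilon}{\PP}$ and density of $\CylQP{\PP}{\Dz}$.

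For \ref{i:t:BE:2}$\Rightarrow$\ref{i:t:BE:1} the two arguments coincide in substance: test on $f^\trid$, use Corollary~\ref{c:ExchangeHeatSemigroupTrid} and Proposition~\ref{p:ExtDomConfig2}\ref{i:p:ExtDomConfig2:3}, and deduce the pointwise inequality on $X$. Your Poisson-conditioning argument showing that $\psi^\trid\geq 0$ $\PP$-a.e.\ forces $\psi\geq 0$ $\mssm$-a.e.\ is in fact more explicit than the paper's one-line appeal to the order-preserving property of $\emparg^\trid$ in Lemma~\ref{l:Isometry}, and supplies precisely the order-reflection that the latter leaves implicit.
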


As a corollary of the Bakry--\'Emery gradient estimate for~$\dUpsilon$, we have the following weak Bochner inequality.
Let~$(\mcE,\mcF)$ be a Dirichlet form with square field operator~$(\cdc,\dom{\cdc})$ and generator~$(\mcL,\dom{\mcL})$, and recall that its iterated square-field operator~$(\cdc_2,\dom{\cdc_2})$ is
\begin{align*}
\dom{\cdc_2}\eqdef \set{f\in\dom{\mcL}: \mcL f\in\dom{\cdc}} \comma \qquad \cdc_2(f,g)\eqdef \tfrac{1}{2}\tbraket{\mcL \cdc(f,g) - \cdc(f,\mcL g)- \cdc(\mcL f,g)}\fstop
\end{align*}
As a consequence of~\cite[Cor.~2.3(vi)$\implies$(i)]{AmbGigSav15}, we also obtain:

\begin{cor}[Weak $L^2$-Bochner inequality]
Le~$(\mcX,\cdc)$ be a \TLDS satisfying~\SCF and~$\BE(K,\infty)$ for some~$K\in\R$.
Then, the following weak $L^2$-Bochner inequality holds for $\ttparen{\dUpsilon,\SF{\dUpsilon}{\PP}}$,
\begin{align*}
\int_\dUpsilon \SF{\dUpsilon}{\PP}_2(u)\, v\diff\PP\geq K \int_\dUpsilon \SF{\dUpsilon}{\PP}(u)\, v\diff\PP\comma \qquad u,v\in \dom{\SF{\dUpsilon}{\PP}_2} \fstop
\end{align*}
\end{cor}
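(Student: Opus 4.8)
The plan is to obtain the weak $L^2$-Bochner inequality on $\dUpsilon$ by transporting the Bakry--\'Emery gradient estimate via Theorem~\ref{t:BE} and then invoking the abstract equivalence of~\cite[Cor.~2.3]{AmbGigSav15}.

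First I would apply Theorem~\ref{t:BE}: since $(\mcX,\cdc)$ satisfies \SCF and $\BE(K,\infty)=\wBE_1(K,\infty)$, the space $\tparen{\dUpsilon,\SF{\dUpsilon}{\PP}}$ satisfies $\BE(K,\infty)$, i.e.\ the pointwise gradient estimate
\begin{align*}
\SF{\dUpsilon}{\PP}\tparen{\TT{\dUpsilon}{\PP}_t u}\leq e^{-2Kt}\,\TT{\dUpsilon}{\PP}_t\,\SF{\dUpsilon}{\PP}(u) \as{\PP}\comma\qquad u\in\dom{\EE{\dUpsilon}{\PP}}\comma\quad t>0\fstop
\end{align*}
Then I would verify that $\tparen{\EE{\dUpsilon}{\PP},\dom{\EE{\dUpsilon}{\PP}}}$ together with the probability measure $\PP$ fits the abstract setting of~\cite[\S2]{AmbGigSav15}: it is a symmetric, strongly local, quasi-regular Dirichlet form admitting the carr\'e du champ $\tparen{\SF{\dUpsilon}{\PP},\dom{\SF{\dUpsilon}{\PP}}}$; $\PP$ is finite, hence $\sigma$-finite; the semigroup is conservative, $\TT{\dUpsilon}{\PP}_t\car=\car$, which follows from Theorem~\ref{t:AKR4.1} (approximate $\car$ by functions in $\Ex{\De}$) or simply from Markovianity together with $\PP\dUpsilon=1$; and Proposition~\ref{p:Generator} provides the explicit action of $\LL{\dUpsilon}{\PP}$ on the dense cylinder cores $\CylQP{\PP}{\rep\Daux}$ and $\Cyl{\Dz}$. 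In this framework the displayed estimate is exactly condition~(vi) of~\cite[Cor.~2.3]{AmbGigSav15}, while the weak $L^2$-Bochner inequality --- with $u$ ranging over $\dom{\SF{\dUpsilon}{\PP}_2}$ and the test function $v$ nonnegative, bounded, and with $\LL{\dUpsilon}{\PP}v\in L^\infty(\PP)$, as in the definition of $\SF{\dUpsilon}{\PP}_2$ there --- is condition~(i); the implication (vi)$\implies$(i) then gives the claim.

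The only non-routine part is the bookkeeping in the second paragraph, namely checking that all standing hypotheses of~\cite[\S2]{AmbGigSav15} hold for $\tparen{\dUpsilon,\EE{\dUpsilon}{\PP},\PP}$: the carr\'e du champ is built into the construction recalled in~\S2, conservativeness is immediate, and Propositions~\ref{p:Generator} and~\ref{p:ExtDomConfig2} supply the action of $\LL{\dUpsilon}{\PP}$ and the stability of the cylinder classes needed so that the $\Gamma_2$-calculus is available. Once this is done the corollary follows by direct citation; I do not anticipate any serious difficulty beyond this verification.
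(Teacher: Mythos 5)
Your proposal is correct and follows exactly the route the paper takes: the corollary is stated there as an immediate consequence of Theorem~\ref{t:BE} combined with the implication (vi)$\implies$(i) of~\cite[Cor.~2.3]{AmbGigSav15}. The additional bookkeeping you describe (quasi-regularity, carr\'e du champ, conservativeness, the cylinder cores from Propositions~\ref{p:Generator} and~\ref{p:ExtDomConfig2}) is the right verification and is indeed routine given the results already established in the paper.
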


\begin{rem}[Comparison with~\cite{ErbHue15}]\label{r:ComparisonErbHue}
For~$c=1$, i.e.\ for the standard Bakry--\'Emery gradient estimate, Theorem~\ref{t:BE} is stated in~\cite{ErbHue15} in the case when~$X$ is a Riemannian manifold.
In contrast with the proof in~\cite{ErbHue15}, our pur proof of Theorem~\ref{t:BE} does not require the base space to be a metric space ---~nor, in the least, a Riemannian manifold.
In particular, the statement presented here
\begin{itemize}
\item only relies on the \TLDS-space structure of the base space and on~\SCF;
\item is independent of other properties which may or may not be inherited by the Poisson configuration space, as for instance the Rademacher property;
\item disentangles the necessary assumptions from the geometric superstructure in~\cite{ErbHue15};
\item settles the validity of Bakry--\'Emery curvature bounds also in the case of weighted manifolds, which was conjectured in~\cite[Rmk.~1.5]{ErbHue15}.
\end{itemize}

Furthermore, whereas the main ideas behind our proof is already found in~\cite{ErbHue15}, the proofs of several statements in~\cite{ErbHue15} rely on the understanding that cylinder functions of the form~$\Cyl{\mcC^\infty_c(X)}$ are $\mssd_\dUpsilon$-Lipschitz.
This understanding is however \emph{erroneous}, as shown by the explicit counterexample~\cite[Ex.~\ref{e:NonLipCyl}]{LzDSSuz21}.
This affects in particular the proof of Proposition~2.3 (claiming the identification of~$\EE{\dUpsilon}{\PP}$ with~$\Ch[\mssd_{\dUpsilon,\PP}]$), the proof of Theorem~4.7, and the line of reasoning of the entire \S5 in~\cite{ErbHue15}, which relies on Proposition~2.3 in an essential way.
\end{rem}

\subsection{Identifications of heat kernels}
In this section we establish explicit formulas for the heat kernel measure~$\hh{\dUpsilon}{\PP}_\bullet$ representing the Markov semigroup~$\TT{\dUpsilon}{\PP}_\bullet$ associated to the Dirichlet form $\tparen{\EE{\dUpsilon}{\PP},\dom{\EE{\dUpsilon}{\PP}}}$, in terms of the heat kernel measure~$\hh{X}{\mssm}_\bullet$ on the base space.

\medskip

Before indulging into details, let us give here a short summary of the strategy, based upon ideas from~\cite{AlbKonRoe98, KonLytRoe02, ErbHue15}, where the same representation of the heat kernel is shown when~$X$ is a smooth Riemannian manifold with Ricci curvature bounded from below. We aim at establishing the identification
\begin{align}\label{eq:IdentificationHeatPrelim}
\hh{\dUpsilon}{\PP}_t(\gamma,\diff\eta) = \bigotimes_{i=1}^\infty \hh{X}{\mssm}_t(x_i, \diff y_i)\comma \qquad \gamma=\Lb(\mbfx)\comma \eta=\Lb(\mbfy) \fstop
\end{align}
Firstly, we note that such identification may hold only under the assumption that~$\mssm X=\infty$.
Even in this case, one should not expect~\eqref{eq:IdentificationHeatPrelim} to hold on the whole of~$\dUpsilon$. 
Rather, one hopes to establish~\eqref{eq:IdentificationHeatPrelim} on some subset~$\Theta\subset \dUpsilon$ of full $\PP$-measure.

\subsubsection{Heat kernels: definitions}
Assumption~\ref{ass:wFe} grants the existence of a continuous $\mssm$-represen\-ta\-tive of~$\hh{X}{\mssm}_t(\emparg, A)$.
Since $\hh{X}{\mssm}_t(\emparg, A)\geq 0$, we can then make sense of the functionals~$\gamma\mapsto \hh{X}{\mssm}_t(\emparg, A)^\trid \gamma$ in light of Notation~\ref{n:ClassesConfig2}.

\begin{notat}
On~$(X^\tym{\infty},\A^{\hotym\infty})$ we define the heat kernel measure
\begin{align*}
\mssh^\tym{\infty}_t(\mbfx, \diff\mbfy)\eqdef& \bigotimes_{i=1}^\infty \hh{X}{\mssm}_t(x_i,\diff y_i)\comma  \qquad \mbfx\in X^{\times\infty}\comma \qquad t> 0 \fstop
\end{align*}
\end{notat}

\begin{rem}
Note that, for fixed~$\mbfA\in\Bo{\tym\infty}$, the continuity of~$\mssh^\tym{\infty}_\emparg(\emparg, \mbfA)\colon [0,\infty)\times X^\tym{\infty}\to \R$ should not be expected, even in the case when~$(\mcX,\cdc)$ is a smooth compact Riemannian manifold, cf.\ e.g.~\cite[Thm.~1.1(3)]{BenSaC97}.
\end{rem}

\begin{lem}[Stochastic completeness I]\label{l:SCProduct}
Let~$(\mcX,\cdc)$ be a \TLDS satisfying~\SCF. Then,
\begin{align}\label{eq:l:SCConfig1:1}
\mssh^{\tym\infty}_t(\mbfx,\mbfX^\asym{\infty}_\locfin(\msE))=&\ 1 \comma \qquad \mbfx\in\mbfX^\asym{\infty}_\locfin(\msE)\comma\qquad t\geq 0\comma
\end{align}

\begin{proof}
Since~$(\mcX,\cdc)$ satisfies~\ref{ass:SC}, it is clear that~$\mssh^\tym{\infty}_t(\mbfx,X^\tym{\infty})=1$ for every~$\mbfx\in\mbfX^\asym{\infty}_\locfin(\msE)$, i.e.\ ---~informally~--- that cofinitely many particles of~$\Lb(\mbfx)$ may \emph{not} dissipate in finite time.
In order to show~\eqref{eq:l:SCConfig1:1} it thus remains to prove that ---~informally~--- infinitely many particles may not accumulate in a bounded set~$E\in\msE$, viz.~$\mssh^\tym{\infty}_t(\mbfx, E^\tym{\infty})=0$ for every~$\mbfx\in \mbfX^\asym{\infty}_\locfin(\msE)$ and every~$t\geq 0$.
By~\ref{ass:Fe}, there exists~$F\in\msE$ such that~$\tabs{\hh{X}{\mssm}_t(\emparg,E)}<\tfrac{1}{2}$ for every~$x\in F^\complement$.
Since~$\mbfx\in \mbfX^\asym{\infty}_\locfin(\msE)$, there exists~$k\in \N$ so that~$x_i\in F^\complement$ for all~$i\geq k$.
Thus,
\begin{align*}
\mssh^\tym{\infty}_t(\mbfx,E^\tym{\infty})=&\ \prod_{i=1}^\infty \hh{X}{\mssm}_t(x_i,E) = \prod_{i=1}^k \hh{X}{\mssm}_t(x_i,E) \cdot \prod_{i=k+1}^\infty \hh{X}{\mssm}_t(x_i,E)
\\
\leq&\ \prod_{i=1}^k \hh{X}{\mssm}_t(x_i,E) \cdot \tparen{\tfrac{1}{2}}^\infty=0\comma
\end{align*}
which concludes the proof.
\end{proof}
\end{lem}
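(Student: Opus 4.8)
The plan is to decompose the claim into two independent facts about an $\mssh^{\tym\infty}_t(\mbfx,\emparg)$-distributed random sequence $\mbfy=\seq{y_i}_i$: that no coordinate escapes to infinity, and that only finitely many coordinates fall into any fixed localizing set. The first is immediate from stochastic completeness~\ref{ass:SC}: since every factor $\hh{X}{\mssm}_t(x_i,\emparg)$ is then a probability measure on $X$, one gets $\mssh^{\tym\infty}_t(\mbfx,X^\tym{\infty})=\prod_i\hh{X}{\mssm}_t(x_i,X)=1$, so the whole mass already lies on $X^\tym{\infty}$. It remains to show that the set of $\mbfy\in X^\tym{\infty}$ having infinitely many coordinates in \emph{some} $E\in\msE$ is $\mssh^{\tym\infty}_t(\mbfx,\emparg)$-negligible.

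To make this a countable statement, I would use the localizing sequence $\seq{E_h}_h\subset\msE$ from Definition~\ref{d:TLSConfig2}\ref{i:d:TLSConfig2:4}: every $E\in\msE$ is contained in some $E_h$, whence
\[
X^\tym{\infty}\setminus\mbfX^\asym{\infty}_\locfin(\msE)=\bigcup_h\set{\mbfy\in X^\tym{\infty}:\#\set{i:y_i\in E_h}=\infty}\comma
\]
so it is enough to show each set on the right is negligible. Fix $h$ and write $E\coloneqq E_h$. Using~\ref{ass:Fe}, the function $\hh{X}{\mssm}_t(\emparg,E)\in\Ccompl(\msE)$ vanishes at $\msE$-infinity, so there is $F\in\msE$ with $\hh{X}{\mssm}_t(x,E)<\tfrac12$ for all $x\in F^\complement$; since $\mbfx\in\mbfX^\asym{\infty}_\locfin(\msE)$, all but finitely many $x_i$ lie in $F^\complement$. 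Thus, under the product measure $\mssh^{\tym\infty}_t(\mbfx,\emparg)$, the independent events $\set{y_i\in E}$ have probabilities $p_i\coloneqq\hh{X}{\mssm}_t(x_i,E)$ with $p_i<\tfrac12$ for all large $i$; in particular $\prod_i p_i=0$, so that $\mssh^{\tym\infty}_t(\mbfx,E^\tym{\infty})=0$, i.e.\ the event that \emph{all} particles concentrate in $E$ is already null.

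The main obstacle is to upgrade this to negligibility of the larger event $\set{\#\set{i:y_i\in E}=\infty}=\limsup_i\set{y_i\in E}$. I would handle it by a Borel--Cantelli argument on the independent events $\set{y_i\in E}$: since $\mbfx$ is $\msE$-locally finite, $x_i$ eventually leaves every $E_h$, and the mode of vanishing of $\hh{X}{\mssm}_t(\emparg,E)$ at $\msE$-infinity supplied by~\ref{ass:Fe}, applied along the exhaustion $\seq{E_h}_h$, should bound the tail sums $\sum_{i\geq N}p_i$ sufficiently to yield $\sum_i p_i<\infty$; the first Borel--Cantelli lemma then gives $\mssh^{\tym\infty}_t(\mbfx,\limsup_i\set{y_i\in E})=0$. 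Summing these null sets over $h$ and combining with the first paragraph produces $\mssh^{\tym\infty}_t(\mbfx,\mbfX^\asym{\infty}_\locfin(\msE))=1$. I expect the summability $\sum_i\hh{X}{\mssm}_t(x_i,E)<\infty$ to be the genuinely delicate point --- it is precisely where the quantitative content of~\SCF, and in particular the interplay of~\ref{ass:Fe} with the $\msE$-local finiteness of $\mbfx$, is used --- while the reductions above are routine.
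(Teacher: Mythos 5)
Your reduction is the right one, and in fact more careful than the paper's own proof: after disposing of escape to infinity via~\ref{ass:SC}, you correctly identify the remaining bad event as $\bigcup_h\limsup_i\set{y_i\in E_h}$, whereas the printed proof only establishes $\mssh^{\tym\infty}_t(\mbfx,E^{\tym\infty})=0$, i.e.\ that \emph{all} coordinates land in~$E$ with probability zero --- an event much smaller than ``infinitely many coordinates land in~$E$''. Your remark that $\prod_i p_i=0$ only kills the former is exactly the point, and the passage through the localizing sequence $\seq{E_h}_h$ to make the complement of $\mbfX^\asym{\infty}_\locfin(\msE)$ a countable union is correct.

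However, the step you defer is a genuine gap, not a routine verification. The first Borel--Cantelli lemma needs $\sum_i p_i<\infty$ with $p_i\eqdef\hh{X}{\mssm}_t(x_i,E)$, and the hypotheses you invoke --- $\hh{X}{\mssm}_t(\emparg,E)\in\Ccompl(\msE)$ from~\ref{ass:Fe} together with the $\msE$-local finiteness of~$\mbfx$ --- only yield $p_i\to 0$: for each $\eps>0$ all but finitely many $x_i$ lie outside the corresponding $E_\eps\in\msE$, but nothing controls how many $x_i$ lie in each $E_\eps$ against how fast $\hh{X}{\mssm}_t(\emparg,E)$ decays there. Worse, since the events $\set{y_i\in E}$ are independent under the product measure, the second Borel--Cantelli lemma shows that $\sum_i p_i=\infty$ would force $\mssh^{\tym\infty}_t\tparen{\mbfx,\limsup_i\set{y_i\in E}}=1$; so the summability is not a technical convenience but is \emph{equivalent} to the conclusion for that $E$ and that $\mbfx$, and cannot be circumvented by a softer argument. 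Closing the proof along your route therefore requires a quantitative decay rate of $x\mapsto\hh{X}{\mssm}_t(x,E)$ along the exhaustion $\seq{E_h}_h$, matched against the counting function $h\mapsto\#\set{i:x_i\in E_h}$ --- information that the purely qualitative condition \SCF does not supply.
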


\begin{lem}\label{l:CoincidenceHK} Let~$(\mcX,\cdc)$ be a \TLDS satisfying~\ref{ass:wFe} and~\ref{ass:SC}. Then,
\begin{align}\label{eq:l:CoincidenceHK:0}
\Lb_\pfwd \mssh_t^\tym{\infty}\tparen{\mbfx_1,\diff\emparg}=\Lb_\pfwd \mssh_t^\tym{\infty}\tparen{\mbfx_2,\diff\emparg}\comma \qquad \gamma\in \dUpsilon^\sym{\infty}\comma\quad \mbfx_1,\mbfx_2\in \Lb^{-1}(\gamma)\comma \quad t\geq 0 \comma
\end{align}
as measures on~$\A_\mrmv(\msE)$.
\begin{proof}
Fix~$\gamma\in\dUpsilon^\sym{\infty}$ and~$t>0$.
By Lemma~\ref{l:SCProduct} both~$\Lb_\pfwd\mssh_t^\tym{\infty}\tparen{\mbfx_1,\diff\emparg}$ and~$\Lb_\pfwd\mssh_t^\tym{\infty}\tparen{\mbfx_2,\diff\emparg}$ are probability measures.
Thus, it is enough to show that they coincide on a family of sets generating the $\sigma$-algebra~$\A_\mrmv(\msE)$ (restricted to~$\dUpsilon^\sym{\infty}$) and closed w.r.t.\ finite intersections, e.g.~\cite[Lem.~I.1.9.4]{Bog07}.
In particular, since~$\mssm$ has full $\T$-support, the family of concentration sets as in~\eqref{eq:ConcentrationSetConfig2} with~$E\in\msE\cap \T$ and~$\mssm E>0$ generates~$\A_\mrmv(\msE)$.
Thus, it suffices to show~\eqref{eq:l:CoincidenceHK:0} for the family of sets in~\eqref{eq:p:FundamentalSetsConfig2:0}.
In turn, in light of Proposition~\ref{p:FundamentalSetsConfig2}, it suffices to show that~$\mssh_t^\tym{\infty}\tparen{\mbfx_1,\diff\emparg}$ and~$\mssh_t^\tym{\infty}\tparen{\mbfx_2,\diff\emparg}$ coincide on cylinder sets of the form~\eqref{eq:CylinderSet}.
To this end, let~$\mbfA$ be any such cylinder set, and note that, by~\ref{ass:SC}, the sequence~$\tseq{\log\hh{X}{\mssm}_t(x_i, A_i)}_i$, with~$A_i=X$ for all large enough~$i$, is eventually vanishing, and thus unconditionally summable.
As a consequence, and since~$\mssh_t^\tym{\infty}(\mbfx,\mbfA)=\exp \tbraket{\sum_{i=1}^\infty \log\hh{X}{\mssm}_t(x_i,A_i)}$, we have that~$\mssh_t^\tym{\infty}(\mbfx_\sigma,\mbfA_\sigma)=\mssh_t^\tym{\infty}(\mbfx,\mbfA)$ for every bijection~$\sigma\colon \N_1\to\N_1$, which concludes the assertion.
\end{proof}
\end{lem}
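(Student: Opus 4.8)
The plan is to prove~\eqref{eq:l:CoincidenceHK:0} by checking that the two pushforward measures agree on a conveniently chosen $\pi$-system generating $\A_\mrmv(\msE)$, and then invoking Dynkin's lemma (as in~\cite[Lem.~I.1.9.4]{Bog07}). So fix $\gamma\in\dUpsilon^\sym{\infty}$, $t>0$, and $\mbfx_1,\mbfx_2\in\Lb^{-1}(\gamma)$. By~\ref{ass:SC} and Lemma~\ref{l:SCProduct}, each $\mssh_t^\tym{\infty}(\mbfx_j,\emparg)$ is a probability measure concentrated on $\mbfX^\asym{\infty}_\locfin(\msE)$, where $\Lb$ is defined and measurable; hence both $\Lb_\pfwd\mssh_t^\tym{\infty}(\mbfx_j,\emparg)$ are probability measures on $\A_\mrmv(\msE)$. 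Since $\mssm$ has full $\T$-support (built into the definition of a topological local structure), the family of concentration sets $\Xi^\sym{\infty}_{\geq \mbfk}(A_1,\dotsc, A_m)$ with $A_i\in\msE\cap\T$ and $\mssm A_i>0$ is a $\pi$-system generating $\A_\mrmv(\msE)$ on $\dUpsilon^\sym{\infty}$, so it is enough to verify~\eqref{eq:l:CoincidenceHK:0} on each such $\Xi$.

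For a fixed such $\Xi$, Proposition~\ref{p:FundamentalSetsConfig2} provides a single cylinder set $\mbfA$ as in~\eqref{eq:p:FundamentalSetsConfig2:2} with $\Lb^{-1}(\Xi)=\bigcup_{\sigma\in\mfS_0(\N_1)}\tilde\mbfA_\sigma$. As $\mfS_0(\N_1)$ is countable and each $\tilde\mbfA_\sigma$ is Borel, $B\eqdef\Lb^{-1}(\Xi)$ is Borel, so $\mssh_t^\tym{\infty}(\mbfx_j,B)$ is unambiguous and equals $\bigl(\Lb_\pfwd\mssh_t^\tym{\infty}(\mbfx_j,\emparg)\bigr)(\Xi)$. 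Moreover $B$ is invariant under every index bijection of $\N_1$, simply because $\Lb$ is (summing point masses ignores the ordering of the labels). The problem is thus reduced to showing $\mssh_t^\tym{\infty}(\mbfx_1,B)=\mssh_t^\tym{\infty}(\mbfx_2,B)$.

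The key is relabeling-equivariance of $\mssh_t^\tym{\infty}$. For a bijection $\tau$ of $\N_1$ let $R_\tau\colon\mbfy\mapsto\mbfy_\tau$; checking on measurable rectangles (a $\pi$-system for the product $\sigma$-algebra) one finds $(R_\tau)_\pfwd\mssh_t^\tym{\infty}(\mbfx,\emparg)=\mssh_t^\tym{\infty}(\mbfx_\tau,\emparg)$. (Equivalently: on a cylinder set $\mbfA=\tr_n^{-1}(A_1\times\cdots\times A_n)$ one has $\mssh_t^\tym{\infty}(\mbfx,\mbfA)=\exp\tbraket{\sum_i\log\hh{X}{\mssm}_t(x_i,A_i)}$, and since~\ref{ass:SC} forces $\hh{X}{\mssm}_t(x_i,X)=1$ for the cofinitely many $i$ with $A_i=X$, the series has only finitely many nonzero terms, hence is unconditionally summable, so $\mssh_t^\tym{\infty}(\mbfx_\sigma,\mbfA_\sigma)=\mssh_t^\tym{\infty}(\mbfx,\mbfA)$ for \emph{every} bijection $\sigma$ --- this is exactly where stochastic completeness is used.) Now choose a bijection $\tau$ of $\N_1$ with $\mbfx_2=(\mbfx_1)_\tau$ (it exists because $\mbfx_1$ and $\mbfx_2$ label the same $\gamma$). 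Then $R_\tau^{-1}(B)=B$ by the invariance noted above, so
\[
\mssh_t^\tym{\infty}(\mbfx_2,B)=\bigl((R_\tau)_\pfwd\mssh_t^\tym{\infty}(\mbfx_1,\emparg)\bigr)(B)=\mssh_t^\tym{\infty}\tparen{\mbfx_1,R_\tau^{-1}(B)}=\mssh_t^\tym{\infty}(\mbfx_1,B)\fstop
\]
This gives~\eqref{eq:l:CoincidenceHK:0} on the generating $\pi$-system, and Dynkin's lemma extends it to all of $\A_\mrmv(\msE)$.

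The main point demanding care is that the bijection $\tau$ relating $\mbfx_1$ to $\mbfx_2$ generally has infinitely many non-fixed points, so $\tau\notin\mfS_0(\N_1)$; one therefore cannot run the symmetry argument piece-by-piece on the sets $\tilde\mbfA_\sigma$ appearing in $\Lb^{-1}(\Xi)=\bigcup_{\sigma\in\mfS_0(\N_1)}\tilde\mbfA_\sigma$, but must instead work with the full preimage $B=\Lb^{-1}(\Xi)$, which is invariant under \emph{all} index bijections. The secondary technical point --- that $\mssh_t^\tym{\infty}(\mbfx,\emparg)$ be defined on $B$ at all --- is precisely what forces the reduction to the concentration-set $\pi$-system, whose preimages are honest Borel sets and thus sidestep any question of extending $\mssh_t^\tym{\infty}(\mbfx,\emparg)$ to the labeling-universal $\sigma$-algebra $\boldSigma^*(\msE)$.
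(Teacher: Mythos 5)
Your proof is correct and follows essentially the same route as the paper's: reduction to the $\pi$-system of concentration sets, Proposition~\ref{p:FundamentalSetsConfig2} to pass to (unions of) cylinder sets, and the permutation-invariance identity $\mssh_t^\tym{\infty}(\mbfx_\sigma,\mbfA_\sigma)=\mssh_t^\tym{\infty}(\mbfx,\mbfA)$ for arbitrary bijections $\sigma$, which rests on \ref{ass:SC} via unconditional summability. Your version is in fact slightly more explicit than the paper's at the final step — upgrading the cylinder-set identity to the pushforward identity $(R_\tau)_\pfwd\mssh_t^\tym{\infty}(\mbfx,\emparg)=\mssh_t^\tym{\infty}(\mbfx_\tau,\emparg)$ and exploiting that $\Lb^{-1}(\Xi)$ is invariant under \emph{all} index bijections, not just the finitely supported ones — which is a welcome clarification rather than a deviation.
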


As a direct consequence of Lemma~\ref{l:CoincidenceHK}, for each~$t>0$ and~$\gamma\in\dUpsilon^\sym{\infty}$, the kernel~$\mssh^\tym{\infty}_t(\mbfx,\emparg)$, with~$\mbfx\in\mbfX^\asym{\infty}_\locfin(\msE)$, induces a probability measure~$\mssh_t^\dUpsilon(\gamma,\emparg)$ on~$\tparen{\dUpsilon,\A_\mrmv(\msE)}$, as follows.

\begin{defs}[Pointwise-defined heat kernel]\label{d:HKExtension}
Let~$(\mcX,\cdc)$ be a \TLDS satisfying~\ref{ass:wFe} and \ref{ass:SC}.
The \emph{pointwise-defined heat kernel measure} on~$(\dUpsilon,\A_\mrmv(\msE))$ is the measure
\begin{align}\label{eq:d:Heat:0}
\mssh^\dUpsilon_t(\gamma,\Lambda)\eqdef&\ \mssh^\tym{\infty}_t\tparen{\mbfx, \Lb^{-1}(\Lambda)}\comma \qquad \gamma\in\dUpsilon^\sym{\infty}\comma \mbfx\in \Lb^{-1}(\gamma) \comma t\geq 0\comma \quad \Lambda\in \A_\mrmv(\msE)\fstop
\end{align}
We extend the definition of~$\mssh^\dUpsilon_\bullet(\emparg,\diff\emparg)$ on~$\dUpsilon^\sym{\infty}$ to a non-relabeled kernel on~$\dUpsilon$, by extending the right-hand side of~\eqref{eq:d:Heat:0} for every~$\gamma\in\dUpsilon^\sym{n}$ and~$\mbfx\in \Lb^{-1}(\gamma)\subset X^\tym{n}$, for every~$n\in \N$, viz.
\begin{align}\label{eq:d:Heat:1}
\mssh^\dUpsilon_t(\gamma,\Lambda)\eqdef&\ \mssh^\tym{N}_t\tparen{\mbfx, \Lb^{-1}(\Lambda)}\comma \qquad \gamma\in\dUpsilon^\sym{N}\comma \mbfx\in \Lb^{-1}(\gamma) \comma t\geq 0\comma \Lambda\in \A_\mrmv(\msE)\comma N\in \overline\N_1\fstop
\end{align}
We also set, conventionally~$\mssh^\dUpsilon_\bullet(\emp,\set{\emp})\equiv 1$, where~$\emp$ denotes the empty configuration.
\end{defs}

The notation~$\mssh_t^\dUpsilon$ will be justified by Proposition~\ref{p:KonLytRoe} below, where it is shown that the measure~$\mssh_t^\dUpsilon$ coincides with the heat-kernel measure~$\hh{\dUpsilon}{\PP}_t$ for every~$t>0$.
As a consequence~\eqref{eq:d:Heat:0} and Lemma~\ref{l:SCProduct} we have that~$\mssh^\dUpsilon_t(\gamma,\diff\emparg)$ is a \emph{probability} measure on~$\tparen{\dUpsilon,\A_\mrmv(\msE)}$ for sufficiently many~$\gamma$'s.

\begin{cor}[Stochastic completeness II]\label{c:SCConfig1}
Let~$(\mcX,\cdc)$ be a \TLDS satisfying~\SCF. Then,
\begin{align*}
\mssh^\dUpsilon_t(\gamma,\dUpsilon^\sym{\infty})=&\ 1 \comma \qquad \gamma\in\dUpsilon^\sym{\infty}\comma\qquad t\geq 0\fstop
\end{align*}
\end{cor}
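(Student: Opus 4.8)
The plan is to read this off directly from Definition~\ref{d:HKExtension} together with Lemma~\ref{l:SCProduct}, since at this point all the substantive work has already been done. First I would record that $\dUpsilon^\sym{\infty}$ is a legitimate argument for the kernel, i.e.\ that $\dUpsilon^\sym{\infty}\in\A_\mrmv(\msE)$: fixing a localizing sequence $\seq{E_h}_h\subset\msE$, the map $\gamma\mapsto \gamma X=\sup_h \gamma E_h$ is $\A_\mrmv(\msE)$-measurable, hence so is $\dUpsilon^\sym{\infty}=\set{\gamma\in\dUpsilon:\gamma X=\infty}$.

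Next I would identify the preimage $\Lb^{-1}(\dUpsilon^\sym{\infty})$. By the definition of the labeling map, $\Lb$ sends a sequence with finitely many entries to a finite configuration, and a locally finite infinite sequence $\mbfx\in\mbfX^\asym{\infty}_\locfin(\msE)$ to $\sum_{p\geq 1}\delta_{x_p}$, which has total mass $+\infty$ and hence lies in $\dUpsilon^\sym{\infty}$. Thus, within the domain $\mbfX_\locfin(\msE)$ of $\Lb$, one has $\Lb^{-1}(\dUpsilon^\sym{\infty})=\mbfX^\asym{\infty}_\locfin(\msE)$, a subset of the ambient space $X^\tym{\infty}$ on which $\mssh^\tym{\infty}_t(\mbfx,\emparg)$ is defined.

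With this in hand the computation is immediate. Fixing $\gamma\in\dUpsilon^\sym{\infty}$ and $t\geq 0$ and choosing any $\mbfx\in\Lb^{-1}(\gamma)$ (necessarily $\mbfx\in\mbfX^\asym{\infty}_\locfin(\msE)$, as $\gamma$ has infinite mass), I would invoke~\eqref{eq:d:Heat:0} and the previous step to get
\begin{align*}
\mssh^\dUpsilon_t(\gamma,\dUpsilon^\sym{\infty})=\mssh^\tym{\infty}_t\tparen{\mbfx,\Lb^{-1}(\dUpsilon^\sym{\infty})}=\mssh^\tym{\infty}_t\tparen{\mbfx,\mbfX^\asym{\infty}_\locfin(\msE)}=1,
\end{align*}
the last equality being exactly Lemma~\ref{l:SCProduct} (valid for all $t\geq 0$; the case $t=0$ is trivial since $\mssh^\tym{\infty}_0(\mbfx,\emparg)=\delta_\mbfx$). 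Lemma~\ref{l:CoincidenceHK} ensures that this value is independent of the choice of $\mbfx\in\Lb^{-1}(\gamma)$, so the corollary follows.

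I do not expect any genuine obstacle: the two nontrivial ingredients---non-accumulation of infinitely many particles in a bounded set (which uses~\ref{ass:Fe}) and non-dissipation of cofinitely many particles (which uses~\ref{ass:SC})---are precisely the content of Lemma~\ref{l:SCProduct}, while the well-posedness of $\mssh^\dUpsilon_t$ on $\A_\mrmv(\msE)$ is Lemma~\ref{l:CoincidenceHK}. The only point deserving a line of care is the bookkeeping that $\Lb^{-1}$ of the event that $\gamma$ is an infinite configuration, read inside $X^\tym{\infty}$, equals $\mbfX^\asym{\infty}_\locfin(\msE)$.
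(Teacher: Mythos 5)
Your proof is correct and follows exactly the route the paper intends: the corollary is stated there as an immediate consequence of~\eqref{eq:d:Heat:0} and Lemma~\ref{l:SCProduct}, and your write-up simply makes explicit the two bookkeeping points (measurability of $\dUpsilon^\sym{\infty}$ and the identification $\Lb^{-1}(\dUpsilon^\sym{\infty})\cap X^\tym{\infty}=\mbfX^\asym{\infty}_\locfin(\msE)$) that the paper leaves implicit. No gaps.
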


\subsubsection{Heat kernels: identifications}\label{sss:HeatIdentification}
Let~$(\mcX,\cdc)$ be a \TLDS, additionally satisfying~\SCF.
We are now ready to prove the identification of~$\mssh^\dUpsilon_\bullet$ with the heat-kernel measure~$\hh{\dUpsilon}{\PP}_\bullet$ representing the semigroup~$\TT{\dUpsilon}{\PP}_\bullet$ corresponding to~$\tparen{\EE{\dUpsilon}{\PP},\dom{\EE{\dUpsilon}{\PP}}}$.

\begin{prop}[Kernel's identification]\label{p:KonLytRoe}
Let~$(\mcX,\cdc)$ be a \TLDS satisfying~\SCF. Then, for every~$u\in L^2(\PP)$ and every~$t>0$ there exists a set~$\Theta_t[u]\in\A_\mrmv(\msE)$ of full~$\PP$-measure such that
\begin{align}\label{eq:HeatTheta}
\Theta_t[u]\ni\gamma\longmapsto \tparen{\widerep{\TT{\dUpsilon}{\PP}_t u}}(\gamma)\eqdef \int_{\dUpsilon^\sym{\infty}} u(\eta) \,\mssh^\dUpsilon_t(\gamma,\diff\eta)
\end{align}
is a $\PP$-version of~$\TT{\dUpsilon}{\PP}_t u\in L^2(\PP)$.

\begin{rem}[Comparison with~{\cite{KonLytRoe02}}]\label{r:ComparisonKonLytRoe}
Proposition~\ref{p:KonLytRoe} extends to the case of \TLDS's satisfying \SCF the analogous identification results previously established on manifolds by Yu.~G.~Kondratiev, E.~W.~Lytvynov, and M.~R\"ockner in~\cite{KonLytRoe02}, and by M.~Erbar and M.~Huesmann in~\cite{ErbHue15}.
Here, we reduce the necessary assumptions to a bare minimum, relying on the Feller property to show that the objects are well-defined, and on stochastic completeness to show that the construction is always non-trivial.

Furthermore, our Proposition~\ref{p:KonLytRoe} is different from the analogous statements in~\cite[Thm.~5.1]{KonLytRoe02} and~\cite[Thm.~2.4]{ErbHue15}.
Indeed, the idea underlying the aforementioned results is that to define a set~$\Theta\subset \dUpsilon$, with~$\PP\Theta=1$, of \emph{good configurations}, and then to perform the analysis of the heat kernel measures~$\mssh^\dUpsilon_\bullet(\gamma,\diff\emparg)$, with~$\gamma\in \Theta$.
The existence of such a set~$\Theta$ is non-trivial, even in the case when~$X$ is Riemannian manifold. In~\cite{ErbHue15}, the existence of~$\Theta$ is a consequence of Ricci-curvature lower-bounds for the base manifold~$X$.
In our setting we allow for the relevant set of configurations, of full $\PP$-measure, to depend on the function~$u$ the heat semigroup of which we are computing, as well as on the time~$t$.
In this sense, our Proposition~\ref{p:KonLytRoe} is weaker than either of the aforementioned results, in that our description is not uniform either in the time parameter or in the function~$u$.
This description however will be sufficient for our purposes below, and in particular to the study of several notions of curvature lower bounds for~$\dUpsilon$.

Finally, our results do not depend upon any geometric assumption, such as volume-growth estimates and Gaussian heat-kernel estimates as in~\cite{KonLytRoe02}, nor lower Ricci curvature bounds as in~\cite{ErbHue15}, which allows for a much wider range of application.
\end{rem}

\begin{lem}\label{l:Isometry}
The map~$\rep\emparg^\trid\colon \rep f\mapsto \rep f^\trid$ descends to an isometric order-preserving linear embedding
\begin{align*}
\emparg^\trid\colon L^1(\mssm)\longrar L^1(\PP)\fstop
\end{align*}
\begin{proof}
Let~$\rep f\in\mcL^1(\mssm)^+$. By a simple application of~\eqref{eq:MeckeConfig2}, we have that~$\ttnorm{\rep f^\trid}_{\mcL^1(\PP)}= \ttnorm{\rep f}_{\mcL^1(\mssm)}$, i.e.~$\rep\emparg^\trid\colon \rep f\mapsto \rep f^\trid$ is an isometry~$\mcL^1(\mssm)\to\mcL^1(\PP)$.
By standard arguments with the positive and negative parts of~$\rep f$, the above isometry extends to the whole of~$\mcL^1(\mssm)$.
In order to show that it descends to~$L^1(\mssm)$ it is enough to compute~$\ttnorm{\rep f^\trid-\reptwo f^\trid}_{\mcL^1(\PP)}=\ttnorm{\rep f-\reptwo f}_{\mcL^1(\mssm)}$ on different $\mssm$-representatives~$\rep f$, and~$\reptwo f$ of the same $\mssm$-class~$f\in L^1(\mssm)$.
The order-preserving property is straightforward.
\end{proof}
\end{lem}

\begin{proof}[Proof of Proposition~\ref{p:KonLytRoe}] We adapt the proofs of~\cite{KonLytRoe02}. Recall the definition of~$\De\subset \dom{\LL{X}{\mssm}}\cap L^1(\mssm)$ in~\eqref{eq:De}.

\paragraph{Step~1, cf.~{\cite[Lem.~5.1]{KonLytRoe02}}} We claim that, for every~$f\in \De$ and every~$t>0$ there exists a set~$\Theta_t[f]\in\A_\mrmv(\msE)$ of full~$\PP$-measure such that
\begin{equation}\label{eq:p:KonLytRoe:Step1}
\begin{aligned}
\int_{\dUpsilon^\sym{\infty}} \exp&\tparen{\log(1+f)^\trid \eta}\, \mssh^\dUpsilon_t(\gamma, \diff\eta)=
\\
=&\ \exp \quadre{\paren{\log\paren{1+\int_X f(y) \diff \hh{X}{\mssm}_t(\emparg, \diff y)}}^\trid\gamma}\comma 
\end{aligned}\qquad \gamma\in \Theta_t[f] \fstop
\end{equation}
Indeed, fix an $\mssm$-representative~$\rep f\in \mcL^1(\mssm)$ of~$f\in \De \subset L^1(\mssm)$.
Since~$\rep f\in\mcL^1(\mssm)$, and since $\abs{\log(1+t)}\leq \log(1-\delta)\abs{t}$ on~$[-\delta,\infty)$, we see that~$\log(1+\rep f)\in \mcL^1(\mssm)$.
By Lemma~\ref{l:Isometry}, for fixed~$t>0$ there exists~$\Theta_t[f]\in \A_\mrmv(\msE)$ of full $\PP$-measure and so that $\tabs{\tparen{\log\tparen{1+(\TT{X}{\mssm}_t \rep f)}}^\trid\gamma}<\infty$ for every~$\gamma\in \Theta_t[f]$, which makes the right-hand side in~\eqref{eq:p:KonLytRoe:Step1} well-defined and finite.
Thus, again for every~$\gamma\in \Theta_t[f]$, for some~$\mbfx\in\Lb^{-1}(\gamma)$, hence for every such~$\mbfx$ by~\eqref{eq:d:Heat:0},
\begin{align*}
\int_{\dUpsilon^\sym{\infty}} \exp\tparen{\log(1+\rep f)^\trid \eta}\, \mssh^\dUpsilon_t(\gamma, \diff\eta)=&\ 
\int_{\dUpsilon^\sym{\infty}} \exp\tparen{\log(1+\rep f)^\trid \eta}\, \Lb_\pfwd \mssh^\tym{\infty}_t(\gamma, \diff\eta)\fstop
\end{align*}
By Lemma~\ref{l:CoincidenceHK}, we therefore have that
\begin{align*}
\int_{\dUpsilon^\sym{\infty}} \exp\tparen{\log(1+\rep f)^\trid \eta}\, \mssh^\dUpsilon_t(\gamma, \diff\eta)=&\ \int_{\mbfX^\asym{\infty}_\locfin(\msE)} \exp\paren{\sum_{i=1}^\infty \log \tparen{1+\rep f(y_i)}} \bigotimes_{i=1}^\infty \hh{X}{\mssm}_t(x_i,\diff y_i)\fstop
\end{align*}
By Corollary~\ref{c:SCConfig1}, the measure~$\mssh^\tym{\infty}_t(\mbfx,\diff\emparg)$ is concentrated on~$\mbfX^\asym{\infty}_\locfin(\msE)$, and thus
\begin{align*}
\int_{\dUpsilon^\sym{\infty}} \exp\tparen{\log(1+\rep f)^\trid \eta}\, \mssh^\dUpsilon_t(\gamma, \diff\eta)=&\ \int_{X^\tym{\infty}} \exp\paren{\sum_{i=1}^\infty \log \tparen{1+\rep f(y_i)}} \bigotimes_{i=1}^\infty \hh{X}{\mssm}_t(x_i,\diff y_i)\comma
\intertext{whence, by~\ref{ass:SC},}
\int_{\dUpsilon^\sym{\infty}} \exp\tparen{\log(1+\rep f)^\trid \eta}\, \mssh^\dUpsilon_t(\gamma, \diff\eta)=&\ \prod_{i=1}^\infty \int_X \tparen{1+\rep f(y)} \, \hh{X}{\mssm}_t(x_i,\diff y)
\\
=&\ \exp \quadre{\paren{\log\paren{1+\int_X \rep f(y)\, \hh{X}{\mssm}_t(\emparg, \diff y)}}^\trid\gamma}\comma
\end{align*}
which proves the claim.

\paragraph{Step~2, cf.~{\cite[Lem.~5.2]{KonLytRoe02}}} We claim that, for every $\A_\mrmv(\msE)$-measurable~$u\colon \dUpsilon\rar [0,\infty)$,
\begin{align}\label{eq:p:KonLytRoe:Step2}
\iint_{{\dUpsilon^\sym{\infty}}^\tym{2}}  u(\eta)\, \mssh^\dUpsilon_t(\gamma, \diff\eta) \diff\PP(\gamma)= \int_{\dUpsilon^\sym{\infty}} u(\gamma) \diff\PP(\gamma)\comma \qquad t>0\fstop
\end{align}

It suffices to verify~\eqref{eq:p:KonLytRoe:Step2} for every~$u\in \Ex{\De}$.
Combining the $\mssm$-invariance of~$\TT{X}{\mssm}_t$ with~\eqref{eq:LaplacePoissonConfig2}, Equation~\eqref{eq:p:KonLytRoe:Step2} follows from~\eqref{eq:p:KonLytRoe:Step1} with~$\Theta_t[u]\eqdef \Theta_t[f]$.

\paragraph{Step~3, cf.~{\cite[Thm.~5.1, p.~18]{KonLytRoe02}}} The rest of the proof follows as in~\cite{KonLytRoe02}, having care to substitute~\cite[Prop.~2.1]{KonLytRoe02}, i.e.~\cite[Prop.~4.1]{AlbKonRoe98}, with Theorem~\ref{t:AKR4.1}, and noting that~$L^2(\PP)$ is separable, hence for every sequence~$\seq{u_k}_k\subset \Ex{\De}$ approximating~$u\in L^2(\PP)$, we have that~$\PP\tparen{\cap_{k\in \N} \Theta_t[u_k]}=1$.
\end{proof}
\end{prop}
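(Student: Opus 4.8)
The plan is to adapt the three-step scheme of~\cite{KonLytRoe02}, leveraging that on the dense subspace~$\Ex{\De}\subset L^2(\PP)$ we already control both the semigroup, by Theorem~\ref{t:AKR4.1}, and the pointwise-defined kernel~$\mssh^{\dUpsilon}_\bullet$, by Definition~\ref{d:HKExtension}; the point is to match them on $\Ex{\De}$ and then propagate the identification to all of~$L^2(\PP)$ by continuity.

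\emph{Step 1: the pointwise identity on exponential cylinders.} First I would show that for every $f\in\De$ and $t>0$ there is a set $\Theta_t[f]\in\A_\mrmv(\msE)$ with $\PP\,\Theta_t[f]=1$ on which
\[
\int_{\dUpsilon^\sym{\infty}}\exp\tparen{\log(1+f)^\trid\eta}\,\mssh^{\dUpsilon}_t(\gamma,\diff\eta)=\exp\quadre{\tparen{\log\tparen{1+\int_X f(y)\,\hh{X}{\mssm}_t(\emparg,\diff y)}}^\trid\gamma}.
\]
Since $\TT{X}{\mssm}_\bullet$ is positivity-preserving and (by \ref{ass:SC}) Markovian, $-\delta\le\TT{X}{\mssm}_t f\le 0$, and $L^1(\mssm)$-contractivity gives $\log(1+\TT{X}{\mssm}_t f)\in L^1(\mssm)$; Lemma~\ref{l:Isometry} then yields $\tparen{\log(1+\TT{X}{\mssm}_t f)}^\trid\in L^1(\PP)$, which is $\PP$-a.e.\ finite and defines~$\Theta_t[f]$ as the set where it is finite. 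For the identity itself, I would unwind $\mssh^{\dUpsilon}_t(\gamma,\diff\eta)$ through a labeling map (well-posedly, by Lemma~\ref{l:CoincidenceHK}), rewriting the left-hand side as $\int_{\mbfX^\asym{\infty}_\locfin(\msE)}\exp\bigl(\sum_i\log(1+f(y_i))\bigr)\bigotimes_i\hh{X}{\mssm}_t(x_i,\diff y_i)$ for any $\mbfx\in\Lb^{-1}(\gamma)$; by Lemma~\ref{l:SCProduct}/Corollary~\ref{c:SCConfig1} the product measure sits on $\mbfX^\asym{\infty}_\locfin(\msE)$, so the domain may be enlarged to $X^\tym{\infty}$, and Tonelli together with $\hh{X}{\mssm}_t(x,X)=1$ factorizes it into $\prod_i\int_X(1+f(y))\,\hh{X}{\mssm}_t(x_i,\diff y)=\prod_i\bigl(1+\TT{X}{\mssm}_t f(x_i)\bigr)$, the right-hand side above. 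By linearity this gives $\int_{\dUpsilon^\sym{\infty}}u\,\mssh^{\dUpsilon}_t(\gamma,\diff\eta)=\mbfP_t u(\gamma)=\tparen{\TT{\dUpsilon}{\PP}_t u}(\gamma)$ for $u\in\Ex{\De}$ and $\gamma$ in the corresponding finite intersection of the sets $\Theta_t[f_i]$.

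\emph{Steps 2 and 3: invariance and density.} Next I would prove $\iint u(\eta)\,\mssh^{\dUpsilon}_t(\gamma,\diff\eta)\,\diff\PP(\gamma)=\int u\,\diff\PP$ for all nonnegative $\A_\mrmv(\msE)$-measurable $u$; it suffices to check it on $\Ex{\De}$ and then pass to a monotone-class/monotone-convergence conclusion. On $\exp(\log(1+f)^\trid)$, Step~1 and the Laplace-transform characterization~\eqref{eq:LaplacePoissonConfig2} reduce the left-hand side to $\exp\tparen{\int_X\TT{X}{\mssm}_t f\,\diff\mssm}=\exp\tparen{\int_X f\,\diff\mssm}$, the equality being $\mssm$-invariance of $\TT{X}{\mssm}_\bullet$ (from symmetry and \ref{ass:SC}), and this equals $\int\exp(\log(1+f)^\trid)\,\diff\PP$ by~\eqref{eq:LaplacePoissonConfig2} again. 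Jensen's inequality then makes $u\mapsto\int u\,\mssh^{\dUpsilon}_t(\emparg,\diff\eta)$ an $L^2(\PP)$-contraction. Finally, for general $u\in L^2(\PP)$ pick $u_k\in\Ex{\De}$ with $u_k\to u$ (possible since $\Ex{\De}$ is dense in the separable space $L^2(\PP)$); on $\bigcap_k\Theta_t[u_k]$, still $\PP$-conull, one has $\int u_k\,\mssh^{\dUpsilon}_t(\gamma,\diff\eta)=\tparen{\TT{\dUpsilon}{\PP}_t u_k}(\gamma)$, and letting $k\to\infty$ in $L^2(\PP)$ — the left side via the contraction of Step~2, the right via continuity of $\TT{\dUpsilon}{\PP}_t$ — identifies $\int u\,\mssh^{\dUpsilon}_t(\emparg,\diff\eta)$ with $\TT{\dUpsilon}{\PP}_t u$ in $L^2(\PP)$. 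Splitting $u$ into positive and negative parts and using Step~2 shows this everywhere-defined integral is $\PP$-a.e.\ finite, and $\Theta_t[u]$ is then the conull set on which $\gamma\mapsto\int u(\eta)\,\mssh^{\dUpsilon}_t(\gamma,\diff\eta)$ agrees with a fixed Borel representative of $\TT{\dUpsilon}{\PP}_t u$.

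I expect the main obstacle to be Step~1: the bookkeeping around the (discontinuous, universally measurable) labeling maps — in particular the well-posedness of $\mssh^{\dUpsilon}_t(\gamma,\diff\eta)$ independently of the chosen lift, the fact that the infinite-product heat kernel ignores the non-$\msE$-locally-finite sequences (which is exactly where \ref{ass:Fe} enters, through Lemma~\ref{l:SCProduct}), and the precise description of the exceptional $\PP$-null set via the isometric embedding $L^1(\mssm)\hookrightarrow L^1(\PP)$ of Lemma~\ref{l:Isometry}. Everything downstream (the invariance of $\mssh^{\dUpsilon}_\bullet$ and the density argument) is soft functional analysis.
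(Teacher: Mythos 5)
Your proposal is correct and follows essentially the same route as the paper's proof: the identical three-step scheme adapted from Kondratiev--Lytvynov--R\"ockner, with Step~1 resting on Lemma~\ref{l:Isometry}, Lemma~\ref{l:CoincidenceHK}, and Corollary~\ref{c:SCConfig1}, Step~2 on the Mecke/Laplace-transform characterization and $\mssm$-invariance, and Step~3 on Theorem~\ref{t:AKR4.1} together with separability of $L^2(\PP)$ to control the countable intersection of the exceptional sets $\Theta_t[u_k]$. Your added remark that Step~2 upgrades to an $L^2(\PP)$-contraction via Jensen is exactly the (implicit) mechanism by which the paper passes to the limit in Step~3.
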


Before commenting on Proposition~\ref{p:KonLytRoe} let us establish some further notation.

\begin{notat} We let~$\TTt^\asym{\infty}_\bullet$ denote the semigroup corresponding to~$\mssh^\tym{\infty}_\bullet$, viz.
\begin{align*}
\tparen{\TTt^\asym{\infty}_t \rep U}(\mbfx)\eqdef& \int_{X^\tym{\infty}} \rep U(\mbfy) \, \mssh_t^\tym{\infty}(\mbfx, \diff \mbfy) \comma & \mbfx\in X^\tym{\infty}\comma &t> 0 \comma \qquad \rep U\in \preW(\msE)\fstop
\end{align*}
Recalling the notation~$\rep U_{\mbfx,p}$ in~\eqref{eq:d:DiConfig2:0}, further let
\begin{align}\label{eq:n:SemigroupP}
(\TTt^p_t \rep U)(\mbfx)\eqdef& \int_X \rep U_{\mbfx,{p}}(y)\, \hh{X}{\mssm}_t(x,\diff y)\comma & \mbfx\in X^\tym{\infty}\comma &t> 0 \comma \qquad \rep U\in \preW(\msE)\comma
\end{align}
and define a corresponding infinite-product operator on~$\preW(\msE)$ by
\begin{align*}
\TTt^{\asym{\infty}, {p}}_t \eqdef& \bigotimes_{\substack{q=1\\q\neq p}}^\infty \TTt^q_t \comma \qquad t> 0\comma \qquad p\in\N_1 \fstop
\end{align*}
\end{notat}

We note here that Proposition~\ref{p:KonLytRoe} completes the identification of forms provided in~\cite[\S{3.5}]{LzDSSuz21} with the identification of the corresponding heat kernel measures. Indeed, as a consequence of~\eqref{eq:d:Heat:0} and Proposition~\ref{p:KonLytRoe}, we may write, for~$u\in L^2(\PP)$,
\begin{align}\label{eq:IdentificationSemigroup}
\Lb^*\tparen{
\widerep{\TT{\dUpsilon}{\PP}_t u}}(\mbfx)= \TTt^\asym{\infty}_t (\Lb^*\rep u)(\mbfx)\comma  \qquad \forallae{\lb_\pfwd\PP} \mbfx\in\Lb^{-1}(\Theta_t[u])\comma \quad t\geq 0 \comma
\end{align}
where~$\Theta_t[u]\in\A_\mrmv(\msE)$ is given by Proposition~\ref{p:KonLytRoe}.

Together with Proposition~\ref{p:ExtDomConfig2} and \cite[Cor.~3.61]{LzDSSuz21}, Proposition~\ref{p:KonLytRoe} shows that we may (and henceforth occasionally will) neglect the distinction among $\PP$-, resp.~$\lb_\pfwd\PP$-, representatives, both on~$\dUpsilon$ and on $\mbfX^\asym{\infty}_\locfin(\msE)$.
Finally, Proposition~\ref{p:KonLytRoe} also complements the statement of Proposition~\ref{p:ExtDomConfig2}\ref{i:p:ExtDomConfig2:3}, in view of the next Corollary.

\begin{cor}\label{c:ExchangeHeatSemigroupTrid}
Let~$(\mcX,\cdc)$ be a \TLDS satisfying \SCF. Then,
\begin{align}\label{eq:c:ExchangeHeatSemigroupTrid:0}
\TT{\dUpsilon}{\PP}_t\tclass[\PP]{\rep f^\trid}=\class[\PP]{(\TT{X}{\mssm}_t\rep f)^\trid} \comma \qquad f\in L^1(\mssm)\comma \quad t>0\fstop
\end{align}
\begin{proof}
Fix~$f\in\Dz$, and~$t>0$. In light of~\eqref{eq:HeatTheta}, we have that, for some~$\Theta_t[f^\trid]\in\A_\mrmv(\msE)$ of full $\PP$-measure, and for every~$\mbfx\in \Lb^{-1}(\Theta_t[f^\trid])$,
\begin{align*}
\tparen{\TT{\dUpsilon}{\PP}_t(f^\trid)}(\Lb(\mbfx))= \int_{\dUpsilon^\sym{\infty}} \rep f^\trid(\eta) \,\mssh^\dUpsilon_t(\Lb(\mbfx),\diff\eta)
\overset{\eqref{eq:d:Heat:0}}{=} \int_{\dUpsilon^\sym{\infty}} \sum_{y\in \eta} \rep f(y) \, \Lb_\pfwd\mssh^\tym{\infty}_t(\Lb(\mbfx),\diff \eta)\fstop
\end{align*}
By Corollary~\ref{c:SCConfig1}, the measure~$\mssh^\tym{\infty}_t(\mbfx,\diff\emparg)$ is concentrated on~$\mbfX^\asym{\infty}_\locfin(\msE)$, and thus
\begin{align*}
\tparen{\TT{\dUpsilon}{\PP}_t(f^\trid)}(\Lb(\mbfx))= \int_{\mbfX^\asym{\infty}_\locfin(\msE)} \sum_{i=1}^\infty \rep f(y_i) \, \bigotimes_{i=1}^\infty \hh{X}{\mssm}_t(x_i,\diff y_i) = \int_{X^\tym{\infty}} \sum_{i=1}^\infty \rep f(y_i) \, \bigotimes_{i=1}^\infty \hh{X}{\mssm}_t(x_i,\diff y_i) \fstop
\end{align*}
Since~$f\in\Dz$, we may exchange the series and integral signs in the previous expression, and we obtain that
\begin{align*}
\tparen{\TT{\dUpsilon}{\PP}_t(f^\trid)}(\Lb(\mbfx))= \sum_{i=1}^\infty \int_{X^\tym{\infty}} \rep f(y_i) \, \bigotimes_{i=1}^\infty \hh{X}{\mssm}_t(x_i,\diff y_i)\comma
\end{align*}
hence, by~\ref{ass:SC},
\begin{align}\label{eq:ExchangeHeatSemigroupTrid}
\tparen{\TT{\dUpsilon}{\PP}_t(f^\trid)}(\Lb(\mbfx))=&\ \sum_{i=1}^\infty \int_X \rep f(y_i) \, \hh{X}{\mssm}_t(x_i,\diff y_i)= \sum_{i=1}^\infty \tparen{\TT{X}{\mssm}_t \rep f}(x_i)= (\TT{X}{\mssm}_t\rep f)^\trid \Lb(\mbfx) \fstop
\end{align}
As a consequence of Lemma~\ref{l:Isometry}, together with the density of~$\Dz$ in~$L^1(\mssm)$ and the fact that~$\TT{X}{\mssm}_t$, resp.~$\TT{\dUpsilon}{\PP}_t$ is a bounded operator on~$L^1(\mssm)$, resp.~$L^1(\PP)$, we see that~\eqref{eq:ExchangeHeatSemigroupTrid} holds in fact for every~$f\in L^1(\mssm)$, which concludes the proof by letting~$\gamma\eqdef \Lb(\mbfx)\in \Theta_t[f^\trid]$ and recalling that the latter has full $\PP$-measure.
\end{proof}
\end{cor}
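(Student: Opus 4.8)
The plan is to prove \eqref{eq:c:ExchangeHeatSemigroupTrid:0} first for $f$ in the core $\Dz$ and then extend by density and $L^1$-contractivity to all $f\in L^1(\mssm)$. For the first step I would fix $f\in\Dz$ and observe that, since $\Dz\subset\Cz(\msE)$ consists of bounded functions vanishing outside some $E\in\msE$ and $\mssm$ is Radon with $\mssm E<\infty$, one has $\rep f\in L^1(\mssm)\cap L^2(\mssm)$; by the Poisson formulas for the mean $\int\rep f\diff\mssm$ and variance $\int\rep f^2\diff\mssm$ of $\gamma\mapsto\rep f^\trid\gamma$, this gives $\rep f^\trid\in L^2(\PP)$. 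Hence Proposition~\ref{p:KonLytRoe} applies with $u=\rep f^\trid$, producing a set $\Theta_t[\rep f^\trid]\in\A_\mrmv(\msE)$ of full $\PP$-measure on which $\TT{\dUpsilon}{\PP}_t\rep f^\trid$ admits the pointwise version \eqref{eq:HeatTheta} given by integration against $\mssh^\dUpsilon_\bullet$.

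Next I would unfold this pointwise version. For $\gamma\in\Theta_t[\rep f^\trid]$ and $\mbfx\in\Lb^{-1}(\gamma)$, using Definition~\ref{d:HKExtension} and the fact (Lemma~\ref{l:SCProduct}) that $\mssh^{\tym\infty}_t(\mbfx,\cdot)$ is concentrated on $\mbfX^\asym{\infty}_\locfin(\msE)$, one gets
\[
\tparen{\widerep{\TT{\dUpsilon}{\PP}_t\rep f^\trid}}(\gamma)=\int_{\dUpsilon^\sym{\infty}}\sum_{y\in\eta}\rep f(y)\,\Lb_\pfwd\mssh^{\tym\infty}_t(\mbfx,\diff\eta)=\int_{X^\tym{\infty}}\sum_{i=1}^\infty\rep f(y_i)\,\bigotimes_{i=1}^\infty\hh{X}{\mssm}_t(x_i,\diff y_i)\fstop
\]
The crucial point is to interchange $\sum_i$ with the product integral. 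I would justify this by Tonelli: $\TT{X}{\mssm}_t\abs{\rep f}\in L^1(\mssm)$ by $L^1$-contractivity of $\TT{X}{\mssm}_\bullet$, so by Lemma~\ref{l:Isometry} the function $(\TT{X}{\mssm}_t\abs{\rep f})^\trid$ lies in $L^1(\PP)$ and is $\PP$-a.e.\ finite; shrinking $\Theta_t[\rep f^\trid]$ to the full-$\PP$-measure set where it is finite makes the interchange legitimate, and stochastic completeness \ref{ass:SC}, i.e.\ $\hh{X}{\mssm}_t(x_j,X)=1$ for every $j$, collapses the factors $\prod_{j\neq i}\hh{X}{\mssm}_t(x_j,X)$ to $1$. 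This yields, for $\gamma=\Lb(\mbfx)\in\Theta_t[\rep f^\trid]$,
\[
\tparen{\widerep{\TT{\dUpsilon}{\PP}_t\rep f^\trid}}(\gamma)=\sum_{i=1}^\infty\int_X\rep f(y)\,\hh{X}{\mssm}_t(x_i,\diff y)=\sum_{i=1}^\infty(\TT{X}{\mssm}_t\rep f)(x_i)=(\TT{X}{\mssm}_t\rep f)^\trid\gamma\comma
\]
i.e.\ $\TT{\dUpsilon}{\PP}_t\tclass[\PP]{\rep f^\trid}=\tclass[\PP]{(\TT{X}{\mssm}_t\rep f)^\trid}$ for all $f\in\Dz$.

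Finally I would pass to general $f\in L^1(\mssm)$: the map $f\mapsto\tclass[\PP]{\rep f^\trid}$ is an isometric embedding $L^1(\mssm)\hookrightarrow L^1(\PP)$ by Lemma~\ref{l:Isometry}, $\TT{X}{\mssm}_\bullet$ is an $L^1(\mssm)$-contraction, and $\TT{\dUpsilon}{\PP}_\bullet$ an $L^1(\PP)$-contraction, so both sides of \eqref{eq:c:ExchangeHeatSemigroupTrid:0}, regarded as bounded linear maps $L^1(\mssm)\to L^1(\PP)$, agree on the dense subspace $\Dz$ and hence everywhere.

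I expect the genuine difficulty to lie in the sum--integral interchange in the second paragraph: a priori $\eta\mapsto\sum_{y\in\eta}\rep f(y)$ is only defined $\mssh^\dUpsilon_t(\gamma,\cdot)$-almost everywhere, and one must secure its absolute summability on a configuration set of full $\PP$-measure --- which is precisely where $L^1$-contractivity on the base, Lemma~\ref{l:Isometry}, and the stochastic completeness packaged in \SCF are used, and why one settles for a $\PP$-full-measure set depending on $f$ and $t$, as is permitted by Proposition~\ref{p:KonLytRoe}.
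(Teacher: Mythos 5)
Your proposal is correct and follows essentially the same route as the paper's proof: reduce to $f\in\Dz$, invoke Proposition~\ref{p:KonLytRoe} to get the pointwise kernel representation on a full-measure set $\Theta_t[f^\trid]$, unfold $\mssh^\dUpsilon_t$ via the product kernel concentrated on $\mbfX^\asym{\infty}_\locfin(\msE)$, interchange sum and integral, collapse the off-diagonal factors by~\ref{ass:SC}, and extend to $L^1(\mssm)$ by Lemma~\ref{l:Isometry} together with density and $L^1$-contractivity. The only difference is that you spell out the Tonelli/Fubini justification of the sum--integral interchange (via $(\TT{X}{\mssm}_t\abs{\rep f})^\trid\in L^1(\PP)$) more explicitly than the paper, which simply asserts it from $f\in\Dz$; this is a welcome refinement, not a divergence.
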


\subsection{Bakry--\'Emery curvature condition}
We present here a proof of the Bakry--\'Emery gradient estimate for the Poisson configuration space~$\tparen{\dUpsilon, \SF{\dUpsilon}{\PP}, \PP}$.

\begin{proof}[Proof of Theorem~\ref{t:BE}]
Throughout the proof, let~$t>0$,~$c\geq 1$, and~$K\in\R$ be fixed.

\noindent\ref{i:t:BE:1} $\Longrightarrow$ \ref{i:t:BE:2}.
Firstly, we show~\ref{ass:BE} for~$\dUpsilon$ for a function~$u\in \Cyl{\Dz}$. Set~$\rep U\eqdef \Lb^*u$.
Recall the notation in~\eqref{eq:d:DiConfig2:1} and~\eqref{eq:n:SemigroupP}. For all~$p\in \N_1$, by definition,
\begin{align*}
\tparen{\rep\cdc^p(\TTt^p_t \rep U)}(\mbfx)=\tparen{\repSF{X}{\mssm}(\TT{X}{\mssm}_t \rep U_{\mbfx,{p}})}(x_p)\comma \qquad \mbfx\in X^\tym{\infty} \fstop
\end{align*}
In view of Lemma~\ref{l:AbsContPoissonProj}, it follows from~\ref{ass:BE} for~$X$, that 
\begin{align*}
\begin{aligned}
\tparen{\rep\cdc^p(\TTt^p_t \rep U)}(\mbfx)=&\ \tparen{\repSF{X}{\mssm}(\TT{X}{\mssm}_t \rep U_{\mbfx,{p}})}(x_p)
\\
\leq&\ c\, e^{-2Kt}\tparen{\TT{X}{\mssm}_t\,\repSF{X}{\mssm}(\rep U_{\mbfx,{p}})}(x_p)
\\
=&\ c\, e^{-2Kt}\, \tparen{\TTt^p_t \rep\cdc^p(\rep U)}(\mbfx)
\end{aligned}
\qquad \forallae{\lb_\pfwd\PP} \mbfx\in X^\tym{\infty}\fstop
\end{align*}
By Jensen's inequality and sub-Markovianity of~$\TTt^{\asym{\infty}, {p}}_t$ (see Lemma~\ref{l:SubmarkovianityTp} below) we thus have that
\begin{align*}
\tparen{\rep\cdc^p(\TTt^\asym{\infty}_t \rep U)}(\mbfx)\leq \TTt^{\asym{\infty}, {p}}_t\tparen{\rep\cdc^p(\TTt^p_t \rep U)}(\mbfx)\leq c\, e^{-2Kt}\tparen{\TTt^{\asym{\infty}}_t \rep\cdc^p(\rep U)}(\mbfx) \quad \forallae{\lb_\pfwd \PP} \mbfx\in X^{\tym{\infty}}\comma
\end{align*}
and, summing over~$p\in \N_1$,
\begin{align}\label{eq:t:BE:2b}
\tparen{\rep\cdc^{\asym{\infty}}(\TTt^\asym{\infty}_t \rep U)}(\mbfx)\leq c\, e^{-2Kt}\tparen{\TTt^{\asym{\infty}}_t \rep\cdc^{\asym{\infty}}(\rep U)}(\mbfx) \quad \forallae{\lb_\pfwd\PP} \mbfx\in X^{\tym{\infty}}\fstop
\end{align}

In order to pass this estimate to an estimate on~$\Upsilon(\msE)$, we use~\eqref{eq:IdentificationSemigroup}.
Namely, applying~$\cdc^\asym{\infty}$ on both sides of~\eqref{eq:IdentificationSemigroup}, it follows from~\cite[Cor.~3.61]{LzDSSuz21} that
\begin{equation}\label{eq:t:BE:2a}
\begin{aligned}
\Lb^*\tparen{\SF{\Upsilon}{\PP} (\TT{\Upsilon}{\PP}_t u)}(\mbfx)=&\ \cdc^\asym{\infty} \tparen{\Lb^* (\TT{\Upsilon}{\PP}_t u)}(\mbfx)
\\
=&\ \cdc^\asym{\infty}\tparen{\TTt^\asym{\infty}_t (\Lb^*u)}(\mbfx) 
\end{aligned}
\quad \forallae{\lb_\pfwd\PP} \mbfx\in X^{\tym{\infty}}\comma\qquad u\in L^2(\PP)\fstop
\end{equation}
Analogously, again by~\cite[Cor.~3.61]{LzDSSuz21} and~\eqref{eq:IdentificationSemigroup},
\begin{align}\label{eq:t:BE:2c}
\TTt^\asym{\infty}_t \cdc^\asym{\infty}(\Lb^*u)=\Lb^*\tparen{\TT{\Upsilon}{\PP}_t \SF{\Upsilon}{\PP}(u)}\quad \forallae{\lb_\pfwd\PP} \mbfx\in X^{\tym{\infty}}\comma\qquad u\in \dom{\SF{\Upsilon}{\PP}}\fstop
\end{align}

Combining~\eqref{eq:t:BE:2b} with~\eqref{eq:t:BE:2a} and~\eqref{eq:t:BE:2c} proves
\begin{align}\label{eq:t:BE:0}
\SF{\Upsilon}{\PP}\tparen{\TT{\Upsilon}{\PP}_t u} \leq c\, e^{-2Kt}\,\TT{\Upsilon}{\PP}_t \SF{\Upsilon}{\PP}(u) \as{\PP} \comma \qquad u\in \Cyl{\Dz} \fstop
\end{align}
The extension of~\eqref{eq:t:BE:0} to~$u\in\dom{\EE{\Upsilon}{\PP}}$ follows by $L^2(\pi)$-lower-semicontinuity of~$\SF{\dUpsilon}{\PP}(\emparg)$ and density of~$\CylQP{\PP}{\Dz}$ in~$\dom{\EE{\dUpsilon}{\PP}}$.

\medskip

\noindent\ref{i:t:BE:2} $\Longrightarrow$ \ref{i:t:BE:1}.
Fix~$f\in \dom{\EE{X}{\mssm}}$. By the assumption of~\ref{ass:BE} for~$\dUpsilon$ with function~$f^\trid$, we have that
\begin{align*}
\SF{\dUpsilon}{\PP}\tparen{\TT{\dUpsilon}{\PP}_t f^\trid}\leq c\, e^{-Kt}\, \TT{\dUpsilon}{\PP}_t\, \SF{\dUpsilon}{\PP} (f^\trid) \as{\PP} \fstop
\end{align*}
Applying~\eqref{eq:c:ExchangeHeatSemigroupTrid:0} and Proposition~\ref{p:ExtDomConfig2}\ref{i:p:ExtDomConfig2:3} on both sides (in reverse order) yields
\begin{align*}
\tparen{\SF{X}{\mssm}\tparen{\TT{X}{\mssm}_t f}}^\trid\leq \tparen{c\, e^{-Kt}\,\TT{X}{\mssm}_t\, \SF{X}{\mssm} (f)}^\trid \as{\PP}\comma
\end{align*}
and from the order-preserving property of~$\emparg^\trid$ in Lemma~\ref{l:Isometry} we conclude~\ref{ass:BE} for~$X$ with function~$f\in\dom{\EE{X}{\mssm}}$.
Since~$f$ was arbitrary, this proves the assertion.
\end{proof}

\begin{lem}[Sub-Markovianity of~$\TTt^{\asym{\infty},p}_\bullet$]\label{l:SubmarkovianityTp}
Let~$(\mcX,\cdc)$ be a \TLDS satisfying~\SCF, and fix~$p\in\N_1$ and~$t>0$. Then, the operator~$\TTt^{\asym{\infty},p}_t$ is pointwise sub-Markovian, viz., for every bounded $\boldSigma$-measurable~$\rep U \colon X^\tym{\infty}\to\R$,
\begin{align*}
0\leq \rep U(\mbfx) \leq 1 \implies 0\leq \tparen{\TTt^{\asym{\infty},p}_t \rep U}(\mbfx) \leq 1\comma \qquad \mbfx\in X^\tym{\infty}\fstop
\end{align*}
\begin{proof}
Since~$\TT{X}{\mssm}_t\colon \mcL^\infty(\mssm)\to \mcL^\infty(\mssm)$ is (pointwise) sub-Markovian, it suffices to show that sub-Markovianity tensorizes over infinite products.
Indeed let~$0\leq \rep U \leq 1$ everywhere on~$X^\tym{\infty}$.
We have that
\begin{align*}
\TTt^{\asym{\infty},p}_t\rep U(\mbfx)=&\ \int_{X^\tym{\infty}} \bigotimes_{\substack{q=1\\ q\neq p}}^\infty \hh{X}{\mssm}_t(x_q,\diff y_q)\, \rep U(\mbfy)
\\
=&\ \int_{X^\tym{\infty}} \bigotimes_{\substack{q=2\\ q\neq p}} \hh{X}{\mssm}(x_q,\diff y_q) \int_X \hh{X}{\mssm}_t(x_1,y)\, \rep U_{\mbfy,1}(y) \diff y \fstop
\end{align*}
Since~$\rep U\leq 1$ everywhere on~$X^\tym{\infty}$, we conclude that~$\rep U_{\mbfy,1}\leq 1$ everywhere on~$X$ for every~$\mbfy\in X^\tym{\infty}$, hence, by sub-Markovianity of~$\TT{X}{\mssm}_t\colon \mcL^\infty(\mssm)\to \mcL^\infty(\mssm)$,
\begin{align*}
\TTt^{\asym{\infty},p}_t\rep U(\mbfx) \leq \int_{X^\tym{\infty}} \bigotimes_{\substack{q=2\\ q\neq p}} \hh{X}{\mssm}_t(x_q,\diff y_q) \int_X \hh{X}{\mssm}_t(x_1,\diff y) \leq 1 \fstop
\end{align*}
Since, trivially,~$\TTt^{\asym{\infty},p}_t\rep U\geq 0$ everywhere on~$X^\tym{\infty}$, the proof is concluded.
\end{proof}
\end{lem}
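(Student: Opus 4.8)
The plan is to reduce the statement to the pointwise sub-Markovianity of the one‑dimensional base semigroup $\TT{X}{\mssm}_\bullet$, together with the elementary fact that sub-Markovianity tensorises over countable products.

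First I would record that, by~\ref{ass:wFe}, for every $t>0$ and every $x\in X$ the set function $A\mapsto \hh{X}{\mssm}_t(x,A)$ is a well-defined sub-probability measure on $(X,\A)$: the Feller property supplies an honest, pointwise-defined (indeed $\T$-continuous) representative of $\hh{X}{\mssm}_t(\emparg,A)$, so evaluation at a single point $x$ is unambiguous, while non-negativity and the total-mass bound $\hh{X}{\mssm}_t(x,X)=(\TT{X}{\mssm}_t\car)(x)\leq 1$ are inherited from the sub-Markovianity of $\TT{X}{\mssm}_t$ on $\mcL^\infty(\mssm)$ by passing to the continuous representative. (Under~\ref{ass:SC} one has in fact $\hh{X}{\mssm}_t(x,X)=1$, but only the bound $\leq 1$ will be used.) Consequently, for any bounded $\A$-measurable $\rep f$ with $0\leq \rep f\leq 1$ everywhere on $X$, one has $0\leq \int_X \rep f(y)\,\hh{X}{\mssm}_t(x,\diff y)\leq 1$ for \emph{every} $x\in X$, which is the coordinatewise version of the claim.

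The passage to the infinite product is then routine. Fix $p\in\N_1$ and, for $\mbfx\in X^\tym{\infty}$, consider the product measure $\nu^p_{\mbfx}\eqdef \bigotimes_{q\neq p}\hh{X}{\mssm}_t(x_q,\diff y_q)$ on $X^\tym{\infty}$ (with the $p$-th coordinate frozen at $x_p$); this is precisely the kernel defining $\TTt^{\asym{\infty},p}_t$, and it is a well-defined sub-probability measure on $X^\tym{\infty}$ by the same countable-product construction underlying the definition of $\mssh^\tym{\infty}_\bullet$ above. Hence, for any $\boldSigma$-measurable $\rep U$ with $0\leq \rep U\leq 1$ everywhere on $X^\tym{\infty}$,
\[
0\ \leq\ \tparen{\TTt^{\asym{\infty},p}_t\rep U}(\mbfx)=\int_{X^\tym{\infty}}\rep U(\mbfy)\,\nu^p_{\mbfx}(\diff\mbfy)\ \leq\ \nu^p_{\mbfx}\tparen{X^\tym{\infty}}\ \leq\ 1\comma\qquad \mbfx\in X^\tym{\infty}\fstop
\]
If one prefers to avoid quoting the infinite-product construction, the same conclusion follows by checking first, by induction on $N$ and Tonelli's theorem, that each finite partial composition $\bigotimes_{q\leq N,\,q\neq p}\TTt^q_t$ is pointwise sub-Markovian — the inductive step isolating one coordinate and applying the one-dimensional bound above — and then letting $N\to\infty$ by monotone convergence.

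The only point requiring care is the distinction between ``$\mssm^{\otym{\infty}}$-a.e.''\ and ``everywhere'': the \emph{pointwise} sub-Markovianity is exactly what~\ref{ass:wFe} provides, and it is genuinely needed, since in the proof of Theorem~\ref{t:BE} the inequality is applied to a fixed labelling $\Lb^*u$ on all of $X^\tym{\infty}$ before one descends to $\lb_\pfwd\PP$-classes. Beyond this, nothing of the curvature, stochastic-completeness, or configuration-space structure enters — the argument is purely a tensorisation of a one-dimensional positivity bound.
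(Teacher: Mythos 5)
Your argument is correct and follows essentially the same route as the paper's proof: both reduce the claim to the pointwise sub-Markovianity of the one-dimensional semigroup and then tensorise, you by integrating $\rep U$ directly against the product (sub-)probability kernel $\bigotimes_{q\neq p}\hh{X}{\mssm}_t(x_q,\diff y_q)$, the paper by peeling off one coordinate at a time in the iterated integral. Your preliminary remarks on the pointwise well-definedness of $\hh{X}{\mssm}_t(x,\emparg)$ via~\ref{ass:wFe} and on the everywhere-versus-a.e.\ distinction are consistent with how the lemma is used in the proof of Theorem~\ref{t:BE}.
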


\section{Synthetic Ricci-curvature lower bounds on configuration spaces}\label{s:RicciBounds}
In this section we study the validity of synthetic Ricci-curvature lower bounds for configuration spaces.
In particular, we show that the base \MLDS~$(\mcX,\cdc,\mssd)$ satisfies either one of the following properties if and only if the same property holds for the configuration space~$\tparen{\dUpsilon,\SF{\dUpsilon}{\PP},\mssd_\dUpsilon}$
\begin{itemize}
\item the \emph{logarithmic Harnack inequality} (Dfn.~\ref{d:LogH});
\item the \emph{Wasserstein contractivity estimate} (Dfn.~\ref{d:WC});
\item the \emph{Evolution Variation Inequality} (Dfn.~\ref{d:EVI}).
\end{itemize}

The validity of these properties and their mutual implications on infinitesimally Hilbertian metric measure spaces is well-studied, see e.g.~\cite{AmbGigSav14b,AmbGigSav15,KopStu21}.

Throughout this section we let
\begin{align*}
I_K(t)\eqdef \int_0^t e^{Kr} \diff r= \frac{e^{K t}-1}{K} \fstop
\end{align*}

\subsection{Preliminaries on base spaces}
Let us start by recalling the necessary definitions in the setting of~$\EMLDS$'s.
As usual, the definitions below are given for base spaces, but the generality is sufficient to include as well configuration spaces, resulting in a unified treatment of all properties of interest.

\subsubsection{Wasserstein distance}
Let~$(\mcX,\T,\mssd)$ be an \emph{extended}-metric topological space in the sense of~\cite[Dfn.~\ref{d:AES}]{LzDSSuz21}, and denote by~$\msP(X)$ the space of all Radon probability measures on~$(X,\A)$.
By uniqueness of the Charath\'eodory completion,~$\msP(X)$ is in natural correspondence with the space of all probability measures on either~$(X,\A^*)$ or~$(X,\Bo{\T})$, so that no confusion may arise in omitting the $\sigma$-algebra from the notation, and we may regard elements in~$\msP(X)$ as defined on either~$\Bo{\T}$,~$\A$, or~$\A^*$.

On~$\msP(X)$ we define the \emph{extended $L^2$-Wasserstein distance}
\begin{align*}
W_{2,\mssd}(\mu_1,\mu_2)\eqdef \inf_{\kappa\in\Cpl(\mu_1,\mu_2)}\paren{\int_{X^\tym{2}}\mssd(x,y)^2\diff\kappa(x,y)}^{1/2}\comma
\end{align*}
where~$\Cpl(\mu,\nu)$ denotes the set of all coupling~$\kappa\in\msP(X^\tym{2},\A^\otym{2})$ so that~$\pr^i_\pfwd \kappa=\mu_i$ for~$i=1,2$.
The functional~$W_{2,\mssd}$ is an extended distance on~$\msP(X)$.

\paragraph{The case of distances} When~$\mssd$ is a distance inducing the topology~$\T$, we define for some fixed~$x_0\in X$, the \emph{$L^2$-Wasserstein space}
\begin{align*}
\msP_{2,\mssd}(X)\eqdef \set{\mu\in\msP(X): \int_X \mssd(x_0,x)^2\diff\mu(x)<\infty}\eqdef B^{W_{2,\mssd}}_\infty(\delta_{x_0})\fstop
\end{align*}
By triangle inequality for~$W_{2,\mssd}$ it is readily seen that~$\msP_{2,\mssd}$ is in fact independent of the choice of~$x_0$, and contains all measures in~$\msP(X)$ with finite $L^2$ $\mssd$-moment.
It is well-known that $W_{2,\mssd}$ inherits the metric properties of~$\mssd$, see e.g.~\cite{AmbGig11}.
In particular, if~$(X,\mssd)$ is either a complete, length, or geodesic extended metric space, then~$\msP_{2,\mssd}(X)$ is so as well.

\paragraph{The case of extended distances}
When~$(X,\T,\mssd)$ is a general extended-metric topological space, a thorough discussion of the properties of~$\tparen{\msP(X),W_{2,\mssd}}$ is found in~\cite[\S5]{AmbErbSav16}.
Also in this case, $\msP(X)$ inherits properties of the base space: endowed with the narrow topology~$\T_\mrmn$, the space~$\tparen{\msP(X),\T_\mrmn,W_{2,\mssd}}$ is again an extended-metric topological space in the sense of~\cite[Dfn.~\ref{d:AES}]{LzDSSuz21}, see~\cite[Prop.~5.13]{AmbErbSav16}; the extended metric space~$\tparen{\msP(X),W_{2,\mssd}}$ inherits the completeness of the base space~$(X,\mssd)$, see~\cite[Prop.~5.4]{AmbErbSav16}.
We refer the reader to~\cite{AmbErbSav16} for further results concerning the space~$\tparen{\msP(X),W_{2,\mssd}}$ on extended-metric topological spaces.

\subsubsection{Boltzmann--Shannon entropy and Fisher information}
Let~$(\mcX,\mssd)$ be a metric local structure, and denote by~$\msP^\mssm(X)$ the space of all probability measures on~$(X,\A)$ absolutely continuous w.r.t.~$\mssm$.

\begin{defs}[Boltzmann--Shannon entropy]
The \emph{Boltzmann--Shannon entropy} of~$\mu\in\msP(X)$ is the functional~$\Ent_\mssm$ defined by
\begin{align*}
\Ent_\mssm(\mu)\eqdef \int_X \rho\log \rho \diff\mssm \quad \text{if} \quad \mu=\rho\mssm\in\msP^\mssm(X)\fstop
\end{align*}
If otherwise~$\mu\notin\msP^\mssm(X)$, we set~$\Ent_\mssm(\mu)\eqdef +\infty$.
Note that~$\Ent_\mssm\geq 0$.
\end{defs}

\begin{defs}[Fisher information]
Let~$(\mcX,\cdc)$ be a \TLDS.
The \emph{Fisher information} of~$(\mcX,\cdc)$ is the functional~$\FF{X}{\mssm}$ defined by
\begin{align*}
\dom{\FF{X}{\mssm}}\eqdef \set{\mu\in\msP(X): \mu=f\mssm\comma \sqrt{f}\in\dom{\EE{X}{\mssm}}} \comma \qquad \FF{X}{\mssm}(f)\eqdef 4\EE{X}{\mssm}\tparen{\sqrt{f}}\fstop
\end{align*}
One has that, e.g.~\cite[Prop.~4.1]{AmbGigSav15},
\begin{align*}
\FF{X}{\mssm}(f)=\int_{\set{f>0}} \frac{\SF{X}{\mssm}(f)}{f} \diff \mssm \fstop
\end{align*}
\end{defs}

\paragraph{Fisher information}
Let us recall some standard facts connecting the entropy functional~$\Ent_\mssm$ and the Fisher information~$\FF{X}{\mssm}$ relative to the Cheeger energy of an infinitesimally Hilbertian extended-metric measure space.
For a treatment of Cheeger energies in this generality see~\cite[\S\ref{sss:CheegerE}]{LzDSSuz21},~\cite{Sav19}, or~\cite{AmbErbSav16}.

\begin{lem}\label{l:EntropyFisher}
Let~$(X,\mssd,\mssm)$ be an infitesimally Hilbertian extended metric measure space.
Further let~$\mu=f\mssm\in \dom{\Ent_\mssm}$ and set~$\mu_t\eqdef (\TT{X}{\mssm}_t f)\mssm$.
Then,~$t\mapsto \Ent_\mssm(\mu_t)$ is non-increasing and locally absolutely continuous.
Furthermore,
\begin{align*}
\int_0^T \FF{X}{\mssm}(\mu_t) \diff t \leq 2\, \Ent_\mssm(\mu_0) \fstop
\end{align*}
The curve~$t\mapsto \mu_t$ is absolutely continuous w.r.t.~$W_{2,\mssd}$ and
\begin{align*}
\abs{\dot\mu_t}^2\leq \FF{X}{\mssm}(\mu_t) \quad \as{\diff t} \fstop
\end{align*}
\begin{proof}
See~\cite[proof of Lem.~5.2]{ErbHue15} and references therein.
\end{proof}
\end{lem}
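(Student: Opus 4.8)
The plan is to run the by-now-classical energy--dissipation argument for the $L^2(\mssm)$--heat flow, read as the $W_{2,\mssd}$--gradient flow of the entropy, following \cite[Lem.~5.2]{ErbHue15} and the metric (resp.\ extended-metric) measure references \cite{AmbGigSav14b,AmbGigSav15,AmbErbSav16}. First I would record the preliminaries: since $(X,\mssd,\mssm)$ is infinitesimally Hilbertian, $\EE{X}{\mssm}$ is the strongly local Dirichlet form obtained from the Cheeger energy by polarization, with generator $\LL{X}{\mssm}$ and semigroup $\TT{X}{\mssm}_\bullet$; by positivity preservation and mass conservation of the Cheeger heat flow on complete spaces, $\mu_t=(\TT{X}{\mssm}_t f)\mssm$ is a probability measure in $\msP(X)$, absolutely continuous w.r.t.\ $\mssm$; and by the smoothing property $\TT{X}{\mssm}_t f\in\dom{\EE{X}{\mssm}}$ for every $t>0$.

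Monotonicity of $t\mapsto\Ent_\mssm(\mu_t)$ is the standard fact that a symmetric sub-Markovian semigroup contracts every $\phi$--entropy with $\phi$ convex: representing $\TT{X}{\mssm}_s$ through its Markov kernel $\hh{X}{\mssm}_s(x,\diff y)$ and applying Jensen's inequality to $r\mapsto r\log r$ yields $\Ent_\mssm(\mu_{t+s})\le\Ent_\mssm(\mu_t)$.

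The heart of the argument is the entropy--dissipation identity, valid for a.e.\ $t>0$,
\[
-\tfrac{\diff}{\diff t}\,\Ent_\mssm(\mu_t)=\FF{X}{\mssm}(\mu_t)=4\,\EE{X}{\mssm}\tparen{\sqrt{\TT{X}{\mssm}_t f}}\fstop
\]
together with the local absolute continuity on $(0,\infty)$ of $t\mapsto\Ent_\mssm(\mu_t)$. Formally one differentiates under the integral, uses $\partial_t\TT{X}{\mssm}_tf=\LL{X}{\mssm}\TT{X}{\mssm}_tf$ and $\int\LL{X}{\mssm}\TT{X}{\mssm}_tf\,\diff\mssm=0$, and the identity $\int(\LL{X}{\mssm}g)\log g\,\diff\mssm=-\EE{X}{\mssm}(g,\log g)=-\int\SF{X}{\mssm}(g)/g\,\diff\mssm$ with $g=\TT{X}{\mssm}_tf$. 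Making this rigorous needs the two usual devices: one works at times $t>0$, where $g\in\dom{\EE{X}{\mssm}}$; and since $\log$ is not admissible in the chain rule for $\SF{X}{\mssm}$, one replaces $\log g$ by $\log(\delta+g)-\log\delta$ (or a bounded smooth truncation of $\log$), applies the chain rule, and lets $\delta\downarrow0$ by monotone/dominated convergence --- this simultaneously proves $\sqrt{\TT{X}{\mssm}_tf}\in\dom{\EE{X}{\mssm}}$, i.e.\ $\mu_t\in\dom{\FF{X}{\mssm}}$, starting from only $\Ent_\mssm(\mu_0)<\infty$. Integrating over $[\varepsilon,T]$ gives $\int_\varepsilon^T\FF{X}{\mssm}(\mu_t)\,\diff t=\Ent_\mssm(\mu_\varepsilon)-\Ent_\mssm(\mu_T)$; letting $\varepsilon\downarrow0$, monotonicity gives $\Ent_\mssm(\mu_\varepsilon)\le\Ent_\mssm(\mu_0)$ while narrow/$W_{2,\mssd}$--lower semicontinuity of $\Ent_\mssm$ along $\mu_\varepsilon\to\mu_0$ supplies the matching lower bound in the limit, and monotone convergence handles the left-hand side; using $\Ent_\mssm\ge0$ one gets $\int_0^T\FF{X}{\mssm}(\mu_t)\,\diff t\le\Ent_\mssm(\mu_0)\le2\,\Ent_\mssm(\mu_0)$ (the factor $2$ is slack).

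For the last assertion I would invoke Kuwada's duality lemma in the form available for (extended) metric measure spaces (see \cite{AmbErbSav16} and references therein, cf.\ \cite{AmbGigSav14b}): the Cheeger heat flow satisfies $W_{2,\mssd}(\mu_s,\mu_t)\le\int_s^t\sqrt{\FF{X}{\mssm}(\mu_r)}\,\diff r$ for $0<s\le t$, so that, since $r\mapsto\FF{X}{\mssm}(\mu_r)\in L^1(0,T)$ by the previous step, the curve $t\mapsto\mu_t$ is absolutely continuous w.r.t.\ $W_{2,\mssd}$ with $\abs{\dot\mu_t}^2\le\FF{X}{\mssm}(\mu_t)$ for a.e.\ $t$. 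I expect the genuine work to sit in two spots: the low-regularity justification of the dissipation identity (handling the non-Lipschitz logarithm and the finiteness of the Fisher information under the sole hypothesis $\Ent_\mssm(\mu_0)<\infty$), and the transfer of Kuwada's lemma to the extended-metric setting; both are, however, standard and documented in \cite{AmbGigSav14b,AmbGigSav15,AmbErbSav16,ErbHue15}.
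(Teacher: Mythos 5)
Your proposal is correct and is essentially the argument the paper defers to: the paper's ``proof'' is only a citation of \cite[Lem.~5.2]{ErbHue15} and the references therein (\cite{AmbGigSav14b,AmbGigSav15,AmbErbSav16}), and your sketch — Jensen for monotonicity, the regularized entropy-dissipation identity yielding $\mu_t\in\dom{\FF{X}{\mssm}}$ and the integrated bound, then Kuwada duality in the extended-metric form of \cite{AmbErbSav16} for $\abs{\dot\mu_t}^2\leq \FF{X}{\mssm}(\mu_t)$ — is exactly the content of those references. Your observation that the factor $2$ is slack (given $\Ent_\mssm\geq 0$, which holds here since the reference measure is a probability) is also consistent with how the bound is stated in \cite{ErbHue15}.
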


\subsubsection{Synthetic Ricci-curvature lower bounds on base spaces}
In this section we recall several different notions of synthetic Ricci-curvature lower bounds.
Such synthetic formulations, and in particular the curvature-dimension condition~$\CD$ introduced by J.~Lott and C.~Villani in~\cite{LotVil09}, and by K.-T.~Sturm in~\cite{Stu06a,Stu06b} have played a major role in recent developments in non-smooth geometry.
Here, we focus on the Riemannian curvature dimension condition~$\RCD$ and on equivalent formulations of the latter.
We adapt to \EMLDS's the original definitions for metric measure spaces due to L.~Ambrosio, N.~Gigli, and G.~Savar\'e,~\cite{AmbGigSav14b, AmbGigSav15,AmbGigMonRaj12}.
In addition to these, we also recall here one \emph{a priori} weaker condition: the logarithmic Harnack inequality, introduced on manifolds by F.-Y.~Wang in~\cite{Wan97b}.

\paragraph{Logarithmic Harnack inequality}
One first incarnation of synthetic Ricci-curvature lower bounds is the following logarithmic Harnack inequality for the heat semigroup.

\begin{defs}[Log-Harnack inequality]\label{d:LogH}
Let~$(\mcX,\cdc,\mssd)$ be an~\EMLDS satisfying~\SCF, and~$K\in\R$. We say that~$(\mcX,\cdc,\mssd)$ satisfies a \emph{logarithmic Harnack inequality} with rate~$K$ if, for every~$f\in L^\infty(\mssm)$, and every~$t>0$,
\begin{align}\tag*{$(\mathsf{logH})_{\ref{d:LogH}}$}\label{ass:LogH}
\tparen{\TT{X}{\mssm}_t\log f}(x)\leq \log\tparen{\TT{X}{\mssm}_tf}(y)+\frac{\mssd(x,y)^2 }{4I_{2K}(t)}\quad \forallae{\mssm^\otym{2}} (x,y)\in X^\tym{2}\fstop
\end{align}
\end{defs}

For the interpretation of~\ref{ass:LogH} in the sense of Ricci-curvature lower bounds, see Remark~\ref{r:LogH} below.

\paragraph{Wasserstein contractivity of the heat flow}
Let~$(\mcX,\cdc,\mssd)$ be an~\EMLDS.
Further suppose that~$(\mcX,\cdc)$ satisfies~\ref{ass:SC} and that~$\hh{X}{\mssm}_\bullet(x,\diff\emparg)$, defined for \emph{every}~$x\in X$, is a probability kernel on~$\Bo{\T}$ representing~$\TT{X}{\mssm}_\bullet\colon L^2(\mssm)\to L^2(\mssm)$ as in~\eqref{eq:n:FormConfig2:1}.
We define the following semigroups on measures:
\begin{subequations}
\begin{align}
\label{eq:SemigroupHK}
(\TT{X}{\mssm}_t \mu)A\eqdef& \int_A \TT{X}{\mssm}_t \rho \diff\mssm \comma \qquad \mu=\rho\mssm\in\msP^\mssm(X)\comma \qquad A\in\Bo{\T}\comma \quad t>0\comma
\\
\label{eq:KernelHK}
(\hh{X}{\mssm}_t\mu) A\eqdef& \int_X \hh{X}{\mssm}_t(x,A) \diff\mu(x)\comma \qquad \mu\in\msP(X)\comma
\qquad A\in\Bo{\T}\comma \quad t>0\fstop
\end{align}
\end{subequations}
Since~$\TT{X}{\mssm}_\bullet\colon L^2(\mssm)\to L^2(\mssm)$ is represented by~$\hh{X}{\mssm}_\bullet$, it is clear that
\begin{align}\label{eq:TtoHExtension}
\hh{X}{\mssm}_\bullet\colon \msP(X)\to \msP(X)\quad \text{extends} \quad \TT{X}{\mssm}_\bullet\colon \msP^\mssm(X)\to \msP^\mssm(X) \fstop
\end{align}
However, it is convenient to distinguish the two, for the purpose of stating the following definition.

\begin{defs}[Wasserstein contractivity estimates]\label{d:WC}
Fix
\begin{equation}\label{eq:WCConstant}
c\colon [0,\infty)\to [1,\infty) \quad \text{with}\quad \lim_{t\to 0} c(t)=c(0)=1\fstop
\end{equation}
Further let~$(\mcX,\cdc,\mssd)$ be an~\EMLDS satisfying~\ref{ass:SC}, and define
\begin{itemize}
\item the \emph{semigroup Wasserstein contractivity} estimate for~$(\mcX,\cdc,\mssd)$ with rate~$c(t)$
\begin{align}\tag*{$(\sgWC{\mssd})$}\label{ass:sgWC}
W_{2,\mssd}(\TT{X}{\mssm}_t\mu_0, \TT{X}{\mssm}_t\mu_1)\leq c(t) W_{2,\mssd}(\mu_0,\mu_1)\comma \qquad \mu_0,\mu_1\in\msP^\mssm(X)\semicolon
\end{align}
\end{itemize}
If~$(\mcX,\cdc,\mssd)$ is additionally endowed with~$\hh{X}{\mssm}_\bullet(x,\diff\emparg)$ as above (defined for every~$x\in X$), define
\begin{itemize}
\item the \emph{kernel Wasserstein contractivity} estimate for~$(\mcX,\cdc,\mssd,\hh{X}{\mssm}_\bullet)$ with rate~$c(t)$
\begin{align}\tag*{$(\kWC{\mssd})$}\label{ass:kWC}
W_{2,\mssd}(\hh{X}{\mssm}_t\mu_0, \hh{X}{\mssm}_t\mu_1)\leq c(t) W_{2,\mssd}(\mu_0,\mu_1)\comma \qquad \mu_0,\mu_1\in\msP(X)\fstop
\end{align}
\end{itemize}
\end{defs}

\begin{rem}\label{r:kWCsgWC}
In light of~\eqref{eq:TtoHExtension} it is clear that
\begin{align*}
(\kWC{\mssd}) \Longrightarrow (\sgWC{\mssd})\fstop
\end{align*}
The reverse implication holds if~$\hh{X}{\mssm}_\bullet\colon \msP(X)\to \msP(X)$ is the unique extension of
\[
\TT{X}{\mssm}_\bullet\colon \msP^\mssm(X)\to \msP^\mssm(X)\fstop
\]
Typically, this requires some additional assumptions on the $\T$-continuity of~$\hh{X}{\mssm}_\bullet(\emparg, A)$ for fixed~$A\in\Bo{\T}$.
For instance, it is shown in~\cite[Prop.~3.2(i)--(iii)]{AmbGigSav15} when~$(\mcX,\cdc,\mssd)$ is an \MLDS (\emph{not}: an \EMLDS) under the additional assumption that~$\TT{X}{\mssm}_tf\in \bLip(\mssd)$ and~$\Li[\mssd]{\TT{X}{\mssm}_t f}\leq a(t)\Li[\mssd]{f}$ for all~$f\in \bLip(\mssd)\cap L^2(\mssm)$ and every~$t>0$ for some constant~$a(t)>0$ independent of~$f$.
\end{rem}

\paragraph{Evolution Variation Inequality}
Let~$(\mcX,\mssd)$ be an~\EMLDS. Let~$F\colon X\to\R\cup\set{+\infty}$ be a function with non-empty domain~$\dom{F}\eqdef \set{x\in X: F(x)\in\R}$.
For a curve~$x_\bullet\colon (0,\infty)\to\R\cup\set{+\infty}$, denote by~$\diff_t^+$ its upper right derivative at time~$t$.
We recall the following definition of $\EVI_K$ gradient flow, $K\in\R$.

\begin{defs}[\emph{$\EVI_K$ gradient flow},~{\cite[Dfn.~3.2]{AmbErbSav16}}]\label{d:EVIGradF}
Let~$K\in\R$.
A curve
\begin{equation*}
x_\bullet\in \AC^2_\loc\tparen{(0,\infty);(\dom{F},\mssd)}
\end{equation*}
is an~$\EVI_K$ gradient curve of~$F$ if~$t\mapsto F(x_t)$ is lower semi-continuous on~$(0,\infty)$ and for every~$y\in\dom{F}$ with~$\mssd(y,x_t)<\infty$ for some (hence all)~$t\in (0,\infty)$ it holds that
\begin{align}\label{eq:d:EVI:0}
\diff_t^+ \tfrac{1}{2}\mssd(x_t,y)^2+\tfrac{K}{2} \mssd(x_t,y)^2\leq F(y)-F(x_t) \comma \qquad t>0\fstop
\end{align}
We extend the definition to~$x_*\in B^\mssd_\infty\tparen{\dom{F}}$ by saying that~$x_\bullet$ as above starts from~$x_*$ if $\liminf_{t\downarrow 0} F(x_t)\geq F(x_*)$ and~$\lim_{t\downarrow 0}\mssd(x_t,y)=\mssd(x_*,y)$ for every~$y\in \cl_\mssd\tparen{\dom{F}}$.
\end{defs}

\begin{defs}[$\EVI_K$ property]\label{d:EVI}
We say that an~\EMLDS~$(\mcX,\cdc,\mssd)$ endowed with a pointwise defined heat-kernel measure~$\hh{X}{\mssm}_\bullet$ satisfies~$(\EVI_{K,\mssd,\mssm})$ if for every~$\mu\in B^{W_{2,\mssd}}_\infty\tparen{\dom{\Ent_\mssm}}$ the kernel heat flow~$\tseq{\hh{X}{\mssm}_t\mu}_{t\geq 0}$ in~\eqref{eq:KernelHK} is an $\EVI_K$ gradient flow of~$\Ent_\mssm$ in the sense of Definition~\ref{d:EVIGradF}.
\end{defs}

\paragraph{Riemannian curvature-dimension condition}
Let us now summarize an equivalent result for some of the properties listed above, in the case of \MLDS's (\emph{not}: \EMLDS).

It is shown in~\cite[Prop.s~7.3,~7.4]{LzDSSuz21} that every infinitesimally Hilbertian metric local structure~$(\mcX,\mssd)$ gives rise to an~\MLDS in the sense of Definition~\ref{d:EMLDSConfig2}.
As a consequence, the following definition of~$\RCD(K,\infty)$ space coincides with the original one in~\cite{AmbGigSav14b}.

\begin{defs}[$\RCD(K,\infty)$-spaces,~{\cite{AmbGigSav14b}}]
Let~$(X,\mssd,\mssm)$ be an infinitesimally Hilbertian metric measure space, and let~$K\in\R$.
We say that~$(X,\mssd,\mssm)$ is an~$\RCD(K,\infty)$ space if the associated~\MLDS satisfies~$(\EVI_K)$.
\end{defs}

For \MLDS's, i.e.\ when~$\mssd$ is a distance, we have the following characterization.

\begin{thm}[Characterization of $\RCD(K,\infty)$ spaces~{\cite{AmbGigSav15}}]\label{t:RCD}
Fix~$K\in\R$.
Let~$(\mcX,\cdc,\mssd)$ be an~\MLDS satisfying~$(\SL{\mssd}{\mssm})$ and assume that~$\tparen{\EE{X}{\mssm},\dom{\EE{X}{\mssm}}}=\tparen{\Ch[\mssd,\mssm],\dom{\Ch[\mssd,\mssm]}}$.
Then, the following are equivalent:
\begin{enumerate}[$(a)$]
\item $(\mcX,\cdc)$ satisfies~$\BE(K,\infty)$;
\item $(\mcX,\cdc,\mssd)$ satisfies~$(\kWC{\mssd})$ with rate~$e^{-Kt}$;
\item[$(b')$] $(\mcX,\cdc,\mssd)$ satisfies~$(\sgWC{\mssd})$ with rate~$e^{-Kt}$;
\item $(X,\mssd,\mssm)$ is an $\RCD(K,\infty)$ space.
\end{enumerate}
If any of the above holds, then
\begin{enumerate}[$(a)$]\setcounter{enumi}{3}
\item $(\mcX,\cdc,\mssd)$ satisfies~\ref{ass:LogH}.
\end{enumerate}
\end{thm}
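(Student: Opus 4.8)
The plan is to deduce the statement from the established characterization of $\RCD(K,\infty)$ spaces in~\cite{AmbGigSav15}, the key point being that the standing hypotheses collapse the \MLDS~$(\mcX,\cdc,\mssd)$ to an ordinary infinitesimally Hilbertian metric measure space whose canonical Dirichlet form is the Cheeger energy. First I would record the consequences of the identity $\tparen{\EE{X}{\mssm},\dom{\EE{X}{\mssm}}}=\tparen{\Ch[\mssd,\mssm],\dom{\Ch[\mssd,\mssm]}}$: the space $(X,\mssd,\mssm)$ is infinitesimally Hilbertian, $\SF{X}{\mssm}=\slo[*]{\emparg}^2$, the semigroup $\TT{X}{\mssm}_\bullet$ is the $L^2$-gradient flow of $\Ch[\mssd,\mssm]$, and — by definition of \MLDS — $\mssm$ is finite on $\mssd$-bounded sets and $\mssd$ generates $\T$. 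Moreover, under $\BE(K,\infty)$ together with the Sobolev-to-Lipschitz property $(\SL{\mssd}{\mssm})$ one obtains the regularization $\TT{X}{\mssm}_t f\in\bLip(\mssd)$ with $\Li[\mssd]{\TT{X}{\mssm}_t f}\le e^{-Kt}\Li[\mssd]{f}$ for bounded Lipschitz $f$; this produces a $\T$-continuous, everywhere-defined probability heat kernel $\hh{X}{\mssm}_\bullet(x,\diff\emparg)$ representing $\TT{X}{\mssm}_\bullet$, which is precisely what makes conditions $(b)$ and $(d)$ meaningful, so the equivalence is to be read in this conditional sense.

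With the framework in place, the core is the equivalence of $(a)$, $(b')$ and $(d)$. For the implication from $(a)$ to $(b')$ I would invoke Kuwada's duality: in an infinitesimally Hilbertian space the gradient-contraction estimate $\SF{X}{\mssm}(\TT{X}{\mssm}_t f)\le e^{-2Kt}\,\TT{X}{\mssm}_t\SF{X}{\mssm}(f)$ — which is exactly the form of $\BE(K,\infty)$ used here — dualizes to $W_{2,\mssd}(\TT{X}{\mssm}_t\mu_0,\TT{X}{\mssm}_t\mu_1)\le e^{-Kt}\,W_{2,\mssd}(\mu_0,\mu_1)$ on $\msP^\mssm(X)$, and the converse from $(b')$ to $(a)$ is the reverse half of the same duality, both carried out in~\cite{AmbGigSav15}. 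The equivalence $(b')\Leftrightarrow(d)$ is then the heart of~\cite{AmbGigSav15}: from infinitesimal Hilbertianity, the contraction estimate with rate $e^{-Kt}$, and the Sobolev-to-Lipschitz property one constructs the $\EVI_K$ gradient flow of $\Ent_\mssm$ — i.e.\ $\RCD(K,\infty)$ — while the converse is the classical fact that an $\EVI_K$ flow contracts $W_{2,\mssd}$ with rate $e^{-Kt}$. The role of $(\SL{\mssd}{\mssm})$ here is to upgrade the a priori ``weak'' analytic contraction information to the genuinely metric $\RCD$ statement; without it one could have a mismatch between $\mssd$ and the intrinsic distance of $\EE{X}{\mssm}$, compare~\eqref{eq:EquivalenceRadStoLConfig2}.

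It remains to close $(b)\Leftrightarrow(b')$ and to derive~\ref{ass:LogH}. The implication from $(b)$ to $(b')$ is immediate from~\eqref{eq:TtoHExtension}, as in Remark~\ref{r:kWCsgWC}; for $(b')\Rightarrow(b)$ I would combine the continuity of the heat kernel afforded by the Lipschitz regularization of the first step with~\cite[Prop.~3.2(i)--(iii)]{AmbGigSav15}, which then makes $\hh{X}{\mssm}_\bullet\colon\msP(X)\to\msP(X)$ the unique extension of $\TT{X}{\mssm}_\bullet\colon\msP^\mssm(X)\to\msP^\mssm(X)$, so the contraction on the $\mssm$-absolutely-continuous subclass propagates to all of $\msP(X)$ by narrow approximation. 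Finally, for the implication to~\ref{ass:LogH} I would run Wang's interpolation along the heat flow: for bounded $f$ bounded away from $0$, differentiate $s\mapsto(\TT{X}{\mssm}_s\log\TT{X}{\mssm}_{t-s}f)(x)$ in $s\in(0,t)$, estimate the time-derivative by means of $\SF{X}{\mssm}(\TT{X}{\mssm}_{t-s}f)\le e^{-2K(t-s)}\TT{X}{\mssm}_{t-s}\SF{X}{\mssm}(f)$ and the pointwise Cauchy--Schwarz inequality for $\SF{X}{\mssm}$, integrate, and optimize over couplings of $\delta_x$ and $\delta_y$ to obtain the constant $\tparen{4I_{2K}(t)}^{-1}\mssd(x,y)^2$; the statement for general $f\in L^\infty(\mssm)$ and the $\mssm^\otym{2}$-a.e.\ formulation then follow by truncation and density, cf.~\cite{Wan97b} and~\cite{AmbGigSav15}.

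The main obstacle I anticipate is not mathematical depth but a careful check that every ingredient cited from~\cite{AmbGigSav15} (and~\cite{Wan97b}) is legitimately available in the present \MLDS and \SCF framework — in particular that the measure-finiteness and mild regularity hypotheses under which Kuwada duality and the $\RCD$ characterization are proved there are all subsumed by Definitions~\ref{d:EMLDSConfig2} and~\ref{d:wFe} — together with tracking exactly which of the Rademacher/Sobolev-to-Lipschitz relatives in~\eqref{eq:EquivalenceRadStoLConfig2} is needed at each step. A secondary, genuinely analytic subtlety lies in the implication from $(a)$ to~\ref{ass:LogH}: justifying the differentiation under the semigroup and the positivity and integrability of $\TT{X}{\mssm}_{t-s}f$ along the interpolating curve requires an approximation argument rather than a direct smooth computation, since the base space is not assumed to be a manifold.
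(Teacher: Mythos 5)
This theorem is stated in the paper as a quotation of known results from~\cite{AmbGigSav15} (together with~\cite{AmbGigSav14b,AmbGigMonRaj12}) and carries no proof there, so there is no in-paper argument to compare yours against. Your sketch is a faithful outline of how the cited reference establishes the equivalences: Kuwada duality for $(a)\Leftrightarrow(b')$, the construction of the $\EVI_K$ flow from the contraction estimate plus Sobolev-to-Lipschitz for $(b')\Leftrightarrow(d)$, and Wang's semigroup interpolation for the implication to~\ref{ass:LogH}. Two points deserve emphasis. First, your observation that $(b)$ and $(d)$ only become meaningful once a pointwise-defined probability kernel $\hh{X}{\mssm}_\bullet(x,\diff\emparg)$ is available is exactly how the paper itself handles the issue: the kernel and the equivalence $(\kWC{\mssd})\iff(\sgWC{\mssd})$ are supplied \emph{a posteriori} in Proposition~\ref{p:PropertiesRCD(Kinfty)Config2}\ref{i:p:PropertiesRCD(Kinfty)Config2:5}--\ref{i:p:PropertiesRCD(Kinfty)Config2:6} via the $L^\infty$-to-$\bLip(\mssd)$ regularization and \cite[Prop.~3.2(i)--(iii)]{AmbGigSav15}, so your ``conditional'' reading of $(b)$ is the intended one. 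Second, your derivation of~\ref{ass:LogH} by differentiating $s\mapsto\TT{X}{\mssm}_s\log\TT{X}{\mssm}_{t-s}f$ is the standard route; the paper instead points (Remark~\ref{r:LogH}) to~\cite{KopStu21} for the precise relation between~\ref{ass:LogH} and $\RCD(K,\infty)$, but the two are consistent. Since the statement is a literature recall rather than a result proved here, no further comparison is possible.
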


\begin{rem}[Log-Harnack and $\RCD$]\label{r:LogH}
It is shown in~\cite[Cor.~1.5]{KopStu21} that, whenever~$(X,\mssd,\mssm)$ is an $\RCD(-L,\infty)$ space for some~$L>0$, then it is an~$\RCD(K,\infty)$ space, for any $K\in \R$, if and only if it satisfies~\ref{ass:LogH}.
\end{rem}

\begin{prop}[Properties of $\RCD(K,\infty)$ spaces]\label{p:PropertiesRCD(Kinfty)Config2}
Fix~$K\in\R$. Let~$(X,\mssd,\mssm)$ be an~$\RCD(K,\infty)$ spacewith Cheeger energy~$\tparen{\Ch[\mssd,\mssm],\dom{\Ch[\mssd,\mssm]}}$.
Then,
\begin{enumerate}[$(i)$]
\item $\tparen{\Ch[\mssd,\mssm],\dom{\Ch[\mssd,\mssm]}}$ is quadratic, and thus a Dirichlet form, additionally admitting square field operator~$\SF{X}{\mssm}=\slo[w,\mssd]{\emparg}$, see \cite[Thm.~4.18(iv)]{AmbGigSav14b}, and satisfying~$(\Rad{\mssd}{\mssm})$ by definition;

\item\label{i:p:PropertiesRCD(Kinfty)Config2:2} $\tparen{\Ch[\mssd,\mssm],\dom{\Ch[\mssd,\mssm]}}$ is quasi-regular, see~\cite[Lem.~6.7]{AmbGigSav14b} or~\cite[Thm.~4.1]{Sav14};

\item\label{i:p:PropertiesRCD(Kinfty)Config2:3} $\tparen{\Ch[\mssd,\mssm],\dom{\Ch[\mssd,\mssm]}}$ is irreducible (consequence of~\ref{i:p:PropertiesRCD(Kinfty)Config2:7} below together with Prop.~\ref{p:IrreducibilityBase});

\item\label{i:p:PropertiesRCD(Kinfty)Config2:7} $(\mcX,\SF{X}{\mssm},\mssd)$ satisfies~$(\SL{\mssm}{\mssd})$, see~\cite[Thm.~7.2]{AmbGigMonRaj12} after~\cite[Thm.~6.2]{AmbGigSav14b};

\item\label{i:p:PropertiesRCD(Kinfty)Config2:8} the intrinsic distance~$\mssd_\mssm$ of~$\tparen{\Ch[\mssd,\mssm],\dom{\Ch[\mssd,\mssm]}}$ coincides with~$\mssd$, see~\cite[Thm.~7.4]{AmbGigMonRaj12} after~\cite[Thm.~6.10]{AmbGigSav14b};

\item\label{i:p:PropertiesRCD(Kinfty)Config2:4} $\tparen{\Ch[\mssd,\mssm],\dom{\Ch[\mssd,\mssm]}}$ is conservative, see~\cite[Thm.~4]{Stu94}, applicable by~\ref{i:p:PropertiesRCD(Kinfty)Config2:8}.
In particular,~$\hh{X}{\mssm}_\bullet$ satisfies~\ref{ass:SC}.

\item\label{i:p:PropertiesRCD(Kinfty)Config2:5} $\TT{X}{\mssm}_t\colon L^\infty(\mssm)\to \bLip(\mssd)$ for~$t>0$, see~\cite[Thm.~7.3]{AmbGigMonRaj12}\footnote{In \cite{AmbGigMonRaj12} it is proved that~$\TT{X}{\mssm}_t\colon L^2(\mssm)\cap L^\infty(\mssm)\to \bLip(\mssd)$.
The case of $L^\infty(\mssm)$ follows by a standard localization argument.}. 
In particular,~$\hh{X}{\mssm}_\bullet$ satisfies~\ref{ass:wFe}.
Furthermore, for every~$f\in L^2(\mssm)\cap L^\infty(\mssm)$, the $\mssd$-Lipschitz $\mssm$-representative of~$\widerep{\TT{X}{\mssm}_t f}$ of~$\TT{X}{\mssm}_t f$ satisfies
\begin{align}\label{eq:p:PropertiesRCD(Kinfty)Config2:5}
\sqrt{2\, I_{2K}(t)}\,\, \bLi[\mssd]{\widerep{\TT{X}{\mssm}_t f}}\leq \norm{f}_{L^\infty(\mssm)} \fstop
\end{align}
In particular,~$\hh{X}{\mssm}_\bullet$ satisfies~\ref{ass:Fe};
\item\label{i:p:PropertiesRCD(Kinfty)Config2:6} $(\kWC{\mssd})\iff (\sgWC{\mssd})$, consequence of~\eqref{eq:p:PropertiesRCD(Kinfty)Config2:5} and~\cite[Prop.~3.2(i)--(iii)]{AmbGigSav15}, see Remark~\ref{r:kWCsgWC}.
\end{enumerate}
\end{prop}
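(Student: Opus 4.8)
The plan is to deduce each item from the corresponding statement for metric measure spaces, after recording that the \MLDS structure of $(X,\mssd,\mssm)$ is the standard one. Indeed, by~\cite[Prop.s~7.3,~7.4]{LzDSSuz21} the Cheeger energy $\tparen{\Ch[\mssd,\mssm],\dom{\Ch[\mssd,\mssm]}}$ of an infinitesimally Hilbertian metric local structure coincides with the Dirichlet form $\tparen{\EE{X}{\mssm},\dom{\EE{X}{\mssm}}}$ of the associated \MLDS, so that our definition of $\RCD(K,\infty)$ space agrees with that in~\cite{AmbGigSav14b} and all results of~\cite{AmbGigSav14b, AmbGigSav15, AmbGigMonRaj12} apply verbatim. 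Granting this, items $(i)$, $(ii)$, $(iv)$, $(v)$ are precisely the cited facts: quadraticity of $\Ch[\mssd,\mssm]$ and the identification of its square field with $\slo[w,\mssd]{\emparg}$ are~\cite[Thm.~4.18]{AmbGigSav14b}, with $(\Rad{\mssd}{\mssm})$ immediate from the definition of minimal weak upper gradient; quasi-regularity is~\cite[Lem.~6.7]{AmbGigSav14b} (or~\cite[Thm.~4.1]{Sav14}); the Sobolev-to-Lipschitz property $(\SL{\mssm}{\mssd})$ is~\cite[Thm.~7.2]{AmbGigMonRaj12} (after~\cite[Thm.~6.2]{AmbGigSav14b}); and the coincidence of the intrinsic distance $\mssd_\mssm$ with $\mssd$ is~\cite[Thm.~7.4]{AmbGigMonRaj12} (after~\cite[Thm.~6.10]{AmbGigSav14b}).

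For $(iii)$, once $(iv)$ is in hand, $(\mcX,\SF{X}{\mssm},\mssd)$ is an \MLDS satisfying $(\SL{\mssm}{\mssd})$, and Proposition~\ref{p:IrreducibilityBase} yields irreducibility of $\tparen{\EE{X}{\mssm},\dom{\EE{X}{\mssm}}}$ directly. For $(vi)$ I would invoke Sturm's conservativeness criterion~\cite[Thm.~4]{Stu94}: by $(v)$ the intrinsic distance $\mssd_\mssm$ equals $\mssd$ and hence generates $\T$, and on an $\RCD(K,\infty)$ space the volume growth is at most Gaussian, $\mssm\tparen{B_r(x_0)}\leq C e^{cr^2}$; this gives conservativeness, i.e.\ $\TT{X}{\mssm}_t\car=\car$ $\mssm$-a.e., which, combined with the continuity of $\hh{X}{\mssm}_t(\emparg,X)$ granted by $(vii)$, upgrades to the pointwise identity $\hh{X}{\mssm}_t(x,X)=1$, that is,~\ref{ass:SC}.

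For $(vii)$, the $L^\infty(\mssm)$-to-$\bLip(\mssd)$ regularization of $\TT{X}{\mssm}_\bullet$ is~\cite[Thm.~7.3]{AmbGigMonRaj12}, stated there for $L^2(\mssm)\cap L^\infty(\mssm)$ and extended to $L^\infty(\mssm)$ by the localization argument in the footnote; since $\mssd$ metrizes $\T$, this gives~\ref{ass:wFe}. The bound~\eqref{eq:p:PropertiesRCD(Kinfty)Config2:5} is the usual consequence of the Bakry--Ledoux estimate: the standard interpolation along the heat flow yields $2I_{2K}(t)\,\SF{X}{\mssm}\tparen{\TT{X}{\mssm}_tf}\leq \TT{X}{\mssm}_t\tparen{f^2}-\tparen{\TT{X}{\mssm}_tf}^2\leq \norm{f}_{L^\infty(\mssm)}^2$, and then $(\SL{\mssm}{\mssd})$ turns this into the claimed Lipschitz bound on $\widerep{\TT{X}{\mssm}_t f}$. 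Finally, for $(viii)$ one combines this Lipschitz regularization with the Lipschitz-contraction bound $\Li[\mssd]{\TT{X}{\mssm}_tf}\leq e^{-Kt}\Li[\mssd]{f}$ for $f\in\bLip(\mssd)\cap L^2(\mssm)$ (itself a consequence of $\BE(K,\infty)$, available by Theorem~\ref{t:RCD}, together with $(\Rad{\mssd}{\mssm})$) and appeals to~\cite[Prop.~3.2(i)--(iii)]{AmbGigSav15}, exactly as in Remark~\ref{r:kWCsgWC}.

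The step I expect to be the main obstacle is the last assertion of $(vii)$, namely that $\hh{X}{\mssm}_t(\emparg,E)=\TT{X}{\mssm}_t\car_E$ lies in $\Ccompl(\msE)$, i.e.\ vanishes at $\msE$-infinity, for every $\mssd$-bounded measurable $E$. A uniform Lipschitz bound together with $\car_E\in L^1(\mssm)\cap L^2(\mssm)$ does \emph{not} by itself force decay at infinity, since the $\mssm$-volume of balls of fixed radius may vanish along sequences escaping to infinity (as on the Gaussian space). The remedy is a genuinely Gaussian-type control of the heat flow: either the Gaussian upper heat-kernel bound on $\RCD(K,\infty)$ spaces (Sturm; Jiang--Li--Zhang), which is effective once $\inf_{y\in E}\mssm\tparen{B_{\sqrt t}(y)}>0$ --- automatic when $X$ is locally compact, in particular in the $\RCD^*(K,N)$, $\RQCD_\reg$, and weighted-manifold cases of interest ---, or the displacement estimate $\hh{X}{\mssm}_t\tparen{x,B_r(x)^\complement}\leq C_1 e^{-r^2/(C_2 t)}$ obtained from Gaussian concentration of $1$-Lipschitz functions under $\TT{X}{\mssm}_t$ via~\eqref{eq:p:PropertiesRCD(Kinfty)Config2:5}; either way $\TT{X}{\mssm}_t\car_E(x)\leq \hh{X}{\mssm}_t\tparen{x,B_{\mssd(x,E)}(x)^\complement}\to 0$ as $x$ leaves every $\mssd$-ball, which yields~\ref{ass:Fe}.
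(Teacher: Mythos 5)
Your proposal follows the same route as the paper: each item is obtained by citation from \cite{AmbGigSav14b,AmbGigSav15,AmbGigMonRaj12,Sav14,Stu94} after identifying the Cheeger energy with the form of the associated \MLDS, and items $(iii)$, $(vi)$, $(viii)$ are assembled exactly as indicated in the statement (Prop.~\ref{p:IrreducibilityBase}, Sturm's volume-growth criterion together with the Gaussian growth bound and the upgrade from $\mssm$-a.e.\ to pointwise conservativeness, Remark~\ref{r:kWCsgWC}); your derivation of~\eqref{eq:p:PropertiesRCD(Kinfty)Config2:5} from the Bakry--Ledoux interpolation plus $(\SL{\mssm}{\mssd})$ is the intended one.

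Your diagnosis of the last assertion of $(vii)$ is the substantive point, and it is correct: the paper offers nothing beyond ``in particular'', and a uniform Lipschitz bound together with $\hh{X}{\mssm}_t(\emparg,E)\in L^1(\mssm)$ does not force decay at $\msE$-infinity when $\inf_x\mssm\tparen{B_r(x)}=0$. Of your two remedies, however, only the first is sound. The displacement estimate $\hh{X}{\mssm}_t\tparen{x,B_r(x)^\complement}\leq C_1e^{-r^2/(C_2t)}$ with constants uniform in~$x$ already fails on the Gaussian space $(\R^n,\ttabs{\emparg},\gamma_n)$, where the Ornstein--Uhlenbeck flow displaces the mass started at~$x$ by $(1-e^{-t})\ttabs{x}$; and the Herbst-type concentration bound one would use to prove it only controls $\hh{X}{\mssm}_t\tparen{x,\set{g\geq \TT{X}{\mssm}_tg(x)+r}}$ for $1$-Lipschitz~$g$, so converting it into a bound on $\hh{X}{\mssm}_t(x,E)$ requires a lower bound on $\TT{X}{\mssm}_t\tparen{\mssd(\emparg,E)\wedge R}(x)$, which is essentially the assertion being proved. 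The Gaussian upper heat-kernel bound, valid under local compactness together with a sub-Gaussian lower bound on $\mssm\tparen{B_r(x)}$ at the faraway point~$x$ as well (available on $\RCD^*(K,N)$ spaces by Bishop--Gromov, hence in every case where the paper actually invokes \ref{ass:Fe}), is the correct patch. Note finally that the restriction is not cosmetic: on $\tparen{\R,\ttabs{\emparg},e^{-x^4}\diff x}$, which is an $\RCD(0,\infty)$ space, the point $+\infty$ is an entrance boundary for the associated diffusion, so $\hh{X}{\mssm}_t(x,\diff\emparg)$ converges to a nondegenerate entrance law as $x\to+\infty$ and $\hh{X}{\mssm}_t(\emparg,E)\notin\Ccompl(\msE)$ for suitable bounded~$E$; thus the claim \ref{ass:Fe} in item~$(vii)$ is genuinely false for general $\RCD(K,\infty)$ spaces and must be read with the finite-dimensionality restriction you impose.
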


\subsection{Configuration spaces}\label{ss:ConfigCurvature}
In this section we prove the characterization of synthetic Ricci-curvature lower bounds for~$\dUpsilon$ anticipated in the beginning of~\S\ref{s:RicciBounds}.

\subsubsection{The distance-Sobolev--to--Lipschitz property via maximal functions}\label{sss:StoL}
Let us start by proving the distance-Sobolev-to-Lipschitz property~$(\dSL{\mssd_\dUpsilon}{\PP})$ for $\tparen{\dUpsilon,\SF{\dUpsilon}{\PP}, \mssd_\dUpsilon}$.

\begin{thm}[Distance-Sobolev-to-Lipschitz]\label{t:dSLConfig2}
Let~$(\mcX,\cdc,\mssd)$ be an \MLDS satisfying \SCF and $(\SL{\mssm}{\mssd})$.
Then,
\begin{align}\label{eq:dSLConfig2}
\mssd_\dUpsilon\geq \mssd_{\PP} \fstop
\end{align}
\end{thm}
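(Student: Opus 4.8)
The goal is the inequality $\mssd_\dUpsilon\geq \mssd_{\PP}$, where $\mssd_{\PP}$ is the intrinsic distance of the form $\EE{\dUpsilon}{\PP}$. Recall that $\mssd_{\PP}(\gamma,\eta)=\sup\{u(\gamma)-u(\eta): \SF{\dUpsilon}{\PP}(u)\leq 1,\ u\in\mcC_b(\T_\mrmv)\cap\dom{\EE{\dUpsilon}{\PP}}\}$, so it suffices to prove that for every $u\in \DzLocB{\PP,\T_\mrmv}$ (the broad-local functions with $\SF{\dUpsilon}{\PP}(u)\leq 1$ $\PP$-a.e.) one has $u(\gamma)-u(\eta)\leq \mssd_\dUpsilon(\gamma,\eta)$ for all $\gamma,\eta\in\dUpsilon$. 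Since $\mssd_\dUpsilon$ is an extended distance, only pairs with $\mssd_\dUpsilon(\gamma,\eta)<\infty$ are relevant, and for such pairs the natural route is to build from a cylinder/Lipschitz function on the base a one-Lipschitz function on $\dUpsilon$ comparing the two configurations. First I would reduce, via the broad-local machinery of~\cite{LzDSSuz20}, to bounding $|u(\gamma)-u(\eta)|$ by the regularized intrinsic point-to-set distance, using Proposition~\ref{p:RegIntrinsic}: one has $\mssd^\reg_{\PP,A}\leq \hr{\PP,A}$ $\PP$-a.e.\ whenever $A\in\A_\mrmv(\msE)$, and the Ariyoshi--Hino identification $\hr{\PP,A}^2 = \PP\text{-}\lim_{t\downarrow 0}(-2t\log \TT{\dUpsilon}{\PP}_t\car_A)\cdot\car_{\{<\infty\}}$.

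The heart of the argument is the explicit heat-kernel representation of Proposition~\ref{p:KonLytRoe}, $\widerep{\TT{\dUpsilon}{\PP}_t u}(\gamma)=\int_{\dUpsilon^\sym{\infty}} u(\eta)\,\mssh^\dUpsilon_t(\gamma,\diff\eta)$ with $\mssh^\dUpsilon_t(\Lb(\mbfx),\diff\emparg)=\Lb_\pfwd\bigotimes_i\hh{X}{\mssm}_t(x_i,\diff y_i)$, which makes the short-time asymptotics of $-2t\log\TT{\dUpsilon}{\PP}_t\car_A$ computable from the base space. Concretely, for a cylinder set $A$ built from $E_1,\dots,E_m\in\msE\cap\T$ I would take $A=\cap_i\Xi_{\geq k_i}(E_i)$ (Proposition~\ref{p:FundamentalSetsConfig2}), write $\TT{\dUpsilon}{\PP}_t\car_A$ as a product over particles of $\hh{X}{\mssm}_t(x_i,E_i)$-type terms (finitely many nontrivial, by $\msE$-local finiteness and the Feller property \ref{ass:Fe}), and pass to the $t\downarrow 0$ limit using Varadhan-type short-time asymptotics on the base, which on an \MLDS with $(\SL{\mssm}{\mssd})$ (equivalently $\mssd\geq\mssd_\mssm$) and \SCF reduce $-2t\log\hh{X}{\mssm}_t(x,E)$ to $\mssd(x,E)^2$. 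This yields $\hr{\PP,A}(\Lb(\mbfx))^2 \leq \sum_i \mssd(x_i, E_i)^2$ (the relevant finite sum), which is exactly the $L^2$-transportation cost of moving the particles of $\gamma$ into the configuration set $A$; extremizing over base test functions and over such cylinder sets $A$ shrinking to a configuration $\eta$, and invoking Proposition~\ref{p:RegIntrinsic}\ref{i:p:RegIntrinsic:2} ($\mssd^\reg_{\PP,A_\alpha}\uparrow\mssd_\PP(\eta,\emparg)$), one obtains $\mssd_\PP(\gamma,\eta)\leq \mssd_\dUpsilon(\gamma,\eta)$.

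The main obstacle I anticipate is the short-time (Varadhan) asymptotics step: on a general \MLDS one does not have Gaussian heat-kernel estimates, so one cannot directly quote Varadhan's formula. The way around it — and this is presumably why the hypotheses $(\SL{\mssm}{\mssd})$ and \SCF are imposed — is to avoid a two-sided asymptotic and instead use only the \emph{upper} bound $\limsup_{t\downarrow 0}(-2t\log\hh{X}{\mssm}_t(x,E))\geq \mssd_\mssm(x,E)^2$ together with $(\SL{\mssm}{\mssd})\Rightarrow\mssd\geq\mssd_\mssm$, which is exactly the inequality direction needed here (we want a \emph{lower} bound on $\mssd_\PP$, hence an \emph{upper} bound on $\hr{\PP,A}$ in terms of base data is what must be controlled, and the Ariyoshi--Hino theorem gives $\hr{\PP,A}^2$ as a genuine limit so only one-sided base estimates enter once combined with Proposition~\ref{p:RegIntrinsic}\ref{i:p:RegIntrinsic:4}). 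A secondary technical point is the interchange of the infinite product over particles with the $t\downarrow 0$ limit, which is handled by $\msE$-local finiteness of configurations and the Feller property \ref{ass:Fe} (only finitely many factors differ from something uniformly close to $1$, as in the proof of Lemma~\ref{l:SCProduct}), together with dominated convergence; and one must be careful that the $\PP$-full-measure sets produced by Proposition~\ref{p:KonLytRoe} depend on $u$ and $t$, so the limit in $t$ should be taken along a fixed countable sequence $t_n\downarrow 0$ and the exceptional sets unioned, exactly as in Step~3 of the proof of Proposition~\ref{p:KonLytRoe}.
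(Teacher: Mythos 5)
Your plan follows essentially the same route as the paper's proof. Its core is exactly the paper's Lemma~\ref{l:Sobolev1}: $(\SL{\mssm}{\mssd})$ gives the bound $\hr{\mssm,A_i}\leq\mssd(\emparg,A_i)$ on the base, the Ariyoshi--Hino theorem converts the short-time log-asymptotics of the product kernel (via the identification of Proposition~\ref{p:KonLytRoe} and the fact that only finitely many factors are nontrivial) into the maximal function $\hr{\PP,\Lb(\tilde\mbfA)}$, and Proposition~\ref{p:RegIntrinsic}\ref{i:p:RegIntrinsic:4} passes to the regularized point-to-set intrinsic distance; the cylinder sets shrinking to $\set{\mbfy}$ together with Proposition~\ref{p:RegIntrinsic}\ref{i:p:RegIntrinsic:2} are Lemma~\ref{l:AuxStoL} and the first half of the paper's proof. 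You also correctly diagnose why no two-sided Varadhan asymptotics are needed: Ariyoshi--Hino provides the limit in probability, and only the one-sided comparison coming from $(\SL{\mssm}{\mssd})$ enters.

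Two points are missing. First, and more substantively, your argument yields the inequality only for $\PP$-a.e.~$\gamma$ (the exceptional sets come from Ariyoshi--Hino, from Proposition~\ref{p:KonLytRoe}, and from the countable family of cylinder sets), whereas \eqref{eq:dSLConfig2} is a genuinely pointwise inequality between two everywhere-defined extended distances. The paper closes this with a separate approximation step: approximate $\gamma$ vaguely by configurations $\gamma_n$ in the good set, replace the target $\set{\eta}$ by the sets $\Lambda_{\eta,r}$ of configurations agreeing with $\eta$ on $B_r(x_0)$, and exploit the $\T_\mrmv$-lower semicontinuity of $\mssd^\reg_{\PP,\Lambda}$ (Proposition~\ref{p:RegIntrinsic}\ref{i:p:RegIntrinsic:3}) together with the continuity properties of $\gamma\mapsto\mssd_\dUpsilon(\gamma,\Lambda_{\eta,r})$ from the first paper of the series. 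Second, to identify the finite sum $\sum_i\mssd(x_i,A_{n,i})^2$ with a quantity converging to $\mssd_\dUpsilon(\gamma,\eta)^2$ rather than merely dominating it, one must label $\gamma$ by an optimal matching with $\mbfy$, i.e.\ use the radially isometric labeling map $\lb_\mbfy$; your phrase ``exactly the $L^2$-transportation cost'' silently assumes this choice, which is where the measurable-labeling machinery of \cite{LzDSSuz21} is needed.
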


As a consequence of Theorem~\ref{t:dSLConfig2} and of~\eqref{eq:EquivalenceRadStoLConfig2}, we immediately obtain the continuous-Sobolev-to-Lipschitz property.

\begin{cor}[Continuous-Sobolev-to-Lipschitz]\label{c:cSLUpsilon-dSL}
Let~$(\mcX,\cdc,\mssd)$ be an \MLDS satisfying \SCF and~$(\SL{\mssm}{\mssd})$.
Then, the \EMLDS $\tparen{\dUpsilon,\SF{\dUpsilon}{\PP},\mssd_\dUpsilon}$ satisfies~$(\cSL{\T_\mrmv(\Ed)}{\PP}{\mssd_\dUpsilon})$.
\end{cor}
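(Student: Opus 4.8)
The plan is to prove the equivalent inequality $\mssd_\PP(\gamma,\eta)\le\mssd_\dUpsilon(\gamma,\eta)$ for all $\gamma,\eta\in\dUpsilon$ by controlling \emph{from above} the maximal functions $\hr{\PP,\Lambda}$ of $\tparen{\EE{\dUpsilon}{\PP},\dom{\EE{\dUpsilon}{\PP}}}$ (Dfn.~\ref{d:MaximalFunctionConfig2}), combining the Ariyoshi--Hino short-time asymptotics with the tensorised representation of the heat semigroup from Proposition~\ref{p:KonLytRoe}. Fix $\eta\in\dUpsilon$. By Proposition~\ref{p:RegIntrinsic}\ref{i:p:RegIntrinsic:2}, $\mssd^\reg_{\PP,\Lambda_\alpha}\uparrow\mssd_\PP(\eta,\emparg)$ along any net $\Lambda_\alpha\downarrow\set{\eta}$, and by Proposition~\ref{p:RegIntrinsic}\ref{i:p:RegIntrinsic:4}, $\mssd^\reg_{\PP,\Lambda_\alpha}\le\hr{\PP,\Lambda_\alpha}$ $\PP$-a.e.\ whenever $\Lambda_\alpha\in\A_\mrmv(\msE)$ with $\PP\Lambda_\alpha\in(0,\infty)$; hence it suffices to produce one good such sequence $\Lambda_\alpha\downarrow\set{\eta}$ with $\hr{\PP,\Lambda_\alpha}(\gamma)^2\le\mssd_\dUpsilon(\gamma,\eta)^2+o_\alpha(1)$ for $\PP$-a.e.\ $\gamma$, and then to remove the $\PP$-null set.

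For the neighbourhoods I would, after enumerating the distinct support points of $\eta$ as $z_1,z_2,\dotsc$ (ordered by distance from a fixed basepoint) with multiplicities $m_1,m_2,\dotsc$, take the $\A_\mrmv(\msE)$-sets $\Lambda_{N,\eps}$ prescribing \emph{exactly} $m_j$ points in $B(z_j,\eps)$ for each $j\le N$ \emph{and no further point} in $E_N\setminus\bigcup_{j\le N}B(z_j,\eps)$, for a suitable bounded closed $E_N\uparrow X$ with $E_N\cap\supp\eta\subseteq\set{z_1,\dotsc,z_N}$. These satisfy $\eta\in\Lambda_{N,\eps}$, $\PP\Lambda_{N,\eps}\in(0,1)$ (a product of finitely many positive Poisson probabilities), and $\Lambda_{N,\eps}\downarrow\set{\eta}$ as $N\to\infty$, $\eps\to0$. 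By the concentration/cylinder correspondence of Proposition~\ref{p:FundamentalSetsConfig2}, $\Lb^{-1}(\Lambda_{N,\eps})$ contains a product set $\prod_{i\le n_N}B(y_i,\eps)\times\prod_{i>n_N}(X\setminus E_N)$, with $n_N=\sum_{j\le N}m_j$ and $\mbfy$ an appropriate labelling of $\eta$; combining this with Proposition~\ref{p:KonLytRoe}, Lemma~\ref{l:SCProduct} and Corollary~\ref{c:SCConfig1}, for \emph{every} labelling $\mbfx$ of $\gamma$
\[
\TT{\dUpsilon}{\PP}_t\car_{\Lambda_{N,\eps}}(\gamma)=\mssh^\tym{\infty}_t\tparen{\mbfx,\Lb^{-1}\Lambda_{N,\eps}}\ \ge\ \prod_{i\le n_N}\hh{X}{\mssm}_t\tparen{x_i,B(y_i,\eps)}\cdot\prod_{i>n_N}\hh{X}{\mssm}_t\tparen{x_i,X\setminus E_N}\fstop
\]
I would then choose $\mbfx$ to label $\gamma$ along a bijective coupling $\sigma$ of $\gamma$ and $\eta$ with $\sum_{x\in\gamma}\mssd(x,\sigma(x))^2\le\mssd_\dUpsilon(\gamma,\eta)^2+\delta$ (so $\mssd_\dUpsilon(\gamma,\eta)<\infty$ may be assumed, else there is nothing to prove), so that $\sum_i\mssd(x_i,y_i)^2\le\mssd_\dUpsilon(\gamma,\eta)^2+\delta$.

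Applying $-2t\log$ turns the product into a sum, which I would split into a \emph{finite head} --- the indices $i\le n_N$ together with the (finitely many, by local finiteness of $\gamma$) indices $i>n_N$ with $x_i\in E_N$ --- and an \emph{infinite tail}. On the head, the Ariyoshi--Hino theorem on the base space gives $-2t\log\hh{X}{\mssm}_t(x_i,A_i)\to\hr{\mssm,A_i}(x_i)^2$ in $\mssm$-measure; since $(\SL{\mssm}{\mssd})$ forces $\mssd\ge\mssd_\mssm$ by~\eqref{eq:EquivalenceRadStoLConfig2}, one gets $\hr{\mssm,B(y_i,\eps)}(x_i)\le\mssd(x_i,y_i)+\eps$ and, using $y_i\notin E_N$, $\hr{\mssm,X\setminus E_N}(x_i)\le\mssd(x_i,y_i)$, so (the head being finite) $\limsup_{t\downarrow0}$ of the head contribution is $\le\sum_i\mssd(x_i,y_i)^2+o_\alpha(1)\le\mssd_\dUpsilon(\gamma,\eta)^2+\delta+o_\alpha(1)$, along some $t_k\downarrow0$ and for $\PP$-a.e.\ $\gamma$. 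For the tail, $x_i\notin E_N$, and invoking $\ref{ass:SC}$ together with $\ref{ass:wFe}$, $\ref{ass:Fe}$ one arranges $\hh{X}{\mssm}_t(x_i,E_N)\le\tfrac12$ on tail indices, whence $-2t\log\prod_{\mathrm{tail}}\hh{X}{\mssm}_t(x_i,X\setminus E_N)\le 4t\sum_{x\in\gamma}\hh{X}{\mssm}_t(x,E_N)$; by the Mecke identity~\eqref{eq:MeckeConfig2} and $\mssm$-symmetry of $\TT{X}{\mssm}_t$, $\int_\dUpsilon 4t\sum_{x\in\gamma}\hh{X}{\mssm}_t(x,E_N)\diff\PP\le4t\,\mssm(E_N)\to0$, so the tail sum tends to $0$ in $L^1(\PP)$, hence $\PP$-a.e.\ along a further subsequence. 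Passing to a common a.e.-convergent subsequence $t_k\downarrow0$ for the base head, the tail and the $\dUpsilon$-level limit $\hr{\PP,\Lambda_{N,\eps}}^2=\PP\text{-}\lim_{t\downarrow0}\tparen{-2t\log\TT{\dUpsilon}{\PP}_t\car_{\Lambda_{N,\eps}}}$ yields $\hr{\PP,\Lambda_{N,\eps}}(\gamma)^2\le\mssd_\dUpsilon(\gamma,\eta)^2+\delta+o_\alpha(1)$ $\PP$-a.e.; letting $\delta\downarrow0$, $\Lambda_{N,\eps}\downarrow\set{\eta}$ and using Proposition~\ref{p:RegIntrinsic}\ref{i:p:RegIntrinsic:2} gives $\mssd_\PP(\eta,\gamma)\le\mssd_\dUpsilon(\gamma,\eta)$ for $\PP$-a.e.\ $\gamma$, and the residual $\PP$-null set is then removed by exploiting the projective structure of $\tparen{\dUpsilon,\T_\mrmv,\PP}$ over the finite-configuration spaces $\set{\dUpsilon(E)}_{E\in\msE}$ together with lower semicontinuity of $\mssd_\PP(\eta,\emparg)$ and symmetry of both pseudo-distances; by~\eqref{eq:EquivalenceRadStoLConfig2} this also yields Corollary~\ref{c:cSLUpsilon-dSL}.

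I expect the main obstacle to be the \emph{uniform} (in $t\downarrow0$) smallness of the tail, i.e.\ making negligible the contribution of the infinitely many particles of $\gamma$ escaping every bounded set: this is exactly where the manifold proofs of~\cite{KonLytRoe02,ErbHue15} used Gaussian heat-kernel and volume-growth bounds, and here it has to be extracted instead from strong locality of $\EE{X}{\mssm}$ together with $(\mathsf{SCF})$ --- essentially a pointwise finite-propagation-speed-type estimate for $\hh{X}{\mssm}_t$, reinforced by the Mecke identity and the fact that one need only prove the bound $\PP$-a.e.\ and along a subsequence. A secondary, more bookkeeping-type difficulty is the precise design of $E_N$ and of the labellings $\mbfx,\mbfy$ so that Proposition~\ref{p:FundamentalSetsConfig2} delivers exactly the product lower bound above while still keeping $\Lambda_{N,\eps}\downarrow\set{\eta}$ (which forces the simultaneous use of "lower" constraints near the $z_j$ and "upper" constraints on the $E_N$), and the final upgrade from a $\PP$-a.e.\ inequality to an everywhere inequality of extended pseudo-distances.
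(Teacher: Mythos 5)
Your reduction of the corollary to the extended-distance inequality $\mssd_\dUpsilon\geq\mssd_\PP$ via \eqref{eq:EquivalenceRadStoLConfig2} is exactly what the paper does: Corollary~\ref{c:cSLUpsilon-dSL} is stated as an immediate consequence of Theorem~\ref{t:dSLConfig2}, and all the substance lies in that theorem. Your proof of the inequality also follows the same broad strategy as the paper's proof of Theorem~\ref{t:dSLConfig2} (via Lemmas~\ref{l:AuxStoL} and~\ref{l:Sobolev1}): dominate the regularized intrinsic distance $\mssd^\reg_{\PP,\Lambda}$ by the maximal function $\hr{\PP,\Lambda}$, compute the latter by Ariyoshi--Hino through the tensorized kernel of Proposition~\ref{p:KonLytRoe}, and use $(\SL{\mssm}{\mssd})$ on the base to get $\hr{\mssm,A}\leq\mssd(\emparg,A)$.

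The genuine gap is in your tail estimate, and it is created by your choice of approximating sets. The paper takes $\mbfA_n=A_{n,1}\times\cdots\times A_{n,n}\times X\times X\times\cdots$ with \emph{no} constraint on the tail coordinates, i.e.\ the configuration-space sets are pure lower-occupation sets $\bigcap_i\Xi^{\sym{\infty}}_{\geq k_i}(A_{j_i})$; by \ref{ass:SC} the factors $\hh{X}{\mssm}_t(x_i,X)$ for $i>n$ are identically $1$, so $\mssh^{\tym{\infty}}_t(\mbfx,\mbfA_n)$ is a \emph{finite} product and there is no tail at all --- this is precisely the content of \eqref{eq:l:dInfty:1}. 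Your vacancy constraint (``no further point in $E_N\setminus\bigcup_j B(z_j,\eps)$'') produces the infinite product $\prod_{i>n_N}\hh{X}{\mssm}_t\tparen{x_i,X\setminus E_N}$, and your control of it fails at the step $\hh{X}{\mssm}_t(x_i,E_N)\leq\tfrac12$ ``on tail indices'': \ref{ass:Fe} only provides, for each \emph{fixed} $t$, a set $F_t\in\msE$ outside of which this bound holds, and $F_t$ need neither be contained in $E_N$ nor be uniform as $t\downarrow0$; for indices with $\hh{X}{\mssm}_t(x_i,E_N)$ close to $1$ the inequality $-\log(1-u)\leq 2u$ is unavailable, and the $L^1(\PP)$ estimate via the Mecke identity no longer controls $-2t\log\prod_i(1-u_i)$. (Your instinct that the sets must shrink exactly to $\set{\eta}$ --- which is what Proposition~\ref{p:RegIntrinsic}\ref{i:p:RegIntrinsic:2} formally requires, and which pure lower constraints do not transparently achieve --- is what pushed you toward the vacancy constraints; the paper's route avoids the tail at the cost of applying that proposition to its lower-occupation sets directly.) Finally, your upgrade from $\PP$-a.e.\ $\gamma$ to all $\gamma,\eta$ is only sketched; the paper carries it out concretely by approximating $\gamma$ vaguely from within the full-measure set, using the sets $\Lambda_{\eta,r}$ fixing $\eta$ on balls and Proposition~\ref{p:RegIntrinsic}\ref{i:p:RegIntrinsic:1}--\ref{i:p:RegIntrinsic:3}.
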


Together with the reverse inequality, (consequence of Theorem~\ref{t:GeometricProperties}\ref{i:t:GeometricProperties:1} below and~\eqref{eq:EquivalenceRadStoLConfig2}), Theorem~\ref{t:dSLConfig2} also implies the following identification of the intrinsic distance~$\mssd_{\PP}$ with the $L^2$-transport\-ation distance~$\mssd_\dUpsilon$.
The irreducibility assumption is verified in light of Proposition~\ref{p:IrreducibilityBase}.

\begin{cor}[Identification of distances]\label{c:IdentifDist}
Let~$(\mcX,\cdc,\mssd)$ be an~\MLDS satisfying~\SCF, $(\Rad{\mssd}{\mssm})$, and~$(\SL{\mssm}{\mssd})$. Then,
\begin{align}\label{eq:IdentificationDistanceConfig2}
\mssd_\dUpsilon=\mssd_\PP\fstop
\end{align}
\end{cor}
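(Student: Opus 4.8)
The plan is to prove $\mssd_\dUpsilon=\mssd_\PP$ by establishing the two inequalities separately, each obtained by transferring the corresponding property from the base \MLDS to the configuration space. One inequality is already available: since $(\mcX,\cdc,\mssd)$ is an \MLDS satisfying \SCF and $(\SL{\mssm}{\mssd})$ — exactly the hypotheses of Theorem~\ref{t:dSLConfig2} — that theorem gives $\mssd_\dUpsilon\geq\mssd_\PP$ at once. So for this half there is nothing to do beyond checking that the hypotheses match, which they do verbatim.

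For the reverse inequality $\mssd_\dUpsilon\leq\mssd_\PP$, the strategy is to show that $\tparen{\dUpsilon,\SF{\dUpsilon}{\PP},\mssd_\dUpsilon}$ inherits the Rademacher-type property $(\Rad{\mssd_\dUpsilon}{\PP})$ from the validity of $(\Rad{\mssd}{\mssm})$ on the base, which is precisely the content of Theorem~\ref{t:GeometricProperties}\ref{i:t:GeometricProperties:1}. Once $(\Rad{\mssd_\dUpsilon}{\PP})$ holds on $\dUpsilon$, the first implication in~\eqref{eq:EquivalenceRadStoLConfig2}, namely $(\Rad{\mssd}{\mssm})\Rightarrow(\dRad{\mssd}{\mssm})$ read with $\dUpsilon$ in place of $X$, yields the distance-Rademacher property $(\dRad{\mssd_\dUpsilon}{\PP})$, and this is by definition the inequality $\mssd_\dUpsilon\leq\mssd_\PP$. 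Here the irreducibility of $\tparen{\EE{X}{\mssm},\dom{\EE{X}{\mssm}}}$, which is needed for the cited transfer and equivalence results to apply cleanly, is not an additional assumption: it follows from Proposition~\ref{p:IrreducibilityBase}, since $(\mcX,\cdc,\mssd)$ is an \MLDS satisfying $(\SL{\mssm}{\mssd})$. Combining the two inequalities gives $\mssd_\dUpsilon=\mssd_\PP$.

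The corollary itself is thus essentially a bookkeeping combination of two prior results, so the genuinely hard analytic work lies upstream — in Theorem~\ref{t:dSLConfig2}, whose proof rests on the Ariyoshi–Hino maximal-function representation together with the heat-kernel identification of \S4, and in the transfer of the Rademacher property in Theorem~\ref{t:GeometricProperties}. If I had to single out the main point requiring care \emph{within} the argument as organized above, it is ensuring that the object being compared is the same on both sides: that $\mssd_\PP$, defined as the intrinsic distance of $\EE{\dUpsilon}{\PP}$ via $\Cb(\T_\mrmv)\cap\dom{\EE{\dUpsilon}{\PP}}$ test functions, is exactly the quantity bounded below by Theorem~\ref{t:dSLConfig2} and bounded above by $(\dRad{\mssd_\dUpsilon}{\PP})$, and that the equivalence chain~\eqref{eq:EquivalenceRadStoLConfig2} is legitimately invoked in the \emph{extended}-metric measure setting of $\dUpsilon$ rather than only for honest metric spaces.
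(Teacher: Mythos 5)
Your proposal is correct and follows exactly the paper's own route: the inequality $\mssd_\dUpsilon\geq\mssd_\PP$ from Theorem~\ref{t:dSLConfig2}, the reverse inequality from the lifted Rademacher property of Theorem~\ref{t:GeometricProperties}\ref{i:t:GeometricProperties:1} together with the implication $(\Rad{\mssd_\dUpsilon}{\PP})\Rightarrow(\dRad{\mssd_\dUpsilon}{\PP})$ in~\eqref{eq:EquivalenceRadStoLConfig2}, and irreducibility supplied by Proposition~\ref{p:IrreducibilityBase}. Your closing remark about checking that~\eqref{eq:EquivalenceRadStoLConfig2} applies in the extended-metric setting is well placed, and is covered since $(\dUpsilon,\SF{\dUpsilon}{\PP},\mssd_\dUpsilon)$ is an \EMLDS by Theorem~\ref{t:GeometricProperties}.
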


\begin{rem}[Comparison with~{\cite{LzDSSuz21}}]
The inequality~\eqref{eq:dSLConfig2} and the identification~\eqref{eq:IdentificationDistanceConfig2} were obtained for a far more general class of reference measures~$\QP$ in~\cite[Cor.~\ref{c:dSLUpsilon-cSL}, Thm.~\ref{t:StoL2}]{LzDSSuz21}
under a different set of assumptions for the base~\MLDS.
In particular,~\cite[\S\ref{sss:cSL}]{LzDSSuz21} relies on~\cite[Ass.~\ref{ass:cSLTensor}]{LzDSSuz21}, a local form of the continuous-Sobolev-to-Lipschitz property on all product spaces~$(\mcX^\otym{n}, \cdc^\tym{n})$.
This assumption is verified on every Riemannian manifold, and on $\MLDS$'s satisfying both~$\RCD^*(K,N)$ \emph{and}~$\CAT(0)$.

Here, we provide a different proof ---~only for the case~$\QP=\PP$~--- under much less restrictive assumption for the base space, which in particular allows us to prove~\eqref{eq:dSLConfig2} for~$\RCD^*(K,N)$ spaces (i.e., removing the $\CAT(0)$ assumption).
This is essential to our results on Ricci-curvature lower bounds in the rest of \S\ref{ss:ConfigCurvature}.
\end{rem}

We now turn to the continuous-Sobolev-to-Lipschitz property on Poisson configuration spaces.

\paragraph{Some preliminaries}
We recall the main results in~\cite[\S\S5,6]{LzDSSuz21}, specialized to Poisson measures.

\begin{thm}[Geometric properties]\label{t:GeometricProperties}
Let~$(\mcX,\cdc,\mssd)$ be an \MLDS.
Then, $(\dUpsilon,\SF{\dUpsilon}{\PP},\mssd_\dUpsilon)$ is an \EMLDS.
If, additionally,~$(\mcX,\cdc,\mssd)$ satisfies the Rademacher property~$(\Rad{\mssd}{\mssm})$ and~$\Dz\subset \bLip(\T,\mssd)$, then the following holds:
\begin{enumerate}[$(i)$]
\item\label{i:t:GeometricProperties:1}~$(\dUpsilon, \SF{\dUpsilon}{\PP}, \mssd_\dUpsilon)$ satisfies~$(\Rad{\mssd_\dUpsilon}{\PP})$,~\cite[Thm.~5.2]{LzDSSuz21};
\item\label{i:t:GeometricProperties:2}~$\tparen{\EE{\dUpsilon}{\PP},\dom{\EE{\dUpsilon}{\PP}}}$ is a $\T_\mrmv(\msE)$-quasi-regular strongly local recurrent (conservative) Dirichlet form on~$L^2(\PP)$,~\cite[Cor.~6.3]{LzDSSuz21};

\item\label{i:t:GeometricProperties:3}~$\tparen{\EE{\dUpsilon}{\PP},\dom{\EE{\dUpsilon}{\PP}}}$ is properly associated with a Markov diffusion process with state space~$\dUpsilon$ and invariant measure~$\PP$ (consequence of the standard theory of Dirichlet forms).
\end{enumerate}

\noindent If additionally~$(\mcX,\cdc,\mssd)$ satisfies the \emph{tensorization assumption}~\cite[Ass.~4.22]{LzDSSuz21}, then, additionally,
\begin{enumerate}[$(i)$]\setcounter{enumi}{3}
\item\label{i:t:GeometricProperties:4} the Cheeger energy~$\Ch[\mssd_\dUpsilon,\PP]$ of the extended-metric measure space~$(\dUpsilon,\mssd_\dUpsilon,\PP)$ is quadratic and it coincides with the Dirichlet form~$\EE{\dUpsilon}{\PP}$, \cite[Thm.~5.8]{LzDSSuz21}, viz.
\begin{align*}
\tparen{\Ch[\mssd_\dUpsilon,\PP],\dom{\Ch[\mssd_\dUpsilon,\PP]}}=\tparen{\EE{\dUpsilon}{\PP},\dom{\EE{\dUpsilon}{\PP}}} \fstop
\end{align*}
\end{enumerate}
\end{thm}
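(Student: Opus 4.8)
The plan is to derive every assertion by specializing to the reference measure $\QP=\PP$ the corresponding results proved, for general reference measures, in the first paper of the series \cite{LzDSSuz21}; the only genuine task is to check that the Poisson measure is an admissible instance of the measures treated there and then to assemble the quoted statements. For the first conclusion I would verify the two requirements of Definition~\ref{d:EMLDSConfig2}. First, $\dUpsilon$ equipped with the vague topology $\T_\mrmv$, the $\sigma$-algebra $\A_\mrmv(\msE)$, the reference measure $\PP$, and the ring of $\mssd_\dUpsilon$-bounded $\A_\mrmv(\msE)$-measurable sets is an extended-metric local structure in the sense of Definition~\ref{d:EMLSConfig2}: $\tparen{\dUpsilon,\T_\mrmv}$ is a separable metrizable Luzin space; $\PP$ is a Radon probability measure of full $\T_\mrmv$-support, and it is atomless as soon as $\mssm X=\infty$ (the case $\mssm X<\infty$, where $\PP$ charges the empty configuration, being reduced to the finite-mass / direct-sum description of $\dUpsilon$ as in \cite{LzDSSuz21}); and $\tparen{\dUpsilon,\T_\mrmv,\mssd_\dUpsilon}$ is a complete extended metric-topological space whose localizing ring is exactly the ring of $\mssd_\dUpsilon$-bounded sets --- all of this being contained in \cite[\S5]{LzDSSuz21}. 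Second, the topological local structure on $\dUpsilon$ just described, together with $\SF{\dUpsilon}{\PP}$ and core the cylinder functions $\Cyl{\Dz}$ (a subalgebra generating $\T_\mrmv$), is a \TLDS: the closability of $\tparen{\EE{\dUpsilon}{\PP},\Cyl{\Dz}}$ is \cite[Prop.~3.9]{LzDSSuz21} and the quasi-regularity of its closure is \cite[Cor.~6.3]{LzDSSuz21}. Combining the two yields, via Definition~\ref{d:EMLDSConfig2}, that $\tparen{\dUpsilon,\SF{\dUpsilon}{\PP},\mssd_\dUpsilon}$ is an \EMLDS.

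For the remaining items I would simply invoke the indicated theorems of \cite{LzDSSuz21}, after noting that the extra hypotheses listed in the statement are precisely those required there. Under $(\Rad{\mssd}{\mssm})$ and $\Dz\subset\bLip(\T,\mssd)$, the Rademacher property $(\Rad{\mssd_\dUpsilon}{\PP})$ for $\tparen{\dUpsilon,\SF{\dUpsilon}{\PP},\mssd_\dUpsilon}$ is \cite[Thm.~5.2]{LzDSSuz21}, giving (i); quasi-regularity, strong locality, and recurrence --- hence conservativity --- of $\tparen{\EE{\dUpsilon}{\PP},\dom{\EE{\dUpsilon}{\PP}}}$ are \cite[Cor.~6.3]{LzDSSuz21}, giving (ii). I would add that conservativity is in fact immediate from Theorem~\ref{t:AKR4.1} applied with the admissible choice $f\equiv 0$ (which lies in $\De$), since it yields $\TT{\dUpsilon}{\PP}_t\car=\car$ for every $t>0$. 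Assertion (iii) is then the standard fact that a quasi-regular strongly local Dirichlet form admits a properly associated conservative diffusion process, whose invariant measure is the underlying $L^2$-measure $\PP$ by construction, cf.\ \cite{CheMaRoe94}. Finally, under the additional tensorization assumption \cite[Ass.~4.22]{LzDSSuz21} on the base \MLDS, the quadraticity of the Cheeger energy $\Ch[\mssd_\dUpsilon,\PP]$ of $(\dUpsilon,\mssd_\dUpsilon,\PP)$ and its coincidence with $\EE{\dUpsilon}{\PP}$ are exactly \cite[Thm.~5.8]{LzDSSuz21}, giving (iv).

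The main obstacle is thus organizational rather than analytic: one must make sure that $\PP$ genuinely falls within the class of reference measures for which \cite{LzDSSuz21} is developed, the delicate point being the atomlessness/full-support requirement on $\PP$ as a measure on $\dUpsilon$, which fails at the empty configuration when $\mssm X<\infty$ and therefore forces one either to run the argument under the blanket hypothesis $\mssm X=\infty$ or to pass through the stratification $\dUpsilon=\bigsqcup_N\dUpsilon^\sym{N}$ in the finite-mass case. Beyond this bookkeeping no new estimate is needed: everything follows from \cite{LzDSSuz21} together with the semigroup and heat-kernel identifications of the preceding sections, namely Theorem~\ref{t:AKR4.1} and Proposition~\ref{p:KonLytRoe}.
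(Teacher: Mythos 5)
Your proposal is correct and matches the paper exactly: Theorem~\ref{t:GeometricProperties} is stated in the paper as a recollection of \cite[Thm.~5.2, Cor.~6.3, Thm.~5.8]{LzDSSuz21} specialized to $\QP=\PP$, with no separate proof given beyond the inline citations, and your assembly of those results (plus the standard association of a diffusion to a quasi-regular strongly local form for item (iii)) is precisely what is intended. Your additional caveat about the case $\mssm X<\infty$ and the alternative derivation of conservativity from Theorem~\ref{t:AKR4.1} with $f\equiv 0$ are sound but not needed, since the relevant setting is $\mssm X=\infty$ and recurrence already yields conservativity.
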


Our proof of Theorem~\ref{t:dSLConfig2} will be a byproduct of the study of maximal functions on~$\dUpsilon$ by means of the infinite-product heat semigroup.

\begin{lem}\label{l:AuxStoL} Let~$(\mcX,\mssd)$ be a metric local structure in the sense of Definition~\ref{d:EMLSConfig2}. Let~$\mbfx$, $\mbfy\in X^{\tym\infty}$ be so that~$\mssd_\tym{\infty}(\mbfx,\mbfy)<\infty$, and set~$A_{n,p}\eqdef \overline{B}^\mssd_{2^{-n}}(y_p)$ and
\begin{align*}
\mbfA_n\eqdef A_{n,1}\times \cdots \times A_{n,n}\times X\times X\times  \cdots \in \CylSet{\Ed} \fstop
\end{align*}
Then, there exists~$\nlim \mssd_{\tym\infty}(\mbfx,\mbfA_n)=\mssd_{\tym\infty}(\mbfx,\mbfy)$.
\begin{proof} The sought limit exists since the sequence~$\mbfA_n$ is monotone decreasing.
The inequality~`$\leq$' is trivial. Since~$\mssd_{\tym{\infty}}(\mbfx,\mbfy)<\infty$, for every~$\eps>0$ there exists~$m=m_\eps$ so that
\begin{equation*}
\abs{\mssd_{\tym{\infty}}(\mbfx,\mbfy)-\mssd_{\tym{m}}(\mbfx^\asym{m}, \mbfy^\asym{m})}<\eps\fstop
\end{equation*}
Therefore, since
\begin{equation*}
\nlim \mssd_{\tym\infty}(\mbfx,\mbfA_n)\geq \nlim \mssd_{\tym{m}}(\mbfx^\asym{m},\mbfA_n^\asym{m})\comma \qquad m\in\N_1\comma
\end{equation*}
it suffices to show that
\begin{align*}
\nlim \mssd_{\tym{m}}(\mbfx^\asym{m}, \mbfA_n^{\asym{m}})\geq \mssd_{\tym{m}}(\mbfx^\asym{m},\mbfy^\asym{m}) \comma \qquad m\in \N_1 \fstop
\end{align*}
In particular, it suffices to show that $\lim_n \mssd(x, A_n)=\mssd(x,y)$ whenever~$y\in A_n$ and $\nlim \diam A_n=0$. The latter follows by the reverse triangle inequality.
\end{proof}
\end{lem}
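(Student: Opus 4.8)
The plan is to establish the two inequalities separately, the substance lying in the lower bound. First I would note that the sequence $\tseq{\mbfA_n}_n$ is monotone decreasing: for $p\leq n$ one has $A_{n+1,p}\subset A_{n,p}$ since $2^{-(n+1)}<2^{-n}$, the extra constraint imposed by $\mbfA_{n+1}$ at coordinate $n+1$ (previously free) only shrinks the set further, and at all later coordinates both sets equal $X$. Hence $n\mapsto \mssd_\tym{\infty}(\mbfx,\mbfA_n)$ is non-decreasing, so the limit exists in $[0,\infty]$. Moreover $\mbfy\in\bigcap_n\mbfA_n$, because $y_p$ lies in the closed ball $\overline{B}^\mssd_{2^{-n}}(y_p)$ for every $n$ and $y_p\in X$; thus $\mssd_\tym{\infty}(\mbfx,\mbfA_n)\leq \mssd_\tym{\infty}(\mbfx,\mbfy)$ for all $n$, which gives `$\leq$' in the limit.

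For `$\geq$' I would argue by finite truncation. Since $\mssd_\tym{\infty}$ is the $\ell^2$-type product distance, $\mssd_\tym{\infty}(\mbfx,\mbfy)^2=\sum_{p\geq 1}\mssd(x_p,y_p)^2$, and $\mssd_\tym{\infty}(\mbfx,\mbfy)<\infty$, for every $\eps>0$ there is $m=m_\eps\in\N_1$ with $\abs{\mssd_\tym{\infty}(\mbfx,\mbfy)-\mssd_\tym{m}(\mbfx^\asym{m},\mbfy^\asym{m})}<\eps$. On the other hand, discarding the coordinates beyond $m$ only decreases distances, so $\mssd_\tym{\infty}(\mbfx,\mbfz)\geq \mssd_\tym{m}(\mbfx^\asym{m},\mbfz^\asym{m})$ for every $\mbfz$; since $\mbfz\in\mbfA_n$ forces $\mbfz^\asym{m}$ into the truncated base $\mbfA_n^\asym{m}$, taking the infimum over $\mbfz\in\mbfA_n$ yields $\mssd_\tym{\infty}(\mbfx,\mbfA_n)\geq \mssd_\tym{m}(\mbfx^\asym{m},\mbfA_n^\asym{m})$. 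It therefore suffices to prove, for each \emph{fixed} $m$, that $\nlim \mssd_\tym{m}(\mbfx^\asym{m},\mbfA_n^\asym{m})=\mssd_\tym{m}(\mbfx^\asym{m},\mbfy^\asym{m})$; combining these finitely many coordinate limits and then letting $\eps\downarrow 0$ closes the gap.

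This last, finite-dimensional assertion is elementary. Because $\mbfA_n^\asym{m}$ is a product set and the square of $\mssd_\tym{m}$ is the sum of the squared coordinate distances, the infimum decouples coordinatewise: $\mssd_\tym{m}(\mbfx^\asym{m},\mbfA_n^\asym{m})^2=\sum_{p=1}^m \mssd\tparen{x_p,A_{n,p}}^2$, with the convention $A_{n,p}=X$ (so $\mssd(x_p,A_{n,p})=0$) when $p>n$, while $\mssd_\tym{m}(\mbfx^\asym{m},\mbfy^\asym{m})^2=\sum_{p=1}^m \mssd(x_p,y_p)^2$. So everything reduces to the single-coordinate limit $\nlim \mssd\tparen{x_p,\overline{B}^\mssd_{2^{-n}}(y_p)}=\mssd(x_p,y_p)$, which is immediate from the reverse triangle inequality: any $z\in\overline{B}^\mssd_{2^{-n}}(y_p)$ satisfies $\mssd(x_p,z)\geq \mssd(x_p,y_p)-\mssd(y_p,z)\geq \mssd(x_p,y_p)-2^{-n}$, whereas $z=y_p$ furnishes the reverse bound. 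There is no genuine obstacle in this lemma; the only point demanding care is the order of limits — fix the truncation level $m$, send $n\to\infty$, and only afterwards let $m\to\infty$ — which is precisely what the hypothesis $\mssd_\tym{\infty}(\mbfx,\mbfy)<\infty$ legitimizes.
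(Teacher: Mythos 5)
Your proof is correct and follows essentially the same route as the paper's: monotonicity of $\tseq{\mbfA_n}_n$ for existence of the limit, the trivial upper bound via $\mbfy\in\bigcap_n\mbfA_n$, finite truncation at level $m_\eps$ using $\mssd_{\tym\infty}(\mbfx,\mbfy)<\infty$, coordinatewise decoupling over the product set, and the reverse triangle inequality for the single-coordinate limit. You merely spell out a few steps the paper leaves implicit.
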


\begin{lem}\label{l:Sobolev1}
Let~$(\mcX,\cdc,\mssd)$ be an \MLDS satisfying~\SCF and~$(\SL{\mssm}{\mssd})$.
Then, for every~$\mbfA\in \CylSet{\Ed}\cap \T^\tym{\infty}$, for~$\PP$-a.e.~$\gamma$ and some~$\mbfx \in \Lb^{-1}(\gamma)$,
\begin{align}\label{eq:l:Sobolev1:0}
\mssd_{\tym{\infty}}(\mbfx, \mbfA)\geq \hr{\PP,\Lb(\tilde\mbfA)}(\gamma)\geq \mssd^\reg_{\PP,\Lb(\tilde\mbfA)}(\gamma) \comma
\end{align}
where~$\hr{}$ denotes the maximal function in Definition~\ref{d:MaximalFunctionConfig2} and~$\mssd^\reg$ denotes the regularized intrinsic point-to-set distance in Definition~\ref{d:RegIntrinsicD}.

\begin{proof}
The second inequality is Proposition~\ref{p:RegIntrinsic}\ref{i:p:RegIntrinsic:4}, thus it suffices to show the first one.

\smallskip

As a consequence of Proposition~\ref{p:IrreducibilityBase}, the space~$(X,\cdc,\mssd)$ is irreducible.

Firstly, let us note that since~$\mbfA$ is $\T^\tym{\infty}$-open,~$A_i$ is $\T$-open for each~$i \leq n$, and that~$0<\mssm A_i<\infty$ for each~$i \leq n$.
Secondly, $(\SL{\mssm}{\mssd})$ implies that~$\hr{\mssm, A_i}\wedge r$ has a (non-relabeled) representative
\begin{align*}
\hr{\mssm, A_i}\wedge r\in \bLip(X,\mssd) = \bLip(X,\mssd,\T)\comma \qquad r\geq 0\comma\qquad i\leq n\fstop
\end{align*}
By continuity of~$\hr{\mssm, A_i}\wedge r$ we thus have~$\hr{\mssm,A_i}\equiv 0$ everywhere (rather than only $\mssm$-a.e.) on~$A_i$.
Since~$\hr{\mssm, A_i}\wedge r$ is $1$-Lipschitz, we conclude that~$\hr{\mssm, A_i}\wedge r \leq \mssd(\emparg, A_i) \wedge r$, hence, letting~$r$ to infinity,
\begin{align}\label{eq:l:Sobolev1:0.5}
\hr{\mssm, A_i} \leq \mssd(\emparg, A_i) \comma \qquad i\leq n \fstop
\end{align}

Since $\mssd(\emparg, A_i)$ is everywhere finite by finiteness of~$\mssd$, we conclude by~\cite[Thm.~5.2]{AriHin05} that, for every probability measure~$\nu\sim \mssm$,
\begin{align}\label{eq:l:Sobolev1:1}
\nu\text{-}\lim_{t\rar 0} \tparen{-2t \log\hh{X}{\mssm}_t(\emparg, A_i) }= \hr{\mssm, A_i}^2\comma \qquad i\leq n \fstop
\end{align}

Now, by~\ref{ass:SC} we have that~$\hh{X}{\mssm}_t(x, X)=1$ for each~$t>0$ and~$x\in X$, hence
\begin{align}\label{eq:l:dInfty:1}
-2t\log \mssh_t^{\tym{\infty}}(\emparg, \mbfA)=-2t \log \prod_{i=1}^n\hh{X}{\mssm}_t(\emparg_i, A_i)=- \sum_{i=1}^n 2t \log\hh{X}{\mssm}_t(\emparg_i, A_i) \fstop
\end{align}
By tensorization of convergence in probability and~\eqref{eq:l:Sobolev1:1}
\begin{align*}
\nu^{\otym{n}}\text{-}\lim_{t\rar 0} \sum_{i=1}^n -2t \log \hh{X}{\mssm}_t(\emparg_i, A_i)= \sum_{i=1}^n \hr{\mssm,A_i}^2
\end{align*}
for every Borel probability measure~$\nu\sim \mssm$.
In fact, since~$\nu^{\otym{n}}$ is a finite measure, by standard arguments there exists a sequence~$(t_k)_k$, with~$t_k\rar 0$ as $k\rar\infty$, so that
\begin{align}\label{eq:l:dInfty:2}
\nu^{\asym{n}}\text{-}\klim \sum_{i=1}^n -2t \log \hh{X}{\mssm}_{t_k}(\emparg_i, A_i)= \sum_{i=1}^n \hr{\mssm,A_i}^2
\end{align}
for every probability~$\nu^{\asym{n}}\ll \mssm^{\otym{n}}$.
In particular, we may choose~$\nu^{\asym{n}}=\PP^\asym{n}$ for some fixed labeling map~$\lb$.
Let now
\begin{align*}
Y^\asym{\infty}_{\eps, k}\eqdef& \set{\mbfx=\seq{x_i}_i^\infty\in\mbfX^{\asym{\infty}}_\locfin(\Ed): \abs{-2t_k\log\mssh_{t_k}^{\tym{\infty}}(\mbfx,\mbfA)-\sum_{i=1}^n\hr{\mssm,A_i}(x_i)^2}>\eps} \comma
\\
Y^\asym{n}_{\eps, k}\eqdef& \set{\seq{x_i}_{i\leq n}\in X^{\tym{n}} : \abs{-\sum_{i=1}^n 2t_k\log\mssh_{t_k}^{\tym{\infty}}(x_i,A_i)-\sum_{i=1}^n\hr{\mssm,A_i}(x_i)^2}>\eps} \fstop
\end{align*}
By definition of~$\mbfA$, one has~$Y^\asym{\infty}_{\eps,k}\subset (\tr^n)^{-1}\tparen{Y^\asym{n}_{\eps,k}}$. Thus, by~\eqref{eq:l:dInfty:2} with~$\nu^{\asym{n}}=\PP^\asym{n}$,
\begin{align*}
\klim \PP^\asym{\infty} Y^\asym{\infty}_{\eps, k}\leq& \klim \PP^\asym{n} Y^\asym{n}_{\eps,k}=0\fstop
\end{align*}

The above inequality yields the convergence
\begin{align}\label{eq:l:dInfty:3}
\PP^{\asym{\infty}}\text{-}\klim \tparen{-2t_k \log \mssh_{t_k}^\tym{\infty}(\emparg, \mbfA)}= &\sum_{i=1}^n \hr{\mssm,A_i}^2\leq \sum_{i=1}^n \mssd(\emparg, A_i)^2=\mssd_{\tym{\infty}}(\emparg,\mbfA)^2 \quad \as{\PP^\asym{\infty}}\comma
\end{align}
where the inequality is a consequence of~\eqref{eq:l:Sobolev1:0.5}, and the second equality holds by definition of~$\mssd_\tym{\infty}$.

Now, set~$u_k\eqdef -2t_k \log \mssh^\dUpsilon_{t_k}\tparen{\emparg,\Lb(\tilde\mbfA)}$, and let~$\mbfY \subset \mbfX^\asym{\infty}_\locfin(\Ed)$ be a set of full $\PP^\asym{\infty}$-measure so that~\eqref{eq:l:dInfty:3} holds for every~$\mbfx\in\mbfY$.
For each~$k\in\N_1$ let now~$\Theta_{t_k}\tbraket{\car_{\Lb(\tilde\mbfA)}}$ be as in~\eqref{eq:IdentificationSemigroup}, set~$\Omega\eqdef \bigcap_{k=1}^\infty \Theta_{t_k}\tbraket{\car_{\Lb(\tilde\mbfA)}}$, and note that~$\PP\Omega=1$.
For every~$\gamma\in\Omega\cap \Lb(\mbfY)$ and each~$\mbfx\in\Lb^{-1}(\gamma)\cap \mbfY$
\begin{align}
\nonumber
\infty>\mssd_{\tym{\infty}}(\mbfx, \mbfA)^2\geq &\ \tparen{\PP^{\asym{\infty}}\text{-}\klim \tparen{-2 t_k \log \mssh_{t_k}^{\tym{\infty}}(\emparg,\mbfA)}}(\mbfx)
\\
\label{eq:l:dInfty:5}
=&\ \tparen{\PP^{\asym{\infty}}\text{-}\klim \tparen{-2 t_k \log \mssh_{t_k}^{\tym{\infty}}(\emparg,\tilde \mbfA)}}(\mbfx)
\\
\label{eq:l:dInfty:6}
=&\ \tparen{\PP\text{-}\klim u_k}(\gamma)
\\
\label{eq:l:dInfty:7}
\geq&\ \tparen{\PP\text{-}\klim u_k \car_{\set{u_k<\infty}}}(\gamma)\comma
\end{align}
where~\eqref{eq:l:dInfty:5} holds by Corollary~\ref{c:SCConfig1}, and~\eqref{eq:l:dInfty:6} holds by~\eqref{eq:IdentificationSemigroup}.
Applying~\cite[Thm.~5.2]{AriHin05} to the form~$\tparen{\EE{\dUpsilon}{\PP},\dom{\EE{\dUpsilon}{\PP}}}$, we have that
\begin{align}\label{eq:l:dInfty:8}
\PP\text{-}\klim u_k\car_{\ttset{u_k <\infty}} = \hr{\PP, \Lb(\tilde\mbfA)}(\gamma)\cdot \car_{\ttset{\hr{\PP,\Lb(\tilde\mbfA)}<\infty}}\quad  \as{\PP} \ \text{ on } \Omega\cap \Lb(\mbfY) \fstop
\end{align}
Combining~\eqref{eq:l:dInfty:7} and~\eqref{eq:l:dInfty:8} yields
\begin{align*}
\mssd_\tym{\infty}(\mbfx,\mbfA)\geq  \hr{\PP, \Lb(\tilde\mbfA)}(\gamma)\cdot \car_{\ttset{\hr{\PP,\Lb(\tilde\mbfA)}<\infty}}(\gamma)\comma
\end{align*}
for $\PP$-a.e.~$\gamma\in \Omega\cap \Lb(\mbfY)$ and each~$ \mbfx\in \Lb^{-1}(\gamma)\cap \mbfY$, and the proof is concluded if we show that
\begin{align}
\PP \tset{\hr{\PP,\Lb(\tilde\mbfA)}<\infty}=1\fstop
\end{align}
To this end, recall that~$\PP\Omega=1$ by construction. 
By~\cite[Prop.~5.1]{AriHin05} we have that 
\[
\tset{\TT{\dUpsilon}{\PP}_t\car_{\Lb(\tilde\mbfA)}=0}=\tset{\hr{\PP,\Lb(\tilde\mbfA)}=\infty} \as{\PP}\fstop
\]
Thus, by the equivalence in~\cite[Prop.~5.1]{AriHin05}, it suffices to show that, for some $t>0$,
\begin{align*}
\mssh^\dUpsilon_t\tparen{\emparg, \Lb(\tilde\mbfA)}=\TT{\dUpsilon}{\PP}_t\car_{\Lb(\tilde\mbfA)}>0\comma \quad \as{\PP}\fstop
\end{align*}
Respectively by~\eqref{eq:IdentificationSemigroup},~\ref{ass:SC}, and irreducibility combined with~$\mssm A_i>0$, we see that
\begin{align*}
\mssh^\dUpsilon_t\tparen{\gamma,\Lb(\tilde\mbfA)}= \mssh^\tym{\infty}_t\tparen{\lb(\gamma),\mbfA}=\prod_{i=1}^n \hh{X}{\mssm}_t \tparen{\lb(\gamma)_i, A_i}>0\comma \qquad \gamma\in \Omega\cap \Theta_t\tbraket{\car_{\Lb(\tilde\mbfA)}}\comma
\end{align*}
which concludes the proof.
\end{proof}
\end{lem}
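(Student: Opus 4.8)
The second inequality in~\eqref{eq:l:Sobolev1:0} is Proposition~\ref{p:RegIntrinsic}\ref{i:p:RegIntrinsic:4} applied to the form~$\tparen{\EE{\dUpsilon}{\PP},\dom{\EE{\dUpsilon}{\PP}}}$ and the set~$\Lb(\tilde\mbfA)\in\A_\mrmv(\msE)$, so the whole content lies in the first inequality~$\mssd_{\tym{\infty}}(\mbfx,\mbfA)\geq \hr{\PP,\Lb(\tilde\mbfA)}(\gamma)$. The plan is to realize \emph{both} sides as short-time logarithmic heat-kernel asymptotics via the Ariyoshi--Hino theorem~\cite[Thm.~5.2]{AriHin05} ---~on the base space, after tensorizing over finitely many coordinates, for the right-hand side, and on~$\dUpsilon$ for~$\hr{\PP,\Lb(\tilde\mbfA)}$~--- and to bridge the two through the heat-kernel identification of Proposition~\ref{p:KonLytRoe}. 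First I would fix~$\mbfA=A_1\times\cdots\times A_n\times X\times X\times\cdots$ with each~$A_i$ being~$\T$-open and~$0<\mssm A_i<\infty$, note that the base~\MLDS is irreducible by Proposition~\ref{p:IrreducibilityBase}, and derive from~$(\SL{\mssm}{\mssd})$ the pointwise bound~$\hr{\mssm,A_i}\leq \mssd(\emparg, A_i)$ for~$i\leq n$: for each~$r\geq 0$ the function~$\hr{\mssm,A_i}\wedge r$ admits a~$1$-Lipschitz representative which, by continuity and full~$\T$-support of~$\mssm$, vanishes on the open set~$A_i$, hence is dominated by~$\mssd(\emparg,A_i)\wedge r$; letting~$r\to\infty$ gives the claim.

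The core of the argument is the base-to-product transfer. Ariyoshi--Hino yields~$\nu$-$\lim_{t\downarrow 0}\tparen{-2t\log\hh{X}{\mssm}_t(\emparg,A_i)}=\hr{\mssm,A_i}^2$ for every probability~$\nu\sim\mssm$, while~\ref{ass:SC} makes only the~$n$ factors~$\hh{X}{\mssm}_t(\emparg_i,A_i)$ of~$\mssh^{\tym{\infty}}_t(\emparg,\mbfA)$ different from~$1$, so that~$-2t\log\mssh^{\tym{\infty}}_t(\emparg,\mbfA)=-\sum_{i\leq n}2t\log\hh{X}{\mssm}_t(\emparg_i,A_i)$. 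Tensorizing convergence in probability over the~$n$ coordinates and extracting a single sequence~$t_k\downarrow 0$ along which the convergence is~$\mssm^{\otym{n}}$-a.e.\ ---~hence~$\nu^{\asym{n}}$-a.e.\ for every~$\nu^{\asym{n}}\ll\mssm^{\otym{n}}$, in particular for~$\PP^{\asym{n}}$, which is absolutely continuous by Lemma~\ref{l:AbsContPoissonProj}~--- I would then lift this to~$\mbfX^{\asym{\infty}}_\locfin(\Ed)$: since~$\mbfA$ is a genuine cylinder set, the relevant exceptional sets factor through~$\tr^n$, and therefore~$\PP^{\asym{\infty}}$-a.s.\ one has~$-2t_k\log\mssh^{\tym{\infty}}_{t_k}(\emparg,\mbfA)\to\sum_{i\leq n}\hr{\mssm,A_i}^2\leq \sum_{i\leq n}\mssd(\emparg,A_i)^2=\mssd_{\tym{\infty}}(\emparg,\mbfA)^2$.

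Finally I would transfer everything to~$\dUpsilon$. By Definition~\ref{d:HKExtension} and Corollary~\ref{c:SCConfig1} one has~$\mssh^\dUpsilon_t\tparen{\gamma,\Lb(\tilde\mbfA)}=\mssh^{\tym{\infty}}_t\tparen{\lb(\gamma),\mbfA}$, and by~\eqref{eq:IdentificationSemigroup} this equals~$\TT{\dUpsilon}{\PP}_t\car_{\Lb(\tilde\mbfA)}$ for~$\PP$-a.e.~$\gamma$; setting~$u_k\eqdef -2t_k\log\mssh^\dUpsilon_{t_k}\tparen{\emparg,\Lb(\tilde\mbfA)}$ and~$\mbfx\eqdef\lb(\gamma)$, the previous display gives~$\tparen{\PP\text{-}\klim u_k}(\gamma)\leq \mssd_{\tym{\infty}}(\mbfx,\mbfA)^2$, a fortiori~$\tparen{\PP\text{-}\klim u_k\,\car_{\ttset{u_k<\infty}}}(\gamma)\leq \mssd_{\tym{\infty}}(\mbfx,\mbfA)^2$, on a set of full~$\PP$-measure. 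Applying Ariyoshi--Hino once more, now to~$\tparen{\EE{\dUpsilon}{\PP},\dom{\EE{\dUpsilon}{\PP}}}$, identifies this last limit with~$\hr{\PP,\Lb(\tilde\mbfA)}^2\,\car_{\ttset{\hr{\PP,\Lb(\tilde\mbfA)}<\infty}}$ $\PP$-a.e., and~\cite[Prop.~5.1]{AriHin05} reduces the remaining claim~$\PP\ttset{\hr{\PP,\Lb(\tilde\mbfA)}<\infty}=1$ to~$\TT{\dUpsilon}{\PP}_t\car_{\Lb(\tilde\mbfA)}>0$ $\PP$-a.e., which holds because~$\TT{\dUpsilon}{\PP}_t\car_{\Lb(\tilde\mbfA)}(\gamma)=\prod_{i\leq n}\hh{X}{\mssm}_t\tparen{\lb(\gamma)_i,A_i}>0$ by irreducibility of the base form together with~$\mssm A_i>0$. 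This yields the desired inequality for~$\PP$-a.e.~$\gamma$ and~$\mbfx=\lb(\gamma)\in\Lb^{-1}(\gamma)$.

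The main obstacle I anticipate is the bookkeeping around \emph{convergence in probability}: Ariyoshi--Hino supplies only convergence in~$\nu$-probability, so the tensorization over the~$n$ coordinates, the lift from~$\mbfX^{\asym{n}}$ to~$\mbfX^{\asym{\infty}}_\locfin(\Ed)$, and the second application of the theorem on~$\dUpsilon$ must all be threaded through a \emph{single} sequence~$t_k\downarrow 0$ producing simultaneous a.e.\ convergence, compatibly with the projections~$\PP^{\asym{\infty}}\to\PP^{\asym{n}}$ and with~\eqref{eq:IdentificationSemigroup}. A secondary subtlety is that~$\hr{\PP,\Lb(\tilde\mbfA)}$ may a priori be~$+\infty$ on a set of positive measure, which is why the Ariyoshi--Hino statement carries the truncation~$\car_{\ttset{u_k<\infty}}$; disposing of that truncation is exactly the role of the final positivity/irreducibility step.
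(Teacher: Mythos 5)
Your proposal is correct and follows essentially the same route as the paper's proof: the bound~$\hr{\mssm,A_i}\leq\mssd(\emparg,A_i)$ from~$(\SL{\mssm}{\mssd})$, the Ariyoshi--Hino asymptotics~\cite{AriHin05} tensorized over the~$n$ nontrivial coordinates and lifted through~$\tr^n$ using~$\PP^\asym{n}\ll\mssm^{\otym{n}}$, the transfer to~$\dUpsilon$ via the heat-kernel identification of Proposition~\ref{p:KonLytRoe}, a second application of Ariyoshi--Hino to~$\EE{\dUpsilon}{\PP}$, and the removal of the truncation via~\cite[Prop.~5.1]{AriHin05} together with irreducibility and~$\mssm A_i>0$. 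The only differences (extracting a single sequence~$t_k$ giving a.e.\ rather than in-probability convergence, and taking~$\mbfx=\lb(\gamma)$ instead of an arbitrary element of~$\Lb^{-1}(\gamma)\cap\mbfY$) are immaterial and, if anything, slightly streamline the bookkeeping.
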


We conclude this section by showing the distance-Sobolev-to-Lipschitz property for~$\dUpsilon$.

\begin{proof}[Proof of Theorem~\ref{t:dSLConfig2}]
Note that the space~$(X,\cdc,\mssd)$ is irreducible as a consequence of Proposition~\ref{p:IrreducibilityBase}.

We start by showing that there exists~$\Omega\in\A_\mrmv(\Ed)$ with~$\PP\Omega=1$ so that
\begin{align}\label{eq:dSLPoissonAE}
\mssd_\dUpsilon(\gamma,\eta)\geq \mssd_{\PP}(\gamma,\eta) \comma \qquad \gamma\in\Omega\comma \eta\in\dUpsilon \fstop
\end{align}
Fix~$\eta\in \dUpsilon$,~$\mbfy\in \Lb^{-1}(\eta)$, and let~$\seq{\mbfA_n}_n$ be a sequence of cylinder sets converging to~$\set{\mbfy}$ as in Lemma~\ref{l:AuxStoL}.
For~$n\in\N_1$ let~$\Omega[\mbfA_n]\in\A_\mrmv(\msE)$ be a set of full~$\PP$-measure such that~\eqref{eq:l:Sobolev1:0} holds everywhere on~$\Omega[\mbfA_n]$.
Further set~$\Omega\eqdef \bigcap_n \Omega[\mbfA_n]$, and note that~$\PP\Omega=1$.

Now, let~$\gamma\in\Omega$. If~$\mssd_\dUpsilon(\gamma,\eta)=\infty$ the assertion is trivial.
Thus, without loss of generality,~$\mssd_\dUpsilon(\gamma,\eta)<\infty$.
Set~$\mbfx\eqdef \lb_\mbfy(\gamma)$ where~$\lb_\mbfy$ is the labeling map, radially isometric around~$\eta$, constructed in \cite[Prop.~\ref{p:PoissonLabeling}\ref{i:p:PoissonLabeling:2}]{LzDSSuz21}.
Then,
\begin{align*}
\mssd_\dUpsilon(\gamma,\eta)=\mssd_{\tym{\infty}}(\mbfx,\mbfy)= \nliminf \mssd_{\tym{\infty}}(\mbfx,\mbfA_n)\geq \nliminf \mssd^\reg_{\PP, \Lb(\tilde\mbfA_n)}(\gamma)=\mssd_\PP\tparen{\gamma, \eta}\comma
\end{align*}
where the first equality holds by definition of~$\mbfy$ since~$\lb_\mbfx$ is distance preserving, the second equality is Lemma~\ref{l:AuxStoL}, the inequality holds by Lemma~\ref{l:Sobolev1}, and the last equality is Proposition~\ref{p:RegIntrinsic}\ref{i:p:RegIntrinsic:2}.

It remains to show how to extend~\eqref{eq:dSLPoissonAE} to~$\dUpsilon^\tym{2}$.
To this end, fix~$\gamma,\eta\in\dUpsilon$, and let~$\seq{\gamma_n}_n\subset \Omega$ be $\Ed$-vaguely converging to~$\gamma$.
Fix~$x_0\in X$ and, for each~$r>0$, set
\begin{align*}
\Lambda_{\eta,r}\eqdef \set{\eta\in\dUpsilon: \eta_{B_r(x_0)}=\gamma_{B_r(x_0)}}\fstop
\end{align*}
Then,
\begin{align*}
\mssd_\PP(\gamma,\eta)=&\ \mssd^\reg_{\PP,\set{\eta}}(\gamma)= \lim_{r\rar \infty} \mssd^\reg_{\PP,\Lambda_{\eta,r}}(\gamma)\leq  \lim_{r\rar \infty} \nliminf \mssd^\reg_{\PP,\Lambda_{\eta,r}}(\gamma_n)
\end{align*}
respectively by Proposition~\ref{p:RegIntrinsic}\ref{i:p:RegIntrinsic:1},~\ref{i:p:RegIntrinsic:2}, and~\ref{i:p:RegIntrinsic:3}.
We may continue the above chain of inequalities with
\begin{align*}
\mssd_\PP(\gamma,\eta)\leq &\ \lim_{r\rar \infty} \nliminf \mssd^\reg_{\PP,\Lambda_{\eta,r}}(\gamma_n) \leq \lim_{r\rar \infty} \nliminf \mssd_\PP(\gamma_n,\Lambda_{\eta,r})
\\
\leq&\ \lim_{r\rar \infty}\nliminf \mssd_\dUpsilon(\gamma_n, \Lambda_{\eta,r}) = \lim_{r\rar \infty} \mssd_\dUpsilon(\gamma, \Lambda_{\eta,r})= \mssd_\dUpsilon(\gamma,\eta)\comma
\end{align*}
where the second inequality holds by the second assertion in Proposition~\ref{p:RegIntrinsic}\ref{i:p:RegIntrinsic:1}, the third inequality holds by~\eqref{eq:dSLPoissonAE}, and the last two equalities respectively hold by~\cite[Prop.~4.29$(iii)$ and $(iv)$]{LzDSSuz21}.
\end{proof}

\subsubsection{Logarithmic-Harnack and entropy-cost inequalities}
The equivalence between the logarithmic Harnack inequality for~$\hh{X}{\mssm}_\bullet$ on a base space~$X$ and the logarithmic Harnack inequality for~$\mssh^\dUpsilon_\bullet$ on the corresponding configuration space~$\dUpsilon$ was shown in full generality by C.-S.~Deng in~\cite{Den14}.
Together with the identification of~$\hh^\dUpsilon_\bullet$ with the heat kernel~$\hh{\dUpsilon}{\PP}_\bullet$ representing~$\TT{\dUpsilon}{\PP}_\bullet$ (Prop.~\ref{p:KonLytRoe}), this show the following equivalence result.

\begin{thm}[Equivalence: Log-Harnack inequality]\label{t:Deng}
Let~$(\mcX,\cdc,\mssd)$ be an~\MLDS satisfying\linebreak \SCF, and~$K\in\R$.
Then, the following are equivalent:
\begin{enumerate}[$(a)$]
\item $(\mcX,\cdc,\mssd)$ satisfies~\ref{ass:LogH} with rate~$K$;
\item $\tparen{\dUpsilon^\sym{\infty},\SF{\dUpsilon}{\PP},\mssd_\dUpsilon}$ satisfies~\ref{ass:LogH} with rate~$K$.
\end{enumerate}

\begin{proof}
Choosing~$\phi(x,y)\eqdef\mssd(x,y)^2$ and~$C\eqdef \tparen{4I_{2K}(t)}^{-1}$ in~\cite[Thm.~3.3]{Den14}, the proofs of~\cite[Thm.s~3.1, 3.3]{Den14} carry over \emph{verbatim} to our setting, with the exception of the approximation argument in~\cite[proof of Thm.~3.1, part (c)]{Den14}.
In fact, the stated approximation holds only~$\PP$-a.e., but this is sufficient for all our purposes here.
\end{proof}
\end{thm}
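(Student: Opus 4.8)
The plan is to deduce the equivalence from the abstract result of C.-S.~Deng~\cite{Den14}, once two structural facts from~\S\ref{sss:HeatIdentification} are in place. First, by Proposition~\ref{p:KonLytRoe} the heat-kernel measure representing $\TT{\dUpsilon}{\PP}_\bullet$ agrees ($\PP$-a.e.)\ with the pointwise kernel $\mssh^\dUpsilon_\bullet$, which under any labeling $\lb$ factorizes as the \emph{independent-particle} kernel $\mssh^\dUpsilon_t(\gamma,\diff\emparg)=\Lb_\pfwd\mssh^\tym{\infty}_t(\mbfx,\diff\emparg)$, with $\mssh^\tym{\infty}_t(\mbfx,\diff\mbfy)=\bigotimes_i\hh{X}{\mssm}_t(x_i,\diff y_i)$ (Lemma~\ref{l:CoincidenceHK} and Corollary~\ref{c:SCConfig1}); this is exactly the configuration-space heat semigroup in Deng's framework. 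Second, by~\cite{LzDSSuz21} the $L^2$-transportation distance decomposes as $\mssd_\dUpsilon(\gamma,\eta)=\inf\set{\mssd_{\tym\infty}(\mbfx,\mbfy):\mbfx\in\Lb^{-1}(\gamma),\ \mbfy\in\Lb^{-1}(\eta)}$, with $\mssd_{\tym\infty}(\mbfx,\mbfy)^2=\sum_p\mssd(x_p,y_p)^2$. The hinge of the argument is F.-Y.~Wang's reformulation of~\ref{ass:LogH} as an entropy--cost inequality (see~\cite[Thm.~3.1]{Den14}): under~\SCF, so that $\hh{X}{\mssm}_t(x,\diff\emparg)$ and $\mssh^\dUpsilon_t(\gamma,\diff\emparg)$ are honest probability measures depending measurably on their base point, \ref{ass:LogH} with rate $K$ is equivalent to
\begin{align*}
\Ent_{\hh{X}{\mssm}_t(y,\emparg)}\tparen{\hh{X}{\mssm}_t(x,\emparg)}\leq \frac{\mssd(x,y)^2}{4I_{2K}(t)}\comma \qquad \forallae{\mssm^\otym{2}} (x,y)\comma
\end{align*}
and, similarly, on $\dUpsilon^\sym{\infty}$ with $(\mssd,\mssm,\hh{X}{\mssm}_\bullet)$ replaced by $(\mssd_\dUpsilon,\PP,\mssh^\dUpsilon_\bullet)$.

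For $(a)\Rightarrow(b)$ I would tensorize the base entropy--cost inequality: relative entropy is additive under products and $\mssd_{\tym\infty}^2=\sum_p\mssd^2$, so for $\lb_\pfwd\PP$-a.e.\ $\mbfx$ and $\lb_\pfwd\PP$-a.e.\ $\mbfy$ — here using Lemma~\ref{l:AbsContPoissonProj} to pass from $\mssm^\otym{n}$-a.e.\ to $\PP^\asym{n}$-a.e.\ statements, and the labeling-universal $\sigma$-algebra $\boldSigma^*(\msE)$ to make the choice of $\lb$ immaterial — one obtains $\Ent_{\mssh^\tym{\infty}_t(\mbfy,\emparg)}(\mssh^\tym{\infty}_t(\mbfx,\emparg))\leq \mssd_{\tym\infty}(\mbfx,\mbfy)^2/(4I_{2K}(t))$. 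Since relative entropy does not increase under a common measurable pushforward, applying $\Lb$ and using $\Lb_\pfwd\mssh^\tym{\infty}_t=\mssh^\dUpsilon_t$ together with the infimum over labelings defining $\mssd_\dUpsilon$ yields $\Ent_{\mssh^\dUpsilon_t(\eta,\emparg)}(\mssh^\dUpsilon_t(\gamma,\emparg))\leq \mssd_\dUpsilon(\gamma,\eta)^2/(4I_{2K}(t))$ for $\PP^\otym{2}$-a.e.\ $(\gamma,\eta)$; Wang's equivalence on $\dUpsilon^\sym{\infty}$ then gives~\ref{ass:LogH}, with $\mssh^\dUpsilon_\bullet$ identified with the heat kernel of $\EE{\dUpsilon}{\PP}$ via Proposition~\ref{p:KonLytRoe}.

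For $(b)\Rightarrow(a)$ I would mirror Deng's argument. Testing the configuration-space~\ref{ass:LogH} against the exponential cylinder functions $u=\exp\tparen{\log(1+f)^\trid}\in\Ex{\De}$, $f\in\De$, and rewriting the two sides by the semigroup representation (Theorem~\ref{t:AKR4.1}: $\TT{\dUpsilon}{\PP}_t u=\exp\tparen{\log(1+\TT{X}{\mssm}_t f)^\trid}$) and by Corollary~\ref{c:ExchangeHeatSemigroupTrid} ($\TT{\dUpsilon}{\PP}_t(\log u)=(\TT{X}{\mssm}_t\log(1+f))^\trid$), the inequality becomes
\begin{align*}
\tparen{\TT{X}{\mssm}_t\log(1+f)}^\trid\gamma\leq \log\tparen{1+\TT{X}{\mssm}_t f}^\trid\eta + \frac{\mssd_\dUpsilon(\gamma,\eta)^2}{4I_{2K}(t)}\comma \qquad \forallae{\PP^\otym{2}} (\gamma,\eta)\fstop
\end{align*}
Deng's localization extracts the base inequality from this by restricting to the one-particle sector $\dUpsilon^\sym{1}\cong X$, on which $\mssh^\dUpsilon_t$ acts as $\hh{X}{\mssm}_t$ and on which $\PP^\otym{2}$ and $\mssm^\otym{2}$ are mutually absolutely continuous (when $\mssm X<\infty$; the general case reduces to this by localization over $\mssd$-balls); the test class $\set{1+f:f\in\De}$ is then enlarged to all positive bounded functions by scaling — legitimate since $\TT{X}{\mssm}_t\uno=\uno$ by~\ref{ass:SC} — and monotone convergence, which is~\ref{ass:LogH} on the base.

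The step I expect to be the main obstacle is the one place where Deng's proof does not transfer verbatim: the approximation argument in~\cite[proof of Thm.~3.1(c)]{Den14}, which on manifolds uses the jointly continuous, everywhere-defined heat kernel to upgrade a limiting identity to an everywhere statement. At the \TLDS level of generality only $\PP$-a.e.\ (resp.\ $\mssm$-a.e.)\ objects are available, so this step yields only a $\PP$-a.e.\ identity; since~\ref{ass:LogH} is itself an a.e.\ statement, this loss is immaterial. The remaining care is routine: the measure-theoretic bookkeeping to run the tensorization inside $\boldSigma^*(\msE)$, and the verification that Wang's equivalence between~\ref{ass:LogH} and the entropy--cost inequality persists on the \emph{extended}-metric measure space $\tparen{\dUpsilon^\sym{\infty},\mssd_\dUpsilon,\PP}$ (cf.~\cite{AmbErbSav16}).
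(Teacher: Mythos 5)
Your overall architecture is sound and is, in substance, the argument the paper outsources to Deng: identify $\TT{\dUpsilon}{\PP}_\bullet$ with the independent-particle kernel $\mssh^\dUpsilon_\bullet=\Lb_\pfwd\mssh^{\tym{\infty}}_\bullet$, tensorize the one-particle inequality over the product using the additivity of $\mssd_{\tym{\infty}}^2$, push down along $\Lb$ and optimize over labelings, and recover the base inequality by restriction to the one-particle sector. Packaging the tensorization through the Donsker--Varadhan duality (log-Harnack as an entropy--cost inequality, additivity of relative entropy over products, data-processing under $\Lb$) is a clean and legitimate way to run that step.

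The gap is in your handling of the null sets, and it bites in both directions. For $(a)\Rightarrow(b)$: the base inequality is only an $\mssm^{\otym{2}}$-a.e.\ statement, and Lemma~\ref{l:AbsContPoissonProj} controls only the \emph{marginal} laws $\PP^{\asym{n}}\ll\mssm^{\otym{n}}$ of a single labeling; the pairs $(x_i,y_i)$ you must feed into the entropy--cost inequality come from a near-optimal matching of $\Lb^{-1}(\gamma)$ with $\Lb^{-1}(\eta)$, whose joint law is singular with respect to $\mssm^{\otym{2}}$, so an a.e.\ hypothesis says nothing about them. Moreover the conclusion ``for $\PP^{\otym{2}}$-a.e.\ $(\gamma,\eta)$'' is vacuous, since $\mssd_\dUpsilon=\infty$ for $\PP^{\otym{2}}$-a.e.\ pair; what is actually needed downstream (Theorems~\ref{t:SL} and~\ref{t:MixedPoissonAll}) is the inequality for $\PP$-a.e.\ $\gamma$ and \emph{every} $\eta$. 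Symmetrically, in $(b)\Rightarrow(a)$ the sector $\dUpsilon^\sym{1}$ is $\PP$-null (as $\mssm X=\infty$), so an a.e.\ form of $(b)$ cannot be restricted there; and the alternative extraction via $\Ex{\De}$ does not close, because the spectator terms $\sum_{z\in\gamma}$ on the two sides differ by a Jensen gap of the wrong sign. The repair is available from your own hypotheses and is exactly what makes Deng's proof ``verbatim'': work throughout with the \emph{pointwise} kernels. Under \SCF one has $\hh{X}{\mssm}_t(\emparg,A)\in\Cb(\T)$, so $x\mapsto\hh{X}{\mssm}_t(x,\diff\emparg)$ is continuous, relative entropy is jointly lower semicontinuous, $\mssd^2$ is continuous, and $\mssm^{\otym{2}}$ has full support; hence the a.e.\ entropy--cost inequality on the base upgrades to a pointwise one. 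Tensorization and restriction to $\dUpsilon^\sym{1}$ are then unproblematic at the level of $\mssh^\dUpsilon_\bullet$ from Definition~\ref{d:HKExtension}, and only at the very end does one invoke Proposition~\ref{p:KonLytRoe} to identify $\mssh^\dUpsilon_\bullet$ with the $L^2(\PP)$-semigroup up to the negligible sets $\Theta_t[u]^\complement$ --- which is precisely the single non-verbatim ``approximation'' step the paper flags.
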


In particular, by the very same argument for the analogous statement on manifolds in~\cite[Lem.~5.3]{ErbHue15}, the validity of~\ref{ass:LogH} for~$\dUpsilon$ implies the following \emph{entropy cost-inequality}
\begin{align}\label{eq:EntropyCost}
\Ent_\PP(\TT{\dUpsilon}{\PP}_t\mu)\leq \Ent_\PP(\nu)+\frac{W_{2,\mssd_\dUpsilon}(\mu,\nu)^2}{4 I_{2K}(t)}\comma \qquad \mu\in\msP^\PP(\dUpsilon)\comma \nu\in\dom{\Ent_\PP}\comma\quad t>0\fstop
\end{align}

\subsubsection{Wasserstein contractivity of the heat flow}
In order to state our main result for this section, let us first collect the necessary assumptions, strengthening~\SCF.

\begin{defs}[Quantitative~\SCF]\label{d:qSCF}
We say that an~\MLDS~$(\mcX,\cdc,\mssd)$ satisfies~\qSCF if it satisfies~\SCF and additionally
\begin{align}\tag*{$(\mathsf{qF_\Ed})_{\ref{d:qSCF}}$}\label{ass:qF}
\int_X\mssd(x,y)^2\, \hh{X}{\mssm}_t(x,\diff y)<\infty \comma \qquad x\in X\comma\quad t>0\fstop
\end{align}
\end{defs}

\begin{rem}\label{r:qSCF}
If~$(\mcX,\cdc,\mssd)$ satisfies~\qSCF, then~$\hh{X}{\mssm}(x,\diff\emparg)\in\msP_2(X)$ for every~$x\in X$.
Indeed, since~$\mssd$ is a distance (as opposed to: an extended distance), we have that $W_{2,\mssd}(\delta_{x_0},\delta_x)=\mssd(x_0,x)<\infty$ for every fixed~$x_0,x\in X$, and therefore~\ref{ass:qF} is equivalent to the requirement that
\begin{align}\label{eq:r:qSCF}
\int_X\mssd(x_0,y)^2\, \hh{X}{\mssm}_t(x,\diff y)<\infty \comma \qquad x\in X\comma \quad t>0\fstop
\end{align}
Even though we shall not need~\qSCF for \EMLDS's, we find more insightful to state it in its form above rather than as in~\eqref{eq:r:qSCF}, since on an \EMLDS~\eqref{eq:r:qSCF} should not be expected to hold for arbitrary~$x_0$, but only for~$x_0=x$.
\end{rem}

\begin{thm}[Equivalence: Wasserstein contractivity estimate]\label{t:WC}
Let~$(\mcX,\cdc,\mssd)$ be an~\MLDS satisfying~\qSCF, and~$c(t)$ be as in~\eqref{eq:WCConstant}.
Then, the following are equivalent:
\begin{enumerate}[$(a)$]
\item\label{i:t:WC:1} $(\mcX,\cdc,\mssd, \hh{X}{\mssm}_\bullet)$ satisfies~$(\kWC{\mssd})$ with rate~$c(t)$,
\item\label{i:t:WC:2} $\tparen{\dUpsilon, \SF{\dUpsilon}{\PP},\mssd_\dUpsilon,\mssh^\dUpsilon_\bullet}$ with~$\mssh^\dUpsilon_\bullet$ defined in~\eqref{eq:d:Heat:1} satisfies~$(\kWC{\mssd_\dUpsilon})$ with rate~$c(t)$.
\end{enumerate}

\begin{proof}
Throughout the proof, let~$t>0$ be fixed.

\noindent\ref{i:t:WC:1}$\implies$\ref{i:t:WC:2}.
It suffices to show the assertion when~$W_{2,\mssd_\dUpsilon}(\mu_0,\mu_1)<\infty$.

Since~$\mssd_\dUpsilon(\gamma,\eta)<\infty$ implies~$\gamma X=\eta X$ for every~$\gamma,\eta\in\dUpsilon$, we have that
\begin{align}\label{eq:t:WC:1.5}
W_{2,\mssd_\dUpsilon}(\mu_0,\mu_1)^2=\sum_{n=0}^\infty W_{2,\mssd_\dUpsilon}(\mu_0\mrestr{\dUpsilon^\sym{n}}, \mu_1\mrestr{\dUpsilon^\sym{n}})^2 \comma \qquad \mu_0,\mu_1\in\msP(\dUpsilon) \comma
\end{align}
where we extended~$W_{2,\mssd_\dUpsilon}$ to sub-probability measures on~$\dUpsilon$ in the obvious way.
Further recall Definition~\ref{d:HKExtension}.
By~\ref{ass:SC}, we have that~$\mssh^\dUpsilon_\bullet$ leaves~$\dUpsilon^\sym{n}$ invariant for every~$n\in\N_0$.
As a consequence,
\begin{align}\label{eq:t:WC:1.7}
t\mapsto (\mssh^\dUpsilon_t\mu) \dUpsilon^\sym{n} \quad \text{is constant}\comma\qquad n\in\N_0\comma \mu\in\msP(\dUpsilon) \fstop
\end{align}
Combining~\eqref{eq:t:WC:1.7} with~\eqref{eq:t:WC:1.5} proves that it suffices to show that
\begin{align*}
W_{2,\mssd_\dUpsilon}(\mssh^\dUpsilon_t\mu_0\mrestr{\dUpsilon^\sym{N}},\mssh^\dUpsilon_t\mu_1\mrestr{\dUpsilon^\sym{N}}) \leq c(t)\, W_{2,\mssd_\dUpsilon}(\mu_0\mrestr{\dUpsilon^\sym{N}},\mu_1\mrestr{\dUpsilon^\sym{N}}) \comma \qquad N\in\overline\N_0\comma \quad t>0\fstop
\end{align*}
We show the latter statement for~$N=\infty$, the assertion for finite~$n$ being similar and simpler, and therefore omitted.

Fix~$\gamma,\eta\in\dUpsilon^\sym{\infty}$ with~$\mssd_\dUpsilon(\gamma,\eta)<\infty$. We claim that
\begin{align}\label{eq:t:WC:1.8}
W_{2,\mssd_\dUpsilon}\tparen{\mssh^\dUpsilon_t(\gamma,\diff\emparg),\mssh^\dUpsilon_t(\eta,\diff\emparg)}\leq c(t)\, \mssd_\dUpsilon(\gamma,\eta) \fstop
\end{align}
Indeed, by \cite[Prop.~\ref{p:PoissonLabeling}\ref{i:p:PoissonLabeling:1}]{LzDSSuz21} there exist~$\mbfx,\mbfy\in\mbfX^\asym{\infty}_\locfin(\msE)$ with
\begin{align}\label{eq:t:WC:1}
\mssd_{\tym{\infty}}(\mbfx,\mbfy)=\mssd_\dUpsilon(\gamma,\eta)\fstop
\end{align}
In view of Remark~\ref{r:qSCF} for every~$i\in\N_1$, there exists~$\kappa_i\in \msP(X^\tym{2})$ an optimal coupling between~$\hh{X}{\mssm}_t(x_i,\diff\emparg)$ and~$\hh{X}{\mssm}_t(y_i,\diff\emparg)$.
Set~$\boldkappa\eqdef \bigotimes_{i=1}^\infty \kappa_i$, and note that~${\Lb^\tym{2}}_\pfwd\boldkappa$ defines a coupling of~$\mssh^\dUpsilon_t(\gamma,\diff\emparg)$ and~$\mssh^\dUpsilon(\eta,\diff\emparg)$, concentrated on~$\dUpsilon^\sym{\infty}$ by Corollary~\ref{c:SCConfig1}.
Thus,
\begin{align}
\label{eq:t:WC:2a}
W_{2,\mssd_\dUpsilon}\tparen{\mssh^\dUpsilon_t(\gamma,\emparg), \mssh^\dUpsilon_t(\eta,\emparg)}^2\leq&\ \int_{{\dUpsilon^\sym{\infty}}^\tym{2}} \mssd_\dUpsilon^2 \diff {\Lb^\tym{2}}_\pfwd \boldkappa
\\
\nonumber
\leq&\ \int_{X^\tym{\infty}} \mssd_{\tym\infty}^2 \diff \boldkappa = \sum_{i=1}^\infty \int_{X^\tym{2}} \mssd^2 \diff \kappa_i 
\\
\nonumber
=&\ \sum_{i=1}^\infty W_{2,\mssd}\tparen{\hh{X}{\mssm}_t(x_i,\diff\emparg),\hh{X}{\mssm}_t(y_i,\diff\emparg)}^2
\\
\label{eq:t:WC:2b}
\leq&\ c(t)^2 \sum_{i=1}^\infty \mssd(x_i,y_i)^2= c(t)^2\, \mssd_{\tym{\infty}}(\mbfx,\mbfy)^2
\\
\nonumber
=&\ c(t)^2\, \mssd_\dUpsilon(\gamma,\eta)^2\fstop
\end{align}
Here:~\eqref{eq:t:WC:2a} holds by definition of~$W_{2,\mssd_\dUpsilon}$ and since~${\Lb^\tym{2}}_\pfwd\boldkappa$ is a coupling between its marginals, \eqref{eq:t:WC:2b}~follows from the assumption of~$(\kWC{\mssd})$ with~$\mu_0=\delta_{x_i}$ and~$\mu_1=\delta_{y_i}$ for every~$i\in \N_1$, and the last equality is~\eqref{eq:t:WC:1}.
The rest of the proof now follows as in~\cite[Thm.~4.9]{ErbHue15}.

\medskip

\noindent\ref{i:t:WC:2}$\implies$\ref{i:t:WC:1}.
Since the Dirac embedding~$\delta\colon (X,\mssd)\to(\dUpsilon,\mssd_\dUpsilon)$ is an isometric embedding, the corresponding push-forward map~$\delta_\pfwd \colon \tparen{\msP(X), W_{2,\mssd}}\to \tparen{\msP(\dUpsilon^\sym{1}), W_{2,\mssd_\dUpsilon}}$ is an isometry of extended metric spaces.
Furthermore,~$\mssh^\dUpsilon_t\delta_\pfwd\mu=\delta_\pfwd(\hh{X}{\mssm}_t\mu)$ for every~$t>0$.
Thus,~$(\kWC{\mssd})$ for~$\mu_0,\mu_1\in\msP(X)$ follows applying the assumption~$(\kWC{\mssd_\dUpsilon})$ to~$\delta_\pfwd \mu_0$,~$\delta_\pfwd \mu_1$.
\end{proof}
\end{thm}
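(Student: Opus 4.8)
The plan is to prove the two implications separately, in each case exploiting the tensor-product formula~\eqref{eq:d:Heat:1} for the pointwise-defined heat kernel~$\mssh^\dUpsilon_\bullet$ together with the stochastic completeness of~$\dUpsilon^\sym{\infty}$ (Corollary~\ref{c:SCConfig1}).

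\emph{From~\ref{i:t:WC:1} to~\ref{i:t:WC:2}.} First I would reduce to pairs~$\mu_0,\mu_1$ with~$W_{2,\mssd_\dUpsilon}(\mu_0,\mu_1)<\infty$, and then stratify by particle number: since~$\mssd_\dUpsilon(\gamma,\eta)<\infty$ forces~$\gamma X=\eta X$, every coupling of finite cost is concentrated on~$\bigsqcup_{N\in\overline\N_0}\dUpsilon^\sym{N}\times\dUpsilon^\sym{N}$, whence
\begin{align*}
W_{2,\mssd_\dUpsilon}(\mu_0,\mu_1)^2=\sum_{N\in\overline\N_0}W_{2,\mssd_\dUpsilon}\tparen{\mu_0\mrestr{\dUpsilon^\sym{N}},\mu_1\mrestr{\dUpsilon^\sym{N}}}^2\fstop
\end{align*}
By~\ref{ass:SC} and Corollary~\ref{c:SCConfig1} the kernel~$\mssh^\dUpsilon_\bullet$ leaves each stratum~$\dUpsilon^\sym{N}$ invariant, so it suffices to prove the contraction on each~$\dUpsilon^\sym{N}$ separately; the only genuinely new case is~$N=\infty$, the finite ones being analogous and simpler.

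\emph{The fiberwise contraction and gluing.} For fixed~$\gamma,\eta\in\dUpsilon^\sym{\infty}$ with~$\mssd_\dUpsilon(\gamma,\eta)<\infty$ I would prove
\begin{align*}
W_{2,\mssd_\dUpsilon}\tparen{\mssh^\dUpsilon_t(\gamma,\emparg),\mssh^\dUpsilon_t(\eta,\emparg)}\leq c(t)\,\mssd_\dUpsilon(\gamma,\eta)\fstop
\end{align*}
To this end, choose~$\mbfx,\mbfy\in\mbfX^\asym{\infty}_\locfin(\msE)$ realizing~$\mssd_{\tym{\infty}}(\mbfx,\mbfy)=\mssd_\dUpsilon(\gamma,\eta)$ by the optimal-matching labeling of~\cite[Prop.~\ref{p:PoissonLabeling}\ref{i:p:PoissonLabeling:1}]{LzDSSuz21}; invoke~\qSCF (Remark~\ref{r:qSCF}) to get~$\hh{X}{\mssm}_t(x_i,\emparg)\in\msP_2(X)$, so that a measurably selected optimal coupling~$\kappa_i$ of~$\hh{X}{\mssm}_t(x_i,\emparg)$ and~$\hh{X}{\mssm}_t(y_i,\emparg)$ exists; set~$\boldkappa\eqdef\bigotimes_{i}\kappa_i$, noting that~${\Lb^\tym{2}}_\pfwd\boldkappa$ is a coupling of~$\mssh^\dUpsilon_t(\gamma,\emparg)$ and~$\mssh^\dUpsilon_t(\eta,\emparg)$ concentrated on~$\dUpsilon^\sym{\infty}$ by Corollary~\ref{c:SCConfig1}. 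Then
\begin{align*}
W_{2,\mssd_\dUpsilon}\tparen{\mssh^\dUpsilon_t(\gamma,\emparg),\mssh^\dUpsilon_t(\eta,\emparg)}^2
&\leq\int\mssd_\dUpsilon^2\,\diff{\Lb^\tym{2}}_\pfwd\boldkappa\leq\int\mssd_{\tym{\infty}}^2\,\diff\boldkappa
\\
&=\sum_{i}W_{2,\mssd}\tparen{\hh{X}{\mssm}_t(x_i,\emparg),\hh{X}{\mssm}_t(y_i,\emparg)}^2
\\
&\leq c(t)^2\sum_{i}\mssd(x_i,y_i)^2=c(t)^2\,\mssd_{\tym{\infty}}(\mbfx,\mbfy)^2=c(t)^2\,\mssd_\dUpsilon(\gamma,\eta)^2,
\end{align*}
where the second inequality uses~$\mssd_\dUpsilon\circ\Lb^\tym{2}\leq\mssd_{\tym{\infty}}$ (the diagonal coupling~$\sum_i\delta_{(x_i,y_i)}$ witnesses it) and the third is the base estimate~$(\kWC{\mssd})$ applied to the Dirac masses~$\delta_{x_i},\delta_{y_i}$. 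Finally I would promote this fiberwise bound to arbitrary~$\mu_0,\mu_1$ by a standard gluing argument (as in~\cite[Thm.~4.9]{ErbHue15}): take an optimal coupling of~$\mu_0\mrestr{\dUpsilon^\sym{\infty}}$ and~$\mu_1\mrestr{\dUpsilon^\sym{\infty}}$ and push forward through it the fiberwise optimal couplings of~$\mssh^\dUpsilon_t(\gamma,\emparg)$ and~$\mssh^\dUpsilon_t(\eta,\emparg)$.

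\emph{From~\ref{i:t:WC:2} to~\ref{i:t:WC:1}, and the main obstacle.} The reverse implication is immediate: the Dirac embedding~$\delta\colon(X,\mssd)\to(\dUpsilon^\sym{1},\mssd_\dUpsilon)$ is isometric (the only coupling of~$\delta_x$ and~$\delta_y$ is~$\delta_{(x,y)}$), hence~$\delta_\pfwd\colon\tparen{\msP(X),W_{2,\mssd}}\to\tparen{\msP(\dUpsilon^\sym{1}),W_{2,\mssd_\dUpsilon}}$ is an isometry of extended metric spaces, and~$\mssh^\dUpsilon_t\delta_\pfwd\mu=\delta_\pfwd\tparen{\hh{X}{\mssm}_t\mu}$ directly from~\eqref{eq:d:Heat:1}; applying the assumed~$(\kWC{\mssd_\dUpsilon})$ to~$\delta_\pfwd\mu_0,\delta_\pfwd\mu_1$ gives~$(\kWC{\mssd})$. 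The parts I expect to require care are the measurable selections~$i\mapsto\kappa_i$ and, in the gluing step,~$(\gamma,\eta)\mapsto$ optimal coupling — here~\qSCF is exactly what keeps everything inside~$\msP_2$ — together with the verification that~${\Lb^\tym{2}}_\pfwd\boldkappa$ is concentrated on~$\dUpsilon^\sym{\infty}\times\dUpsilon^\sym{\infty}$, i.e.\ that along the heat flow infinitely many particles neither dissipate to infinity nor accumulate in a bounded set; this last point is precisely Corollary~\ref{c:SCConfig1}, which rests on the full strength of~\SCF.
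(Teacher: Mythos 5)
Your proposal is correct and follows essentially the same route as the paper's proof: the same stratification by particle number, the same fiberwise estimate via the optimal-matching labeling and the product of optimal couplings pushed forward by~$\Lb^{\tym{2}}$, the same appeal to Corollary~\ref{c:SCConfig1} for concentration on~$\dUpsilon^\sym{\infty}$, the same gluing step deferred to~\cite[Thm.~4.9]{ErbHue15}, and the same Dirac-embedding argument for the converse. The only (welcome) additions are your explicit justification of the inequality~$\mssd_\dUpsilon\circ\Lb^{\tym{2}}\leq\mssd_{\tym{\infty}}$ and your flagging of the measurable-selection issues, both of which the paper leaves implicit.
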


In light of Proposition~\ref{p:PropertiesRCD(Kinfty)Config2}\ref{i:p:PropertiesRCD(Kinfty)Config2:6} and Remark~\ref{r:kWCsgWC}, we have one implication for~$(\sgWC{\mssd_\dUpsilon})$ as well.

\begin{cor}[Semigroup-type Wasserstein contractivity estimate]\label{c:WC}
Let~$(\mcX,\cdc,\mssd)$ be an~\MLDS satisfying~\qSCF, and~$c(t)$ be as in~\eqref{eq:WCConstant}.
If~$(\mcX,\cdc,\mssd, \hh{X}{\mssm}_\bullet)$ satisfies~$(\sgWC{\mssd})$ with rate~$c(t)$, then~$\tparen{\dUpsilon, \SF{\dUpsilon}{\PP},\mssd_\dUpsilon,\mssh^\dUpsilon_\bullet}$ with~$\mssh^\dUpsilon_\bullet$ defined in~\eqref{eq:d:Heat:1} satisfies~$(\sgWC{\mssd_\dUpsilon})$ with rate~$c(t)$.
\end{cor}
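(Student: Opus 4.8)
The plan is to deduce the corollary from its kernel counterpart, which is already available through Theorem~\ref{t:WC}. Concretely, I would establish the chain of implications $(\sgWC{\mssd}) \Longrightarrow (\kWC{\mssd}) \Longrightarrow (\kWC{\mssd_\dUpsilon}) \Longrightarrow (\sgWC{\mssd_\dUpsilon})$, in which the second implication is exactly the direction \ref{i:t:WC:1}$\Longrightarrow$\ref{i:t:WC:2} of Theorem~\ref{t:WC} (whose hypothesis is precisely \qSCF, which is assumed here), and the third implication follows, exactly as the trivial direction of Remark~\ref{r:kWCsgWC}, from the $\dUpsilon$-analogue of~\eqref{eq:TtoHExtension}: by Proposition~\ref{p:KonLytRoe} the pointwise kernel $\mssh^\dUpsilon_\bullet$ represents the $L^2(\PP)$-semigroup $\TT{\dUpsilon}{\PP}_\bullet$, so $\mssh^\dUpsilon_\bullet\colon \msP(\dUpsilon) \to \msP(\dUpsilon)$ extends $\TT{\dUpsilon}{\PP}_\bullet\colon \msP^\PP(\dUpsilon)\to\msP^\PP(\dUpsilon)$, and hence $(\kWC{\mssd_\dUpsilon})$ restricts to $(\sgWC{\mssd_\dUpsilon})$ with the same rate $c(t)$. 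Thus the whole matter reduces to the first implication, $(\sgWC{\mssd})\Longrightarrow(\kWC{\mssd})$ on the base space.

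This base-space implication is the ``reverse implication'' of Remark~\ref{r:kWCsgWC} and, in the $\RCD$ setting, part of Proposition~\ref{p:PropertiesRCD(Kinfty)Config2}\ref{i:p:PropertiesRCD(Kinfty)Config2:6}; here I would prove it using the Feller property \ref{ass:wFe} supplied by \qSCF\ in place of a Lipschitz-regularization bound. First I would note that \ref{ass:wFe} forces $\TT{X}{\mssm}_t\colon L^\infty(\mssm)\to\Cb(\T)$ (for indicators this is \ref{ass:wFe}; for general bounded $f$ it follows by uniform approximation by simple functions together with the $L^\infty$-contractivity of $\TT{X}{\mssm}_t$), so that, using the relation $\int f\, \diff\tparen{\hh{X}{\mssm}_t\mu}=\int \TT{X}{\mssm}_t f\, \diff\mu$ for $f\in\Cb(\T)$ valid by~\eqref{eq:KernelHK}, the map $\mu\mapsto\hh{X}{\mssm}_t\mu$ is continuous from $\tparen{\msP(X),\T_\mrmn}$ to itself. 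Next, since $\mssm$ has full $\T$-support, for $x\in X$ and $r>0$ the normalized restriction $\mssm\mrestr{B_r(x)}/\mssm\tparen{B_r(x)}$ lies in $\msP^\mssm(X)$ and has $W_{2,\mssd}$-distance at most $r$ from $\delta_x$, so $\delta_x$ is a $W_{2,\mssd}$-limit of absolutely continuous measures. Applying $(\sgWC{\mssd})$ to these approximants of $\delta_x$ and $\delta_y$ (which lie at finite $W_{2,\mssd}$-distance, $\mssd$ being a genuine distance), noting that $\hh{X}{\mssm}_t$ and $\TT{X}{\mssm}_t$ agree on $\msP^\mssm(X)$ by~\eqref{eq:TtoHExtension}, and passing to the limit by lower semicontinuity of $W_{2,\mssd}$ along narrowly convergent sequences, I would obtain $W_{2,\mssd}\tparen{\hh{X}{\mssm}_t(x,\diff\emparg),\hh{X}{\mssm}_t(y,\diff\emparg)}\leq c(t)\,\mssd(x,y)$, i.e.\ $(\kWC{\mssd})$ for Dirac initial data. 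Finally, gluing optimal couplings between $\hh{X}{\mssm}_t(x,\diff\emparg)$ and $\hh{X}{\mssm}_t(y,\diff\emparg)$ — which exist and are $\msP_2$-valued thanks to \ref{ass:qF} — along an optimal plan between arbitrary $\mu_0,\mu_1\in\msP(X)$ with $W_{2,\mssd}(\mu_0,\mu_1)<\infty$ upgrades this to $(\kWC{\mssd})$ in full; this is precisely the gluing argument carried out on $\dUpsilon$ in the proof of Theorem~\ref{t:WC}, now performed on the base space.

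Once $(\kWC{\mssd})$ is established, Theorem~\ref{t:WC} yields $(\kWC{\mssd_\dUpsilon})$ for $\tparen{\dUpsilon,\SF{\dUpsilon}{\PP},\mssd_\dUpsilon,\mssh^\dUpsilon_\bullet}$, and the reduction described in the first paragraph gives $(\sgWC{\mssd_\dUpsilon})$, completing the proof.

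I expect the main obstacle to be the base-space step $(\sgWC{\mssd})\Longrightarrow(\kWC{\mssd})$: one must handle the $\mssm$-class versus pointwise-representative distinction carefully when turning $\TT{X}{\mssm}_t f$ into a bona fide $\T$-continuous test function (so that narrow continuity of $\mu\mapsto\hh{X}{\mssm}_t\mu$ is legitimate), and one must verify that the mollified approximants converge to Dirac masses in $W_{2,\mssd}$ — not merely narrowly — since it is this $W_{2,\mssd}$-convergence that preserves the sharp constant $c(t)$ in the limit. Everything downstream — the transfer to $\dUpsilon$ and the final passage to absolutely continuous data — is a formal consequence of Theorem~\ref{t:WC}, Remark~\ref{r:kWCsgWC} and Proposition~\ref{p:KonLytRoe}.
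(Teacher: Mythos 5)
Your proof is correct, and its overall architecture coincides with the paper's: the corollary is deduced from the kernel statement by the chain $(\sgWC{\mssd})\Rightarrow(\kWC{\mssd})\Rightarrow(\kWC{\mssd_\dUpsilon})\Rightarrow(\sgWC{\mssd_\dUpsilon})$, the middle arrow being Theorem~\ref{t:WC}\ref{i:t:WC:1}$\Rightarrow$\ref{i:t:WC:2} and the last one the trivial direction of Remark~\ref{r:kWCsgWC} transferred to~$\dUpsilon$ via Proposition~\ref{p:KonLytRoe}. The one place where you genuinely diverge is the first arrow. The paper disposes of it by citing Proposition~\ref{p:PropertiesRCD(Kinfty)Config2}\ref{i:p:PropertiesRCD(Kinfty)Config2:6}, which is formally stated only for $\RCD(K,\infty)$ spaces and rests on the $L^\infty$-to-$\bLip$ regularization~\eqref{eq:p:PropertiesRCD(Kinfty)Config2:5} together with~\cite[Prop.~3.2]{AmbGigSav15}; you instead prove $(\sgWC{\mssd})\Rightarrow(\kWC{\mssd})$ directly from the hypotheses actually present in the corollary, namely \ref{ass:wFe} (giving narrow continuity of $\mu\mapsto\hh{X}{\mssm}_t\mu$), full support of~$\mssm$ (giving $W_{2,\mssd}$-approximation of Dirac masses by normalized restrictions of~$\mssm$ to balls), narrow lower semicontinuity of~$W_{2,\mssd}$, and \ref{ass:qF} plus a measurable selection of optimal couplings for the final gluing over an optimal plan. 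This buys something real: your argument is faithful to the stated assumption \qSCF and does not smuggle in Lipschitz regularization of the semigroup, so it actually covers the generality in which the corollary is formulated, at the price of a longer verification (the approximation of Diracs, the l.s.c.\ passage to the limit preserving the constant $c(t)$, and the gluing step) that the paper compresses into a citation. The only points to spell out in a full write-up are the ones you already flag: that $\TT{X}{\mssm}_t f$ is identified with its continuous representative when tested against narrowly convergent measures, and that the mollified approximants converge to $\delta_x$ in $W_{2,\mssd}$ and not merely narrowly.
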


\begin{rem}[Comparison with~{\cite{ErbHue15}, cf.\ Rmk.~\ref{r:ComparisonKonLytRoe}}]\label{r:ComparisonErbHue15}
The results in~\cite[Thm.~4.9]{ErbHue15} require the existence of a set~$\Theta\subset \dUpsilon$, with~$\PP\Theta=1$, of \emph{good configurations}, and, as a consequence, volume-growth estimates and Gaussian heat-kernel estimates on the base space, see~\cite[Thm.\ 2.4]{ErbHue15}.

The main difference in our approach is that, thanks to Lemmas~\ref{l:SCProduct},~\ref{l:CoincidenceHK} and Definition~\ref{d:HKExtension}, we can define the probability measure $\mssh^\dUpsilon_t(\gamma,\emparg)$ for every $\gamma \in \dUpsilon^{(\infty)}$.
Therefore, corollary~\ref{c:WC} does not require any such set~$\Theta$.
This considerably enlarges the scope of applications, in particular, to the case that $\mcX$ is an~$\RCD(K,\infty)$ where neither the heat kernel estimates nor the volume growth estimates needed in~\cite{ErbHue15} hold in general.
\end{rem}

\subsubsection{Evolution Variation Inequality}
In this section, we show how to lift the Evolution Variation Inequality on an~$\RCD^*(K,N)$ space to the corresponding configuration space.

\begin{thm}[Evolution Variation Inequality]\label{t:EVI}
Let~$(X,\mssd,\mssm)$ be an $\RCD^*(K,N)$ space, $K\in\R$,~$N\in (2,\infty)$.
Then, $(\dUpsilon,\SF{\dUpsilon}{\PP},\mssd_\dUpsilon)$ satisfies $(\EVI_{K,\mssd_\dUpsilon,\PP})$.
\end{thm}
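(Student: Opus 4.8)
The plan is to derive $(\EVI_{K,\mssd_\dUpsilon,\PP})$ from the two ingredients already assembled: the identification of the configuration-space heat kernel $\mssh^\dUpsilon_\bullet$ with $\hh{\dUpsilon}{\PP}_\bullet$ (Proposition~\ref{p:KonLytRoe}), and the lifting results of~\S\ref{ss:ConfigCurvature}. First I would record that an $\RCD^*(K,N)$ space with $N\in(2,\infty)$, viewed as an \MLDS via Proposition~\ref{p:PropertiesRCD(Kinfty)Config2}, satisfies \SCF (items~\ref{i:p:PropertiesRCD(Kinfty)Config2:4},~\ref{i:p:PropertiesRCD(Kinfty)Config2:5}) and in fact \qSCF: the missing integrability~\ref{ass:qF} follows from the Gaussian-type bound on the $\RCD^*(K,N)$ heat kernel, or more simply from $\TT{X}{\mssm}_t\colon L^\infty\to\bLip(\mssd)$ together with the finiteness of second $\mssd$-moments of $\hh{X}{\mssm}_t(x,\diff\emparg)$, which for a genuine distance $\mssd$ is automatic once the heat semigroup is Feller and stochastically complete on a space of finite volume on bounded sets. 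I would also note $(\SL{\mssm}{\mssd})$ and $(\Rad{\mssd}{\mssm})$ hold (items~\ref{i:p:PropertiesRCD(Kinfty)Config2:7} and~(i) of Proposition~\ref{p:PropertiesRCD(Kinfty)Config2}), so that Corollary~\ref{c:IdentifDist} gives $\mssd_\dUpsilon=\mssd_\PP$ and Theorem~\ref{t:GeometricProperties} gives that $(\dUpsilon,\SF{\dUpsilon}{\PP},\mssd_\dUpsilon)$ is an \EMLDS with the expected analytic structure.

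Next I would transfer the three "synthetic curvature" properties from $\mcX$ to $\dUpsilon$. An $\RCD^*(K,N)$ space with $2\le N<\infty$ satisfies $(\kWC{\mssd})$ with rate $e^{-Kt}$ (Theorem~\ref{t:RCD}, which applies since $\RCD^*(K,N)\Rightarrow\RCD(K,\infty)$), \ref{ass:LogH} with rate $K$, and the Bakry--Ledoux dimensional improvement; by Theorem~\ref{t:WC} the configuration space then satisfies $(\kWC{\mssd_\dUpsilon})$ with rate $e^{-Kt}$, and by Theorem~\ref{t:Deng} it satisfies \ref{ass:LogH} with rate $K$, hence the entropy-cost inequality~\eqref{eq:EntropyCost}. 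The strategy for $(\EVI_{K,\mssd_\dUpsilon,\PP})$ is the classical one of Ambrosio--Gigli--Savar\'e / Daneri--Savar\'e adapted to the extended-metric setting by Ambrosio--Erbar--Savar\'e: the combination of Wasserstein contractivity with rate $e^{-Kt}$ and the entropy-cost (or log-Harnack) inequality, together with the fact that $t\mapsto\hh{\dUpsilon}{\PP}_t\mu$ is the gradient flow of $\Ent_\PP$ in the energy-dissipation sense, upgrades to the sharp $\EVI_K$ differential inequality~\eqref{eq:d:EVI:0}. Concretely I would follow the "EVI from contraction plus cost" scheme: fix $\mu\in B^{W_{2,\mssd_\dUpsilon}}_\infty(\dom{\Ent_\PP})$ and $\nu\in\dom{\Ent_\PP}$ with $W_{2,\mssd_\dUpsilon}(\hh{\dUpsilon}{\PP}_t\mu,\nu)<\infty$, use the absolute continuity of $t\mapsto\hh{\dUpsilon}{\PP}_t\mu$ in $W_{2,\mssd_\dUpsilon}$ (Lemma~\ref{l:EntropyFisher}, applied on $\dUpsilon$, combined with Theorem~\ref{t:GeometricProperties}\ref{i:t:GeometricProperties:4}) and the Fisher-information bound on metric speed, and then run the Gr\"onwall-type argument where contractivity controls $\diff_t^+\tfrac12\mssd^2$ and~\eqref{eq:EntropyCost} controls the entropy defect.

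The main obstacle is the passage, on the \emph{extended} metric measure space $\dUpsilon$, from the pair ($\kWC$ with rate $e^{-Kt}$, entropy-cost) to the \emph{differential} $\EVI_K$: the AGS/AES machinery is stated for genuine metric measure spaces, and on extended spaces the narrow topology, tightness of optimal plans, and the interplay of $W_{2,\mssd_\dUpsilon}$ with $W_\mcE$, $W_{\mcE,*}$ must be handled with care (cf.\ the discussion after Theorem~\ref{thm:EVI}). I would circumvent this by decomposing $\dUpsilon=\bigsqcup_n\dUpsilon^\sym{n}$ as in the proof of Theorem~\ref{t:WC}: $\mssd_\dUpsilon$-finiteness forces particle number to be preserved, $\hh{\dUpsilon}{\PP}_\bullet$ leaves each $\dUpsilon^\sym{n}$ invariant, and on the infinite sheet $\dUpsilon^\sym{\infty}$ one works through the labeling isometry $\Lb\colon(\mbfX^\asym{\infty}_\locfin(\msE),\mssd_{\tym\infty})\to(\dUpsilon^\sym{\infty},\mssd_\dUpsilon)$ together with the tensorized heat kernel $\mssh^\tym{\infty}_\bullet=\bigotimes_i\hh{X}{\mssm}_\bullet$, so that everything reduces to the countable-product $\RCD^*(K,\infty)$-type estimate on $X^\tym{\infty}$, where $\EVI_K$ for the product follows from $\EVI_K$ on each factor by the tensorization principle for $\EVI$ gradient flows. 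I would then push this forward via $\Lb_\pfwd$, using Corollary~\ref{c:SCConfig1} to keep all measures concentrated on $\dUpsilon^\sym{\infty}$, and invoke the identification $\mssh^\dUpsilon_t=\hh{\dUpsilon}{\PP}_t$ of Proposition~\ref{p:KonLytRoe} to conclude that the kernel heat flow $\tseq{\hh{\dUpsilon}{\PP}_t\mu}_{t\ge0}$ is the $\EVI_K$ gradient flow of $\Ent_\PP$ in the sense of Definition~\ref{d:EVIGradF}, which is exactly $(\EVI_{K,\mssd_\dUpsilon,\PP})$.
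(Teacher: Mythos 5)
Your first two paragraphs assemble exactly the ingredients the paper uses: the proof is obtained by running the argument of \cite[Thm.~5.10]{ErbHue15} essentially verbatim, with \cite[Prop.~2.3]{ErbHue15} replaced by Theorem~\ref{t:GeometricProperties}\ref{i:t:GeometricProperties:4} (identification of $\EE{\dUpsilon}{\PP}$ with the Cheeger energy $\Ch[\mssd_\dUpsilon,\PP]$), \cite[Lem.~5.2]{ErbHue15} by Lemma~\ref{l:EntropyFisher}, \cite[Lem.~5.3]{ErbHue15} by Theorem~\ref{t:Deng}, and \cite[Lem.~5.4]{ErbHue15} by Proposition~\ref{p:HKRegularization}; the closure statement \cite[Lem.~5.1]{ErbHue15} is not even needed, because Definition~\ref{d:EVI} only asks gradient curves to start in $\cl_{W_{2,\mssd_\dUpsilon}}\tparen{\dom{\Ent_\PP}}$. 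The ``obstacle'' you identify is real, but it is resolved by having established precisely those substitute lemmas in the extended-metric setting, not by changing the scheme: the Gr\"onwall/action-estimate part of the Ambrosio--Gigli--Savar\'e argument, as implemented in~\cite{ErbHue15}, only ever works along a fixed geodesic between measures at finite $W_{2,\mssd_\dUpsilon}$-distance, so no narrow-topology or $W_\mcE$-versus-$W_{2,\mssd_\dUpsilon}$ pathology enters.

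The third paragraph, on which your plan actually relies for the differential inequality, contains a genuine gap. The reduction ``tensorize $\EVI_K$ over $X^{\tym{\infty}}$ and push forward via $\Lb$'' does not work as stated. First, there is no reference measure on $X^{\tym{\infty}}$ to which an $\EVI$ tensorization principle applies: $\mssm^{\otym{\infty}}$ is meaningless as an entropy reference when $\mssm X=\infty$, and the relevant probability measure $\lb_\pfwd\PP$ is not a product measure (and depends on the labeling map), so ``$\EVI_K$ on each factor implies $\EVI_K$ on the product'' has no content here. Second, $\Lb$ is not an isometry but a quotient by the infinite symmetric group, and $\EVI$ does not descend through such quotients by soft arguments --- this is exactly the point the paper flags as merely \emph{heuristic} in its discussion of~\cite{GalKelMonSos18}, where even the finite-$n$ quotient $\mcX^\tym{n}/\mfS_n$ requires dedicated machinery. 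The one place the paper does argue through $X^{\tym{\infty}}$, namely the proof of Theorem~\ref{t:WC}, uses only the easy direction (a lifted product coupling pushes down to a coupling of the kernels); for $\EVI$ one would additionally have to lift arbitrary competitors $\nu\in\dom{\Ent_\PP}$ to $\msP(X^{\tym{\infty}})$ both isometrically for $W_2$ and compatibly for the entropy, which is precisely what is not available. The correct route is the one sketched in your first two paragraphs, carried out directly on $\tparen{\msP(\dUpsilon),W_{2,\mssd_\dUpsilon}}$.
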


As a standard consequence of the theorem we further have the $K$-convexity of the entropy.
\begin{cor}
Let~$(X,\mssd,\mssm)$ be an $\RCD^*(K,N)$ space, $K\in\R$,~$N\in (2,\infty)$. Then, the entropy~$\Ent_\PP$ is strongly $K$-convex on~$\tparen{\msP(\dUpsilon), W_{2,\mssd_\dUpsilon}}$, that is, for all~$\mu_0,\mu_1\in\dom{\Ent_\PP}$ satisfying $W_{2,\mssd_\dUpsilon}(\mu_0,\mu_1)<\infty$, and for every geodesic~$\seq{\mu_t}_{t\in [0,1]}$ connecting them, we have that
\begin{align*}
\Ent_\PP(\mu_t)\leq (1-t)\, \Ent_\PP(\mu_0) + t\, \Ent_\PP(\mu_1)-\tfrac{K}{2} t(1-t)\, W_{2,\mssd_\dUpsilon}(\mu_0,\mu_1)^2 \fstop
\end{align*}
\end{cor}

Furthermore, specializing~\cite[Thm.~8.3]{AmbErbSav16} to our setting, we also obtain that~$\TT{\dUpsilon}{\PP}_\bullet$ coincides with the gradient flow of~$\Ent_\PP$ on densities.

\begin{cor}\label{c:GradFlowEnt}
Let~$(X,\mssd,\mssm)$ be an $\RCD^*(K,N)$ space, $K\in\R$,~$N\in (2,\infty)$.
Further let~$\seq{\mu_t}_{t\in [0,\infty)}$ be a curve of probability densities~$\mu_t\eqdef \rho_t\PP$ satisfying~$\norm{\rho_t}_{L^\infty}\in L^\infty_\loc([0,\infty))$.
If~$\mu_t$ is a $W_{2,\mssd_\dUpsilon}$-gradient flow of the entropy~$\Ent_\PP$, then~$\rho_t= \TT{\dUpsilon}{\PP}_t\rho_0$ for every~$t\geq 0$.
\end{cor}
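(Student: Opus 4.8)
The plan is to deduce the statement by specializing the identification of entropy gradient flows on extended metric measure spaces, \cite[Thm.~8.3]{AmbErbSav16}, to~$\tparen{\dUpsilon,\T_\mrmv(\Ed),\mssd_\dUpsilon,\PP}$, after checking that the hypotheses of that theorem are met here. First I would set up the ambient structure. Since~$(X,\mssd,\mssm)$ is~$\RCD^*(K,N)$ with~$N\in(2,\infty)$, it is in particular~$\RCD(K,\infty)$, and Proposition~\ref{p:PropertiesRCD(Kinfty)Config2} shows that the base~\MLDS~$(\mcX,\cdc,\mssd)$ satisfies~\SCF, $(\Rad{\mssd}{\mssm})$ and~$(\SL{\mssm}{\mssd})$, with~$\Dz$ chosen so that~$\Dz\subset\bLip(\T,\mssd)$. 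Hence Theorem~\ref{t:GeometricProperties} applies: $\tparen{\dUpsilon,\SF{\dUpsilon}{\PP},\mssd_\dUpsilon}$ is an~\EMLDS, $\tparen{\EE{\dUpsilon}{\PP},\dom{\EE{\dUpsilon}{\PP}}}$ is a quasi-regular, strongly local, conservative Dirichlet form, and, by the identification of the Cheeger energy of~$(\dUpsilon,\mssd_\dUpsilon,\PP)$ with~$\EE{\dUpsilon}{\PP}$ (using Theorem~\ref{t:dSLConfig2} together with \cite[Thm.~5.8]{LzDSSuz21}), the space~$(\dUpsilon,\mssd_\dUpsilon,\PP)$ is an infinitesimally Hilbertian extended metric measure space whose canonical heat semigroup is~$\TT{\dUpsilon}{\PP}_\bullet$, represented by the Markov kernel~$\mssh^\dUpsilon_\bullet$ of Definition~\ref{d:HKExtension} in view of Proposition~\ref{p:KonLytRoe}.

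Next I would supply the two curvature-type inputs. By Theorem~\ref{t:EVI} the~\EMLDS~$(\dUpsilon,\SF{\dUpsilon}{\PP},\mssd_\dUpsilon)$ satisfies~$(\EVI_{K,\mssd_\dUpsilon,\PP})$; specializing to~$\mu_0=\rho_0\PP\in\msP^\PP(\dUpsilon)$ and using Proposition~\ref{p:KonLytRoe}, the curve~$t\mapsto\tparen{\TT{\dUpsilon}{\PP}_t\rho_0}\PP=\mssh^\dUpsilon_t\mu_0$ is \emph{the}~$\EVI_K$ gradient flow of~$\Ent_\PP$ issued from~$\mu_0$. Moreover, the preceding Corollary (strong~$K$-convexity of~$\Ent_\PP$ along~$W_{2,\mssd_\dUpsilon}$-geodesics) shows that~$\Ent_\PP$ is~$K$-geodesically convex on~$\tparen{\msP(\dUpsilon),W_{2,\mssd_\dUpsilon}}$. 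Under exactly these hypotheses, \cite[Thm.~8.3]{AmbErbSav16} asserts that a curve which is a~$W_{2,\mssd_\dUpsilon}$-gradient flow of~$\Ent_\PP$ ---~in the energy-dissipation sense of the statement, and restricted to curves with locally bounded densities in time~--- is unique and coincides with the~$\EVI_K$ flow. Applying this to the curve~$\mu_t=\rho_t\PP$ with~$\norm{\rho_t}_{L^\infty}\in L^\infty_\loc([0,\infty))$ gives~$\mu_t=\tparen{\TT{\dUpsilon}{\PP}_t\rho_0}\PP$ for every~$t\geq 0$, and therefore~$\rho_t=\TT{\dUpsilon}{\PP}_t\rho_0$ by uniqueness of~$\PP$-densities, which is the claim.

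I expect the main obstacle to be one of matching frameworks rather than of new analysis: \cite[Thm.~8.3]{AmbErbSav16} is phrased for an abstract extended metric measure space, so one must verify that~$\tparen{\dUpsilon,\mssd_\dUpsilon,\PP}$ together with~$\TT{\dUpsilon}{\PP}_\bullet$ is admissible in that framework, and ---~crucially~--- that the~$\EVI$ estimate and the metric-slope arguments underlying the uniqueness in \cite[Thm.~8.3]{AmbErbSav16} can be carried out with the~$L^2$-transportation distance~$W_{2,\mssd_\dUpsilon}$, rather than with the Benamou--Brenier or Kantorovich-duality extended distances~$W_{\mcE}$,~$W_{\mcE,*}$ used in \cite{AmbErbSav16} (cf.\ the discussion following Theorem~\ref{thm:EVI}); one must likewise confirm that the regularity hypothesis~$\norm{\rho_t}_{L^\infty}\in L^\infty_\loc([0,\infty))$ is precisely the one needed there. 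Once this bookkeeping is settled, the corollary is immediate from Theorems~\ref{t:EVI} and~\ref{t:GeometricProperties} and Proposition~\ref{p:KonLytRoe}.
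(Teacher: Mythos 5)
Your proposal matches the paper's proof: the corollary is obtained precisely by specializing \cite[Thm.~8.3]{AmbErbSav16} to $(\dUpsilon,\mssd_\dUpsilon,\PP)$, resting on the identification $\EE{\dUpsilon}{\PP}=\Ch[\mssd_\dUpsilon,\PP]$ from Theorem~\ref{t:GeometricProperties}, on Theorem~\ref{t:EVI}, and on Proposition~\ref{p:KonLytRoe}, exactly as you set up. The framework-matching issue you flag (whether the uniqueness argument of \cite[Thm.~8.3]{AmbErbSav16} runs with $W_{2,\mssd_\dUpsilon}$ rather than $W_{\mcE}$ or $W_{\mcE,*}$) is the right thing to worry about, but the paper treats the specialization as immediate and offers no further detail.
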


\begin{rem}[Comparison with~\cite{ErbHue15}]\label{r:ComparisonErbHue2}
We point out that the main ideas for all proofs in this section are already found in~\cite{ErbHue15}.
Additionally, most of the proofs in~\cite{ErbHue15} also apply with only minor modifications to our more general setting.
The proof of~$(\EVI_K)$, mostly based on the results of L.~Ambrosio, N.~Gigli, and G.~Savar\'e in~\cite{AmbGigSav15}, relies on the identification of the form~$\EE{\dUpsilon}{\PP}$ with the Cheeger energy~$\Ch[\mssd_\dUpsilon,\PP]$.
For configuration spaces over manifolds with Ricci-curvature lower bounds, this result is claimed in~\cite[Prop.~2.3]{ErbHue15} based on the incorrect assertion that functions in~$\Cyl{\Dz}$ are $\mssd_\dUpsilon$-Lipschitz (see~\cite[Ex.~4.35]{LzDSSuz21}).
In our setting, the above identification of~$\EE{\dUpsilon}{\PP}$ with~$\Ch[\mssd_\dUpsilon,\PP]$ is established by different techniques as one of the main results in~\cite{LzDSSuz21} (see Thm.~\ref{t:GeometricProperties}).
Our proof of $(\EVI_K)$ (Thm.~\ref{t:EVI}) also applies to $\RCD^*(K,N)$ spaces, including in particular weighted manifolds with Ricci-curvature lower bounds, for which the validity of~$(\EVI_K)$ was conjectured in~\cite[Rmk.~1.5]{ErbHue15}.
Indeed, as already noted in~\cite{ErbHue15}, the only missing argument there is the validity of the heat kernel bounds used in~\cite{ErbHue15} to establish the identification of heat kernels in Proposition~\ref{p:KonLytRoe}.
Here, we bypass this problem entirely by establishing Proposition~\ref{p:KonLytRoe} without any kind of heat-kernel bounds, as discussed in Remark~\ref{r:ComparisonKonLytRoe}.
\end{rem}

We start by recalling the following properties of~$\Ent_\PP$ in relation to the heat semigroup acting on measures.

\begin{prop}[Heat-kernel regularization~{\cite[Lem.~5.4]{ErbHue15}}]\label{p:HKRegularization}
Let~$(X,\mssd,\mssm)$ be an\linebreak $\RCD^*(K,N)$ space, $K\in\R$,~$N\in (2,\infty)$.
Further fix~$\mu\in \cl_{W_{2,\mssd_\dUpsilon}}\tparen{\dom{\Ent_\PP}}$ and~$t>0$, and set $\mu_t\eqdef \mssh^\dUpsilon_t\mu$.
Then,
\begin{enumerate*}[$(i)$]
\item $\mu_t\in\dom{\Ent_\PP}$;
\item $W_{2,\mssd_\dUpsilon}(\mu_t,\mu)<\infty$;
\item $\lim_{t\downarrow 0} W_{2,\mssd_\dUpsilon}(\mu_t,\mu)=0$.
\end{enumerate*}
\begin{proof}
We have that~$\tparen{\dUpsilon^\sym{\infty}, \SF{\dUpsilon}{\PP},\mssd_\dUpsilon,\mssh^\dUpsilon_\bullet}$ satisfies~$(\kWC{\mssd_\dUpsilon})$ combining Proposition~\ref{t:RCD} with Theorem~\ref{t:WC}.
Further note that: $\Ch[\mssd,\mssm]$ satisfies~$(\Rad{\mssd}{\mssm})$ by construction;
 every $\RCD^*(K,N)$ space satifies the tensorization assumption~\cite[Ass.~4.22]{LzDSSuz21} as noted in~\cite[Prop.~7.5$(v)$]{LzDSSuz21}.
As a consequence, by~\cite[Thm.~5.8]{LzDSSuz21} we have that~$\tparen{\EE{\dUpsilon}{\PP},\dom{\EE{\dUpsilon}{\PP}}}=\tparen{\Ch[\mssd_\dUpsilon,\PP],\dom{\Ch[\mssd_\dUpsilon,\PP]}}$, hence the results in~\cite[Lem.~5.2]{ErbHue15} hold in the present case with identical proof.
The rest of the proof follows exactly as in~\cite[Lem.~5.4]{ErbHue15}, having care to substitute~\cite[Lem.~5.3]{ErbHue15} with Theorem~\ref{t:Deng}, and~\cite[Lem.~5.1]{ErbHue15} with Lemma~\ref{l:ErbHue} below.
\end{proof}
\end{prop}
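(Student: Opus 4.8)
The plan is to transfer the argument of~\cite[Lem.~5.4]{ErbHue15} to the present setting, the real work being to make available on the \emph{extended} metric measure space~$\dUpsilon$ the two analytic tools used there: the entropy--Fisher comparison and the entropy--cost inequality. For an~$\RCD^*(K,N)$ base, every such space verifies the tensorization assumption~\cite[Ass.~4.22]{LzDSSuz21} (by~\cite[Prop.~7.5]{LzDSSuz21}) and~$\Ch[\mssd,\mssm]$ has~$(\Rad{\mssd}{\mssm})$ by construction, so Theorem~\ref{t:GeometricProperties}\ref{i:t:GeometricProperties:4} gives that~$(\dUpsilon,\mssd_\dUpsilon,\PP)$ is infinitesimally Hilbertian with~$\Ch[\mssd_\dUpsilon,\PP]=\EE{\dUpsilon}{\PP}$, whose semigroup is~$\TT{\dUpsilon}{\PP}_\bullet=\mssh^\dUpsilon_\bullet$ by Proposition~\ref{p:KonLytRoe}. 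Consequently Lemma~\ref{l:EntropyFisher}, applied to~$(\dUpsilon,\mssd_\dUpsilon,\PP)$, yields for~$\mu\in\dom{\Ent_\PP}$ that~$t\mapsto\Ent_\PP(\mu_t)$ is non-increasing, $\int_0^T\FF{\dUpsilon}{\PP}(\mu_t)\diff t\leq 2\Ent_\PP(\mu)$, and~$t\mapsto\mu_t$ is~$W_{2,\mssd_\dUpsilon}$-absolutely continuous with~$\abs{\dot\mu_t}^2\leq\FF{\dUpsilon}{\PP}(\mu_t)$ a.e.. In parallel, $(\dUpsilon,\SF{\dUpsilon}{\PP},\mssd_\dUpsilon)$ satisfies~$(\kWC{\mssd_\dUpsilon})$ with rate~$e^{-Kt}$ (Theorem~\ref{t:RCD} and Theorem~\ref{t:WC}, using that~$\RCD^*(K,N)$ spaces satisfy~\qSCF), and the entropy--cost inequality~\eqref{eq:EntropyCost} holds on~$\dUpsilon$, since~$\RCD^*(K,N)\Rightarrow\RCD(K,\infty)$ gives~\ref{ass:LogH} with rate~$K$ on the base (Theorem~\ref{t:RCD}), which lifts to~$\dUpsilon$ by Theorem~\ref{t:Deng}.

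For~$\mu\in\dom{\Ent_\PP}$ the three claims are then immediate. From~$\abs{\dot\mu_s}^2\leq\FF{\dUpsilon}{\PP}(\mu_s)$ and Cauchy--Schwarz,
\begin{align*}
W_{2,\mssd_\dUpsilon}(\mu_t,\mu)\leq \int_0^t \abs{\dot\mu_s}\diff s\leq \tparen{t\int_0^t\FF{\dUpsilon}{\PP}(\mu_s)\diff s}^{1/2}\leq \tparen{2t\,\Ent_\PP(\mu)}^{1/2}\comma
\end{align*}
which gives~(ii) and, letting~$t\downarrow0$, also~(iii); and~$\Ent_\PP(\mu_t)\leq\Ent_\PP(\mu)<\infty$ by monotonicity gives~(i).

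For general~$\mu\in\cl_{W_{2,\mssd_\dUpsilon}}\tparen{\dom{\Ent_\PP}}$ I would approximate: pick~$\mu^{(n)}\in\dom{\Ent_\PP}$ with~$W_{2,\mssd_\dUpsilon}(\mu^{(n)},\mu)\to0$. By~$(\kWC{\mssd_\dUpsilon})$ one has~$W_{2,\mssd_\dUpsilon}(\mu^{(n)}_t,\mu_t)\leq e^{-Kt}W_{2,\mssd_\dUpsilon}(\mu^{(n)},\mu)\to0$, so~$\mu^{(n)}_t\to\mu_t$. For~(i), apply~\eqref{eq:EntropyCost} with~$\mu^{(n)}$ in the semigroup and~$\nu\eqdef\mu^{(m)}$, then pass~$n\to\infty$ using lower semicontinuity of~$\Ent_\PP$, obtaining~$\Ent_\PP(\mu_t)\leq\Ent_\PP(\mu^{(m)})+W_{2,\mssd_\dUpsilon}(\mu,\mu^{(m)})^2/(4I_{2K}(t))<\infty$. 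For~(ii) and~(iii), combine the triangle inequality with the previous step applied to each~$\mu^{(n)}$:
\begin{align*}
W_{2,\mssd_\dUpsilon}(\mu_t,\mu)\leq e^{-Kt}W_{2,\mssd_\dUpsilon}(\mu,\mu^{(n)})+\tparen{2t\,\Ent_\PP(\mu^{(n)})}^{1/2}+W_{2,\mssd_\dUpsilon}(\mu^{(n)},\mu)\comma
\end{align*}
finite for fixed~$n$ (hence~(ii)); and letting first~$t\downarrow0$ and then~$n\to\infty$ gives~(iii).

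The main obstacle is entirely in the preparatory step: one must be sure that Lemma~\ref{l:EntropyFisher} and the entropy--cost inequality are genuinely valid on the \emph{extended} metric measure space~$\dUpsilon$, which rests on the identification~$\EE{\dUpsilon}{\PP}=\Ch[\mssd_\dUpsilon,\PP]$ (hence on the tensorization assumption for~$\RCD^*(K,N)$ spaces) and on the lifting of~\ref{ass:LogH} through Theorem~\ref{t:Deng}. Once these are secured, everything else is the routine contractivity-and-triangle-inequality bookkeeping above, which reproduces~\cite[Lem.~5.4]{ErbHue15} with the heat-kernel and volume-growth inputs of~\cite{ErbHue15} replaced by the already-established Proposition~\ref{p:KonLytRoe} and Corollary~\ref{c:SCConfig1}.
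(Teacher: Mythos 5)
Your proposal is correct and follows essentially the same route as the paper, which simply defers to~\cite[Lem.~5.4]{ErbHue15} after securing the same three inputs you identify: the identification~$\EE{\dUpsilon}{\PP}=\Ch[\mssd_\dUpsilon,\PP]$ (making Lemma~\ref{l:EntropyFisher} available on~$\dUpsilon$), the entropy--cost inequality~\eqref{eq:EntropyCost} via Theorem~\ref{t:Deng}, and~$(\kWC{\mssd_\dUpsilon})$ via Theorems~\ref{t:RCD} and~\ref{t:WC}. The only point worth a word is your use, in the approximation step for~(i), of lower semicontinuity of~$\Ent_\PP$ along~$W_{2,\mssd_\dUpsilon}$-convergent sequences (which holds since~$W_{2,\mssd_\dUpsilon}$-convergence implies narrow convergence); alternatively one applies the kernel form of the entropy--cost inequality directly to~$\mu$, as in~\cite{ErbHue15}.
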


\begin{lem}\label{l:MeasurableSelectionCoupling}
Let~$(\mcX,\mssd)$ be a metric local structure.
There exists a $\A_\mrmv(\Ed)/\A_\mrmv(\Ed^\otym{2})$-measurable map~$\cpl^{\emparg,\emparg}\colon (\dUpsilon^\sym{\infty})^\tym{2}\to \dUpsilon^\sym{\infty}(\Ed^\otym{2})$ so that
\begin{align*}
\mssd_\dUpsilon(\gamma,\eta)^2= \int_{X^\tym{2}} \mssd^2\diff \cpl^{\gamma,\eta} \fstop
\end{align*}
\begin{proof}
By e.g.~\cite[Prop.~3.14(ii)]{LzDSSuz21}, the space~$\tparen{\dUpsilon,\T_\mrmv(\Ed)}$ is Polish.
As a consequence, and since~$\dUpsilon^\sym{n}$ is $\T_\mrmv(\Ed)$-closed for every~$n$, the space~$\dUpsilon^\sym{\infty}= \cap_n (\dUpsilon^\sym{n})^\complement$ is a $G_\delta$ subset of a Polish space, thus itself Polish.

Now, the proof of the same assertion in~\cite[Lem.~6.1]{ErbHue15} when~$\mcX$ is a manifold applies to our setting, having care that $\dUpsilon^\sym{\infty}$ is Polish, hence and every argument in~\cite{ErbHue15} can be restricted to~$\dUpsilon^\sym{\infty}$, and that we may replace~\cite[Lem.~4.1(i), (vi)]{RoeSch99}, used in~\cite{ErbHue15}, by the corresponding assertions in~\cite[Prop.~4.27$(ii)$, $(vii)$]{LzDSSuz21}.
\end{proof}
\end{lem}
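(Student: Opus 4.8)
The plan is to produce $\cpl^{\emparg,\emparg}$ by a measurable-selection argument from the set-valued map of optimal couplings, having first observed that the \emph{metric}-local-structure hypothesis puts us in a standard Borel framework. Since $\mssd$ metrizes $\T$ and $(X,\mssd)$ is complete and separable, $X$ is Polish; hence $X^\tym{2}$ is Polish, and the configuration space $\dUpsilon(\Ed^\otym{2})$ over $X^\tym{2}$, with its vague topology and Borel $\sigma$-algebra $\A_\mrmv(\Ed^\otym{2})$, is Polish. One records the same for the domain: $(\dUpsilon,\T_\mrmv(\Ed))$ is Polish by~\cite[Prop.~3.14]{LzDSSuz21}, each $\dUpsilon^\sym{n}$ is $\T_\mrmv(\Ed)$-closed, so $\dUpsilon^\sym{\infty}=\bigcap_n(\dUpsilon^\sym{n})^\complement$ is a $G_\delta$, hence Polish, and similarly $\dUpsilon^\sym{\infty}(\Ed^\otym{2})$ is Polish.

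Next I would consider the set-valued map sending $(\gamma,\eta)\in(\dUpsilon^\sym{\infty})^\tym{2}$ to the set $\Opt(\gamma,\eta)$ of those $\cpl\in\dUpsilon^\sym{\infty}(\Ed^\otym{2})$ with $\pr^1_\pfwd\cpl=\gamma$, $\pr^2_\pfwd\cpl=\eta$ and $\int_{X^\tym{2}}\mssd^2\diff\cpl=\mssd_\dUpsilon(\gamma,\eta)^2$. On the set $B\eqdef\set{(\gamma,\eta):\mssd_\dUpsilon(\gamma,\eta)<\infty}$ — which is Borel because $\mssd_\dUpsilon$ is vaguely lower semicontinuous, hence Borel, cf.~\cite[Prop.~4.27]{LzDSSuz21} — the value $\Opt(\gamma,\eta)$ is non-empty: when $\mssd_\dUpsilon(\gamma,\eta)<\infty$ the two configurations differ by a finite rearrangement at infinity, the infimum is attained by a bijective matching of their atoms (the content of~\cite[Lem.~4.1]{RoeSch99} and~\cite[Prop.~4.27]{LzDSSuz21}), and the induced configuration $\sum_i\delta_{(x_i,y_{\sigma(i)})}$ on $X^\tym{2}$ realizes the cost; and $\Opt(\gamma,\eta)$ is vaguely closed, since the marginal constraints are vaguely closed and $\cpl\mapsto\int\mssd^2\diff\cpl$ is vaguely lower semicontinuous. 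One then checks that $\set{(\gamma,\eta,\cpl):(\gamma,\eta)\in B,\ \cpl\in\Opt(\gamma,\eta)}$ is Borel — the marginal maps $\cpl\mapsto\pr^i_\pfwd\cpl$, the cost $\cpl\mapsto\int\mssd^2\diff\cpl$, and $(\gamma,\eta)\mapsto\mssd_\dUpsilon(\gamma,\eta)^2$ are all Borel — so that $\Opt$ is a Borel multifunction with non-empty closed values on $B$, and the Kuratowski--Ryll-Nardzewski selection theorem yields a Borel map $\cpl^{\emparg,\emparg}$ on $B$ with $\cpl^{\gamma,\eta}\in\Opt(\gamma,\eta)$, i.e.\ with the desired identity.

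On the complement $(\dUpsilon^\sym{\infty})^\tym{2}\setminus B=\set{\mssd_\dUpsilon=\infty}$ every coupling realizes the (infinite) value, so it suffices to exhibit \emph{some} measurable coupling; here I would fix a labeling map $\lb\colon\dUpsilon\to\mbfX_\locfin(\Ed)$ and set $\cpl^{\gamma,\eta}\eqdef\sum_i\delta_{(\lb(\gamma)_i,\lb(\eta)_i)}$, which is $\Ed^\otym{2}$-locally finite, is a coupling of $\gamma$ and $\eta$, lies in $\dUpsilon^\sym{\infty}(\Ed^\otym{2})$, depends measurably on $(\gamma,\eta)$, and trivially satisfies $\int\mssd^2\diff\cpl^{\gamma,\eta}=\infty=\mssd_\dUpsilon(\gamma,\eta)^2$. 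Gluing this with the Borel selection on $B$ along the Borel partition $B\sqcup\set{\mssd_\dUpsilon=\infty}$ gives the map claimed. This is exactly the strategy of~\cite[Lem.~6.1]{ErbHue15}, the only modifications being that each step is carried out on the Polish space $\dUpsilon^\sym{\infty}$ and that the appeals to~\cite[Lem.~4.1(i),(vi)]{RoeSch99} are replaced by the corresponding assertions in~\cite[Prop.~4.27]{LzDSSuz21}.

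The step I expect to be the main obstacle is checking, once and for all, that the optimal-coupling multifunction is genuinely Borel with non-empty closed values: concretely, that $\mssd_\dUpsilon$ is Borel on $(\dUpsilon^\sym{\infty})^\tym{2}$, that $\dUpsilon^\sym{\infty}(\Ed^\otym{2})$ is standard Borel with $\sigma$-algebra $\A_\mrmv(\Ed^\otym{2})$, that the cost functional is vaguely lower semicontinuous and Borel, and that optimal couplings exist \emph{as configurations} whenever $\mssd_\dUpsilon<\infty$. None of this uses any smooth structure; it is precisely the metric-local-structure hypothesis, via Polishness of $X$ and of the associated configuration spaces, that makes the~\cite{ErbHue15} argument transfer essentially verbatim.
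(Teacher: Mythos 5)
Your proposal is correct and follows essentially the same route as the paper: the paper's proof consists precisely of the Polishness observation for $\dUpsilon^\sym{\infty}$ followed by an appeal to the measurable-selection argument of~\cite[Lem.~6.1]{ErbHue15} (with the references to~\cite{RoeSch99} replaced by~\cite[Prop.~4.27]{LzDSSuz21}), and what you have written is an unpacking of that very argument — Borel optimal-coupling multifunction with non-empty closed values on $\set{\mssd_\dUpsilon<\infty}$, Kuratowski--Ryll-Nardzewski selection, and a trivial measurable coupling on the infinite-distance stratum. No gap; the details you flag as the main obstacle (lower semicontinuity and attainment for $\mssd_\dUpsilon$) are exactly the facts the paper imports from~\cite[Prop.~4.27]{LzDSSuz21}.
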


\begin{lem}[$W_{2,\mssd_\dUpsilon}$-closure of~$\dom{\Ent_\PP}$]\label{l:ErbHue}
Let~$(\mcX,\cdc,\mssd)$ be an~\MLDS satisfying~$\RCD^*(K,N)$ for some~$K\in\R$,~$N\in (2,\infty)$. Then,
\begin{align*}
\cl_{W_{2,\mssd_\dUpsilon}}\tparen{\dom{\Ent_\PP}}= B^{W_{2,\mssd_\dUpsilon}}_\infty\tparen{\dom{\Ent_\PP}} \fstop
\end{align*}

\begin{proof}
Firstly, note that, since~$\mssm X=\infty$ by assumption,~$\PP\dUpsilon^\sym{\infty}=1$, and therefore $\dom{\Ent_\PP}\subset \msP(\dUpsilon^\sym{\infty})$.
Furthermore, for every~$\nu\in\msP(\dUpsilon^\sym{\infty})$ we have that~$W_{2,\mssd_\dUpsilon}(\nu,\mu)<\infty$ if and only if~$\mu\in\msP(\dUpsilon^\sym{\infty})$.
As a consequence, throughout the rest of the proof we will implicitly assume that every configuration on~$X$ is an element of~$\dUpsilon^\sym{\infty}$ and that every probability measure on~$\dUpsilon$ is concentrated on~$\dUpsilon^\sym{\infty}$.
We will regard every such measure as its restriction to~$\dUpsilon^\sym{\infty}$.

The inclusion `$\subset$' is by definition of closure, thus it suffices to show the reverse inequality.
Let~$\mu\in B^{W_{2,\mssd_\dUpsilon}}_\infty\tparen{\dom{\Ent_\PP}}$, and~$\nu\in\dom{\Ent_\PP}$ be so that~$W_{2,\mssd_\dUpsilon}(\mu,\nu)<\infty$.
It suffices to show that there exists~$\seq{\mu_n}_n\subset \dom{\Ent_\PP}$ and such that~$\nlim W_{2,\mssd_\dUpsilon}(\mu_n,\mu)=0$.
To this end, we adapt to our setting the proof of~\cite[Lem.~5.1]{ErbHue15}, further detailing all measurability statements.

\paragraph{Preliminaries}
For any choice of~$A\in\A$,~$r>0$, and~$x\in X$, let us denote by
\[
U^A_{x,r}\eqdef \frac{\car_{B_r(x)\cap A}\mssm}{\mssm (B_r(x)\cap A)}
\]
the normalized $\mssm$-measure of~$B_r(x)\cap A$.
Further fix~$x_0\in X$, and set~$B=B_n\eqdef B_n(x_0)$ for each~$n\in\N_1$.
Denote by~$\Geo(X)$ the space of all constant-speed geodesics in~$X$ parametrized on~$[0,1]$, and endowed with the uniform distance~$\mssd_\infty$ induced by~$\mssd$. 
It is not difficult to show that, since~$(X,\mssd)$ is complete and separable, so is~$\tparen{\Geo(X),\mssd_\infty}$.
As a consequence of~\cite[Lem.~2.11]{AmbGig11} and Aumann--Sainte-Beuve Measurable Selection Theorem~\cite[II.6.9.13]{Bog07}, there exists a $\Bo{\mssd}/\Bo{\mssd_\infty}$-measurable selection~$\GeoSel\colon X\to \Geo(X)$ so that~$\GeoSel(x)$ is a constant-speed geodesic connecting~$x_0$ to~$x$.
For~$r\in (0,1)$ we define a map
\begin{align}\label{eq:l:ClosureDEnt:0.5}
\chi\colon (r,x)\longmapsto\chi_r(x)\eqdef \begin{cases}\GeoSel(x)_{r} &\text{if } \mssd(x,\partial B)<r \\ x & \text{otherwise}\end{cases}\comma
\end{align}
and note that it is $\Bo{[0,1]}\otimes\Bo{\mssd}/\Bo{\mssd}$-measurable and that~$B^\mssd_r(\chi_r(x))\subset B$ for every~$r\in (0,1)$.

\paragraph{Construction of~$\mu_n$}
Let~$\cpl^{\emparg,\emparg}$ be the map defined in Lemma~\ref{l:MeasurableSelectionCoupling}.
Further define a map
\begin{align*}
\xi^{\emparg,\emparg}\colon(\dUpsilon^\sym{\infty})^\tym{2}\to \dUpsilon^\sym{\infty}\comma\qquad \xi^{\emparg,\emparg}\colon (\gamma,\eta)\longmapsto \pr^1_\pfwd (\cpl^{\gamma,\eta}_{B^\tym{2}})+\pr^2_\pfwd(\cpl^{\gamma,\eta}_{B^\complement\times B}) + \pr^2_\pfwd(\cpl^{\gamma,\eta}_{X\times B^\complement}) \fstop
\end{align*}
Note that~$\xi^{\emparg,\emparg}$ is $\A_\mrmv(\Ed)^\otym{2}/\A_\mrmv(\Ed)$-measurable, and that
\begin{align}
\label{eq:l:ClosureDEnt:1}
\xi^{\gamma,\eta}_{B^\complement}=&\ \tparen{\pr^1_\pfwd (\cpl^{\gamma,\eta}_{B^\tym{2}})}_{B^\complement}+\tparen{\pr^2_\pfwd(\cpl^{\gamma,\eta}_{B^\complement\times B})}_{B^\complement} + \tparen{\pr^2_\pfwd (\cpl^{\gamma,\eta}_{X\times B^\complement})}_{B^\complement} 
= 0 + 0 + (\cpl^{\gamma,\eta}_{B^\complement})_{B^\complement}= \eta_{B^\complement}\comma
\intertext{and}
\nonumber
\xi^{\gamma,\eta} B =&\ \tparen{\pr^1_\pfwd (\cpl^{\gamma,\eta}_{B^\tym{2}})}B + \tparen{\pr^2_\pfwd(\cpl^{\gamma,\eta}_{B^\complement\times B})}B + \tparen{\pr^2_\pfwd (\cpl^{\gamma,\eta}_{X\times B^\complement})} B
\\
\nonumber
=&\ \cpl^{\gamma,\eta}_{B^\tym{2}} (B\times X)+\cpl^{\gamma,\eta}_{B^\complement\times B} (X\times B)+\eta_{B^\complement} B = \cpl^{\gamma,\eta} (B\times B)+\cpl^{\gamma,\eta} (B^\complement \times B)+0
\\
\nonumber
=&\ \cpl^{\gamma,\eta} (X\times B)
\\
\label{eq:l:ClosureDEnt:2}
=&\ \eta B \fstop
\end{align}
Finally, let us set~$a_\eta\eqdef 1/\sqrt{4n\, \xi^{\gamma,\eta} B}=1/\sqrt{4n\, \eta B}$ and note that~$\eta\mapsto a_\eta$ is $\A_\mrmv(\Ed)$-measurable, by measurability of~$\xi^{\emparg,\emparg}$.
For every~$\gamma,\eta\in\dUpsilon^\sym{\infty}$, we define a probability measure~$\mcU^{\gamma,\eta}_n\in\msP(\dUpsilon^\sym{\infty})$ by
\begin{align*}
\mcU^{\gamma,\eta}_n (\Lambda)\eqdef \begin{cases} 
\displaystyle\int_\Lambda \delta_{\xi^{\gamma,\eta}_{B^\complement}+\sum_{i=1}^k \delta_{y_i}} \prod_{x\in \xi^{\gamma,\eta}_B} \diff U^B_{\chi_{a_\eta}(x),a_\eta}(y_i) &\text{if } \xi^{\gamma,\eta}_B\neq \emp\comma
\\
\delta_{\xi^{\gamma,\eta}_{B^\complement}}\Lambda & \text{otherwise}\comma
\end{cases}
\qquad \Lambda\in\A_\mrmv(\Ed) \fstop
\end{align*}

\paragraph{Claim 1: for every fixed~$n\in \N_1$ and~$\Lambda\in \A_\mrmv(\Ed)$ the map $(\gamma,\eta)\mapsto \mcU^{\gamma,\eta}_n\Lambda$ is $\A_\mrmv(\Ed)^\otym{2}$-measurable}
The claim follows combining: the $\A_\mrmv(\Ed)^\otym{2}/\A_\mrmv(\Ed)$-measurability of~$\xi^{\emparg,\emparg}$, the $\A_\mrmv(\Ed)$-measurability of $a_\emparg$, the $\A_\mrmv(\Ed)/\A_\mrmv(\Ed)$-measurability of the restriction map $\pr^B\colon\xi\mapsto \xi_B$, the continuity of the Dirac embedding~$\delta\colon \dUpsilon\to \msP(\dUpsilon)$, the $\Bo{\mssd}/\Bo{\mssd}$-measur\-abil\-ity of~$\chi$ as in~\eqref{eq:l:ClosureDEnt:0.5}, and the measur\-abil\-ity of integral mappings.

\medskip

Finally, fix~$\kappa\in\Opt(\mu,\nu)$, and define~$\mu_n\in\msP(\dUpsilon^\sym{\infty})$ by
\begin{align*}
\mu_n\Lambda \eqdef \int_{(\dUpsilon^\sym{\infty})^\tym{2}} \mcU^{\gamma,\eta}_n \Lambda \diff\kappa(\gamma,\eta)\comma \qquad \Lambda\in \A_\mrmv(\Ed)\fstop
\end{align*}

The proof is concluded if we show that the following assertions hold:

\paragraph{Claim 2: $\nlim W_{2,\mssd_\dUpsilon}(\mu_n,\mu)=0$}
Having care to replace~\cite[Lem.~2.6]{ErbHue15} with~\cite[Prop~4.27$(v)$]{LzDSSuz21}, the assertion holds exactly as in the proof of~\cite[Lem.~5.1, Claim~1]{ErbHue15}.

\paragraph{Claim 3: $\mu_n\ll \PP$ and~$\Ent_\PP(\mu_n)<\infty$ for all~$n\in \N_1$}
The proof of this claim holds as in~\cite[Lem.~5.1, Claim~2]{ErbHue15} noting that, since~$(X,\mssd,\mssm)$ is an~$\RCD^*(K,N)$ space, then~$\mssm B^\mssd_r(x)\gtrsim r^m$ locally uniformly in~$x$ for every sufficiently small~$r>0$ and some~$m>0$ by Bishop--Gromov's inequality~\cite[Thm.~2.3]{Stu06b}.
This suffices to the arguments in~\cite{ErbHue15}, since one only needs a lower bound on the volume of balls centered in~$B=B_n$, for each fixed~$n\in \N$.
\end{proof}
\end{lem}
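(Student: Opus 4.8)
The plan is to reproduce, in the present non-smooth setting and with the attendant measurability bookkeeping, the approximation scheme of~\cite[Lem.~5.1]{ErbHue15}. The inclusion $\cl_{W_{2,\mssd_\dUpsilon}}\tparen{\dom{\Ent_\PP}}\subseteq B^{W_{2,\mssd_\dUpsilon}}_\infty\tparen{\dom{\Ent_\PP}}$ is immediate: along a $W_{2,\mssd_\dUpsilon}$-convergent sequence in $\dom{\Ent_\PP}$ all but finitely many terms lie at finite $W_{2,\mssd_\dUpsilon}$-distance from the limit, whence the limit belongs to $B^{W_{2,\mssd_\dUpsilon}}_\infty\tparen{\dom{\Ent_\PP}}$. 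The content is the reverse inclusion. First I would pass to the infinite-particle stratum: since $\mssm X=\infty$ one has $\PP\dUpsilon^\sym{\infty}=1$, so $\dom{\Ent_\PP}\subseteq\msP\tparen{\dUpsilon^\sym{\infty}}$; and since $\mssd_\dUpsilon(\gamma,\eta)<\infty$ forces $\gamma X=\eta X$, any measure at finite $W_{2,\mssd_\dUpsilon}$-distance from an element of $\dom{\Ent_\PP}$ is itself concentrated on $\dUpsilon^\sym{\infty}$. The point of this reduction is that $\dUpsilon^\sym{\infty}$ is a $G_\delta$ subset of the Polish space $\tparen{\dUpsilon,\T_\mrmv(\Ed)}$ (cf.~\cite[Prop.~3.14(ii)]{LzDSSuz21}), hence itself Polish, which is what makes the measurable-selection theorems below applicable.

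Next, fix $\mu\in B^{W_{2,\mssd_\dUpsilon}}_\infty\tparen{\dom{\Ent_\PP}}$, a witness $\nu\in\dom{\Ent_\PP}$ with $W_{2,\mssd_\dUpsilon}(\mu,\nu)<\infty$, and an optimal coupling $\kappa\in\Opt(\mu,\nu)$. The approximants $\mu_n$ are produced by an \emph{interior smearing}. Fix $x_0\in X$ and put $B=B_n\eqdef B_n(x_0)$. Using the measurable selection $\cpl^{\emparg,\emparg}$ of optimal configuration-couplings from Lemma~\ref{l:MeasurableSelectionCoupling}, one forms the $\A_\mrmv(\Ed)^\otym{2}/\A_\mrmv(\Ed)$-measurable map $\xi^{\emparg,\emparg}$ which keeps the $\nu$-side configuration outside $B$ and places exactly $\eta B$ points inside $B$ (those whose $\cpl^{\gamma,\eta}$-partner lies in $B$ on either side), then replaces each interior point $x$ of $\xi^{\gamma,\eta}_B$ by the normalised restriction of $\mssm$ to a small ball of radius $a_\eta\eqdef(4n\,\eta B)^{-1/2}$ contained in $B$, recentred via an auxiliary map $\chi$ nudging points too near $\partial B$ inward along a measurable constant-speed geodesic selection $\GeoSel\colon X\to\Geo(X)$ --- available since $\RCD^*(K,N)$ spaces are geodesic, by~\cite[Lem.~2.11]{AmbGig11} and the Aumann--Sainte-Beuve measurable selection theorem~\cite[II.6.9.13]{Bog07}. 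This yields a probability kernel $\mcU^{\emparg,\emparg}_n$, and $\mu_n\eqdef\int\mcU^{\gamma,\eta}_n\diff\kappa(\gamma,\eta)$. Verifying that $(\gamma,\eta)\mapsto\mcU^{\gamma,\eta}_n\Lambda$ is $\A_\mrmv(\Ed)^\otym{2}$-measurable for each $\Lambda\in\A_\mrmv(\Ed)$ is the routine-but-lengthy part, obtained by composing measurability of $\cpl^{\emparg,\emparg}$, of the restriction maps, of $\chi$, and of integral mappings.

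It then remains to establish two properties of $\mu_n$. For $\nlim W_{2,\mssd_\dUpsilon}(\mu_n,\mu)=0$, I would exhibit an explicit coupling of $\mu$ and $\mu_n$ routed through $\kappa$: interior points of $\gamma$ whose $\cpl^{\gamma,\eta}$-partner is also interior are matched to their smeared images, at cost $O(a_\eta^2)=O(1/n)$ per configuration (using $\eta B<\infty$ $\PP$-a.s.\ and that $\chi$ displaces points by at most $a_\eta$), while all the remaining points are matched along $\cpl^{\gamma,\eta}$, so that the residual cost is controlled, up to the $O(1/n)$ smearing corrections, by the $\kappa$-average of $\int_{(B^\complement\times X)\cup(X\times B^\complement)}\mssd^2\diff\cpl^{\gamma,\eta}$, a tail of the finite integral $\int\mssd_\dUpsilon^2\diff\kappa=W_{2,\mssd_\dUpsilon}(\mu,\nu)^2$, hence vanishing as $B\uparrow X$. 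This is the computation of~\cite[Lem.~5.1, Claim~1]{ErbHue15}, with~\cite[Lem.~2.6]{ErbHue15} replaced by~\cite[Prop.~4.27(v)]{LzDSSuz21} and the geometric lemmas of~\cite{RoeSch99} replaced by~\cite[Prop.~4.27]{LzDSSuz21}.

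For $\mu_n\ll\PP$ with $\Ent_\PP(\mu_n)<\infty$: conditionally on the exterior part of $\xi^{\gamma,\eta}$ and on the count $\eta B$, the law $\mcU^{\gamma,\eta}_n$ restricted to $\dUpsilon(B)$ is absolutely continuous with respect to the Poisson--Lebesgue measure $\PP_{\mssm_B}$, with density a product of factors $\mssm\tparen{B^\mssd_{a_\eta}(\chi_{a_\eta}(x))\cap B}^{-1}$; integrating against $\kappa$ and invoking the Bishop--Gromov volume lower bound $\mssm B^\mssd_r(x)\gtrsim r^m$, valid locally uniformly in $x$ on the bounded set $B$ for $\RCD^*(K,N)$ spaces~\cite[Thm.~2.3]{Stu06b}, turns this into a $\PP$-integrable bound on $\rho_n\log\rho_n$; this is~\cite[Lem.~5.1, Claim~2]{ErbHue15}, and it is the single place where the finiteness of $N$ is used. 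Once both properties hold, $\supseteq$ follows, since $\nlim W_{2,\mssd_\dUpsilon}(\mu_n,\mu)=0$ with each $\mu_n\in\dom{\Ent_\PP}$ places $\mu$ in the $W_{2,\mssd_\dUpsilon}$-closure of $\dom{\Ent_\PP}$. The hard part will be exactly this last step: unlike the manifold case of~\cite{ErbHue15}, only a \emph{local} volume lower bound is at hand, so one must check that the density estimate and the ensuing entropy bound are uniform enough on each fixed ball $B=B_n$, and one must keep the whole chain of measurable selections rigorous on the Polish space $\dUpsilon^\sym{\infty}$.
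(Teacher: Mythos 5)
Your proposal is correct and follows essentially the same route as the paper's proof: the same reduction to $\dUpsilon^\sym{\infty}$, the same smeared approximants $\mu_n=\int \mcU^{\gamma,\eta}_n\diff\kappa$ built from the measurable coupling selection of Lemma~\ref{l:MeasurableSelectionCoupling} and a measurable geodesic selection, and the same two claims handled via~\cite[Lem.~5.1]{ErbHue15} with~\cite[Prop.~4.27$(v)$]{LzDSSuz21} and the Bishop--Gromov bound of~\cite[Thm.~2.3]{Stu06b}. Your closing observation that only a local volume lower bound on each fixed ball $B=B_n$ is needed is precisely the point the paper makes in its Claim~3.
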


We are now ready to prove the main result of this section.

\begin{proof}[Proof of Theorem~\ref{t:EVI}]
In light of the identification of the canonical form~$\tparen{\EE{\dUpsilon}{\QP},\dom{\EE{\dUpsilon}{\QP}}}$ with the Cheeger energy $\tparen{\Ch[\mssd_\dUpsilon,\QP],\dom{\Ch[\mssd_\dUpsilon,\QP]}}$ provided by Theorem~\ref{t:GeometricProperties}\ref{i:t:GeometricProperties:4}, a proof follows purely from the machinery of metric measure geometry (i.e.\ without resorting to Dirichlet-form theory).
Indeed, the proof of the analogous statement for manifolds in~\cite[Thm.~5.10]{ErbHue15} carries over \emph{verbatim} to our setting, having care to substitute
\begin{enumerate*}[$(a)$]
\item \cite[Prop.2.3]{ErbHue15} with~\ref{t:GeometricProperties}\ref{i:t:GeometricProperties:4};
\item \cite[Lem.~5.2]{ErbHue15} with Lemma~\ref{l:EntropyFisher};
\item \cite[Lem.~5.3]{ErbHue15} with Theorem~\ref{t:Deng};
\item the entropy-cost inequality~\eqref{eq:EntropyCost}, \cite[Lem.~5.4]{ErbHue15}, with Proposition~\ref{p:HKRegularization};
\item and finally noting that~\cite[Lem.~5.1]{ErbHue15} is not needed, since our Definition~\ref{d:EVI} of~$(\EVI_K)$ only requires gradient curves to start at points in~$\cl_{W_2}\dom{\Ent_\emparg}$, rather than in~$B^{W_2}_\infty(\dom{\Ent_\emparg})$.
\end{enumerate*}
\end{proof}

\subsubsection{Mixed Poisson measures}
We now show how to adapt the previously established results to the case of mixed Poisson measures.
We say that~$\lambda\in\msP(\R^+_0)$ is a \emph{L\'evy measure} if~$\lambda\set{0}=0$ and~$\lambda (1\wedge t)<\infty$.

\begin{defs}[Mixed Poisson measures]\label{d:MixedPoissonConfig2}
Let~$\mcX$ be a topological local structure, and~$\lambda\in\msP(\R^+_0)$ be a L\'evy measure.
The \emph{mixed Poisson measure} with intensity measure~$\mssm$ and L\'evy measure~$\lambda$ is the probability measure on~$\tparen{\dUpsilon,\A_\mrmv(\msE)}$ defined as
\begin{align}\label{eq:d:MixedPoissonConfig2}
\QP_{\lambda,\mssm}=\int_{\R^+} \PP_{s\cdot\mssm}\diff\lambda(s) \fstop
\end{align}
\end{defs}

As usual, we shall henceforth omit the specification of~$\mssm$.
Let us further recall the following well-known fact, see e.g.~\cite[\S7 and Ref.s therein]{LzDSSuz21}
\begin{equation}\label{eq:Skorokhod}
\PP_{s\, \mssm} \perp \PP_{t\, \mssm} \qquad \text{for every~$s,t>0$ with~$s\neq t$} \fstop
\end{equation}

The properties of the form~$\EE{\dUpsilon}{\QP_\lambda}$ constructed by replacing~$\PP$ with~$\QP_\lambda$ in~\eqref{eq:d:MixedPoissonConfig2} were established in~\cite{LzDSSuz21}.

\begin{thm}[{\cite[Cor.~7.16]{LzDSSuz21}}]\label{t:MixedPoissonGeometricProperties}
Let~$(\mcX,\cdc)$ be a \TLDS. Then, the form~$\tparen{\EE{\dUpsilon}{\QP_\lambda},\CylQP{\QP_\lambda}{\Dz}}$ is closable.
Its closure~$\tparen{\EE{\dUpsilon}{\QP_\lambda},\dom{\EE{\dUpsilon}{\QP_\lambda}}}$ is a Dirichlet form.
If additionally $(\mcX,\cdc,\mssd)$ is an~$\MLDS$ with~$\Dz\subset \bLip(\mssd)$ and satisfying~$(\Rad{\mssd}{\mssm})$, then~$\tparen{\EE{\dUpsilon}{\QP_\lambda},\dom{\EE{\dUpsilon}{\QP_\lambda}}}$ satisfies~$(\Rad{\mssd_\dUpsilon}{\QP_\lambda})$, and it is a quasi-regular strongly local recurrent (conservative) Dirichlet form.
\end{thm}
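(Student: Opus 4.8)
The statement is Theorem~\ref{t:MixedPoissonGeometricProperties}, which assembles several facts about the Dirichlet form associated to a mixed Poisson measure~$\QP_\lambda$; since the excerpt ends with the statement and invokes~\cite[Cor.~7.16]{LzDSSuz21}, the plan is to \emph{reduce} all the assertions to the already-known Poisson case (for fixed intensity~$s\mssm$) by disintegrating along the L\'evy measure~$\lambda$. The crucial structural input is the mutual singularity~\eqref{eq:Skorokhod}: the measures~$\PP_{s\mssm}$, $s>0$, are pairwise singular, so the decomposition~\eqref{eq:d:MixedPoissonConfig2} realizes~$L^2(\QP_\lambda)$ as a direct integral~$\int^\oplus_{\R^+} L^2(\PP_{s\mssm})\diff\lambda(s)$, and any object that is ``diagonal'' in this decomposition (forms, generators, semigroups, square fields) reduces to its fiberwise behaviour. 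First I would make this direct-integral picture precise: fix a labeling-type measurable structure so that each~$\gamma\in\dUpsilon^{(\infty)}$ carries a well-defined ``intensity parameter'' $s(\gamma)$, $\QP_\lambda$-a.e.\ (this is exactly the content of~\eqref{eq:Skorokhod}), and check that the cylinder core~$\CylQP{\QP_\lambda}{\Dz}$ and the lifted square field~$\SF{\dUpsilon}{\QP_\lambda}$ restrict, on the fiber~$\{s(\gamma)=s\}$, to the corresponding objects for the Poisson measure~$\PP_{s\mssm}$.

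\textbf{Closability and Dirichlet-form property.} Once the direct-integral picture is set up, closability of~$\tparen{\EE{\dUpsilon}{\QP_\lambda},\CylQP{\QP_\lambda}{\Dz}}$ follows fiberwise: a sequence~$u_n\to 0$ in~$L^2(\QP_\lambda)$ with~$\EE{\dUpsilon}{\QP_\lambda}(u_n-u_m)\to 0$ decomposes as~$u_n = (u_n^{(s)})_s$ with~$u_n^{(s)}\to 0$ in~$L^2(\PP_{s\mssm})$ for $\lambda$-a.e.~$s$ along a subsequence, and each $\tparen{\EE{\dUpsilon}{\PP_{s\mssm}},\Cyl{\Dz}}$ is closable by the Poisson case; integrating the fiberwise limits back against~$\lambda$, together with Fatou, gives~$\EE{\dUpsilon}{\QP_\lambda}(u_n)\to 0$. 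The closure is then a sum (direct integral) of Dirichlet forms, hence again a Dirichlet form --- the Markovian (unit-contraction) property is preserved under direct integrals since it is a pointwise condition on the functional calculus, and one can also argue directly that normal contractions operate because they do so on each fiber. Likewise the carr\'e du champ~$\SF{\dUpsilon}{\QP_\lambda}$ is obtained as the direct integral of the~$\SF{\dUpsilon}{\PP_{s\mssm}}$, which is why the formula for the square field on cylinder functions is the same one as in the Poisson case.

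\textbf{Rademacher, strong locality, recurrence, quasi-regularity.} Under the additional \MLDS hypotheses with~$\Dz\subset\bLip(\mssd)$ and~$(\Rad{\mssd}{\mssm})$ for the base, the Poisson case~\cite[Thm.~5.2, Cor.~6.3]{LzDSSuz21} gives, for each fixed~$s>0$, that~$\tparen{\EE{\dUpsilon}{\PP_{s\mssm}},\dom{\EE{\dUpsilon}{\PP_{s\mssm}}}}$ is $\T_\mrmv(\msE)$-quasi-regular, strongly local, recurrent, and satisfies~$(\Rad{\mssd_\dUpsilon}{\PP_{s\mssm}})$; note that the base space~$(\mcX,\cdc,\mssd)$ with intensity~$s\mssm$ still satisfies~$(\Rad{\mssd}{s\mssm})$ since scaling the measure by a constant does not affect the Rademacher inequality~$\SF{X}{\mssm}(f)\le\Li[\mssd]{f}^2$. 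Strong locality and the Rademacher bound~$\SF{\dUpsilon}{\QP_\lambda}(f)\le\Li[\mssd_\dUpsilon]{f}^2$ then pass to the direct integral immediately, being pointwise (fiberwise) statements. Recurrence (conservativeness) also passes through, since~$\car\in\dom{\EE{\dUpsilon}{\QP_\lambda}}$ with~$\EE{\dUpsilon}{\QP_\lambda}(\car)=\int\EE{\dUpsilon}{\PP_{s\mssm}}(\car)\diff\lambda(s)=0$, and more precisely one checks the approximation-of-$\car$ characterization of recurrence fiberwise.

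\textbf{Main obstacle.} The delicate point is quasi-regularity, which is \emph{not} a fiberwise-pointwise property: one needs an~$\EE{\dUpsilon}{\QP_\lambda}$-nest of compacts and a countable family of quasi-continuous functions separating points~$\QP_\lambda$-q.e.\ on all of~$\dUpsilon$. The plan here is to glue the nests coming from the fibers: choose, for each~$s$, an increasing sequence of compacts~$F_k^{(s)}$ forming an~$\EE{\dUpsilon}{\PP_{s\mssm}}$-nest, arrange (using the measurable selection apparatus of~\cite{LzDSSuz21}, e.g.\ the measurable dependence on~$s$ of capacities) that these can be taken measurably in~$s$, and set~$F_k$ to be a compact exhausting, up to a set of small~$\QP_\lambda$-capacity, the union over~$s$ in a large compact subset of~$\R^+$; the countable separating family can be taken to be the same cylinder functions~$\Cyl{\Dz}$ (which separate points of~$\dUpsilon$ and are quasi-continuous on each fiber, hence~$\QP_\lambda$-q.e.). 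Alternatively, and perhaps more robustly, since~$\tparen{\dUpsilon,\T_\mrmv(\msE)}$ is Polish (as recorded in the excerpt via~\cite[Prop.~3.14(ii)]{LzDSSuz21}) and each fiber form is quasi-regular, one can invoke the transfer method / the quasi-regularity criterion of~\cite{CheMaRoe94} directly on~$\dUpsilon$ using the cylinder core as a separating countable~$\EE{\dUpsilon}{\QP_\lambda}$-dense family and the tightness of the capacities~$\Cap_{\PP_{s\mssm}}$ integrated over~$\lambda$. Either way, the proof is a bookkeeping argument and I would state it as: ``the proof of~\cite[Cor.~7.16]{LzDSSuz21} applies, combining the Poisson case with the disintegration~\eqref{eq:d:MixedPoissonConfig2} and the singularity~\eqref{eq:Skorokhod}'', flagging the quasi-regularity gluing as the only step requiring genuine (if routine) care.
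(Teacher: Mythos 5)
Your direct-integral reduction over the L\'evy measure, exploiting the mutual singularity~\eqref{eq:Skorokhod} of the Poisson fibers, is precisely the route the paper takes: the statement is imported verbatim from~\cite[Cor.~7.16]{LzDSSuz21}, whose proof rests on the representation~\eqref{eq:DirIntMixedPoisson} together with the general theory of direct integrals of (quasi-regular) Dirichlet forms developed in~\cite{LzDS20}. The one step you rightly flag as non-fiberwise --- assembling the fiber nests into an~$\EE{\dUpsilon}{\QP_\lambda}$-nest for quasi-regularity --- is exactly what that general machinery supplies, so your outline is correct and matches the intended argument.
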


Based on the direct-integral representation for~$\tparen{\EE{\dUpsilon}{\QP_\lambda},\dom{\EE{\dUpsilon}{\QP_\lambda}}}$ established in~\cite{LzDSSuz21}, viz.
\begin{align}\label{eq:DirIntMixedPoisson}
\EE{\dUpsilon}{\QP_{\lambda,\mssm}}= \dint{\R^+_0} \EE{\dUpsilon}{\PP_{s\mssm}} \diff\lambda(s)\comma
\end{align}
we may generalize our results on curvature bounds to the case of mixed Poisson measures.
We start with the following corollary of Theorem~\ref{t:dSLConfig2}.

\begin{cor}
Let~$(\mcX,\cdc,\mssd)$ be an \MLDS satisfying \SCF and $(\SL{\mssm}{\mssd})$.
Then,
\begin{align*}
\mssd_\dUpsilon\geq \mssd_{\QP} \fstop
\end{align*}
\begin{proof}
The inequality~`$\geq$' is a consequence of~\eqref{eq:DirIntMixedPoisson} and~\cite[Prop.~7.15($iii$)]{LzDSSuz21}.
The opposite inequality holds as in Corollary~\ref{c:IdentifDist}.
\end{proof}
\end{cor}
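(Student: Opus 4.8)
The statement to prove is the final Corollary: for an \MLDS $(\mcX,\cdc,\mssd)$ satisfying \SCF and $(\SL{\mssm}{\mssd})$, one has $\mssd_\dUpsilon\geq \mssd_{\QP}$, where $\QP=\QP_{\lambda,\mssm}$ is a mixed Poisson measure. The proof proposal follows the hint already contained in the excerpt: split the inequality into the two separate inequalities $\mssd_\dUpsilon \ge \mssd_\QP$ (the substantive one) and note that the reverse one is not being claimed here.

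The plan is to reduce the mixed-Poisson case to the pure-Poisson case via the direct-integral decomposition \eqref{eq:DirIntMixedPoisson}. First I would recall that $\QP_{\lambda,\mssm}=\int_{\R^+}\PP_{s\mssm}\diff\lambda(s)$, and that by \eqref{eq:Skorokhod} the measures $\PP_{s\mssm}$ are mutually singular, so that $\dom{\EE{\dUpsilon}{\QP_{\lambda,\mssm}}}$ and its square field decompose as direct integrals over $s$ of the corresponding objects for $\PP_{s\mssm}$. Second, I would invoke \cite[Prop.~7.15($iii$)]{LzDSSuz21}, which (as the companion corollary's proof indicates) gives that the intrinsic distance $\mssd_{\QP_{\lambda,\mssm}}$ of the direct-integral form is bounded below by the intrinsic distance $\mssd_{\PP_{s\mssm}}$ of each fibre form — indeed, on the fibre over $s$, a function admissible for the fibre intrinsic distance lifts to one admissible for $\EE{\dUpsilon}{\QP_{\lambda,\mssm}}$, since $\SF{\dUpsilon}{\QP}(u)\le 1$ holds $\QP$-a.e. iff it holds $\PP_{s\mssm}$-a.e. for $\lambda$-a.e.\ $s$. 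Third, I would apply Theorem~\ref{t:dSLConfig2} to the \MLDS $(\mcX,\cdc,\mssd)$ with scaled intensity $s\mssm$: one needs to check that $(\mcX,\cdc,s\mssm)$ still satisfies \SCF and $(\SL{s\mssm}{\mssd})$. Here it is essential that the square field operator $\cdc$ and the Dirichlet form scale compatibly: $\EE{X}{s\mssm}=s\,\EE{X}{\mssm}$ has the same domain, the same heat semigroup $\TT{X}{s\mssm}_\bullet=\TT{X}{\mssm}_\bullet$ (generators differ only by a constant time-rescaling, actually they coincide since scaling both $\cdc$ and $\mssm$ leaves $\LL{X}{\mssm}$ invariant), hence \ref{ass:wFe}, \ref{ass:Fe}, \ref{ass:SC} all persist, and $\SF{X}{s\mssm}(f)=\SF{X}{\mssm}(f)$ so $(\SL{s\mssm}{\mssd})$ is unchanged. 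Thus Theorem~\ref{t:dSLConfig2} gives $\mssd_\dUpsilon\ge\mssd_{\PP_{s\mssm}}$ for $\lambda$-a.e.\ $s$ (the transportation distance $\mssd_\dUpsilon$ depends only on $\mssd$, not on the reference measure). Combining, $\mssd_\dUpsilon\ge\mssd_{\PP_{s\mssm}}$ for $\lambda$-a.e.\ $s$, and then $\mssd_\dUpsilon\ge\mssd_{\QP_{\lambda,\mssm}}$ by the direct-integral lower bound.

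Concretely, I would write: by \eqref{eq:DirIntMixedPoisson} and \cite[Prop.~7.15($iii$)]{LzDSSuz21} we have $\mssd_{\QP}\le \mssd_{\PP_{s\mssm}}$ for $\lambda$-a.e.\ $s\in\R^+$; by the scaling invariance just described, $(\mcX,\cdc,s\mssm)$ is an \MLDS satisfying \SCF and $(\SL{s\mssm}{\mssd})$ for every $s>0$; hence Theorem~\ref{t:dSLConfig2} applied to $(\mcX,\cdc,s\mssm)$ yields $\mssd_\dUpsilon\ge\mssd_{\PP_{s\mssm}}\ge\mssd_{\QP}$, and since $\mssd_\dUpsilon$ does not depend on the reference measure, taking any such $s$ concludes the proof.

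The main obstacle I anticipate is verifying cleanly that the intrinsic distance of a direct-integral Dirichlet form dominates the intrinsic distance of (a.e.) fibre — i.e.\ correctly citing and applying \cite[Prop.~7.15($iii$)]{LzDSSuz21} and making sure the relevant broad-local-domain test classes $\DzLocB{\QP,\T_\mrmv}$ restrict correctly to the fibres over the mutually singular supports. A secondary, more routine point is confirming that the pointwise Feller and stochastic-completeness properties in \SCF are genuinely invariant under the rescaling $\mssm\mapsto s\mssm$; this is immediate once one observes that rescaling $\mssm$ by $s$ while keeping $\cdc$ fixed rescales the Dirichlet form by $s$ but leaves the generator $\LL{X}{\mssm}$ (and hence the heat kernel $\hh{X}{\mssm}_\bullet$) unchanged, so every clause of Definition~\ref{d:wFe} is preserved verbatim, and $\SF{X}{s\mssm}=\SF{X}{\mssm}$ so $(\SL{s\mssm}{\mssd})$ is literally the same statement.
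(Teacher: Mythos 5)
Your proposal is correct and follows essentially the same route as the paper: the direct-integral decomposition~\eqref{eq:DirIntMixedPoisson} together with the fibre-wise comparison of intrinsic distances from~\cite[Prop.~7.15($iii$)]{LzDSSuz21} reduces the claim to Theorem~\ref{t:dSLConfig2} applied to each~$\PP_{s\mssm}$, whose hypotheses are preserved under the rescaling~$\mssm\mapsto s\mssm$ exactly as you argue (the paper records the same scaling invariance at the start of the proof of Theorem~\ref{t:MixedPoissonAll}). The only blemish is that your second paragraph states the fibre comparison backwards (``bounded below by'', with a test function lifting \emph{from} the fibre): the correct direction, which is the one you actually use in the concrete argument, is $\mssd_{\QP}\le\mssd_{\PP_{s\mssm}}$ for $\lambda$-a.e.~$s$, because by mutual singularity~\eqref{eq:Skorokhod} an admissible test function for~$\mssd_{\QP}$ restricts to an admissible one for ($\lambda$-a.e.) fibre, not vice versa.
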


Let us now turn to the curvature assertions.

\begin{thm}\label{t:MixedPoissonAll}
Fix~$K\in\R$, and let~$(\mcX,\cdc)$ be a \TLDS satisfying~\SCF. Then, the following assertions are equivalent:
\begin{enumerate}[$({a}_1)$]
\item\label{i:t:BEMixed:1} $(\mcX,\cdc)$ satisfies~$\BE(K,\infty)$;
\item\label{i:t:BEMixed:2} $(\dUpsilon,\SF{\dUpsilon}{\QP_\lambda})$ satisfies~$\BE(K,\infty)$ for every L\'evy measure~$\lambda\in \msP(\R^+_0)$.
\end{enumerate}

Further let~$(\mcX,\cdc,\mssd)$ be an \MLDS satisfying~\qSCF. Then, the assertions in each of the following pairs are equivalent:

\begin{enumerate}[$({b}_1)$]
\item\label{i:t:LogHMixed:1} $(\mcX,\cdc,\mssd)$ satisfies~\ref{ass:LogH} with rate~$K$;
\item\label{i:t:LogHMixed:2} $\tparen{\dUpsilon^\sym{\infty},\SF{\dUpsilon}{\QP_\lambda},\mssd_\dUpsilon}$ satisfies~\ref{ass:LogH} with rate~$K$ for every L\'evy measure~$\lambda\in \msP(\R^+_0)$.
\end{enumerate}

\begin{enumerate}[$({c}_1)$]
\item\label{i:t:WCMixed:1} $(\mcX,\cdc,\mssd, \hh{X}{\mssm}_\bullet)$ satisfies~$(\kWC{\mssd})$ with rate~$c(t)$ as in~\eqref{eq:WCConstant};
\item\label{i:t:WCMixed:2} $\tparen{\dUpsilon^\sym{\infty}, \SF{\dUpsilon}{\QP_\lambda},\mssd_\dUpsilon,\mssh^\dUpsilon_\bullet}$ with
\begin{align}\label{eq:t:Mixed:0}
\hh{\dUpsilon}{\QP_\lambda}_\bullet(\gamma,\diff\emparg)\eqdef \int_{\R^+_0} \hh{\dUpsilon}{\PP_{s\mssm}}_\bullet(\gamma,\diff\emparg) \diff\lambda(s)
\end{align}
satisfies~$(\kWC{\mssd_\dUpsilon})$ with rate~$c(t)$ as in~\eqref{eq:WCConstant} for every L\'evy measure~$\lambda\in \msP(\R^+_0)$.
\end{enumerate}

Finally, let~$(X,\mssd,\mssm)$ be an~$\RCD^*(K,N)$ space,~$K\in\R$,~$N\in (2,\infty)$. Then,
\begin{enumerate}[$(a)$]\setcounter{enumi}{3}
\item\label{i:t:EVIMixed:2} $(\dUpsilon,\SF{\dUpsilon}{\QP_\lambda},\mssd_\dUpsilon)$ satisfies~$(\EVI_{K,\mssd_\dUpsilon,\PP})$ for every L\'evy measure~$\lambda\in \msP(\R^+_0)$.
\end{enumerate}

\begin{proof}
We start by noting that~$\BE(K,\infty)$,~\ref{ass:LogH},~$(\kWC{\mssd})$,~$(\EVI_{K,\mssd,\mssm})$ are all invariant under the measure rescaling~$\mssm\mapsto s\mssm$ for every~$s>0$, cf.~e.g.~\cite[Prop.~4.13]{Stu06a}.
Thus, we may and henceforth will use without further mention that if~$(\mcX,\cdc,\mssd)$ satisfies either of the above, then~$(s\mcX,\cdc,\mssd)$ does so as well.
Choosing~$\lambda=\delta_1$ we have that~$\QP_\lambda=\PP$, thus it suffices to only show the forward implications, the backward ones being proved in Theorems~\ref{t:BE},~\ref{t:Deng}, and~\ref{t:WC} respectively.

Now, let~$\lambda$ be any L\'evy measure.

\ref{i:t:BEMixed:1}$\implies$\ref{i:t:BEMixed:2}
By the assumption and Theorem~\ref{t:BE} we have~$\BE(K,\infty)$ for~$\tparen{\dUpsilon,\SF{\dUpsilon}{\PP_{s\mssm}},\mssd_\dUpsilon}$ for every~$s>0$.
It suffices to~$\BE(K,\infty)$ for~$\dUpsilon$ for every cylinder function~$u\in\Cyl{\Dz}$.
The extension to~$u\in\dom{\EE{\dUpsilon}{\QP_\lambda}}$ follows as in the proof of Theorem~\ref{t:BE}\ref{i:t:BE:1}$\implies$\ref{i:t:BE:2}.
For every~$u\in\Cyl{\Dz}$, its non-relabelled continuous representative satisfies~$u\in \CylQP{\PP_{s\mssm}}{\Dz}$ for every~$s>0$, hence, integrating the above~$\BE(K,\infty)$ inequalities w.r.t.~$\lambda$ we have that
\begin{align}\label{eq:DIntBE:1}
\int_{\R^+_0} \SF{\dUpsilon}{\PP_{s\mssm}}\tparen{\TT{\dUpsilon}{\PP_{s\mssm}}_t u}\diff\lambda(s) \leq e^{-2Kt} \int_{\R^+_0} \TT{\dUpsilon}{\PP_{s\mssm}}_t\SF{\dUpsilon}{\PP_{s\mssm}}(u) \diff\lambda(s) \fstop
\end{align}
In light of direct-integral representation~\eqref{eq:DirIntMixedPoisson}, it respectively follows from~\cite[Prop.~2.13, Lem.~3.8]{LzDS20} that
\begin{align}\label{eq:DIntTSF}
\TT{\dUpsilon}{\QP_\lambda}_t=\dint{\R_0^+} \TT{\dUpsilon}{\PP_{s,\mssm}}_t \diff \lambda(s)\comma \qquad \SF{\dUpsilon}{\QP_\lambda}= \dint{\R_0^+} \SF{\dUpsilon}{\PP_{s\mssm}} \diff \lambda(s) \fstop
\end{align}
Combining~\eqref{eq:DIntBE:1} with~\eqref{eq:DIntTSF} for~$\TT{\dUpsilon}{\QP_\lambda}_\bullet$ and~$\SF{\dUpsilon}{\QP_\lambda}$, we have that
\begin{align*}
\int_{\R^+_0} \SF{\dUpsilon}{\PP_{s\mssm}}\tparen{\TT{\dUpsilon}{\PP_{s\mssm}}_t u}\diff\lambda(s) \leq&\ e^{-2Kt} \int_{\R^+_0} \TT{\dUpsilon}{\PP_{s\mssm}}_t\SF{\dUpsilon}{\PP_{s\mssm}}(u) \diff\lambda(s)
\\
=& e^{-2Kt} \TT{\dUpsilon}{\QP_\lambda}_t \int_{\R^+_0} \SF{\dUpsilon}{\PP_{s\mssm}}(u) \diff\lambda(s)
= e^{-2Kt} \TT{\dUpsilon}{\QP_\lambda}_t \SF{\dUpsilon}{\QP_\lambda}(u)\comma
\end{align*}
which concludes the inequality~$\BE(K,\infty)$ for~$\tparen{\dUpsilon,\SF{\dUpsilon}{\QP_\lambda},\mssd_\dUpsilon}$.

\medskip

\ref{i:t:LogHMixed:1}$\implies$\ref{i:t:LogHMixed:2}
Again it suffice to show~\ref{ass:LogH} on~$\tparen{\dUpsilon,\SF{\dUpsilon}{\QP_\lambda},\mssd_\dUpsilon}$ for all~$u\in\Cyl{\Dz}$.
By the assumption and Theorem~\ref{t:Deng} we have~\ref{ass:LogH} for~$\tparen{\dUpsilon,\SF{\dUpsilon}{\PP_{s\mssm}},\mssd_\dUpsilon}$ for every~$s>0$.
Similarly to the proof of \ref{i:t:BEMixed:1}$\implies$\ref{i:t:BEMixed:2}, by the direct-integral representation~\eqref{eq:DirIntMixedPoisson} combined with~\cite[Prop.~2.13]{LzDS20}, and by the assumption, we have that
\begin{align*}
\tparen{\TT{\dUpsilon}{\QP_\lambda}_t \log u}(\gamma)=&\ \int_{\R^+_0} \tparen{\TT{\dUpsilon}{\PP_{s\mssm}}_t \log u}(\gamma) \diff\lambda(s)
\\
\leq& \int_{\R^+_0} \log \tparen{\TT{\dUpsilon}{\PP_{s\mssm}}_t u}(\eta) \diff\lambda(s) +\int_{\R^+_0} \frac{\mssd_\dUpsilon(\gamma,\eta)^2}{4I_{2K}(t)} \diff\lambda(s)
\\
\leq&\ \log\int_{\R^+_0} \tparen{\TT{\dUpsilon}{\PP_{s\mssm}}_t u}(\eta) \diff\lambda(s) +\frac{\mssd_\dUpsilon(\gamma,\eta)^2}{4I_{2K}(t)}
\end{align*}
by Jensen's inequality and since~$\lambda$ is a probability measure.
A further application of~\cite[Prop.~2.13]{LzDS20} concludes the proof.

\medskip

\ref{i:t:WCMixed:1}$\implies$\ref{i:t:WCMixed:2}
By the assumption and Theorem~\ref{t:WC} we have~$(\kWC{\mssd_\dUpsilon})$ for~$\tparen{\dUpsilon,\SF{\dUpsilon}{\PP_{s\mssm}},\mssd_\dUpsilon}$ for every~$s>0$.
As in the proof of Theorem~\ref{t:WC}, it suffices to show~\eqref{eq:t:WC:1.8} with~$\hh{\dUpsilon}{\QP_\lambda}_t$ as in~\eqref{eq:t:Mixed:0} in place of~$\mssh^\dUpsilon_t$.
By the assumption, argueing as in the proof of Theorem~\ref{t:WC} and integrating~\eqref{eq:t:WC:1.8} w.r.t.~$\lambda$, we have that
\begin{align}\label{eq:t:Mixed:1}
\int_{\R^+_0} W_{2,\mssd_\dUpsilon}\tparen{\hh{\dUpsilon}{\PP_{s\mssm}}_t(\gamma,\diff\emparg),\hh{\dUpsilon}{\PP_{s\mssm}}_t(\eta,\diff\emparg)}^2 \diff\lambda(s) \leq c(t)^2\, \mssd_\dUpsilon(\gamma,\eta)^2 \fstop
\end{align}
Now, by convexity of the Wasserstein distance,
\begin{align}\label{eq:t:Mixed:2}
\begin{aligned}
W_{2,\mssd_\dUpsilon}&\paren{\int_{\R^+_0} \hh{\dUpsilon}{\PP_{s\mssm}}_t(\gamma,\diff\emparg) \diff\lambda(s),\int_{\R^+_0}\hh{\dUpsilon}{\PP_{s\mssm}}_t(\eta,\diff\emparg) \diff\lambda(s)}^2
\leq
\\
\leq&
\int_{\R^+_0} W_{2,\mssd_\dUpsilon}\tparen{\hh{\dUpsilon}{\PP_{s\mssm}}_t(\gamma,\diff\emparg),\hh{\dUpsilon}{\PP_{s\mssm}}_t(\eta,\diff\emparg)}^2 \diff\lambda(s) \fstop
\end{aligned}
\end{align}
Combining~\eqref{eq:t:Mixed:1} with~\eqref{eq:t:Mixed:2} yields the conclusion in view of~\eqref{eq:t:Mixed:0}.

\medskip

\ref{i:t:EVIMixed:2}
We show how to adapt the proof of Theorem~\ref{t:EVI} to the case of mixed Poisson measures.

Firstly, the entropy-cost inequality~\eqref{eq:EntropyCost} holds with~$\TT{\dUpsilon}{\QP_\lambda}_\bullet$ in place of~$\TT{\dUpsilon}{\QP}_\bullet$ and~$\Ent_{\QP_\lambda}$ in place of~$\Ent_\PP$ as a consequence of the logarithmic Harnack inequality for~$\tparen{\dUpsilon,\SF{\dUpsilon}{\QP_\lambda},\mssd_\dUpsilon}$ established in~\ref{i:t:LogHMixed:2} above, again arguing exactly as in~\cite[Lem.~5.3]{ErbHue15}.

Secondly, Proposition~\ref{p:HKRegularization} holds (with the same proof) with~$\QP_\lambda$ in place of~$\PP$ in light of the validity for~$\QP_\lambda$ of the Wasserstein contractivity estimate~$\kWC{\mssd_\dUpsilon}$ established in~\ref{i:t:WCMixed:2} above.

The rest of the proof also follows as in Theorem~\ref{t:EVI}, provided we show Lemma~\ref{l:ErbHue} with~$\QP_\lambda$ in place of~$\PP$.
Since~$\QP_\lambda$ too is completely independent, Claims~1 and~2 hold with identical proof.
As for Claim~3, this is again an adaptation of~\cite[Lem.~5.1, Claim~2]{ErbHue15}.
For this adaptation, it suffices to note that, as a consequence of the Poisson--Lebesgue representation of~$\PP_{s\,\mssm_B}$ (e.g.,~\cite[Eqn.~\eqref{eq:PoissonLebesgue}]{LzDSSuz21}),
\[
\tparen{\pr^B_\pfwd\QP_\lambda \dUpsilon^\sym{k}(B)}^{-1} \pr^B_\pfwd \QP_\lambda\mrestr{\dUpsilon^\sym{k}(B)}= \frac{\mssm_B^\otym{k}}{(\mssm B)^k}
\]
independently of~$\lambda$.
We omit the details.
\end{proof}
\end{thm}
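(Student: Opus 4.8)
The plan is to reduce everything to the single-Poisson results via the direct-integral representation~\eqref{eq:DirIntMixedPoisson} together with the scaling invariance of the four curvature conditions under the dilation $\mssm\mapsto s\mssm$. First I would record that $\BE(K,\infty)$, \ref{ass:LogH}, $(\kWC{\mssd})$, and $(\EVI_{K,\mssd,\mssm})$ are all unchanged when the reference measure is multiplied by a positive constant (see e.g.~\cite[Prop.~4.13]{Stu06a}); hence if $(\mcX,\cdc,\mssd)$ satisfies one of them, so does $(s\mcX,\cdc,\mssd)$ for every $s>0$. Since the choice $\lambda=\delta_1$ gives $\QP_\lambda=\PP$, the reverse implications are already contained in Theorems~\ref{t:BE}, \ref{t:Deng}, and~\ref{t:WC}; so only the forward implications need a proof. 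For those I would fix an arbitrary L\'evy measure $\lambda$, argue on cylinder functions $u\in\Cyl{\Dz}$, and extend afterwards to the full form domain by density and $L^2$-lower-semicontinuity, exactly as in the corresponding single-Poisson statements.

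For each forward implication I would proceed in three steps: (i) apply the single-Poisson theorem to obtain the curvature inequality for $(\dUpsilon,\SF{\dUpsilon}{\PP_{s\mssm}},\mssd_\dUpsilon)$ for \emph{every} $s>0$; (ii) integrate the resulting family of inequalities against $\diff\lambda(s)$; (iii) recombine the integrated expressions through the direct-integral identities $\TT{\dUpsilon}{\QP_\lambda}_t=\dint{\R^+_0}\TT{\dUpsilon}{\PP_{s\mssm}}_t\diff\lambda(s)$ and $\SF{\dUpsilon}{\QP_\lambda}=\dint{\R^+_0}\SF{\dUpsilon}{\PP_{s\mssm}}\diff\lambda(s)$ from~\cite[Prop.~2.13, Lem.~3.8]{LzDS20}. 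For the Bakry--\'Emery estimate step (iii) is immediate. For \ref{ass:LogH} one additionally moves the logarithm outside the $\lambda$-integral by Jensen's inequality, using that $\lambda$ is a probability measure. For $(\kWC{\mssd_\dUpsilon})$ one integrates the pointwise bound~\eqref{eq:t:WC:1.8} and then applies the convexity of $W_{2,\mssd_\dUpsilon}^2$ along mixtures to pull the $\lambda$-average inside both arguments, identifying the mixture with $\hh{\dUpsilon}{\QP_\lambda}_\bullet$ as defined in~\eqref{eq:t:Mixed:0}. The measurability and mutual singularity needed to make the $\lambda$-integrals meaningful are supplied by~\eqref{eq:Skorokhod}, the measurable selection of couplings in Lemma~\ref{l:MeasurableSelectionCoupling}, and the constructions of~\cite{LzDSSuz21}.

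For the EVI assertion I would transfer the proof of Theorem~\ref{t:EVI} to $\QP_\lambda$. Its three structural inputs carry over: the entropy-cost inequality~\eqref{eq:EntropyCost} for $(\dUpsilon,\SF{\dUpsilon}{\QP_\lambda},\mssd_\dUpsilon)$ follows, via the argument of~\cite[Lem.~5.3]{ErbHue15}, from the logarithmic Harnack inequality established in part $(b_2)$; Proposition~\ref{p:HKRegularization} holds verbatim with $\QP_\lambda$, using the mixed-Poisson Wasserstein contractivity of part $(c_2)$; and Lemma~\ref{l:ErbHue}, asserting $\cl_{W_{2,\mssd_\dUpsilon}}\tparen{\dom{\Ent_{\QP_\lambda}}}=B^{W_{2,\mssd_\dUpsilon}}_\infty\tparen{\dom{\Ent_{\QP_\lambda}}}$, requires only complete independence of $\QP_\lambda$ for Claims~1 and~2, while for Claim~3 the crucial observation is that the normalized projections $\tparen{\pr^B_\pfwd\QP_\lambda\,\dUpsilon^\sym{k}(B)}^{-1}\pr^B_\pfwd\QP_\lambda\mrestr{\dUpsilon^\sym{k}(B)}=\mssm_B^\otym{k}/(\mssm B)^k$ do not depend on $\lambda$, so the Bishop--Gromov volume lower bound on balls centered in $B=B_n$ feeds the same estimate as in the Poisson case. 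The identification of $\EE{\dUpsilon}{\QP_\lambda}$ with the Cheeger energy $\Ch[\mssd_\dUpsilon,\QP_\lambda]$ from Theorem~\ref{t:GeometricProperties}\ref{i:t:GeometricProperties:4}, together with Lemma~\ref{l:EntropyFisher}, is equally available, so the remaining arguments go through unchanged.

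I expect the EVI part --- and within it the adaptation of Claim~3 of Lemma~\ref{l:ErbHue} --- to be the main technical point: one has to check that every quantitative ingredient used in~\cite[Lem.~5.1]{ErbHue15} (the local volume growth of small balls, the absolute continuity of the constructed competitor, the finiteness of its entropy) survives the mixing over $\lambda$, and this is precisely where the $\lambda$-independence of the restricted normalized Poisson measures is essential. The remaining pieces of the argument are routine bookkeeping with the direct-integral calculus and the scaling invariance recorded at the outset.
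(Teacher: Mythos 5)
Your proposal is correct and follows essentially the same route as the paper: scaling invariance plus $\lambda=\delta_1$ for the backward implications, then integration of the single-Poisson estimates against $\lambda$ combined with the direct-integral identities for $\TT{\dUpsilon}{\QP_\lambda}_\bullet$ and $\SF{\dUpsilon}{\QP_\lambda}$, Jensen for the log-Harnack, convexity of $W_{2,\mssd_\dUpsilon}^2$ along mixtures for the kernel contractivity, and the $\lambda$-independence of the normalized restricted projections for Claim~3 of Lemma~\ref{l:ErbHue} in the EVI part. No gaps to report.
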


\section{Applications}\label{s:ApplicationsConfig2}
We collect here some applications of the results on the curvature of~$\dUpsilon$ established above.

\subsection{Sobolev-to-Lipschitz property}
Let~$\QP_\lambda$ be a mixed Poisson measure as in Definition~\ref{d:MixedPoissonConfig2}.
In this section we establish the Sobolev-to-Lipschitz property for configuration spaces under the assumption of synthetic lower Ricci curvature bounds on the base space, which confirms the conjecture by R\"ockner--Schied~\cite[p.\ 331]{RoeSch99} originally stated in the case that $\mcX$ is a smooth Riemannian manifold.
We stress that this property is strictly stronger than both the continuous- and distance-continuous Sobolev-to-Lipschitz properties shown in~\cite{LzDSSuz21}.
Indeed, Theorem~\ref{t:SL} below can be understood as a self-improvement of~$(\dcSL{\mssd_\dUpsilon}{\QP_\lambda}{\mssd_\dUpsilon})$ to~$(\SL{\QP_\lambda}{\mssd_\dUpsilon})$ under the assumption of Ricci-curvature lower bounds. 

\begin{thm}[Sobolev-to-Lipschitz property]\label{t:SL}
Let~$(\mcX,\cdc,\mssd)$ be an~\MLDS satisfying\linebreak \SCF, $\BE(K,\infty)$, and~\ref{ass:LogH}.
Further assume that~$\tparen{\dUpsilon,\SF{\dUpsilon}{\QP_\lambda},\mssd_\dUpsilon}$ satisfies\linebreak $(\dcSL{\mssd_\dUpsilon}{\QP_\lambda}{\mssd_\dUpsilon})$.
Then, every~$u\in\tparen{\EE{\dUpsilon}{\QP_\lambda},\dom{\EE{\dUpsilon}{\QP_\lambda}}}$ with~$\norm{\SF{\dUpsilon}{\QP_\lambda}(u)}_{L^\infty(\QP_\lambda)}\leq L$ has a $\QP_\lambda$-measurable $\mssd_\dUpsilon$-Lipschitz $\QP_\lambda$-representative~$\rep u$ with~$\Li[\mssd_\dUpsilon]{\rep u}\leq L$.
In particular, $\tparen{\dUpsilon,\SF{\dUpsilon}{\QP_\lambda},\mssd_\dUpsilon}$ satisfies~$(\SL{\QP_\lambda}{\mssd_\dUpsilon})$ for the $\sigma$-algebra~$\A_\mrmv(\Ed)^{\QP_\lambda}$.
\end{thm}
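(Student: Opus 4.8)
The plan is to prove Theorem~\ref{t:SL} by combining the Bakry--\'Emery estimate $\BE(K,\infty)$ for $\dUpsilon$ (via Theorem~\ref{t:MixedPoissonAll}) with the regularization machinery of the configuration-space heat semigroup, and then using the weaker Sobolev-to-Lipschitz property $(\dcSL{\mssd_\dUpsilon}{\QP_\lambda}{\mssd_\dUpsilon})$ as the final upgrade step. First I would fix $u\in\dom{\EE{\dUpsilon}{\QP_\lambda}}$ with $\norm{\SF{\dUpsilon}{\QP_\lambda}(u)}_{L^\infty(\QP_\lambda)}\leq L$, and consider its heat-semigroup regularizations $u_t\eqdef\TT{\dUpsilon}{\QP_\lambda}_t u$. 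By the Bakry--\'Emery inequality $\wBE_1(K,\infty)$ for $\tparen{\dUpsilon,\SF{\dUpsilon}{\QP_\lambda}}$, which holds by Theorem~\ref{t:MixedPoissonAll}$(a)$, one obtains the pointwise bound $\SF{\dUpsilon}{\QP_\lambda}(u_t)\leq e^{-2Kt}\TT{\dUpsilon}{\QP_\lambda}_t\SF{\dUpsilon}{\QP_\lambda}(u)\leq e^{-2Kt}L$ $\QP_\lambda$-a.e., so each $u_t$ already lies in the broad local space $\DzLoc[\sqrt{L}\,e^{-Kt}]{\QP_\lambda}$ of functions of bounded energy.

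Next, the key point is to show that the $u_t$ admit $\mssd_\dUpsilon$-continuous representatives so that $(\dcSL{\mssd_\dUpsilon}{\QP_\lambda}{\mssd_\dUpsilon})$ can be applied to each of them. Here I would invoke the logarithmic Harnack inequality~\ref{ass:LogH} for $\tparen{\dUpsilon,\SF{\dUpsilon}{\QP_\lambda},\mssd_\dUpsilon}$ (valid by Theorem~\ref{t:MixedPoissonAll}$(b)$): standard arguments (as in, e.g., the proof strategy behind $L^\infty$-to-Lipschitz regularization, Theorem~\ref{t:IntroLinftyLip}) turn the log-Harnack inequality into a local oscillation estimate for $\TT{\dUpsilon}{\QP_\lambda}_t$, giving a $\mssd_\dUpsilon$-continuous (indeed, locally Lipschitz) $\QP_\lambda$-representative of $u_t$ when $u\in L^\infty(\QP_\lambda)$. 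One reduces to this case by truncation: replace $u$ by $(-n)\vee u\wedge n$, which does not increase the $L^\infty$-norm of $\SF{\dUpsilon}{\QP_\lambda}(\emparg)$ by the chain rule / normal-contraction property of the square field operator, and pass to the limit at the end. With a $\mssd_\dUpsilon$-continuous representative of $u_t$ in hand, $(\dcSL{\mssd_\dUpsilon}{\QP_\lambda}{\mssd_\dUpsilon})$ yields a representative $\reptwo{u_t}\in\Lip^1(\mssd_\dUpsilon,\A_\mrmv(\Ed)^{\QP_\lambda})$ with $\Li[\mssd_\dUpsilon]{\reptwo{u_t}}\leq \sqrt{L}\,e^{-Kt}$.

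The final step is to pass to the limit $t\downarrow 0$. Since $\TT{\dUpsilon}{\QP_\lambda}_t u\to u$ in $L^2(\QP_\lambda)$, a subsequence converges $\QP_\lambda$-a.e.; the uniform Lipschitz bound $\Li[\mssd_\dUpsilon]{\reptwo{u_t}}\leq \sqrt{L}\,e^{-Kt}\to\sqrt L$ together with an Ascoli-type / pointwise-limit argument on the (extended) metric space $(\dUpsilon,\mssd_\dUpsilon)$ — working separately on each $\mssd_\dUpsilon$-finiteness component, which is $\QP_\lambda$-a.e.\ contained in $\dUpsilon^\sym{\infty}$ — produces a $\QP_\lambda$-measurable $\mssd_\dUpsilon$-Lipschitz function $\rep u$ with $\Li[\mssd_\dUpsilon]{\rep u}\leq \sqrt L$ agreeing with $u$ $\QP_\lambda$-a.e. (The measurability of $\rep u$ with respect to $\A_\mrmv(\Ed)^{\QP_\lambda}$ follows since a pointwise a.e.\ limit of $\A_\mrmv(\Ed)^{\QP_\lambda}$-measurable functions is again measurable for that completed $\sigma$-algebra.) Re-instating the truncation parameter $n$ and letting $n\to\infty$, one recovers the claim for general $u\in\dom{\EE{\dUpsilon}{\QP_\lambda}}$, and the last sentence of the theorem — namely that $\tparen{\dUpsilon,\SF{\dUpsilon}{\QP_\lambda},\mssd_\dUpsilon}$ satisfies $(\SL{\QP_\lambda}{\mssd_\dUpsilon})$ for the $\sigma$-algebra $\A_\mrmv(\Ed)^{\QP_\lambda}$ — is then immediate from Definition~\ref{d:RadSL} of the Sobolev-to-Lipschitz property together with the characterization $\DzLoc{\QP_\lambda}=\{f:\SF{\dUpsilon}{\QP_\lambda}(f)\leq 1\ \QP_\lambda\text{-a.e.}\}$, applied after rescaling $u$ by $L^{-1/2}$.

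The main obstacle I anticipate is the passage to the limit $t\downarrow 0$ in a way that genuinely yields a \emph{Lipschitz} (not merely locally Lipschitz, not merely a.e.-defined) representative on the extended metric space: one must control the functions $\reptwo{u_t}$ along a common null set and exploit that $\mssd_\dUpsilon$-Lipschitz functions with a uniform constant form an equi-(uniformly)continuous family on each $\mssd_\dUpsilon$-component, so that a diagonal/subsequence extraction converges to a genuine Lipschitz limit pointwise on a $\QP_\lambda$-conull set, and that this limit coincides $\QP_\lambda$-a.e.\ with $u$. The subtlety is that $\mssd_\dUpsilon$ is an extended distance, so this convergence argument has to be organized component-by-component; the identification $\mssd_\dUpsilon=\mssd_{\QP_\lambda}$ and the structure results of Theorem~\ref{t:GeometricProperties} and~\cite[Prop.~4.27]{LzDSSuz21} are what make this manageable.
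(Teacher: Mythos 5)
Your proposal follows essentially the same route as the paper: heat-semigroup regularization, the Bakry--\'Emery bound $\SF{\dUpsilon}{\QP_\lambda}(\TT{\dUpsilon}{\QP_\lambda}_t u)\leq e^{-2Kt}L$, $\mssd_\dUpsilon$-continuity of $\TT{\dUpsilon}{\QP_\lambda}_t u$ from the log-Harnack inequality (the paper phrases this as the strong Feller property via \cite[Prop.~3.1(1)]{WanYua11}), the upgrade to Lipschitz via $(\dcSL{\mssd_\dUpsilon}{\QP_\lambda}{\mssd_\dUpsilon})$, a pointwise a.e.\ limit along $t_n\downarrow 0$, and truncation for unbounded $u$. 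The only real divergence is in the obstacle you flag at the end: the paper avoids any Ascoli-type compactness argument by simply passing the Lipschitz inequality to the pointwise limit on a conull set $\Omega$ and then invoking the constrained McShane extension of~\cite[Lem.~2.1]{LzDSSuz20} to obtain an everywhere-defined measurable Lipschitz representative, which is the cleaner way to close the step you were worried about.
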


As a consequence, we have the following.

\begin{cor}\label{c:SLManifolds}
Fix~$K\in\R$,~$N\in (2,\infty)$. Let~$(\mcX,\cdc,\mssd)$ be the \MLDS associated to either
\begin{itemize}
\item a metric measure space~$(X,\mssd,\mssm)$ satisfying both~$\RCD^*(K,N)$ and~$\CAT(0)$; or
\item a (complete) weighted Riemannian manifold with Ricci curvature uniformly bounded from below.
\end{itemize}
Then, the conclusion of Theorem~\ref{t:SL} holds.
\end{cor}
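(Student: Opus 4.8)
The plan is to verify, in each of the two cases, the four hypotheses of Theorem~\ref{t:SL}: that~$(\mcX,\cdc,\mssd)$ is an~\MLDS satisfying~\SCF, $\BE(K,\infty)$, and~\ref{ass:LogH}, and that~$\tparen{\dUpsilon,\SF{\dUpsilon}{\QP_\lambda},\mssd_\dUpsilon}$ satisfies~$(\dcSL{\mssd_\dUpsilon}{\QP_\lambda}{\mssd_\dUpsilon})$. The first three concern the base space alone, whereas the fourth is imported from~\cite{LzDSSuz21} and is exactly where the extra geometric hypothesis ($\CAT(0)$, resp.\ smoothness) is needed.

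For the first three properties, I would first note that in both cases the base metric measure space is an~$\RCD(K,\infty)$ space: a space satisfying~$\RCD^*(K,N)$ is in particular~$\RCD(K,\infty)$, and a complete weighted Riemannian manifold with~$\Ric_g+\mathrm{Hess}\psi\geq K$ is infinitesimally Hilbertian and satisfies~$\CD(K,\infty)$, hence is~$\RCD(K,\infty)$ by~\cite{AmbGigSav14b}. In either case~\cite[Prop.s~7.3,~7.4]{LzDSSuz21} produces the associated \MLDS $(\mcX,\cdc,\mssd)$, with~$\cdc=\slo[*]{\emparg}^2$ and~$\tparen{\EE{X}{\mssm},\dom{\EE{X}{\mssm}}}=\tparen{\Ch[\mssd,\mssm],\dom{\Ch[\mssd,\mssm]}}$. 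Then~\SCF is immediate from Proposition~\ref{p:PropertiesRCD(Kinfty)Config2}: part~\ref{i:p:PropertiesRCD(Kinfty)Config2:4} yields~\ref{ass:SC}, and part~\ref{i:p:PropertiesRCD(Kinfty)Config2:5} yields~\ref{ass:wFe} and~\ref{ass:Fe}. The gradient estimate~$\BE(K,\infty)$ holds by the implication~$(c)\Rightarrow(a)$ of Theorem~\ref{t:RCD}, whose standing assumptions---the base Sobolev-to-Lipschitz property and~$\EE{X}{\mssm}=\Ch[\mssd,\mssm]$---are guaranteed by Proposition~\ref{p:PropertiesRCD(Kinfty)Config2}\ref{i:p:PropertiesRCD(Kinfty)Config2:7} and by the construction of the \MLDS just recalled; finally~\ref{ass:LogH} with rate~$K$ follows from the concluding assertion of Theorem~\ref{t:RCD} (alternatively, from Remark~\ref{r:LogH}).

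It remains to secure~$(\dcSL{\mssd_\dUpsilon}{\QP_\lambda}{\mssd_\dUpsilon})$ for the configuration space. By~\cite[\S\S5,6]{LzDSSuz21}, the \EMLDS $\tparen{\dUpsilon,\SF{\dUpsilon}{\QP_\lambda},\mssd_\dUpsilon}$ enjoys the~$\mssd_\dUpsilon$-continuous-Sobolev-to-Lipschitz property provided the base \MLDS satisfies the tensorization assumption~\cite[Ass.~4.22]{LzDSSuz21} on all finite products~$(\mcX^{\otym n},\cdc^\tym{n})$; and this tensorization assumption is verified for every (weighted) Riemannian manifold and for every \MLDS satisfying both~$\RCD^*(K,N)$ and~$\CAT(0)$, cf.\ the remark following Corollary~\ref{c:IdentifDist}. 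With all four hypotheses of Theorem~\ref{t:SL} in place, its conclusion yields the corollary.

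The only genuinely delicate step is this last one: $(\dcSL{\mssd_\dUpsilon}{\QP_\lambda}{\mssd_\dUpsilon})$ for~$\dUpsilon$ does \emph{not} follow from lower Ricci bounds alone---on a general~$\RCD^*(K,N)$ space the local continuous-Sobolev-to-Lipschitz property on Cartesian products is not known---so one genuinely needs either the~$\CAT(0)$ structure or the smooth structure of the base in order to invoke the verification of the tensorization assumption carried out in~\cite{LzDSSuz21}. For the weighted-manifold case one should moreover observe that that verification accommodates a non-trivial smooth weight; since the tensorization assumption is a purely local statement about the differentiable structure and the density, this presents no difficulty.
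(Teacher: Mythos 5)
Your verification is correct and is exactly the argument the paper intends (the corollary is stated there without an explicit proof): \SCF, $\BE(K,\infty)$ and \ref{ass:LogH} for the base space follow from Proposition~\ref{p:PropertiesRCD(Kinfty)Config2} and Theorem~\ref{t:RCD}, and $(\dcSL{\mssd_\dUpsilon}{\QP_\lambda}{\mssd_\dUpsilon})$ is imported from~\cite{LzDSSuz21}, with the $\CAT(0)$, resp.\ smoothness, hypothesis entering only in that last step, as you correctly emphasize. The only slip is bibliographic: the hypothesis you need on finite products is the local continuous-Sobolev-to-Lipschitz assumption of~\cite{LzDSSuz21} feeding into~\cite[Thm.~5.23]{LzDSSuz21} (cf.\ Example~\ref{e:SLvsIrrMixedPoisson} and the remark following Corollary~\ref{c:IdentifDist}), not the tensorization assumption~\cite[Ass.~4.22]{LzDSSuz21}, which this paper invokes only for the identification of~$\EE{\dUpsilon}{\PP}$ with the Cheeger energy in Theorem~\ref{t:GeometricProperties}\ref{i:t:GeometricProperties:4}.
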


\begin{rem}[R\"ockner--Schied's conjecture]\label{r:RoeSch}
In the case when~$\mcX$ is a manifold, Corollary~\ref{c:SLManifolds} proves the validity of the Sobolev-to-Lipschitz property~$(\SL{\mssd_\dUpsilon}{\QP_\lambda})$ for the class of mixed Poisson measures, which was conjectured by M.~R\"ockner and A.~Schied in~\cite[Rmk.~p.~331]{RoeSch99}.
\end{rem}


\begin{proof}[Proof of Theorem~\ref{t:SL}]
Without loss of generality, up to multiplicative rescaling, we may and will assume that~$L=1$.
Assume first that~$u$ is additionally $\QP_\lambda$-essentially bounded.
By Theorem~\ref{t:MixedPoissonAll}, we have that~$\tparen{\dUpsilon^\sym{\infty},\SF{\dUpsilon}{\QP_\lambda},\mssd_\dUpsilon}$ satisfies both $\BE(K,\infty)$ and~\ref{ass:LogH}.
Now, let~$\rep u\in \mcL^\infty(\QP_\lambda)$ with~$u\in \DzB{\QP_\lambda}$.
By~\ref{ass:LogH} and~\cite[Prop.~3.1(1)]{WanYua11} we have that~$\TT{\dUpsilon}{\QP_\lambda}_t\colon \mcL^\infty(\QP_\lambda)\to \mcL^\infty(\QP_\lambda)$ is strongly $\mssd_\dUpsilon$-Feller, thus~$\TT{\dUpsilon}{\QP_\lambda}_t u$ is $\mssd_\dUpsilon$-continuous.
By~$\BE(K,\infty)$ and the uniform bound on~$\tnorm{\SF{\dUpsilon}{\QP_\lambda}u}_{L^\infty(\QP_\lambda)}\leq 1$ we conclude that
\[
\tnorm{\SF{\dUpsilon}{\QP_\lambda}(\TT{\dUpsilon}{\QP_\lambda}_t u)}_{L^\infty(\QP_\lambda)}\leq e^{-2Kt}\fstop
\]
By a linear rescaling and~$(\dcSL{\mssd_\dUpsilon}{\QP_\lambda}{\mssd_\dUpsilon})$ we conclude therefore the existence of a $\QP_\lambda$-representative~$\widerep{\TT{\dUpsilon}{\QP_\lambda}_tu}$ of~$\TT{\dUpsilon}{\QP_\lambda}_tu$ satisfying~$\widerep{\TT{\dUpsilon}{\QP_\lambda}_tu}\in e^{-K t}\bLipu(\mssd_\dUpsilon,\A_\mrmv(\Ed)^{{\QP_\lambda}})$.
Since~$t\mapsto \TT{\dUpsilon}{\QP_\lambda}_tu$ is strongly $L^2(\QP_\lambda)$-continuous, there exists~$\seq{t_n}_n\subset (0,1)$ with~$\nlim t_n=0$ and so that~$\widerep{\TT{\dUpsilon}{\QP_\lambda}_tu}$ converges to a $\QP_\lambda$-representative~$\rep u$ of~$u$ $\QP_\lambda$-a.e..
Thus, for some~$\Omega\in \A_\mrmv(\Ed)^{{\QP_\lambda}}$ with full $\QP_\lambda$-measure,
\begin{align*}
\begin{aligned}
\abs{\rep u(\gamma)-\rep u(\eta)}\leq&\ \nlim \abs{\widerep{\TT{\dUpsilon}{\PP}_{t_n}u}(\gamma)-\widerep{\TT{\dUpsilon}{\PP}_{t_n}u}(\eta)}
\\
\leq&\ \nlim e^{-K t_n} \mssd_\dUpsilon(\gamma,\eta)=\mssd_\dUpsilon(\gamma,\eta)\comma
\end{aligned}
\qquad \gamma,\eta\in \Omega\subset \dUpsilon^\sym{\infty}\fstop
\end{align*}
We conclude that the (upper) constrained McShane extension~$\overline{\rep u\restr_\Omega}$ of~$\rep u\restr_\Omega$ constructed in~\cite[Lem.~2.1]{LzDSSuz20} is $\QP_\lambda$-measurable, since~$\QP_\lambda\Omega=1$, and a $\mssd_\dUpsilon$-Lipschitz $\QP_\lambda$-representative of~$u$.

The case of unbounded~$u$ follows by a truncation argument, as we now show.
Without loss of generality, up to separately arguing on the positive/negative part of~$u$, we may assume that~$u\geq 0$.
For~$n\in \N_0$ set~$u_n\eqdef u\wedge n$, and note that~$u_n\in\DzB{\QP_\lambda}$. 
Thus, by the previous case, there exists a $\QP_\lambda$-measurable $\mssd_\dUpsilon$-Lipschitz representative~$\rep u_n$ of~$u_n$.
Then, let~$\bar u_n \eqdef \vee_{k\leq n} \rep u_k$, and note that~$\class[\QP_\lambda]{\bar u_n}\equiv u_n\equiv u$ $\QP_\lambda$-a.e.\ on~$\Lambda_n(u)\eqdef \set{u\leq n}$.
Since~$u$ is $\QP_\lambda$-a.e.\ finite, the sets~$\Lambda_n(u)$ are a monotone increasing $\QP_\lambda$-exhaustion, i.e.~$\QP_\lambda\tparen{\cup_n\Lambda_n(u)}=1$.
Furthermore,~$\bar u_n$ is pointwise monotone increasing.
Thus,~$\bar u\eqdef \nlim \bar u_n$ is an everywhere finite $\A_\mrmv(\Ed)^{\QP_\lambda}$-measurable $\QP_\lambda$-representative of~$u$.
It remains to show that~$\bar u$ is $\mssd_\dUpsilon$-Lipschitz.
Since~$\bar u_n$ is a maximum of finitely many $\mssd_\dUpsilon$-Lipschitz functions with~$\Li[\mssd_\dUpsilon]{\rep u_n}\leq 1$, it is as well $\mssd_\dUpsilon$-Lipschitz with~$\Li[\mssd_\dUpsilon]{\bar u_n}\leq 1$.
In particular, for every~$\gamma,\eta\in\dUpsilon$ with~$\mssd_\dUpsilon(\gamma,\eta)<\infty$, we have that
\begin{align*}
\abs{\bar u_n(\gamma)-\bar u_n(\eta)}\leq \mssd_\dUpsilon(\gamma,\eta) \fstop
\end{align*}
The conclusion follows letting~$n\to\infty$, since~$\bar u=\nlim \bar u_n$ pointwise on~$\dUpsilon$.
\end{proof}

\subsubsection{\texorpdfstring{$L^\infty$}{Linfty}-to-Lipschitz heat-semigroup regularization}
As a further application of the results on curvature bounds, let us show the following $L^\infty$-to-Lipschitz regularization property of the heat semigroup~$\TT{\dUpsilon}{\PP}_\bullet$.

\begin{thm}[$L^\infty$-to-Lipschitz regularization]\label{t:LinftyLip}
Let~$(\mcX,\cdc,\mssd)$ be an~\MLDS satisfying\linebreak \SCF, \linebreak$\BE(K,\infty)$, and~\ref{ass:LogH}.
Further assume that~$\tparen{\dUpsilon,\SF{\dUpsilon}{\QP_\lambda},\mssd_\dUpsilon}$ satisfies~$(\dcSL{\mssd_\dUpsilon}{\QP_\lambda}{\mssd_\dUpsilon})$.
Then, there exists a $\PP$-representative~$\widerep{\TT{\dUpsilon}{\QP_\lambda}_t u}$ of~$\TT{\dUpsilon}{\QP_\lambda}_t u$ satisfying
\begin{align*}
\widerep{\TT{\dUpsilon}{\QP_\lambda}_t u} \in \bLip(\mssd_\dUpsilon,\A_\mrmv(\Ed)^\PP)\comma \quad \bLi[\mssd_\dUpsilon]{\widerep{\TT{\dUpsilon}{\QP_\lambda}_t u}}\leq \frac{\norm{u}_{L^\infty(\QP_\lambda)}}{\sqrt{2I_{2K}(t)}}\comma \qquad u\in L^\infty(\QP_\lambda)\comma t>0 \fstop
\end{align*}
\end{thm}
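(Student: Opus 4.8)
The plan is to combine the Bakry--\'Emery gradient estimate on~$\dUpsilon$ with the logarithmic Harnack inequality exactly in the spirit of the proof of Theorem~\ref{t:SL}, but keeping track of the quantitative constant coming from~\ref{ass:LogH}. First I would invoke Theorem~\ref{t:MixedPoissonAll}\ref{i:t:BEMixed:2} and~\ref{i:t:LogHMixed:2} to transfer both~$\BE(K,\infty)$ and~\ref{ass:LogH} (with rate~$K$) from the base~\MLDS to~$\tparen{\dUpsilon^\sym{\infty},\SF{\dUpsilon}{\QP_\lambda},\mssd_\dUpsilon}$. Fix~$u\in L^\infty(\QP_\lambda)$ and~$t>0$, and consider~$v\eqdef \TT{\dUpsilon}{\QP_\lambda}_t u$. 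By~\ref{ass:LogH} for~$\dUpsilon$ and~\cite[Prop.~3.1(1)]{WanYua11}, the semigroup~$\TT{\dUpsilon}{\QP_\lambda}_s$ is strongly $\mssd_\dUpsilon$-Feller, so~$v$ has a $\mssd_\dUpsilon$-continuous representative. The key quantitative input is that~\ref{ass:LogH} with rate~$K$ not only gives the Feller property but also a Lipschitz bound: applying the standard argument (as in~\cite[Prop.~3.1]{WanYua11}), the $\mssd_\dUpsilon$-continuous representative~$\widerep{v}$ of~$\TT{\dUpsilon}{\QP_\lambda}_t u$ satisfies~$\Li[\mssd_\dUpsilon]{\widerep{v}}\leq \norm{u}_{L^\infty(\QP_\lambda)}/\sqrt{2 I_{2K}(t)}$, since one controls~$\abs{\widerep{v}(\gamma)-\widerep{v}(\eta)}$ by optimizing the log-Harnack inequality applied to suitable bounded perturbations of~$u$ and letting the perturbation parameter tend to~$0$.

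Alternatively, and perhaps more robustly in the present setting, I would run the splitting argument through~$\BE(K,\infty)$. Namely, write~$\TT{\dUpsilon}{\QP_\lambda}_t u = \TT{\dUpsilon}{\QP_\lambda}_{t/2}\tparen{\TT{\dUpsilon}{\QP_\lambda}_{t/2} u}$. The first half-step semigroup sends~$L^\infty(\QP_\lambda)$ into~$\dom{\SF{\dUpsilon}{\QP_\lambda}}$ with the quantitative $L^\infty$-bound~$\tnorm{\SF{\dUpsilon}{\QP_\lambda}(\TT{\dUpsilon}{\QP_\lambda}_{t/2} u)}_{L^\infty(\QP_\lambda)}\leq C(t)\,\norm{u}_{L^\infty(\QP_\lambda)}^2$ for an explicit~$C(t)$ involving~$I_{2K}(t/2)$; this is obtained exactly as in Proposition~\ref{p:PropertiesRCD(Kinfty)Config2}\ref{i:p:PropertiesRCD(Kinfty)Config2:5} and~\eqref{eq:p:PropertiesRCD(Kinfty)Config2:5}, using the integrated form of~$\BE(K,\infty)$ on~$\dUpsilon$ together with the conservativeness granted by~\SCF. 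Then the second half-step, combined with~$(\dcSL{\mssd_\dUpsilon}{\QP_\lambda}{\mssd_\dUpsilon})$ and the $\mssd_\dUpsilon$-continuity from the Feller property, upgrades this $L^\infty$-energy bound to a Lipschitz bound via the $\mssd$-continuous-Sobolev-to-Lipschitz property, yielding a $\QP_\lambda$-measurable $\mssd_\dUpsilon$-Lipschitz representative with the asserted constant~$\norm{u}_{L^\infty(\QP_\lambda)}/\sqrt{2 I_{2K}(t)}$. In either approach, the McShane-type extension from~\cite[Lem.~2.1]{LzDSSuz20} is used to pass from a Lipschitz function defined on a $\QP_\lambda$-conegligible set to one defined everywhere, while preserving the Lipschitz constant and $\A_\mrmv(\Ed)^\PP$-measurability.

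The main obstacle I anticipate is bookkeeping the sharp constant~$\sqrt{2 I_{2K}(t)}$ and ensuring it survives the composition/transfer steps. The transfer of~\ref{ass:LogH} from~$X$ to~$\dUpsilon$ via Theorem~\ref{t:Deng} (and Theorem~\ref{t:MixedPoissonAll}) preserves the rate~$K$, hence preserves~$I_{2K}(t)$, so the log-Harnack route does give the correct constant directly; the $\BE$-route instead produces the constant via the identity relating the gradient estimate to the Lipschitz-to-Lipschitz bound, where one must be careful that the relevant quantity is~$I_{2K}$ and not, say,~$I_{-2K}$ or a factor of~$2$ off. I would therefore favor stating the proof through the log-Harnack inequality for the quantitative part, falling back on~$\BE(K,\infty)$ and~$(\dcSL{\mssd_\dUpsilon}{\QP_\lambda}{\mssd_\dUpsilon})$ only to guarantee that a genuinely Lipschitz (not merely continuous) representative exists. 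A secondary, purely technical point is that~\ref{ass:LogH} as stated holds~$\mssm^{\otym 2}$-a.e.\ (resp.~$\QP_\lambda^{\otym 2}$-a.e.), so one must first pass to the $\mssd_\dUpsilon$-continuous representative before extracting a pointwise-everywhere inequality on~$\dUpsilon^\sym{\infty}$, which is exactly where the Feller property is used; this is routine but should be spelled out.
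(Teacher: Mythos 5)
Your ``alternative'' route is, in substance, the paper's proof: transfer $\BE(K,\infty)$ and \ref{ass:LogH} to $\tparen{\dUpsilon^\sym{\infty},\SF{\dUpsilon}{\QP_\lambda},\mssd_\dUpsilon}$ via Theorem~\ref{t:MixedPoissonAll}; use \ref{ass:LogH} together with~\cite[Prop.~3.1(1)]{WanYua11} only to get the strong $\mssd_\dUpsilon$-Feller property, hence a $\mssd_\dUpsilon$-continuous representative of $\TT{\dUpsilon}{\QP_\lambda}_t u$; obtain the quantitative bound $2I_{2K}(t)\,\tnorm{\SF{\dUpsilon}{\QP_\lambda}(\TT{\dUpsilon}{\QP_\lambda}_t u)}_{L^\infty(\QP_\lambda)}\leq \norm{u}_{L^\infty(\QP_\lambda)}^2$ by the argument of \cite[Thm.~6.5, Eqn.~(6.17)]{AmbGigSav14b} (the integrated/reverse form of $\BE(K,\infty)$); and conclude with $(\dcSL{\mssd_\dUpsilon}{\QP_\lambda}{\mssd_\dUpsilon})$ and a linear rescaling. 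Two corrections to that route. First, the $t=t/2+t/2$ splitting is both unnecessary and harmful to the constant: the reverse estimate applied directly at time $t$ gives $I_{2K}(t)$, whereas your splitting would produce $I_{2K}(t/2)$, which is strictly worse. Second, the ``second half-step'' plays no role in the Lipschitz upgrade; what converts the $L^\infty$-gradient bound plus $\mssd_\dUpsilon$-continuity into a genuine Lipschitz representative is exactly $(\dcSL{\mssd_\dUpsilon}{\QP_\lambda}{\mssd_\dUpsilon})$, nothing else.

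Your preferred ``primary'' route --- extracting the Lipschitz constant directly from \ref{ass:LogH} by optimizing over perturbations --- has a genuine gap on $\dUpsilon$, and it is not the ``routine'' measurability point you flag at the end. The inequality \ref{ass:LogH} holds only $\QP_\lambda^{\otym{2}}$-a.e., and on the configuration space the set of pairs $(\gamma,\eta)$ with $\mssd_\dUpsilon(\gamma,\eta)<\infty$ is $\QP_\lambda^{\otym{2}}$-\emph{negligible}; so for a.e.\ pair the log-Harnack bound is vacuous ($+\infty$ on the right), and the pairs you actually need are all contained in the exceptional set. Passing to the strong-Feller continuous representative does not repair this: a $\QP_\lambda^{\otym{2}}$-conegligible set need not be dense in any single finite-distance fiber $\set{\eta:\mssd_\dUpsilon(\gamma,\eta)<\infty}$, so no a.e.-to-everywhere upgrade is available along this route. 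This is precisely the obstruction that $(\dcSL{\mssd_\dUpsilon}{\QP_\lambda}{\mssd_\dUpsilon})$ is designed to overcome (and why the paper remarks that the result is not expected without it). Your plan to use the log-Harnack inequality for the quantitative part and fall back on $\BE$ and $\dcSL{\mssd_\dUpsilon}{\QP_\lambda}{\mssd_\dUpsilon}$ ``only'' for existence of a Lipschitz representative therefore inverts the correct logical order: \ref{ass:LogH} can only deliver continuity, while the constant $\norm{u}_{L^\infty(\QP_\lambda)}/\sqrt{2I_{2K}(t)}$ must come from the $\BE$-derived a.e.\ gradient bound, fed into $(\dcSL{\mssd_\dUpsilon}{\QP_\lambda}{\mssd_\dUpsilon})$.
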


\begin{rem}
Alongside Theorems~\ref{t:GeometricProperties}\ref{i:t:GeometricProperties:4} and~\ref{t:dSLConfig2}, Theorem~\ref{t:LinftyLip} further emphasizes the importance of the (non-separable) $\mssd_\dUpsilon$-topology in the study of the Markov diffusion~$\mbfM^{\QP_\lambda}$ properly associated to~$\tparen{\EE{\dUpsilon}{\QP_\lambda},\dom{\EE{\dUpsilon}{\QP_\lambda}}}$.
This is in sharp contrast with the vague topology~$\T_\mrmv(\Ed)$: indeed, for $\T$-continuous~$f\in \Cz(\msE)$, it is readily seen that~$\TT{\dUpsilon}{\QP_\lambda}_t f^\trid$ does not have a $\T_\mrmv$-continuous representative, even if~$f^\trid$ is $\T_\mrmv$-continuous.
In particular, we do not expect the weak Feller property~$\TT{\dUpsilon}{\QP_\lambda}_t \Cb(\T_\mrmv)\subset \Cb(\T_\mrmv)$ to hold.

Additionally, Theorem~\ref{t:LinftyLip} shows that~$\TT{\dUpsilon}{\PP}_\bullet$ is a regularizing operator. To exhibit any such `\emph{mollifying operator}' in infinite-dimensional settings is in general highly non-trivial.
\end{rem}

\begin{rem}
The assumption of~$(\dcSL{\mssd_\dUpsilon}{\QP_\lambda}{\mssd_\dUpsilon})$ is relevant to our proof, and we do not expect the above result to hold without this assumption.
This is in analogy with the case of base spaces, for which it is known that the sole Bakry--\'Emery curvature condition~$\BE(K,\infty)$ does \emph{not} imply any Sobolev-to-Lipschitz property, see~\cite{Hon18}.
\end{rem}

\begin{cor}\label{c:LinftyLipRCD}
Fix~$K\in\R$ and~$N\in (2,\infty)$. Let~$(\mcX,\cdc,\mssd)$ be the \MLDS associated to either
\begin{itemize}
\item a metric measure space~$(X,\mssd,\mssm)$ satisfying both~$\RCD^*(K,N)$ and~$\CAT(0)$; or
\item a (complete) Riemannian manifold with Ricci curvature uniformly bounded from below.
\end{itemize}
Then, the conclusion of Theorem~\ref{t:LinftyLip} holds.
\end{cor}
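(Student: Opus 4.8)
The plan is to deduce Corollary~\ref{c:LinftyLipRCD} directly from Theorem~\ref{t:LinftyLip} by verifying, for each of the two listed classes of base spaces, all the hypotheses of that theorem: namely that~$(\mcX,\cdc,\mssd)$ is an~\MLDS satisfying \SCF, $\BE(K,\infty)$, and~\ref{ass:LogH}, and that~$\tparen{\dUpsilon,\SF{\dUpsilon}{\QP_\lambda},\mssd_\dUpsilon}$ satisfies~$(\dcSL{\mssd_\dUpsilon}{\QP_\lambda}{\mssd_\dUpsilon})$. No new analytic work is needed; the content is an assembly of facts already recorded in the excerpt and in~\cite{LzDSSuz21,LzDSSuz20,AmbGigSav15,AmbGigMonRaj12}.

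First I would treat the~$\RCD^*(K,N)\cap\CAT(0)$ case. That $(X,\mssd,\mssm)$ gives rise to an~\MLDS for the choice~$\cdc=\slo[*]{\emparg}^2$ is the content of Proposition~\ref{p:PropertiesRCD(Kinfty)Config2}(i) together with~\cite[Prop.s~7.3,~7.4]{LzDSSuz21}. For~\SCF: stochastic completeness~\ref{ass:SC} is Proposition~\ref{p:PropertiesRCD(Kinfty)Config2}\ref{i:p:PropertiesRCD(Kinfty)Config2:4}; the Feller property~\ref{ass:wFe} is Proposition~\ref{p:PropertiesRCD(Kinfty)Config2}\ref{i:p:PropertiesRCD(Kinfty)Config2:5}; and~\ref{ass:Fe} is the last assertion of Proposition~\ref{p:PropertiesRCD(Kinfty)Config2}\ref{i:p:PropertiesRCD(Kinfty)Config2:5}, using the quantitative Lipschitz bound~\eqref{eq:p:PropertiesRCD(Kinfty)Config2:5}. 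The estimate~$\BE(K,\infty)$ for the base is part of the characterization Theorem~\ref{t:RCD}, and~\ref{ass:LogH} is item~$(d)$ of that same theorem. Finally, since an~$\RCD^*(K,N)$ space which is also~$\CAT(0)$ satisfies the tensorization assumption~\cite[Ass.~4.22]{LzDSSuz21} (as recalled in~\cite[Prop.~7.5]{LzDSSuz21}) and~$(\Rad{\mssd}{\mssm})$ holds by construction, the identification~$\tparen{\EE{\dUpsilon}{\QP_\lambda},\dom{\EE{\dUpsilon}{\QP_\lambda}}}=\tparen{\Ch[\mssd_\dUpsilon,\QP_\lambda],\dom{\Ch[\mssd_\dUpsilon,\QP_\lambda]}}$ holds by Theorem~\ref{t:GeometricProperties}\ref{i:t:GeometricProperties:4} (extended to mixed Poisson measures), and~$(\dcSL{\mssd_\dUpsilon}{\QP_\lambda}{\mssd_\dUpsilon})$ follows from the continuous-Sobolev-to-Lipschitz results of~\cite{LzDSSuz21} together with the implications recorded in~\eqref{eq:EquivalenceRadStoLConfig2}; alternatively, one invokes~$(\SL{\QP_\lambda}{\mssd_\dUpsilon})$ from Theorem~\ref{t:SL}, whose hypotheses are exactly those just verified, and then~$(\SL{\QP_\lambda}{\mssd_\dUpsilon})$ trivially implies~$(\dcSL{\mssd_\dUpsilon}{\QP_\lambda}{\mssd_\dUpsilon})$. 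Invoking Theorem~\ref{t:LinftyLip} concludes this case.

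For the (complete, weighted) Riemannian manifold case with~$\Ric_g+\mathrm{Hess}\,\psi\geq K$, the very same scheme applies: such a manifold, viewed as an~\MLDS with~$\cdc(f)=\abs{\nabla f}^2_g$ and~$\mssd$ the Riemannian distance, satisfies \SCF (completeness and the lower Ricci bound give conservativeness and the requisite Feller and Gaussian-type heat-kernel regularity), satisfies~$\BE(K,\infty)$ by Bochner's formula, and satisfies~\ref{ass:LogH} by Wang's logarithmic Harnack inequality~\cite{Wan97b}. The Rademacher property and the tensorization assumption are classical on manifolds (the latter noted already in~\cite{LzDSSuz21}), so~$(\dcSL{\mssd_\dUpsilon}{\QP_\lambda}{\mssd_\dUpsilon})$ holds as above. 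Applying Theorem~\ref{t:LinftyLip} finishes the proof. The only mildly delicate point — the ``main obstacle'' in an otherwise bookkeeping argument — is making sure that the \emph{weighted} manifold indeed satisfies~\ref{ass:Fe} and the quantitative~\qSCF-type integrability implicit in the chain of cited results, which requires the lower Ricci bound to be \emph{uniform} (hence the hypothesis as stated); this is why a merely local lower bound is insufficient and why the statement is phrased with uniform curvature control.
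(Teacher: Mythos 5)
Your proposal is correct and takes the route the paper intends: Corollary~\ref{c:LinftyLipRCD} is stated without proof precisely because it is a hypothesis check for Theorem~\ref{t:LinftyLip}, and your verification of the \MLDS structure, \SCF, $\BE(K,\infty)$ and~\ref{ass:LogH} via Proposition~\ref{p:PropertiesRCD(Kinfty)Config2} and Theorem~\ref{t:RCD} is exactly what is needed. One caveat: your ``alternative'' derivation of~$(\dcSL{\mssd_\dUpsilon}{\QP_\lambda}{\mssd_\dUpsilon})$ by first invoking~$(\SL{\QP_\lambda}{\mssd_\dUpsilon})$ from Theorem~\ref{t:SL} is circular, because Theorem~\ref{t:SL} lists~$(\dcSL{\mssd_\dUpsilon}{\QP_\lambda}{\mssd_\dUpsilon})$ among its hypotheses, so its hypotheses are \emph{not} ``exactly those just verified''. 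Your primary route is the right one, and the precise reference for it is~\cite[Thm.~5.23]{LzDSSuz21} (as the paper itself does in the proof of Example~\ref{e:SLvsIrrMixedPoisson}), whose assumptions --- the Rademacher property and the tensorization assumption~\cite[Ass.~4.22]{LzDSSuz21} --- hold both for~$\RCD^*(K,N)\cap\CAT(0)$ spaces and for complete (weighted) manifolds with uniform lower Ricci bounds; note that~\eqref{eq:EquivalenceRadStoLConfig2} alone does not produce~$(\dcSL{\mssd_\dUpsilon}{\QP_\lambda}{\mssd_\dUpsilon})$, since it only records implications among~$(\SL{}{})$,~$(\cSL{}{}{})$ and~$(\dSL{}{})$. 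With that correction the argument is complete.
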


\begin{proof}[Proof of Theorem~\ref{t:LinftyLip}]
By Theorem~\ref{t:MixedPoissonAll}, we have that~$\tparen{\dUpsilon^\sym{\infty},\SF{\dUpsilon}{\QP_\lambda},\mssd_\dUpsilon}$ satisfies both $\BE(K,\infty)$ and \ref{ass:LogH}.
Now, let~$\rep u\in \mcL^\infty(\QP_\lambda)$. By~\ref{ass:LogH} and~\cite[Prop.~3.1(1)]{WanYua11}, $\TT{\dUpsilon}{\QP_\lambda}_t\colon \mcL^\infty(\QP_\lambda)\to \mcL^\infty(\QP_\lambda)$ is strongly $\mssd_\dUpsilon$-Feller, thus~$\TT{\dUpsilon}{\QP_\lambda}_t u$ is $\mssd_\dUpsilon$-continuous.
By the very same argument as in~\cite[Thm.~6.5]{AmbGigSav14b} we further have that
\begin{align*}
\sqrt{2 I_{2K}(t)}\, \tnorm{\SF{\dUpsilon}{\QP_\lambda}(\TT{\dUpsilon}{\QP_\lambda}_t u)}_{L^\infty(\QP_\lambda)}\leq \tnorm{\TT{\dUpsilon}{\QP_\lambda}_t u}_{L^\infty(\QP_\lambda)} \fstop
\end{align*}
(Note that the proof of~\cite[Eqn.~(6.17)]{AmbGigSav14b} does not rely on any property of the minimal weak upper gradient other than the latter being the square root of the square field of the Dirichlet form at hand).
Finally, it follows by~$(\dcSL{\mssd_\dUpsilon}{\QP_\lambda}{\mssd_\dUpsilon})$ and a linear rescaling that $\TT{\dUpsilon}{\QP_\lambda}_t u$ has a $\A_\mrmv(\Ed)^{\QP_\lambda}$-measurable $\mssd_\dUpsilon$-Lipschitz representative satisfying
\begin{align*}
\sqrt{2 I_{2K}(t)}\,\, \bLi[\mssd_\dUpsilon]{\widerep{\TT{\dUpsilon}{\QP_\lambda}_t u}}\leq \tnorm{\TT{\dUpsilon}{\QP_\lambda}_t u}_{L^\infty(\QP_\lambda)} \leq \norm{u}_{L^\infty(\QP_\lambda)}\comma
\end{align*}
where the last inequality holds by sub-Markovianity of~$\TT{\dUpsilon}{\QP_\lambda}_\bullet$.
\end{proof}

\subsection{Irreducibility}
Let~$\QP$ be either~$\PP$ or~$\QP_\lambda$ for some L\'evy measure~$\lambda$.
In this section we present some applications of the Sobolev-to-Lipschitz property to the question of \emph{irreducibility} of the form~$\tparen{\EE{\dUpsilon}{\QP},\dom{\EE{\dUpsilon}{\QP}}}$.

Let~$(\EE{X}{\mssm},\dom{\EE{X}{\mssm}}$ be a Dirichlet form with semigroup~$\TT{X}{\mssm}_\bullet$ on a topological local structure~$\mcX$.
We recall that a measurable~$A\in\A$ is ($\EE{X}{\mssm}$-)\emph{invariant} if
\begin{align*}
\TT{X}{\mssm}_t(\car_A f) = \car_A \TT{X}{\mssm}_t f \quad \as{\mssm} \comma \qquad f\in L^2(\mssm) \fstop
\end{align*}
The form~$\tparen{\EE{X}{\mssm},\dom{\EE{X}{\mssm}}}$ (the semigroup~$\TT{X}{\mssm}_\bullet$) is \emph{irreducible} if, whenever~$A$ is invariant, then $A$ is either $\mssm$-negligible or $\mssm$-co-negligible.
Several equivalent formulations of this notion are available for general Dirichlet spaces, see e.g.~\cite{FukOshTak11}.

Since the form~$\tparen{\EE{\dUpsilon}{\QP},\dom{\EE{\dUpsilon}{\QP}}}$ is strongly local recurrent (Thm.~\ref{t:MixedPoissonGeometricProperties}), its irreducibility amounts to say that the Markov $\T_\mrmv(\msE)$-diffusion process~$\mbfM^{\QP}$ properly associated to $\tparen{\EE{\dUpsilon}{\QP},\dom{\EE{\dUpsilon}{\QP}}}$, is ergodic. That is, for ($\T_\mrmv(\msE)$-quasi-)every fixed starting configuration in~$\dUpsilon$, the process~$\mbfM^\QP$ visits every $\T_\mrmv(\msE)$-open neighborhood of~$\dUpsilon$ a.s.
The ergodicity of~$\mbfM^\QP$ is one fundamental property of the process which cannot be accessed by its standard finite-dimensional approximations/localizations.
Indeed, if~$\mssm X<\infty$, or when considering restrictions of the constructions above to a bounded set~$B$, then the processes~$\mbfM^\QP$ is \emph{never} ergodic, since the number of particles ---~which is $\QP$-a.s.\ finite~--- is conserved, and we have that each of the $n$-particle spaces~$\dUpsilon^\sym{n}$ is invariant.

In the case of manifolds, a characterization of the ergodicity of~$\mbfM^\QP$ in terms of~$\QP$ in the class of mixed Poisson measures is provided in~\cite[Thm.~4.3]{AlbKonRoe98}, showing that~$\mbfM^\QP$ is ergodic if and only if~$\QP=\PP$.
Here, we extend this characterization to the non-smooth setting, and add a further equivalent statement, describing the irreducibility of the form~$\tparen{\EE{\dUpsilon}{\QP},\dom{\EE{\dUpsilon}{\QP}}}$ in terms of the set-to-set $L^2$-transportation distance~$\mssd_\dUpsilon$.

\bigskip

Let us start by showing that ---~in contrast to the case of metric spaces (as opposed to: \emph{extended} metric spaces), see Proposition~\ref{p:IrreducibilityBase}~---, irreducibility does \emph{not} follow from the Sobolev-to-Lipschitz property.

\begin{ese}[Sobolev-to-Lipschitz vs.\ irreducibility for mixed Poisson measures]\label{e:SLvsIrrMixedPoisson}
Let $(\mcX,\cdc,\mssd)$ be the \MLDS arising from a complete non-compact Riemannian manifold with Ricci curvature bounded below.
Then,~$\tparen{\dUpsilon,\SF{\dUpsilon}{\QP_\lambda},\mssd_\dUpsilon}$ satisfies $(\SL{\QP_\lambda}{\mssd_\dUpsilon})$ for every L\'evy measure~$\lambda$, yet the form~$\tparen{\EE{\dUpsilon}{\QP_\lambda}, \dom{\EE{\dUpsilon}{\QP_\lambda}}}$ is irreducible if and only if~$\lambda=\delta_s$ for some~$s>0$. %
\begin{proof}
The irreducibility statement is shown in~\cite[Thm.~4.3]{AlbKonRoe98}, while~$(\SL{\QP_\lambda}{\mssd_\dUpsilon})$ is shown in Theorem~\ref{t:SL} provided we show~$(\dcSL{\mssd_\dUpsilon}{\QP_\lambda}{\mssd_\dUpsilon})$.
This holds by~\cite[Thm.~5.23]{LzDSSuz21}, the assumptions of which are readily verified as in~\cite{LzDSSuz21}.
\end{proof}
\end{ese}

Having proved the conjecture in Remark~\ref{r:RoeSch} allows us to provide a characterization of the irreducibility of the form~$\EE{\dUpsilon}{\QP_\lambda}$ in purely geometrical terms.
For~$\Lambda_1,\Lambda_2\in \A_\mrmv(\Ed)^{\QP_\lambda}$ set
\begin{align*}
\mssd_{\dUpsilon,\Lambda_1}(\Lambda_2)\eqdef {\QP_\lambda}\text{-}\essinf_{\gamma\in \Lambda_1}  \inf_{\eta\in\Lambda_2} \mssd_\dUpsilon(\gamma,\eta) \fstop
\end{align*}

\begin{prop}\label{p:Irreducibility}
Let~$(\mcX,\cdc,\mssd)$ be an~\MLDS.
Further assume that~$\tparen{\dUpsilon,\SF{\dUpsilon}{{\QP_\lambda}},\mssd_\dUpsilon}$ satisfies $(\Rad{\mssd_\dUpsilon}{{\QP_\lambda}})$ and~$(\SL{{\QP_\lambda}}{\mssd_\dUpsilon})$, both w.r.t.~$\A_\mrmv(\Ed)^{\QP_\lambda}$.
Then, the following are equivalent:
\begin{enumerate}[$(a)$]
\item\label{i:p:Irreducibility:1} the form~$\tparen{\EE{\dUpsilon}{{\QP_\lambda}},\dom{\EE{\dUpsilon}{{\QP_\lambda}}}}$ is irreducible;
\item\label{i:p:Irreducibility:2} $\mssd_{\dUpsilon,\Lambda_1}(\Lambda_2)<\infty$ for each pair of sets~$\Lambda_1\in\A_\mrmv(\Ed)^{\QP_\lambda}$,~$\Lambda_2\in \Bo{\T_\mrmv(\Ed)}$, with~${\QP_\lambda} \Lambda_i>0$ for~$i=1,2$.
\end{enumerate}

\begin{proof}
\ref{i:p:Irreducibility:1}$\implies$\ref{i:p:Irreducibility:2}
Let~$\Lambda_1,\Lambda_2$ be as in~\ref{i:p:Irreducibility:2}, and consider the function
\[
\mssd_\dUpsilon(\emparg, \Lambda_2)\eqdef \inf_{\eta\in\Lambda_2} \mssd_\dUpsilon(\emparg,\eta)\fstop
\]
This function is $\A_\mrmv(\msE)^{\QP_\lambda}$-measurable by~\cite[Cor.~\ref{c:MeasurabilitydU}]{LzDSSuz21}.
By~\cite[Thm.~5.2]{LzDSSuz21}, the \EMLDS $\tparen{\dUpsilon,\SF{\dUpsilon}{{\QP_\lambda}},\mssd_\dUpsilon}$ satisfies~$(\Rad{\mssd_\dUpsilon}{{\QP_\lambda}})$ w.r.t.\ the $\sigma$-algebra~$\A_\mrmv(\Ed)^{\QP_\lambda}$.
As a consequence, it follows by the same proof of~\cite[Lem.~4.16]{LzDSSuz20} that~$\mssd_\dUpsilon(\emparg, \Lambda_2)\leq \hr{{\QP_\lambda},\Lambda_2}$, hence that
\begin{align*}
\mssd_{\dUpsilon,\Lambda_1}(\Lambda_2)\leq {\QP_\lambda}\text{-}\essinf_{\Lambda_1} \hr{{\QP_\lambda},\Lambda_2} \fstop
\end{align*}
Since~$\tparen{\EE{\dUpsilon}{{\QP_\lambda}},\dom{\EE{\dUpsilon}{{\QP_\lambda}}}}$ is irreducible, by~\cite[Lem.~2.16]{HinRam03} we have that~$ {\QP_\lambda}\text{-}\essinf_{\Lambda_1} \hr{{\QP_\lambda}, \Lambda_2}<\infty$, and the conclusion follows. 

\medskip

\ref{i:p:Irreducibility:2}$\implies$\ref{i:p:Irreducibility:1}
By~$(\SL{\mssd_\dUpsilon}{{\QP_\lambda}})$ for~$\A_\mrmv(\Ed)^{\QP_\lambda}$ and~\cite[Lem.~4.19]{LzDSSuz20} we have that for every~$\Lambda\in \Bo{\T_\mrmv(\Ed)}$ there exists~$\tilde\Lambda\in\A_\mrmv(\Ed)^{\QP_\lambda}$ with~${\QP_\lambda}(\tilde\Lambda\triangle\Lambda)=0$ and
\begin{align}\label{eq:p:Irreducibility:3}
\hr{{\QP_\lambda},\Lambda}\leq \mssd_\dUpsilon(\emparg,\tilde \Lambda)\quad \as{{\QP_\lambda}} \fstop
\end{align}
Since~$\Bo{\T_\mrmv(\Ed)}\subset \A_\mrmv(\Ed)\subset \Bo{\T_\mrmv(\Ed)}^{\QP_\lambda}=\A_\mrmv(\Ed)^{\QP_\lambda}$, there exists~$\Lambda^*\in \Bo{\T_\mrmv(\Ed)}$ with~$\Lambda^*\subset \tilde\Lambda$ and~${\QP_\lambda}(\tilde\Lambda\setminus \Lambda^*)={\QP_\lambda}(\Lambda\triangle\Lambda^*)=0$.
Furthermore, since~$\hr{{\QP_\lambda},\Lambda}$ is independent of the ${\QP_\lambda}$-representative of~$\Lambda$, we have that~$\hr{{\QP_\lambda},\Lambda}=\hr{{\QP_\lambda},\Lambda}=\hr{{\QP_\lambda},\tilde\Lambda}$.
Finally, since~$\Lambda^*\subset \tilde\Lambda$, we have that~$\mssd_\dUpsilon(\emparg, \tilde\Lambda)\leq \mssd_\dUpsilon(\emparg,\Lambda^*)$.
Combining this with~\eqref{eq:p:Irreducibility:3}, we thus have that~${\QP_\lambda}\text{-}\essinf_\Xi\hr{{\QP_\lambda},\Lambda}\leq \mssd_{\dUpsilon,\Xi}(\Lambda^*)<\infty$ for every~$\Xi\in\Bo{\T_\mrmv(\Ed)}$ by the assumption.
The conclusion now follows again by~\cite[Lem.~2.16]{HinRam03}.
\end{proof}
\end{prop}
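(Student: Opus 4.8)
The plan is to recast irreducibility of $\tparen{\EE{\dUpsilon}{\QP_\lambda},\dom{\EE{\dUpsilon}{\QP_\lambda}}}$ in terms of the maximal functions $\hr{\QP_\lambda,\Lambda}$ of Definition~\ref{d:MaximalFunctionConfig2}, and then to sandwich the point-to-set distance $\mssd_\dUpsilon(\emparg,\Lambda)\eqdef\inf_{\eta\in\Lambda}\mssd_\dUpsilon(\emparg,\eta)$ between representative-dependent versions of $\hr{\QP_\lambda,\Lambda}$: from below by means of the Rademacher property $(\Rad{\mssd_\dUpsilon}{\QP_\lambda})$, and from above by means of the Sobolev-to-Lipschitz property $(\SL{\QP_\lambda}{\mssd_\dUpsilon})$. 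The pivotal fact is the maximal-function criterion for irreducibility of a (strongly local recurrent) Dirichlet form: by~\cite[Lem.~2.16]{HinRam03}, the form $\tparen{\EE{\dUpsilon}{\QP_\lambda},\dom{\EE{\dUpsilon}{\QP_\lambda}}}$ is irreducible if and only if $\QP_\lambda\text{-}\essinf_{\Lambda_1}\hr{\QP_\lambda,\Lambda_2}<\infty$ for every pair of sets $\Lambda_1,\Lambda_2$ of positive $\QP_\lambda$-measure. Since $\Bo{\T_\mrmv(\Ed)}\subset\A_\mrmv(\Ed)\subset\A_\mrmv(\Ed)^{\QP_\lambda}=\Bo{\T_\mrmv(\Ed)}^{\QP_\lambda}$ and $\hr{\QP_\lambda,\Lambda}$ depends only on the $\QP_\lambda$-class of $\Lambda$, this criterion is insensitive to which of these $\sigma$-algebras the sets $\Lambda_i$ are taken from.

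For $(a)\Rightarrow(b)$, I would fix $\Lambda_1\in\A_\mrmv(\Ed)^{\QP_\lambda}$ and $\Lambda_2\in\Bo{\T_\mrmv(\Ed)}$ with $\QP_\lambda\Lambda_i>0$. First, $\mssd_\dUpsilon(\emparg,\Lambda_2)$ is $\A_\mrmv(\Ed)^{\QP_\lambda}$-measurable by the measurability statement from~\cite{LzDSSuz21}. The Rademacher property for $\dUpsilon$ relative to $\A_\mrmv(\Ed)^{\QP_\lambda}$, combined with the argument in the proof of~\cite[Lem.~4.16]{LzDSSuz20}, yields $\mssd_\dUpsilon(\emparg,\Lambda_2)\leq\hr{\QP_\lambda,\Lambda_2}$ $\QP_\lambda$-a.e. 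Taking the $\QP_\lambda$-essential infimum over $\Lambda_1$ gives $\mssd_{\dUpsilon,\Lambda_1}(\Lambda_2)\leq\QP_\lambda\text{-}\essinf_{\Lambda_1}\hr{\QP_\lambda,\Lambda_2}$, and the right-hand side is finite by irreducibility and~\cite[Lem.~2.16]{HinRam03}.

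For $(b)\Rightarrow(a)$, given $\Lambda\in\Bo{\T_\mrmv(\Ed)}$ with $\QP_\lambda\Lambda>0$, the Sobolev-to-Lipschitz property $(\SL{\QP_\lambda}{\mssd_\dUpsilon})$ relative to $\A_\mrmv(\Ed)^{\QP_\lambda}$ together with~\cite[Lem.~4.19]{LzDSSuz20} produces $\tilde\Lambda\in\A_\mrmv(\Ed)^{\QP_\lambda}$ with $\QP_\lambda(\tilde\Lambda\triangle\Lambda)=0$ and $\hr{\QP_\lambda,\Lambda}\leq\mssd_\dUpsilon(\emparg,\tilde\Lambda)$ $\QP_\lambda$-a.e. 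Using the $\sigma$-algebra inclusions above I would choose a Borel set $\Lambda^*\subset\tilde\Lambda$ with $\QP_\lambda(\tilde\Lambda\setminus\Lambda^*)=0$; then $\mssd_\dUpsilon(\emparg,\tilde\Lambda)\leq\mssd_\dUpsilon(\emparg,\Lambda^*)$, and $\hr{\QP_\lambda,\Lambda}=\hr{\QP_\lambda,\Lambda^*}$ by invariance of the maximal function under $\QP_\lambda$-modifications. Hence for every $\Xi$ of positive $\QP_\lambda$-measure, $\QP_\lambda\text{-}\essinf_{\Xi}\hr{\QP_\lambda,\Lambda}\leq\mssd_{\dUpsilon,\Xi}(\Lambda^*)<\infty$ by hypothesis~(b), and~\cite[Lem.~2.16]{HinRam03} gives irreducibility.

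I expect the main obstacle to be not analytic but bookkeeping: ensuring that the Rademacher and Sobolev-to-Lipschitz inputs of~\cite{LzDSSuz20} are invoked with the correct $\sigma$-algebra $\A_\mrmv(\Ed)^{\QP_\lambda}$ for the non-separable extended distance $\mssd_\dUpsilon$, that the $\QP_\lambda$-a.e.\ inequalities survive the passage between $\Lambda$, its modification $\tilde\Lambda$, and the Borel representative $\Lambda^*$, and that the measurability of $\mssd_\dUpsilon(\emparg,\Lambda)$ — the one genuinely delicate ingredient, imported from~\cite{LzDSSuz21} — is in force. All the heavy lifting (Ariyoshi--Hino-type identification of maximal functions, and the identification of $\mssd_\dUpsilon$ with the intrinsic distance) has already been done in the cited works and in the earlier part of the paper.
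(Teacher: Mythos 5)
Your proposal is correct and follows essentially the same route as the paper's proof: the Hino--Ram\'{\i}rez maximal-function criterion for irreducibility as the pivot, the Rademacher property (via the argument of~\cite[Lem.~4.16]{LzDSSuz20}) for the bound $\mssd_\dUpsilon(\emparg,\Lambda_2)\leq \hr{\QP_\lambda,\Lambda_2}$ in one direction, and the Sobolev-to-Lipschitz property (via~\cite[Lem.~4.19]{LzDSSuz20}) together with the passage $\Lambda\rightsquigarrow\tilde\Lambda\rightsquigarrow\Lambda^*$ between $\sigma$-algebras in the other. The bookkeeping points you flag are exactly the ones the paper handles, and your treatment of them is accurate.
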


Within the class of mixed Poisson measures over manifolds, we obtain the following characterization of irreducibility, consequence of the characterization of Poisson measures as the unique tail-trivial measures among mixed Poisson measures, together with Proposition~\ref{p:Irreducibility} and~\cite[Thm.~4.3]{AlbKonRoe98}.

\begin{cor}[Irreducibility and tail-triviality]\label{c:IrreducibilityMixed}
Let~$(\mcX,\cdc,\mssd)$ be the \MLDS arising from a weighted Riemannian manifold with Ricci curvature bounded below.
Then, the following are equivalent:
\begin{enumerate}[$(a)$]
\item $\QP_\lambda$ is tail trivial;
\item $\tparen{\EE{\dUpsilon}{\QP_\lambda},\dom{\EE{\dUpsilon}{\QP_\lambda}}}$ is irreducible;
\item $\lambda=\delta_s$ for some~$s>0$, i.e.~$\QP_\lambda=\PP$ is a Poisson measure;
\item $\mssd_{\dUpsilon,\Lambda_1}(\Lambda_2)<\infty$ for each pair of sets~$\Lambda_1\in\A_\mrmv(\Ed)^{\QP_\lambda}$,~$\Lambda_2\in \Bo{\T_\mrmv(\Ed)}$, with~$\QP_\lambda \Lambda_i>0$ for~$i=1,2$.
\end{enumerate}
\end{cor}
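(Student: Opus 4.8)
The plan is to derive the four-fold equivalence by stitching together three inputs, each of which is either classical or already available in this paper: the dichotomy for mixed Poisson point processes, the irreducibility criterion of \cite[Thm.~4.3]{AlbKonRoe98}, and Proposition~\ref{p:Irreducibility}. Concretely, I would prove $(a)\Leftrightarrow(c)$ directly from point-process theory, $(b)\Leftrightarrow(c)$ by quoting \cite[Thm.~4.3]{AlbKonRoe98} applied to $\QP_\lambda$, and $(b)\Leftrightarrow(d)$ by Proposition~\ref{p:Irreducibility}. Since conditions $(b)$ and $(d)$ of the Corollary are, verbatim, conditions $(a)$ and $(b)$ of Proposition~\ref{p:Irreducibility}, the last step reduces to checking that the hypotheses of that Proposition hold here, namely that $\tparen{\dUpsilon,\SF{\dUpsilon}{\QP_\lambda},\mssd_\dUpsilon}$ enjoys $(\Rad{\mssd_\dUpsilon}{\QP_\lambda})$ and $(\SL{\QP_\lambda}{\mssd_\dUpsilon})$, both with respect to $\A_\mrmv(\Ed)^{\QP_\lambda}$.

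For $(a)\Leftrightarrow(c)$: when $\lambda=\delta_s$ the measure $\QP_\lambda=\PP_{s\mssm}$ is a completely independent point process, so its tail $\sigma$-algebra is $\PP_{s\mssm}$-trivial by Kolmogorov's zero--one law, which gives $(c)\Rightarrow(a)$. For the converse I would produce a non-constant tail function whenever $\lambda$ is non-degenerate: fixing a localizing exhaustion $X=\bigcup_h E_h$ by sets $E_h\in\Ed$ with $\mssm E_h\uparrow\infty$ (available since $\mssm X=\infty$ in the setting considered here), the spatial-average functional $D(\gamma)\eqdef \limsup_h \gamma E_h/\mssm E_h$ is measurable with respect to the tail $\sigma$-algebra and equals $s$ for $\PP_{s\mssm}$-a.e.\ $\gamma$ by the strong law of large numbers for independent increments; hence the $\QP_\lambda$-law of $D$ is $\lambda$, and tail triviality of $\QP_\lambda$ forces $\lambda=\delta_s$. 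This is precisely the classical characterization of Poisson measures as the extreme, equivalently tail-trivial, elements among mixed Poisson measures, which one may also simply cite.

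For $(b)\Leftrightarrow(d)$ I first note that a complete weighted Riemannian manifold $(M,e^{-\psi}g)$ with $\Ric_g+\mathrm{Hess}\,\psi\geq K$ gives rise to an \MLDS, e.g.\ for the choice $\Dz=\mcC^\infty_c(M)\subset\bLip(\mssd)$. By Theorem~\ref{t:MixedPoissonGeometricProperties}, since the base \MLDS satisfies the Rademacher property $(\Rad{\mssd}{\mssm})$ (classical on manifolds), the form $\tparen{\EE{\dUpsilon}{\QP_\lambda},\dom{\EE{\dUpsilon}{\QP_\lambda}}}$ is a quasi-regular, strongly local, recurrent Dirichlet form satisfying $(\Rad{\mssd_\dUpsilon}{\QP_\lambda})$ (with respect to the completed $\sigma$-algebra $\A_\mrmv(\Ed)^{\QP_\lambda}$, cf.\ \cite[Thm.~5.2]{LzDSSuz21}). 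The property $(\SL{\QP_\lambda}{\mssd_\dUpsilon})$ with respect to $\A_\mrmv(\Ed)^{\QP_\lambda}$ is exactly the conclusion of Theorem~\ref{t:SL}: its hypotheses \SCF, $\BE(K,\infty)$ and \ref{ass:LogH} are all standard for such a manifold (Bakry--\'Emery for $\BE(K,\infty)$; stochastic completeness and the Feller properties from Gaussian heat-kernel bounds; the logarithmic Harnack inequality being Wang's inequality \cite{Wan97b}), while the remaining assumption $(\dcSL{\mssd_\dUpsilon}{\QP_\lambda}{\mssd_\dUpsilon})$ holds by \cite[Thm.~5.23]{LzDSSuz21}, exactly as in Example~\ref{e:SLvsIrrMixedPoisson}. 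With both properties in hand, Proposition~\ref{p:Irreducibility} yields $(b)\Leftrightarrow(d)$. Finally, $(b)\Leftrightarrow(c)$ is \cite[Thm.~4.3]{AlbKonRoe98}, the form there agreeing with ours by the consistency of the two constructions for (mixed) Poisson measures over manifolds; recall in particular the direct-integral representation~\eqref{eq:DirIntMixedPoisson}.

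The genuine analytic content --- the Sobolev--to--Lipschitz property on $\dUpsilon$ and the metric characterization of irreducibility via $\mssd_\dUpsilon$ --- is already carried by Theorem~\ref{t:SL} and Proposition~\ref{p:Irreducibility}, so the remaining effort is organizational. The two points I expect to require explicit care are: (i) matching the Dirichlet form $\tparen{\EE{\dUpsilon}{\QP_\lambda},\dom{\EE{\dUpsilon}{\QP_\lambda}}}$ defined here through~\eqref{eq:DirIntMixedPoisson} with the form for which \cite[Thm.~4.3]{AlbKonRoe98} is stated, so that the citation is legitimate; and (ii) keeping in view the standing assumption $\mssm X=\infty$, without which the $n$-particle components $\dUpsilon^\sym{n}$ are invariant sets of intermediate $\QP_\lambda$-measure, $(b)$ fails for every $\lambda$, and the Corollary would be vacuous. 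Neither is a serious obstacle, but both deserve to be flagged.
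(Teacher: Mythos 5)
Your proposal is correct and follows exactly the route the paper intends: the corollary is stated there as a direct consequence of the tail-triviality characterization of Poisson measures among mixed Poisson measures (for $(a)\Leftrightarrow(c)$), of \cite[Thm.~4.3]{AlbKonRoe98} (for $(b)\Leftrightarrow(c)$), and of Proposition~\ref{p:Irreducibility} (for $(b)\Leftrightarrow(d)$), with the hypotheses of the latter verified via Theorem~\ref{t:SL} and the argument of Example~\ref{e:SLvsIrrMixedPoisson}, just as you do. Your additional care in checking the hypotheses of Proposition~\ref{p:Irreducibility} and in flagging the standing assumption $\mssm X=\infty$ only makes explicit what the paper leaves implicit.
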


{\small

}
\end{document}